\DeclareFontFamily{OT1}{pzc}{}
\DeclareFontShape{OT1}{pzc}{m}{it}{<-> s * [1.10] pzcmi7t}{}
\DeclareMathAlphabet{\mathpzc}{OT1}{pzc}{m}{it}
\crefname{defin}{Definition}{Definitions}
\crefname{eg}{Example}{Examples}
\crefname{egs}{Examples}{Examples}
\crefname{convention}{Convention}{Convention}
\crefname{lem}{Lemma}{Lemmas}
\crefname{prop}{Proposition}{Propositions}
\crefname{theo}{Theorem}{Theorems}
\crefname{rem}{Remark}{Remarks}
\crefname{equation}{}{}
\crefname{enumi}{}{}
\newcommand\la{\lambda}
\newcommand\La{\Lambda}
\newcommand\n{\mathfrak{n}}
\newcommand\sgn{\operatorname{sgn}}
\newcommand\C{\mathbb{C}}
\newcommand\GG{\mathbb{G}}
\newcommand\N{\mathbb{N}}
\newcommand\Q{\mathbb{Q}}
\newcommand\Z{\mathbb{Z}}
\newcommand\kk{\Bbbk}
\newcommand\one{\mathbbm{1}}
\newcommand\bB{\mathbf{B}}
\newcommand\bF{\mathbf{F}}
\newcommand\bAF{\hat{\mathbf{F}}}
\newcommand\fb{\mathfrak{b}}
\newcommand\fg{\mathfrak{g}}
\newcommand\fh{\mathfrak{h}}
\newcommand\h{\mathfrak{h}} 			
\newcommand\fS{\mathfrak{S}}
\newcommand\fso{\mathfrak{so}}
\newcommand\Cl{\mathrm{Cl}}             
\newcommand\GL{\mathrm{GL}}
\newcommand\GPin{\mathrm{GPin}}
\newcommand\Group{{\mathrm{G}}}         
\newcommand\op{\mathrm{op}}
\newcommand\OSp{\mathrm{OSp}}           
\newcommand\rev{\mathrm{rev}}
\newcommand\rO{\mathrm{O}}              
\newcommand\SO{\mathrm{SO}}             
\newcommand\Spin{\mathrm{Spin}}         
\newcommand\Pin{\mathrm{Pin}}           
\newcommand\triv{\mathrm{triv}}
\newcommand\Lie{\mathrm{Lie}}
\newcommand\Irr{\mathrm{Irr}}
\newcommand\md{\textup{-mod}}           
\newcommand\Md{\textup{-Mod}}           
\newcommand\Shuffle{\textup{Sh}}
\newcommand\transpose{\textup{t}}
\newcommand\even{{\bar{0}}}
\newcommand\odd{{\bar{1}}}
\newcommand\dotop{\tilde{\Omega}}
\newcommand\inv{^{-1}}
\newcommand\ASB{\mathpzc{ASB}}          
\newcommand\Brauer{\mathpzc{B}}         
\newcommand\cC{\mathcal{C}}
\newcommand\cD{\mathcal{D}}
\newcommand\cEnd{\mathpzc{End}}
\newcommand\cN{\mathpzc{N}}
\newcommand\OB{\mathpzc{OB}}            
\newcommand\SB{\mathpzc{SB}}            
\newcommand\Sgo{\mathsf{S}}             
\newcommand\Vgo{\mathsf{V}}             
\DeclareMathOperator{\diag}{diag}
\DeclareMathOperator{\End}{End}
\DeclareMathOperator{\flip}{flip}
\DeclareMathOperator{\Hom}{Hom}
\DeclareMathOperator{\Id}{Id}
\DeclareMathOperator{\id}{id}
\DeclareMathOperator{\Ind}{Ind}
\DeclareMathOperator{\Kar}{Kar}
\DeclareMathOperator{\Mat}{Mat}
\DeclareMathOperator{\Res}{Res}
\DeclareMathOperator{\Span}{span}
\DeclareMathOperator{\tr}{tr}
\DeclareMathOperator{\Tr}{Tr}
\DeclareMathOperator{\wt}{wt}
\tikzset{anchorbase/.style={>=To,baseline={([yshift=-0.5ex]current bounding box.center)}}}
\tikzset{ 
    centerzero/.style={>=To,baseline={([yshift=-0.5ex](#1))}},
    centerzero/.default={0,0}
}
\tikzset{wipe/.style={white,line width=3pt}}
\tikzset{vec/.style={thick,blue,densely dotted}}
\tikzset{spin/.style={thick,black}}
\tikzset{multispin/.style={very thick,black}}
\tikzset{any/.style={red,densely dashed}}
\tikzset{->-/.style={decoration={
  markings,
  mark=at position #1 with {\arrow{>}}},postaction={decorate}}
}
\tikzset{-<-/.style={decoration={
  markings,
  mark=at position #1 with {\arrow{<}}},postaction={decorate}}
}
\newcommand\braidup{to[out=up,in=down]}
\newcommand\braiddown{to[out=down,in=up]}
\newcommand\dotlabel[1]{$\scriptstyle{#1}$}
\newcommand\strandlabel[1]{$\scriptstyle{#1}$}
\newcommand\botlabel[1]{node[anchor=north] {\strandlabel{#1}}}
\newcommand\singdot[1]{
    \filldraw[fill=white, draw=black] (#1) circle (1.5pt)
}
\newcommand\multdot[3]{
    \filldraw[fill=white, draw=black] (#1) circle (1.5pt) node[anchor=#2] {\dotlabel{#3}}
}
\newcommand{\genbox}[3]{
    \filldraw[fill=black!20!white,rounded corners] (#1) rectangle (#2) node[midway] {\strandlabel{#3}}
}
\newcommand{\altbox}[3]{
    \filldraw[fill=black!20!white] (#1) rectangle (#2) node[midway] {\strandlabel{#3}}
}
\newcommand\bub[2]{
    \draw[#1] (#2)++(0,0.2) arc(90:-270:0.2)
}
\newcommand\cbubgen[2][u]{
    \draw[->] (#2)++(0,0.2) arc(90:-270:0.2);
    \node at (#2) {\dotlabel{#1}}
}
\newcommand\ccbubgen[2][u]{
    \draw[->] (#2)++(0,0.2) arc(90:450:0.2);
    \node at (#2) {\dotlabel{#1}}
}
\newcommand\bubble[1]{
    \begin{tikzpicture}[centerzero]
        \bub{#1}{0,0};
    \end{tikzpicture}
}
\newcommand\multbubble[2]{
    \begin{tikzpicture}[centerzero]
        \bub{#1}{0,0};
        \multdot{0.2,0}{west}{#2};
    \end{tikzpicture}
}
\newcommand\cbubblegen[1][u]{
    \begin{tikzpicture}[centerzero]
        \cbubgen{0,0};
    \end{tikzpicture}
}
\newcommand\ccbubblegen[1][u]{
    \begin{tikzpicture}[centerzero]
        \ccbubgen{0,0};
    \end{tikzpicture}
}
\newcommand\idstrand[1]{
    \begin{tikzpicture}[centerzero]
        \draw[#1] (0,-0.2) -- (0,0.2);
    \end{tikzpicture}
}
\newcommand\dotstrand[1]{
    \begin{tikzpicture}[centerzero]
        \draw[#1] (0,-0.2) -- (0,0.2);
        \singdot{0,0};
    \end{tikzpicture}
}
\newcommand\crossmor[2]{
    \begin{tikzpicture}[centerzero]
        \draw[#1] (-0.2,-0.2) -- (0.2,0.2);
        \draw[#2] (0.2,-0.2) -- (-0.2,0.2);
    \end{tikzpicture}
}
\newcommand{\cupmor}[1]{
    \begin{tikzpicture}[centerzero]
        \draw[#1] (-0.15,0.15) -- (-0.15,0) arc(180:360:0.15) -- (0.15,0.15);
    \end{tikzpicture}
}
\newcommand{\capmor}[1]{
    \begin{tikzpicture}[centerzero]
        \draw[#1] (-0.15,-0.15) -- (-0.15,0) arc(180:0:0.15) -- (0.15,-0.15);
    \end{tikzpicture}
}
\newcommand{\mergemor}[3]{
    \begin{tikzpicture}[anchorbase]
        \draw[#1] (-0.197,-0.197) -- (0,0);
        \draw[#2] (0.197,-0.197) -- (0,0);
        \draw[#3] (0,0) -- (0,0.229);
    \end{tikzpicture}
}
\newcommand{\splitmor}[3]{
    \begin{tikzpicture}[anchorbase]
        \draw[#2] (-0.197,0.197) -- (0,0);
        \draw[#3] (0.197,0.197) -- (0,0);
        \draw[#1] (0,0) -- (0,-0.229);
    \end{tikzpicture}
}
\newcommand\hourglass{
    \begin{tikzpicture}[centerzero]
        \draw (-0.1,-0.2) -- (-0.1,-0.15) arc(180:0:0.1) -- (0.1,-0.2);
        \draw (-0.1,0.2) -- (-0.1,0.15) arc(180:360:0.1) -- (0.1,0.2);
    \end{tikzpicture}
}
\newtheorem{theo}{Theorem}[section]
\newtheorem{prop}[theo]{Proposition}
\newtheorem{lem}[theo]{Lemma}
\newtheorem{cor}[theo]{Corollary}
\newtheorem{conj}[theo]{Conjecture}
\theoremstyle{definition}
\newtheorem{defin}[theo]{Definition}
\newtheorem{rem}[theo]{Remark}
\numberwithin{equation}{section}
    \newcommand{\acomments}[1]{
        \ \\
        {\color{red}
            \textbf{AS:} #1
        }
        \ \\
    }
    \newcommand{\pcomments}[1]{
        \ \\
        {\color{blue}
            \textbf{PM:} #1
        }
        \ \\
    }
    \newcommand{\acomments}[1]{\ignorespaces}
    \newcommand{\pcomments}[1]{\ignorespaces}
    \newcommand{\details}[1]{
        \ \\
        {\color{OliveGreen}
            \textbf{Details:} #1
        }
        \\
    }
    \newcommand{\details}[1]{\ignorespaces}
\begin{document}

\title{The spin Brauer category}

\author{Peter J. McNamara}
\address[P.M.]{
    School of Mathematics and Statistics \\
    University of Melbourne \\
    Parkville, VIC, 3010, Australia
}
\urladdr{\href{http://petermc.net/maths}{petermc.net/maths}, \textrm{\textit{ORCiD}:} \href{https://orcid.org/0000-0001-6111-1511}{orcid.org/0000-0001-6111-1511}}
\email{maths@petermc.net}

\author{Alistair Savage}
\address[A.S.]{
  Department of Mathematics and Statistics \\
  University of Ottawa \\
  Ottawa, ON, K1N 6N5, Canada
}
\urladdr{\href{https://alistairsavage.ca}{alistairsavage.ca}, \textrm{\textit{ORCiD}:} \href{https://orcid.org/0000-0002-2859-0239}{orcid.org/0000-0002-2859-0239}}
\email{alistair.savage@uottawa.ca}

\begin{abstract}
    We introduce a diagrammatic monoidal category, the \emph{spin Brauer category}, that plays the same role for the spin and pin groups as the Brauer category does for the orthogonal groups.  In particular, there is a full functor from the spin Brauer category to the category of finite-dimensional modules for the spin and pin groups.  This functor becomes essentially surjective after passing to the Karoubi envelope, and its kernel is the tensor ideal of negligible morphisms.  In this way, the spin Brauer category can be thought of as an interpolating category for the spin and pin groups.  We also define an affine version of the spin Brauer category, which acts on categories of modules for the pin and spin groups via translation functors.
\end{abstract}

\subjclass[2020]{18M05, 18M30, 17B10}

\keywords{Spin group, orthogonal group, monoidal category, string diagram, Deligne category, interpolating category}

\ifboolexpr{togl{comments} or togl{details}}{%
  {\color{magenta}DETAILS OR COMMENTS ON}
}{%
}

\maketitle
\thispagestyle{empty}


\section{Introduction}

One of the most classical results in representation theory is Schur--Weyl duality, one half of which is the statement that the algebra homomorphism
\[
    \C \fS_r \to \End_{\GL(V)}(V^{\otimes r})
\]
is surjective, where $\fS_r$ is the symmetric group on $r$ letters, acting on $V^{\otimes r}$ by permutation of the factors.  If one replaces the general linear group by the orthogonal group, the analogous statement is that one has a surjective algebra homomorphism
\[
    \textup{Br}_r \to \End_{\rO(V)}(V^{\otimes r}),
\]
where $\textup{Br}_r$ is the Brauer algebra.

A more modern approach to the above involves considering morphisms between \emph{different} powers of the natural module $V$ to rephrase the results in terms of monoidal categories.  More precisely, there is a full and essentially surjective functor
\[
    \OB(N) \to \GL(V)\md
\]
from the \emph{oriented Brauer category} to the category of finite-dimensional rational $\GL(V)$-modules, where $N = \dim V$.  See, for example, \cite[Th.~4.7.1]{CW12}, although the idea essentially goes back to Turaev \cite{Tur89}.  Similarly, one has a full and essentially surjective functor
\[
    \Brauer(N) \to \rO(V)\md,
\]
where $\Brauer(N)$ is the \emph{Brauer category}.  See, for example, \cite[Th.~4.8]{LZ15}.  The categories $\OB(N)$ and $\Brauer(N)$ are defined for \emph{any} choice of parameter $N \in \C$.  Their additive Karoubi envelopes are Deligne's interpolating categories for the general linear and orthogonal groups \cite{Del07}.

Since the orthogonal group is not simply connected, it is natural to want to extend the above picture to its double cover, the pin group $\Pin(V)$, or the identity component, the spin group $\Spin(V)$.  This desire is further underlined by the importance of the spin group in other areas of representation theory and physics.  A first step in this direction is the recent work of Wenzl \cite{Wen20} describing the endomorphism algebra of $S^{\otimes r}$, where $S$ is the spin module.  (In fact, \cite{Wen20} works with representations of quantized enveloping algebra.)  Other partial results were obtained in \cite{OW02,Wen12}.  In type $D$, see also \cite[Th.~4.3.4.1]{How95} and \cite[Th.~3.6]{Abo22} for similar results.  The goal of the current paper is to develop the monoidal category approach and find a spin analogue of the Brauer category, allowing one to describe morphisms between \emph{all} tensor products of the spin and vector modules.

After recalling and developing, in \cref{sec:spin,sec:Liealg,sec:reps}, some of the representation theory of the spin and pin groups, we introduce the \emph{spin Brauer category} $\SB(d,D;\kappa)$ in \cref{sec:spinBrauer}.  Here $d,D$ are elements of the ground field and $\kappa \in \{\pm 1\}$.  Our definition of this strict monoidal category is diagrammatic, given via a presentation in terms of generators and relations.  Whereas the Brauer category has one generating object, which should be thought of as a formal version of the natural module, the spin Brauer category has an additional generating object corresponding to the spin module.  The parameters $d$ and $D$ are the categorical dimensions of the two generating objects. We then describe, in \cref{incarnation}, a functor
\[
    \bF \colon \SB(N, \sigma_N 2^{n}; \kappa_N) \to \Group(V)\md,
\]
where $N = \dim V$, $n=\lfloor \frac{N}{2} \rfloor$, $\sigma_N, \kappa_N \in \{\pm 1\}$ depend on $N$ (see \cref{crazy}), and
\[
    \Group(V) :=
    \begin{cases}
        \Pin(V) & \text{if $N$ is even}, \\
        \Spin(V) & \text{if $N$ is odd}.
    \end{cases}
\]
(See \cref{hamster,gerbil} for an explanation of why we split into these cases.)

We prove that the functor $\bF$ is full (\cref{fullness}), essentially surjective after passing to the Karoubi envelope (\cref{essential}), and that its kernel is precisely the tensor ideal of negligible morphisms (\cref{negligible}).  This implies that the category $\Group(V)\md$ is equivalent to the semisimplification of the Karoubi envelope of $\SB(N, \sigma_N 2^n; \kappa_N)$.  The Karoubi envelope of $\SB(d,D;\kappa)$ should be thought of as an interpolating category for the spin and pin groups, in the spirit of Deligne's interpolating categories \cite{Del07}.

Both the oriented Brauer category and the Brauer category have affine analogues, defined in \cite{BCNR17,RS19}.  In \cref{sec:affine}, we define an affine version $\ASB(d,D;\kappa)$ of the spin Brauer category, together with functors (\cref{affinc})
\[
    \ASB(N, \sigma_N 2^n; \kappa_N) \to \cEnd_\C(\Group(V)\md)
    \quad \text{and} \quad
    \ASB(N, \sigma_N 2^n; \kappa_N) \to \cEnd_\C(\fso(V)\Md),
\]
where $\cEnd_\C(\cC)$ denotes the monoidal category of $\C$-linear endofunctors of a $\C$-linear category $\cC$, with natural transformations as morphisms, and $\fso(V)\Md$ denotes the category of all $\fso(V)$-modules.  This functor yields tools for studying the translation functors given by tensoring with the spin and vector modules.  Such translation functors have proved to be extremely useful in representation theory.  Finally, in \cref{centresurjection}, we show that the image of the induced algebra homomorphism
\[
    \End_{\ASB(N, \sigma_N 2^n; \kappa_N)}(\one) \to \cEnd_\C(\fso(V)\Md) \cong Z(\fso(V))
\]
is $Z(\fso(V))^{\Group(V)}$, where $Z(\fso(V))$ is the centre of the universal enveloping algebra $U(\fso(V))$.

The results of the current paper bring the power of diagrammatic techniques to the study of the representation theory of the spin and pin groups.  As such, they lead to many natural directions of future research.  We plan to develop a quantum analogue of our results, replacing the spin group by the quantized enveloping algebra $U_q(\fso(n))$. Such a quantum version would also suggest an approach to webs of types $B$ and $D$, and so should be related to recent work of Bodish and Wu \cite{BW23}.

\subsection*{Acknowledgements}

The research of P.M.\ was supported by Australian Research Council grant DE150101415.  The research of A.S.\ was supported by Discovery Grant RGPIN-2023-03842 from the Natural Sciences and Engineering Research Council of Canada.  The second author is also grateful for the support and hospitality of the Sydney Mathematical Research Institute (SMRI).  The authors thank Elijah Bodish, Ben Webster, and Geordie Williamson for helpful discussions.  Several ideas in this paper were also influenced by \cite{Del}.

\subsection*{Relation to published version}

After publication of this paper, the authors noticed a gap in the proof of \cref{essential}.  A footnote has been added to the proof, indicating how to fix this gap.

\section{The spin representation\label{sec:spin}}

In this section, we recall the explicit construction of the most important representation theoretic ingredient in the current paper: the spin representation.  Throughout this section we work over the field $\C$ of complex numbers.

\subsection{The Clifford algebra}

Let $V$ be a vector space of finite dimension $N$ and let $\Phi_V \colon V \times V \to \C$ be a nondegenerate symmetric bilinear form.  Let
\begin{equation} \label{Clifford}
    \Cl = \Cl(V) := T(V)/ \left( vw+wv - 2 \Phi_V(v, w) : v,w \in V \right)
\end{equation}
denote the Clifford algebra associated to $V$.  Here $T(V)$ is the tensor algebra on $V$.  The algebra $\Cl$ is $(\Z/2\Z)$-graded by declaring that elements of $V$ are odd (that is, have degree $\bar 1$).  We let $\deg x \in \Z/2\Z$ denote the degree of a homogeneous element $x \in \Cl$.

The factor of $2$ in \cref{Clifford} is chosen to make some later formulas slightly cleaner.  For instance, for $v \in V$ with $\Phi_V(v,v) = 1$, we have $v^2 = 1$.  Note, however, that not all elements of $V$ are invertible when $N \ge 2$.  For example, if $v \in V$ satisfies $\Phi_V(v,v) = 0$, then $v$ is not invertible.

Since, over the complex numbers, any nondegenerate symmetric form is equivalent to the standard one, we may fix an orthonormal basis $e_1,\dotsc,e_N$ of $V$.  Then, in $\Cl$, we have
\begin{equation} \label{ecomm}
    e_i e_j + e_j e_i = 2 \delta_{ij}.
\end{equation}
Let
\begin{equation}
    n = \left\lfloor \frac{N}{2} \right\rfloor \in \N,
    \quad \text{so that} \quad
    N =
    \begin{cases}
        2n & \text{if $N$ is even}, \\
        2n+1 & \text{if $N$ is odd}.
    \end{cases}
\end{equation}
Now define
\begin{equation} \label{tuna}
    \psi_j := \tfrac{1}{2} \left( e_{2j-1} + \sqrt{-1} e_{2j} \right),\qquad
    \psi_j^\dagger : = \tfrac{1}{2} \left( e_{2j-1} - \sqrt{-1} e_{2j} \right), \qquad
    1 \le j \le n.
\end{equation}
Then we have
\[
    \Phi_V(\psi_i, \psi_j) = 0, \quad
    \Phi_V(\psi_i^\dagger, \psi_j^\dagger) = 0, \quad
    \Phi_V(\psi_i, \psi_j^\dagger) = \tfrac{1}{2} \delta_{ij},\qquad
    1 \le i,j \le n,
\]
and so
\begin{equation} \label{fermion}
    \psi_i \psi_j + \psi_j \psi_i = 0 = \psi_i^\dagger \psi_j^\dagger + \psi_j^\dagger \psi_i^\dagger, \qquad
    \psi_i \psi_j^\dagger + \psi_j^\dagger \psi_i = \delta_{ij},\qquad
    1 \le i,j \le n.
\end{equation}

When $N$ is even, \cref{fermion} gives a presentation of $\Cl$. When $N$ is odd, we need to include the additional relations
\begin{equation}\label{oddextra}
    \psi_i e_N + e_N \psi_i=0=\psi_i^\dagger e_N + e_N \psi_i^\dagger, \qquad e_{N}^2=1, \qquad 1\le i \le n,
\end{equation}
to obtain a presentation of $\Cl$.

\subsection{Clifford modules\label{subsec:Cliffmod}}

The algebra $\Cl$ is semisimple.  If $N$ is even, then the algebra $\Cl$ has a unique simple module up to isomorphism. If $N$ is odd, then $\Cl$ has exactly two simple modules. We will now describe these.

Let
\[
    S := \Lambda(W) = \bigoplus_{r=0}^n \Lambda^r(W),\qquad
    \text{where}\quad W = \Span_\C \{ \psi_i^\dagger : 1 \le i \le n \}.
\]
As a $\C$-module, $S$ has basis
\begin{equation} \label{xIdef}
    \begin{gathered}
        x_I := \psi_{i_1}^\dagger \wedge \psi_{i_2}^\dagger \wedge \dotsb \wedge \psi_{i_k}^\dagger,\\
        I = \{i_1,\dotsc,i_k\} \subseteq [n],\quad i_1 < i_2 < \dotsc < i_k,\quad
        0 \le k \le n,
    \end{gathered}
\end{equation}
where
\[
    [n]=\{1,2,\dotsc,n\},
\]
a notation we use throughout.  In particular,
\begin{equation} \label{paneer}
    \dim_\C(S) = 2^n.
\end{equation}

We will now construct a $\Cl$-module structure on $S$. If $N$ is even, we turn $S$ into a $\Cl$-module by defining
\begin{equation} \label{mind}
    \psi_i^\dagger x_I = \psi_i^\dagger \wedge x_I,\qquad
    \psi_i x_I =
    \begin{cases}
        (-1)^{|\{ j \in I : j<i \}|} x_{I \setminus \{i\}} & \text{if } i \in I, \\
        0 & \text{if } i \notin I,
    \end{cases}
    \qquad
    I \subseteq [n],\ 1 \le i \le n.
\end{equation}
It is straightforward to verify that the relations \cref{fermion} are satisfied.

If $N$ is odd, then we define two $\Cl$-module structures on $S$, depending on a choice of $\varepsilon\in \{\pm 1\}$. We again use the action defined in \cref{mind}, and additionally define
\begin{equation} \label{mouse}
    e_{2n+1} x_I = \varepsilon (-1)^{|I|} x_I.
\end{equation}
It is straightforward to verify that relations \cref{oddextra} are satisfied.

For both even and odd $N$, the $\Cl$-modules defined above are called the \emph{spin modules}.  If $N$ is even, then $S$ is the unique simple $\Cl$-module.  If $N$ is odd, then the two modules constructed above are the two nonisomorphic simple $\Cl$-modules.
In both cases, $\Cl$ is semisimple.
\details{
    To prove that $\Cl$ is semisimple, we use the fact that, if $A$ is a finite-dimensional algebra over an algebraically closed field, then
    \[
        A \text{ is semisimple}
        \iff \dim A \le \sum_i n_i^2
        \iff \dim A = \sum_i n_i^2,
    \]
    where the $n_i$ are the dimensions of the simple $A$-modules.  Since is it clear that $\dim \Cl \le 2^N$, the result then follows from \cref{paneer} and the simplicity of spin modules.
}

\begin{rem} \label{snow}
    An equivalent construction of the spin module is as $\Cl/A$, where $A$ is the left ideal generated by the $\psi_j$, $1 \le j \le n$, if $N$ is even, and is the left ideal generated by the $\psi_j$, $1 \le j \le n$, and $e_N-\varepsilon$ if $N$ is odd.
\end{rem}

We conclude this subsection with a technical lemma that will be used in the proof of \cref{incarnation}. Suppose $N$ is odd, and let
\begin{equation} \label{spiky}
    \psi_0 = \tfrac{1}{\sqrt{2}} e_N,\qquad
    \psi_{-i}=\psi_i^\dagger, \quad i \in [n].
\end{equation}

\begin{lem}\label{signlemma}
    Suppose $N$ is odd.  For all permutations $\varpi$ of $\{-n,1-n,\cdots ,n-1,n\}$, we have
    \begin{equation} \label{dragon}
        \psi_{\varpi(-n)}\psi_{\varpi(1-n)}   \dotsm \psi_{\varpi(n-1)}\psi_{\varpi(n)} x_I
        =
        \begin{cases}
            \frac{\varepsilon}{\sqrt{2}} \sgn(\varpi) x_I & \text{if } I=I_\varpi, \\
            0 & \text{otherwise},
        \end{cases}
    \end{equation}
    where $I_\varpi=\{i \in [n] : \varpi\inv(-i) < \varpi\inv(i)\}$.
\end{lem}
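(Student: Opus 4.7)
My plan is to fix the permutation $\varpi$, abbreviate $T_\varpi := \psi_{\varpi(-n)} \cdots \psi_{\varpi(n)}$, and first show that on every basis vector $x_I$ of $S$, the action $T_\varpi x_I$ either vanishes or returns a scalar multiple of $x_I$. For each $a \in [n]$, both $\psi_a$ and $\psi_{-a} = \psi_a^\dagger$ appear exactly once in $T_\varpi$, while $\psi_0$ appears once; no other factor affects the $a$-slot. For the product to survive on $x_I$, the first of $\psi_a, \psi_{-a}$ to act (namely whichever stands further to the right in the product) must find the $a$-slot in the correct state: the current set must contain $a$ if this factor is $\psi_a$, and must not contain $a$ if it is $\psi_a^\dagger$. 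Since no other factor alters $a$-membership, this requirement reads $a \in I$ iff $\psi_a$ sits to the right of $\psi_{-a}$, i.e.\ $I = I_\varpi$; the second factor of the pair then restores the original $a$-membership. Hence $T_\varpi x_{I_\varpi} = c_\varpi x_{I_\varpi}$ for some scalar $c_\varpi \in \C$, while $T_\varpi x_I = 0$ for $I \neq I_\varpi$.

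The substance is then to show $c_\varpi = \frac{\varepsilon}{\sqrt{2}}\sgn(\varpi)$. The base case $\varpi = \id$ is a short direct calculation: $\psi_1 \cdots \psi_n$ sends $x_{[n]}$ to $(-1)^{n(n-1)/2} x_\emptyset$, then $\psi_0$ contributes $\frac{\varepsilon}{\sqrt{2}}$, and $\psi_n^\dagger \cdots \psi_1^\dagger$ sends $x_\emptyset$ back to $(-1)^{n(n-1)/2} x_{[n]}$, so the two signs cancel and $c_{\id} = \frac{\varepsilon}{\sqrt{2}}$. To propagate this to arbitrary $\varpi$, it suffices to verify that $c_{\varpi'} = -c_\varpi$ whenever $\varpi'$ is obtained from $\varpi$ by swapping the factors at adjacent positions $k, k+1$, since adjacent transpositions generate the symmetric group on $\{-n,\dots,n\}$ and change $\sgn$ by $-1$. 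When the two swapped factors anticommute --- which holds both when their labels $a, b$ satisfy $\{a, b\} \neq \{i, -i\}$ for every $i \in [n]$, and when one of them is $\psi_0 = \frac{1}{\sqrt{2}} e_N$ (since $e_N$ anticommutes with every $e_j$, $j < N$) --- then $T_{\varpi'} = -T_\varpi$ as operators on $S$ and a short check of positions shows $I_{\varpi'} = I_\varpi$, so $c_{\varpi'} = -c_\varpi$ immediately.

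The main obstacle is the remaining case, in which $\{\varpi(k), \varpi(k+1)\} = \{i, -i\}$ for some $i \in [n]$: these factors do not anticommute, and furthermore $I_{\varpi'} = I_\varpi \triangle \{i\}$, so $c_\varpi$ and $c_{\varpi'}$ record the actions of different operators on different basis vectors. My approach is to exploit the Clifford relation $\psi_i \psi_i^\dagger + \psi_i^\dagger \psi_i = 1$: writing $T_\varpi = L\, \psi_i \psi_i^\dagger\, R$ and $T_{\varpi'} = L\, \psi_i^\dagger \psi_i\, R$, this gives $T_\varpi + T_{\varpi'} = LR$, a product of $2n - 1$ factors in which neither $\psi_i$ nor $\psi_i^\dagger$ appears. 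Combined with the vanishing from the first paragraph, this forces $LR \cdot x_{I_\varpi} = c_\varpi x_{I_\varpi}$ and $LR \cdot x_{I_{\varpi'}} = c_{\varpi'} x_{I_{\varpi'}}$. Because $LR$ never touches the $i$-slot, toggling the $i$-membership of the input toggles the $i$-membership of every intermediate state; the only sign contributions affected are those of the operators sensitive to the count $|\{\ell : \ell < j\}|$ for $j > i$ --- of which there are $2(n-i)$, an even number, so these flips cancel --- together with the single $\psi_0$ factor, which contributes $(-1)^{|\cdot|}$ of the current set, and whose value flips since the relevant set sizes differ by $1$. This yields $c_{\varpi'} = -c_\varpi$, completing the induction.
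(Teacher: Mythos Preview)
Your proof is correct, but it takes a genuinely different route from the paper's.  The paper avoids induction entirely: it introduces the auxiliary operators $A_i = \psi_{-i}\psi_i$ or $\psi_i\psi_{-i}$ (according as $i \in I_\varpi$ or not) and compares $T_\varpi$ directly with the canonical product $\psi_0 A_1 A_2 \cdots A_n$.  The key point is that this canonical ordering has been chosen so that \emph{each} pair $\psi_i, \psi_{-i}$ appears in the same relative order as in $T_\varpi$; consequently one can pass from $T_\varpi$ to $\psi_0 A_1 \cdots A_n$ using only swaps of anticommuting factors, and the resulting sign is a single global permutation sign, computed to be $\sgn(\varpi)(-1)^{|I_\varpi|}$.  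Since $A_i x_{I_\varpi} = x_{I_\varpi}$ for every $i$ and $\psi_0 x_{I_\varpi} = \frac{\varepsilon}{\sqrt{2}}(-1)^{|I_\varpi|} x_{I_\varpi}$, the two factors of $(-1)^{|I_\varpi|}$ cancel and the result drops out immediately.

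Your approach is more hands-on: you induct on adjacent transpositions, and the price is that you must confront the non-anticommuting swap $\{\psi_i, \psi_{-i}\}$ head-on.  Your resolution of that case, via the Clifford relation $\psi_i\psi_i^\dagger + \psi_i^\dagger\psi_i = 1$ to write $T_\varpi + T_{\varpi'} = LR$ and then a parity count of the sign flips incurred by toggling the $i$-slot, is a nice argument.  The paper's approach is shorter because its choice of canonical target sidesteps that case altogether; yours is perhaps more transparent about where each individual sign comes from.
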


\begin{proof}
    For $i=1,2,\ldots,n$, let
    \[
        A_i =
        \begin{cases}
            \psi_{-i}\psi_i & \text{if } i \in I_\varpi, \\
            \psi_i \psi_{-i} & \text{if } i \notin I_\varpi.
        \end{cases}
    \]
    We compare the products $\psi_{\varpi(-n)}\psi_{\varpi(1-n)} \dotsm \psi_{\varpi(n-1)}\psi_{\varpi(n)}$ and $\psi_0 A_1 A_2 \dotsm A_n$. They are both a product of $\psi_{-n},\psi_{1-n},\ldots,\psi_n$ in some order.  For each $i\in [n]$, the elements $\psi_i$ and $\psi_{-i}$ appear in the same order in each of these two products. Therefore, we can pass from one to the other by swapping adjacent pairs $\psi_i$ and $\psi_j$ for $i\neq \pm j$. In the Clifford algebra, each of these swaps introduces a minus sign since $\psi_i\psi_j=-\psi_j\psi_i$ for $i\neq \pm j$. So, in order to compare $\psi_{\varpi(-n)} \psi_{\varpi(1-n)} \dotsm \psi_{\varpi(n-1)}\psi_{\varpi(n)}$ and $\psi_0 A_1 A_2 \dotsm A_n$ in $\Cl$, we need to compute the sign of the permutation by which these two orderings of the indices differ.

    The sign of the permutation sending $(\varpi(-n),\varpi(1-n),\ldots,\varpi(n))$ to $(-n,1-n,\ldots,n)$ is $\sgn(\varpi)$.  A reduced expression of the permutation sending $(-n,1-n,\cdots,n)$ to $(0,-1,1,-2,2,\dotsc,-n,n)$ is a product of $1 + 3 + \dotsb + (2n-1) = n^2$ simple transpositions.  Therefore, its sign is $(-1)^{n^2} =  (-1)^n$. The sign of the permutation sending $(0,-1,1,-2,2,\dotsc,-n,n)$ to the order of the indices in the product $\psi_0 A_1 A_2 \dotsm A_n$ is $(-1)^{n-|I_\varpi|}$. Hence, we obtain the identity
    \[
        \psi_{\varpi(-n)}\psi_{\varpi(1-n)}   \dotsm \psi_{\varpi(n-1)}\psi_{\varpi(n)}
        = \sgn(\varpi)(-1)^{|I_\varpi|} \psi_0 A_1 A_2 \dotsm A_n.
    \]
    From \cref{mind} and \cref{mouse} we have $A_i x_{I_\varpi}=x_{I_\varpi}$ and $\psi_0 x_{I_\varpi}=\varepsilon (-1)^{|I_\varpi|}/\sqrt{2}$. This completes the proof of \cref{dragon} when $I=I_\varpi$.  On the other hand, when $I \ne I_\varpi$, we have $A_i x_I = 0$ for any $i \in (I \setminus I_\varpi) \cup (I_\varpi \setminus I)$.
\end{proof}

\subsection{The pin and spin groups\label{subsec:pin}}

Recall that $\Cl$ is $(\Z/2\Z)$-graded.  Define
\begin{equation}
    \GPin(V) := \{ g \in \Cl(V)^\times : g \text{ is homogeneous and } gVg^{-1} = V\},
\end{equation}
and let
\begin{equation}
    \iota \colon \Cl(V) \to \Cl(V)
\end{equation}
be the unique antiautomorphism of $\Cl(V)$ that is the identity on elements of $V$.  Then the \emph{spinor norm} on $\GPin(V)$ is the group homomorphism given by
\[
    \GPin(V) \to \C^\times,\quad
    g \mapsto g \iota(g).
\]
\details{
    We justify that the spinor norm takes values in $\C^\times$.  If $v \in V$ then $v \iota(v)\in \C$.  Next, note that $(xy)\iota(xy)=x(y\iota(y))\iota(x)$. So, if $y\iota(y)\in\C$ and $x\iota(x)\in \C$, then $(xy)\iota(xy)\in\C$.  Since $V$ generates $\Cl(V)$, we conclude that $x\iota(x)\in\C$ for all $x \in \Cl(V)$.  It also follows that the spinor norm is a group homomorphism, and hence takes values in $\C^\times$.
}
The \emph{pin group} associated to $V$, equipped with its nondegenerate symmetric bilinear form, is the subgroup of $\GPin(V)$ consisting of elements of spinor norm one:
\begin{equation}
    \Pin(V) := \{g \in \GPin(V) : g \iota(g) = 1\}.
\end{equation}
The corresponding \emph{spin group} is
\begin{equation}
    \Spin(V) := \Pin(V) \cap \Cl_\even,
\end{equation}
where $\Cl_\even$ is the even part of $\Cl$.

We will need the following analogue of the Cartan--Dieudonn\'e Theorem for the pin and spin groups. Note that if $v\in V$ satisfies $\Phi_V(v,v)=1$, then $v\in \Pin(V)$.

\begin{theo} \label{icts}
    Suppose $N\geq 1$. Then
    \begin{gather} \label{pinvgen}
        \Pin(V) = \{ v_1 v_2 \dotsm v_k : k \in \N,\ v_i \in V,\ \Phi_V(v_i,v_i)=1\ \forall\ 1 \le i \le k \}
        \subseteq \Cl(V)^\times,
        \\ \label{spinvgen}
        \Spin(V) = \{ v_1 v_2 \dotsm v_k : k \in 2\N,\ v_i \in V,\ \Phi_V(v_i,v_i)=1\ \forall\ 1 \le i \le k \}
        \subseteq \Pin(V).
    \end{gather}
\end{theo}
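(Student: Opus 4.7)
My plan is to deduce \cref{icts} from the classical Cartan--Dieudonn\'e theorem for $\rO(V)$ via the twisted adjoint representation of $\GPin(V)$. Write $\Omega$ for the right-hand side of \cref{pinvgen}. The inclusion $\Omega \subseteq \Pin(V)$ is immediate: any $v \in V$ with $\Phi_V(v,v) = 1$ is invertible in $\Cl$ (with $v\inv = v$) and homogeneous of degree $\odd$; the Clifford relation $vw = -wv + 2\Phi_V(v,w)$ gives $vwv\inv = 2\Phi_V(v,w)v - w \in V$, so $v \in \GPin(V)$; and $v \iota(v) = v^2 = \Phi_V(v,v) = 1$. Products of such elements remain in $\Pin(V)$, which is a subgroup.

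For the reverse inclusion, I will use the twisted adjoint map
\[
    \widetilde{\Ad} \colon \GPin(V) \to \GL(V),
    \qquad
    \widetilde{\Ad}_g(v) = (-1)^{\deg g}\, g v g\inv.
\]
A short computation with $\widetilde{\Ad}_g(v)^2 = g v^2 g\inv = \Phi_V(v,v)$ shows $\widetilde{\Ad}_g \in \rO(V)$, and the identity $vwv\inv = 2\Phi_V(v,w)v - w$ computed above shows that for $v$ of norm $1$, $\widetilde{\Ad}_v$ is the reflection through the hyperplane orthogonal to $v$. By Cartan--Dieudonn\'e (together with the fact that over $\C$ any vector of nonzero norm can be rescaled to norm one), every element of $\rO(V)$ is such a product of reflections, so $\widetilde{\Ad}|_{\Pin(V)} \colon \Pin(V) \to \rO(V)$ is surjective. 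Given $g \in \Pin(V)$, I pick $v_1, \dotsc, v_k \in V$ with $\Phi_V(v_i,v_i)=1$ and $\widetilde{\Ad}_g = \widetilde{\Ad}_{v_1 \dotsm v_k}$, so that $g (v_1 \dotsm v_k)\inv$ lies in the kernel.

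The technical heart of the argument is identifying this kernel. If $g \in \GPin(V)$ is homogeneous with $\widetilde{\Ad}_g = \id$, then $gv = (-1)^{\deg g} vg$ for all $v \in V$, i.e.\ $g$ lies in the supercenter of $\Cl(V)$. A direct check using the generators and relations \cref{fermion,oddextra} (and noting that for $N$ odd the volume element $\omega = e_1 \dotsm e_N$, although central, is odd and anticommutes with $V$ in the super sense, hence is \emph{not} supercentral) shows the supercenter is $\C$. Intersecting with $\Pin(V)$ via the spinor norm $\lambda \mapsto \lambda^2$ forces $\lambda = \pm 1$. Since $-1 = (-v) v$ with $\Phi_V(\pm v,\pm v) = 1$ lies in $\Omega$, the sign is harmless, and we obtain $g \in \Omega$, proving \cref{pinvgen}.

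For \cref{spinvgen}, the containment $\{v_1 \dotsm v_k : k \in 2\N,\dotsc\} \subseteq \Spin(V)$ follows from $\deg(v_1 \dotsm v_k) = k \bmod 2$. Conversely, given $g \in \Spin(V) \subseteq \Pin(V)$, the pin case produces $g = v_1 \dotsm v_k$ (absorbing any sign by inserting a pair $(-v)v$, which does not change the parity of the length); as $g$ is even, $k$ must be even. The main obstacle, as indicated above, is the supercenter computation identifying $\ker \widetilde{\Ad} \cap \GPin(V) = \C^\times$; once that is in hand, the rest is bookkeeping around Cartan--Dieudonn\'e.
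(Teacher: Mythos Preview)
Your proof is correct and follows essentially the same approach as the paper: both use the twisted adjoint representation $g \mapsto \big(v \mapsto (-1)^{\deg g} g v g\inv\big)$ into $\rO(V)$, invoke Cartan--Dieudonn\'e, identify the kernel as $\{\pm 1\}$ via the supercentre computation, and absorb the possible sign by writing $-1$ as a product of two norm-one vectors. The only differences are cosmetic (you explicitly verify the easy inclusion $\Omega \subseteq \Pin(V)$, which the paper notes just before the theorem, and you write $-1 = (-v)v$ where the paper writes $-1 = w(-w)$).
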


\begin{proof}
    First we prove \cref{pinvgen}.  We will deduce this from the usual Cartan--Dieudonn\'e Theorem, which states that the orthogonal group $\rO(V)$ is generated by reflections (linear transformations that act as $-1$ on a vector of nonzero length and fix its orthogonal complement).

    There is a homomorphism $p \colon \Pin(V)\to \rO(V)$ given by the following action of $\Pin(V)$ on $V$:
    \begin{equation} \label{cha}
        p(g)(v)=(-1)^{\deg g} g v g\inv,
    \end{equation}
    for $g\in \Pin(V)$ and $v\in V$.  Indeed, conjugate $uv+vu=2\Phi_V(u,v)$ by $g$ to get $\Phi_V(p(g)(u),p(g)(v)) = \Phi_V(u,v)$. This shows that the image of $p$ lies in $\rO(V)$.

    Since $v^2 = \Phi_V(v,v)$, we have
    \[
        v^{-1} = \Phi_V(v,v)^{-1} v
        \qquad \text{for } v \in V,\quad \Phi_V(v,v) \ne 0.
    \]
    Then, for $w \in V$, we have
    \[
        -vwv^{-1}
        \overset{\cref{Clifford}}{=} wvv^{-1} - 2 \Phi_V(v,w) v^{-1}
        = w - 2\Phi_V(v,w) v^{-1}
        = w - 2 \frac{\Phi_V(v,w)}{\Phi_V(v,v)} v.
    \]
    Thus, if $\Phi_V(v,v) \ne 0$, then $p(v)$ is reflection across the hyperplane orthogonal to $v$.

    Let $g\in \Pin (V)$. By the Cartan--Dieudonn\'e Theorem, there exist $v_1,v_2,\dotsc,v_k\in V$ with $\Phi_V(v_i,v_i)=1$, such that $p(g)=p(v_1 v_2 \dotsm v_k)$. Therefore $g\inv v_1v_2\cdots v_k\in \ker p$.  If $x\in \ker p$ then $xvx\inv=(-1)^{\deg x} v$ for all $v\in V$. Since $V$ generates $\Cl$, this implies $xyx\inv=(-1)^{(\deg x)(\deg y)}y$ for all homogeneous $y\in \Cl$. This is the condition that $x$ lies in the supercentre of $\Cl$ (where we consider $\Cl$ as a superalgebra via its $(\Z/2\Z)$-grading), which we claim consists only of scalars.  Indeed, for $I \subseteq [n]$, write $e_I=\prod_{i\in I}e_i$. (Pick one order of the product for each $I$; it does not matter which one.)  Suppose $\sum_{I \subseteq [n]}a_I e_I$ is in the supercentre. (In particular, it must be purely even or purely odd.)  Now suppose $I \subseteq [n]$ is nonempty, and pick $i \in I$. Comparing the coefficients of $e_{I \setminus \{i\}}$ on both sides of the equation
    \[
        \left( \sum_{I \subseteq [n]} a_I e_I \right)e_i
        = (-1)^{|I|} e_i \left( \sum_{I \subseteq [n]} a_I e_I \right)
    \]
    yields $a_I=0$.

    The only scalars lying in $\Pin(V)$ are $\pm 1$. Hence, $g\inv v_1 v_2 \dotsm v_k = \pm 1$. If $g\inv v_1 v_2 \dotsm v_k = 1$, we are done. If $g\inv v_1 v_2 \dotsm v_k = -1$, then pick $w \in V$ with $\Phi_V(w,w)=1$,
    and write $-1=w(-w)$.  Then we have $g = v_1 v_2 \dotsm v_{k+2}$ with $v_{k+1}=w$ and $v_{k+2}=-w$, and we are done.

    Finally, \cref{spinvgen} follows from \cref{pinvgen} by noting that each $v_i$ lies in $\Cl_\odd$.
\end{proof}

\begin{rem}[Low values of $N$] \label{lowNgroup}
    It will be important for some of the inductive arguments in the paper that we allow $N \in \{0,1,2\}$, even though, in some ways, these behave differently than the cases $N \ge 3$.  Let $C_2$ denote the cyclic group on $2$ elements.  Then we have the following:
    \begin{itemize}
        \item When $N=0$, $\Pin(V) = \Spin(V) = \{\pm 1\} \cong C_2$.
        \item When $N=1$, $\Pin(V) = \{\pm 1, \pm v\} \cong C_2 \times C_2$, where $v \in V$ satisfies $\Phi_V(v,v)=1$ (so that $v^2=1$), and $\Spin(V) = \{\pm 1\} \cong C_2$.
        \item When $N=2$, we have an isomorphism
            \begin{equation} \label{roti}
                \GG_m \xrightarrow{\cong} \Spin(V),\qquad
                t \mapsto t + (t^{-1}-t) \psi_1^\dagger \psi_1,
            \end{equation}
            where $\GG_m$ is the multiplicative group.  Next, note that $\Pin(V) = \Spin(V) \sqcup \Spin(V) e_1$ and conjugation by $e_1$ corresponds, under the above isomorphism, to inversion on $\GG_m$.  Thus, $\Pin(V) \cong \GG_m \rtimes C_2$, where the nontrivial element of $C_2$ acts on $\GG_m$ by inversion.
    \end{itemize}
\end{rem}

\details{
    The isomorphism described in the $N=2$ case above can be extended to describe the maximal torus of $\Spin(V)$ for $N \ge 2$.  For $i=1,2,\dotsc,n$, the map $t \mapsto t + (t^{-1}-t) \psi_i^\dagger \psi_i$ is a commuting family of 1-parameter subgroups of $\Spin(V)$.  This gives a homomorphism $\chi \colon \GG_m^n \to \Spin(V)$ whose image $H$ is a maximal torus of $\Spin(V)$.  We have
    \[
        \ker(\chi)
        = \left\{ (\zeta_1,\dotsc,\zeta_n) : \zeta_i \in \{\pm 1\},\ \prod_{i=1}^n \zeta_i=1 \right\}.
    \]
    If we identify $X^\ast(\GG_m^n) = \Hom(\GG_m^n,\GG_m) \cong \Z^n$ in the usual way, this identifies $X^\ast(H) \subseteq \Z^n$ as the set of $(\la_1,\dotsc,\la_n)$ such that either all $\la_i$ are odd or all $\la_i$ are even.  Since
    \[
        \left( t + (t^{-1}-t) \psi_i^\dagger \psi_i \right) x_I
        =
        \begin{cases}
            t x_I & \text{if } i \notin I, \\
            t\inv x_I & \text{if } i \in I,
        \end{cases}
    \]
    this gives another computation of the weights of the spin module.
}

Implicit in the proof of \cref{icts} is a short exact sequence (for all $N \in \N$)
\[
    \{1\} \to \{\pm 1\} \to \Pin(V)\to \rO(V) \to \{1\}.
\]
Restricting to $\Spin(V)$ yields another short exact sequence
\[
    \{1\} \to \{\pm 1\} \to \Spin(V)\to \SO(V) \to \{1\}.
\]
The group $\Spin(V)$ is connected for $N \ge 2$.  This explains why the image of the third map above lies in the connected group $\SO(V)$, and realises $\Spin(V)$ as the universal cover of $\SO(V)$ for $N\geq 3$.
\details{
    Here is an argument, by induction on $N$, that $\Spin(V)$ is connected for $N \geq 2$.  The action \cref{cha} induces an action of $\Spin(V)$ on the connected space $\{v \in V : \Phi_V(v,v)=1\}$.  This latter action factors through the surjective map $\Spin(V) \to \SO(V)$.  Since $\SO(V)$ acts transitively on the given space, so does $\Spin(V)$.  The stabiliser of $v$ is $\{\pm 1\}\times \Spin(v^\perp)$. By induction $\Spin(v^\perp)$ is connected and to show that $\{\pm 1\}$ lies in the identity component of $\Spin(V)$ is a simple $N=2$ computation.
}

\begin{rem} \label{hamster}
    If $N$ is odd, then the element $e_1 e_2 \dotsm e_N \in \Pin(V) \setminus \Spin(V)$ is central, and $\Pin(V)$ is generated by $\Spin(V)$ and this central element. In this case, the difference between the representation theory of $\Pin(V)$ and $\Spin(V)$ is not significant.  We will focus on $\Spin(V)$-modules when $N$ is odd; see \cref{gerbil}.
    \details{
        Every $\Spin(V)$-module has two $\Pin(V)$ lifts.  So $\Pin(V)$-mod is basically two copies of $\Spin(V)$-mod and we can get from one to the other by tensoring with $\triv^1$, to be defined in \cref{subsec:classirr}; see \cref{trivflip}.
    }
\end{rem}

\section{Special orthogonal Lie algebras\label{sec:Liealg}}

In this section we collect some basic facts about the special orthogonal Lie algebra $\fso(V)$.  Since $\fso(V)$ is the zero Lie algebra when $N \le 1$, we assume throughout this section that $N \ge 2$.

The Lie algebra $\Lie(\Cl^\times)$ is $\Cl$ with the commutator Lie bracket.  The inclusion $\Spin(V) \hookrightarrow \Cl^\times$ induces an inclusion $\Lie(\Spin(V)) \hookrightarrow \Cl$. The image of this inclusion is
\[
    \Cl^2 := \Span_\C \{ uv-vu : u,v \in V\} \subseteq \Cl.
\]
\details{
    Let $X$ be the image of the inclusion $\Lie(\Spin(V)) \hookrightarrow \Cl$.  Since $\Cl^2$ has basis $[e_i,e_j]$, $1 \le i < j \le N$, we have
    \[
        \dim_\C \Cl^2 = \tfrac{1}{2}{N(N-1)} = \dim_\C \Lie(\Spin(V)) = \dim_\C X.
    \]
    Hence, it suffices to show that $X \subseteq \Cl^2$. Furthermore, since the set of $(u,v) \in V \times V$ such that
    \begin{itemize}
        \item $u$ and $v$ are linearly independent and
        \item the restriction of $\Phi_V$ to $\Span_\C \{u,v\}$ is nondegenerate
    \end{itemize}
    is dense in $V \times V$, it suffices to show that $[u,v] \in X$ for all such pairs $(u,v)$.

    Suppose $u,v \in V$ are linearly independent and the restriction of $\Phi_V$ to $U = \Span_\C \{u,v\}$ is nondegenerate.  This implies that $V \cong U \oplus U^\perp$. Let $a,b\in \C$ and $g=a+buv$. We claim that $g \in \GPin(V)$ if $g$ is invertible.  To see this, it suffices to show that $g w \iota(g) \in V$ for all $w \in V$, since $g^{-1} = (g \iota(g))^{-1} \iota(g)$. If $w \in U^\perp$ then $w$ commutes with $g$ and $\iota(g)$ and we are done, since $g\iota(g) \in \C$.  By linearity, it remains to consider $w \in U$, i.e., we are reduced to a computation in the Clifford algebra of $U$. Then $g w \iota(g)\in \Cl(U)_\odd = U$, since $U$ is 2-dimensional.

    Next, since $\iota(g)=a+bvu$, we have
    \[
        g \iota(g)
        = a^2 + 2ab \Phi_V(u,v) + b^2 \Phi_V(u,u) \Phi_V(v,v).
    \]
    The condition that $a+buv\in \Pin(V)$ is therefore
    \[ \label{abcond} \tag{$\maltese$}
        a^2+2ab\Phi_V(u,v)+b^2\Phi_V(u,u)\Phi_V(v,v)=1.
    \]
    We will now compute the line in $\Lie(\Spin(V)) \subseteq \Lie(\Cl^\times)=\Cl$ corresponding to this 1-parameter subgroup.  To do so, set $a=1+\epsilon x$ and $b=\epsilon y$ where $\epsilon^2=0$.  The condition \cref{abcond} becomes $x+y\Phi_V(u,v)=0$, and so we have
    \[
        1 + \epsilon(x+yuv)
        = 1 + \epsilon \big( -y\Phi_V(u,v) + yuv \big)
        = 1 + \epsilon \tfrac{y}{2} [u,v].
    \]
    Hence, $[u,v]$ spans the desired line in $\Lie(\Spin(V))$, and so $[u,v] \in X$, as desired.
}
The group homomorphism $\Spin(V)\to \SO(V)$ induces an isomorphism of Lie algebras. Under the identification of $\Lie(\Spin(V))$ with $\Cl^2$ above, this isomorphism is
\begin{equation} \label{Canberra}
    \gamma \colon \Cl^2\to \fso(V), \qquad \gamma(uv-vu)=4M_{u,v},
\end{equation}
where $M_{u,v}\in \fso(V)$ is defined by
\begin{equation} \label{thai}
    M_{u,v} w = \Phi_V(v,w) u - \Phi_V(u,w) v.
\end{equation}

\details{
    We compute the image of $[u,v]$ under the map $\Lie(\Spin(V))\to \fso(V)$:
    \begin{gather*}
        (1+\epsilon[u,v])u(1-\epsilon[u,v])
        = u + 4\epsilon(\Phi_V(u,v)u-\Phi_V(u,u)v)
        = u + 4\epsilon M_{u,v}(u),
        \\
        (1+\epsilon[u,v])v(1-\epsilon[u,v])
        = v + 4\epsilon(\Phi_V(v,v)u-\Phi_V(u,v)v)
        = v + 4\epsilon M_{u,v}(v).
    \end{gather*}
    Furthermore, if $w$ is perpendicular to both $u$ and $v$, then $w$ commutes with $[u,v]$, and so
    \[
        (1+\epsilon[u,v])u(1-\epsilon[u,v])=w=w+4 \epsilon M_{u,v}(w).
    \]
    Then \cref{Canberra} follows.
}
\details{
    To see directly that $M_{u,v} \in \fso(V)$, we compute
    \begin{multline*}
        \Phi_V( M_{u,v} w, w' ) + \Phi_V( w, M_{u,v} w' )
        \\
        = \Phi_V(v,w) \Phi_V(u,w') - \Phi_V(u,w) \Phi_V(v, w')
        + \Phi_V(v,w') \Phi_V(w,u) - \Phi_V(u,w') \Phi_V(w,v)
        = 0.
    \end{multline*}
    The $M_{u,v}$ span $\fso(V)$, and we have an isomorphism of $\C$-modules
    \[
        \Lambda^2(V) \xrightarrow{\cong} \fso(V),\qquad
        u \wedge v \mapsto M_{u,v},\quad u,v \in V.
    \]
}

\subsection{Type $D$ (even $N$)\label{subsec:spinD}}

We suppose throughout this subsection that $N=2n$ is even, and we continue to assume that $N \ge 2$.  Note that
\begin{itemize}
    \item when $n=1$, $\fso(V)$ is a one-dimensional abelian Lie algebra, and
    \item when $n \ge 2$, $\fso(V)$ is the semisimple Lie algebra of type $D_n$.
\end{itemize}

For $A \in \Mat_n(\C)$, let $A'$ denote the flip of $A$ in the antidiagonal.  More precisely,
\begin{equation} \label{trip}
    \text{if } \quad
    A = (a_{ij})_{i,j=1}^n
    \quad \text{then} \quad
    A' = (a_{n-j+1,n-i+1})_{i,j=1}^n.
\end{equation}
Note that $(AB)' = B' A'$ for $A,B \in \Mat_n(\C)$.  In the ordered basis $\psi_1, \dotsc, \psi_n, \psi_n^\dagger, \dotsc, \psi_1^\dagger$ of $V$, the matrices of $\fso(V)$ are those of the form
\[
    \begin{pmatrix}
        A & B \\
        C & -A'
    \end{pmatrix}
    ,\qquad
    A,B,C \in \Mat_n(\C),\
    B' = -B,\
    C' = -C.
\]
The Cartan subalgebra $\fh$ consists of the diagonal matrices.  For $1 \le i \le n$, define
\[
    \epsilon_i \in \fh^*,\quad
    \epsilon_i( \diag(a_1,\dotsc,a_n, -a_n,\dotsc,-a_1) ) = a_i.
\]

Let $E_{ij}$, $1 \le i,j \le n$ denote the usual matrix units of $\Mat_n(\C)$, and define, for $1 \le i,j \le n$,
\begin{equation} \label{greenD}
    A_{ij} =
    \begin{pmatrix}
        E_{ij} & 0 \\
        0 & -E_{ij}'
    \end{pmatrix}
    ,\
    B_{ij} =
    \begin{pmatrix}
        0 & E_{i,n-j+1} - E_{j,n-i+1} \\
        0 & 0
    \end{pmatrix}
    ,\
    C_{ij} =
    \begin{pmatrix}
        0 & 0 \\
        E_{n-i+1,j} - E_{n-j+1,i} & 0
    \end{pmatrix}
    .
\end{equation}
Then the $A_{ij}$, $B_{ij}$, and $C_{ij}$ are the root vectors, with corresponding roots $\epsilon_i - \epsilon_j$, $\epsilon_i + \epsilon_j$, and $-\epsilon_i - \epsilon_j$, respectively.
\details{
    Let
    \[
        H = \diag(a_1,\dotsc,a_n,-a_n,\dotsc,-a_1)
        =
        \begin{pmatrix}
            \sum_{k=1}^n a_k E_{kk} & 0 \\
            0 & -\sum_{k=1}^n a_k E_{kk}'
        \end{pmatrix}
        .
    \]
    Then
    \[
        [H, A_{ij}] =
        \begin{pmatrix}
            \sum_{k=1}^n a_k [E_{kk}, E_{ij}] & 0 \\
            0 & \sum_{k=1}^n a_k [E_{kk}', E_{ij}']
        \end{pmatrix}
        = (a_i-a_j) A_{ij}
        = (\epsilon_i - \epsilon_j)(H) A_{ij}
    \]
    and
    \begin{multline*}
        [H, B_{ij}] =
        \begin{pmatrix}
            0 & \sum_{k=1}^n (a_k E_{kk} (E_{i,n-j+1}-E_{j,n-i+1}) + a_k (E_{i,n-j+1}-E_{j,n-i+1}) E_{kk}') \\
            0 & 0
        \end{pmatrix}
        \\
        =
        \begin{pmatrix}
            0 & a_i E_{i,n-j+1} - a_j E_{j,n-i+1} + a_j E_{i,n-j+1} - a_i E_{j,n-i+1} \\
            0 & 0
        \end{pmatrix}
        = (a_i+a_j) B_{ij}
        = (\epsilon_i + \epsilon_j)(H) B_{ij}
    \end{multline*}
    and
    \begin{multline*}
        [H, C_{ij}]
        =
        \begin{pmatrix}
            0 & 0 \\
            -\sum_{k=1}^n (a_k E_{kk}' (E_{n-i+1,j} - E_{n-j+1,i}) - a_k (E_{n-i+1,j} - E_{n-j+1,i}) E_{kk}) & 0
        \end{pmatrix}
        \\
        =
        \begin{pmatrix}
            0 & 0 \\
            -a_i E_{n-i+1,j} + a_j E_{n-j+1,i} - a_j E_{n-i+1,j} + a_i E_{n-j+1,i} & 0
        \end{pmatrix}
        = - (a_i+a_j) C_{ij}
        = -(\epsilon_i+\epsilon_j)(H) C_{ij}.
    \end{multline*}
}
We choose the positive system of roots given by
\[
    \epsilon_i \pm \epsilon_j,\quad 1 \le i < j \le n.
\]
Thus, the positive root spaces of $\fso(V)$ are spanned by
\[
    A_{ij},\quad B_{ij},\quad 1 \le i < j \le n.
\]
It is straightforward to verify, recalling the definition \cref{thai} of $M_{u,v}$, that
\[
    2 M_{\psi_i,\psi_j^\dagger} = A_{ij},\quad
    2 M_{\psi_i,\psi_j} = B_{ij},\quad
    2 M_{\psi_i^\dagger,\psi_j^\dagger} = C_{ij},\qquad
    1 \le i,j \le n.
\]
\details{
    For $1 \le i,j,k \le n$, we have
    \begin{gather*}
        2 M_{\psi_i,\psi_j^\dagger} \psi_k
        = 2 \Phi_V( \psi_j^\dagger, \psi_k ) \psi_i - 2 \Phi_V( \psi_i, \psi_k ) \psi_j^\dagger
        = \delta_{jk} \psi_i
        = A_{ij} \psi_k,
        \\
        2 M_{\psi_j,\psi_i^\dagger} \psi_k^\dagger
        = 2 \Phi_V( \psi_i^\dagger, \psi_k^\dagger ) \psi_j - 2 \Phi_V( \psi_j, \psi_k^\dagger ) \psi_i^\dagger
        = - \delta_{jk} \psi_i^\dagger
        = A_{ij} \psi_k^\dagger.
    \end{gather*}
    Similarly,
    \begin{gather*}
        2 M_{\psi_i,\psi_j} \psi_k
        = 2 \Phi_V( \psi_j, \psi_k ) \psi_i - 2 \Phi_V( \psi_i, \psi_k ) \psi_j
        = 0
        = B_{ij} \psi_k,
        \\
        M_{\psi_i,\psi_j} \psi_k^\dagger
        = 2 \Phi_V( \psi_j, \psi_k^\dagger ) \psi_i - 2 \Phi_V( \psi_i, \psi_k^\dagger ) \psi_j
        = \delta_{jk} \psi_i - \delta_{ik} \psi_j
        = B_{ij} \psi_k^\dagger,
    \end{gather*}
    and
    \begin{gather*}
        2 M_{\psi_i^\dagger, \psi_j^\dagger} \psi_k
        = 2 \Phi_V( \psi_j^\dagger, \psi_k ) \psi_i^\dagger - 2 \Phi_V( \psi_i^\dagger, \psi_k ) \psi_j^\dagger
        = \delta_{jk} \psi_i^\dagger - \delta_{ik} \psi_j^\dagger
        = C_{ij} \psi_k,
        \\
        2 M_{\psi_i^\dagger, \psi_j^\dagger} \psi_k^\dagger
        = 2 \Phi_V( \psi_j^\dagger, \psi_k^\dagger ) \psi_i^\dagger - 2 \Phi_V( \psi_i^\dagger, \psi_k^\dagger ) \psi_j^\dagger
        = 0
        = C_{ij} \psi_k.
    \end{gather*}
}
Thus, the positive root vectors are
\[
    2 M_{\psi_i,\psi_j^\dagger},\quad
    2 M_{\psi_i,\psi_j},\quad
    1 \le i < j \le n.
\]
The images under the isomorphism $\gamma\inv$, given in \cref{Canberra}, of these elements are
\begin{equation} \label{ramen}
    \psi_i \psi_j^\dagger,\quad
    \psi_i \psi_j,\quad
    1 \le i < j \le n.
\end{equation}
We also have that
\begin{equation} \label{bamboo}
    \gamma\inv(A_{ii})
    = \gamma\inv \left( 2 M_{\psi_i,\psi_i^\dagger} \right)
    = \tfrac{1}{2} (\psi_i \psi_i^\dagger - \psi_i^\dagger \psi_i)
    \overset{\cref{fermion}}{=} \psi_i \psi_i^\dagger - \tfrac{1}{2},
    \quad 1 \le i \le n.
\end{equation}
For $n \ge 2$, the dominant integral weights are those weights of the form
\begin{equation} \label{dominantD}
    \begin{gathered}
        \lambda = \sum_{i=1}^n \lambda_i \epsilon_i,\quad
        \lambda_1 \ge \lambda_2 \ge \dotsb \ge \lambda_{n-1} \ge |\lambda_n|,
        \\
        \text{such that} \quad (\lambda_i \in \tfrac{1}{2} + \Z \text{ for all } 1 \le i \le n) \quad \text{or} \quad
        (\lambda_i \in \Z \text{ for all } 1 \le i \le n).
    \end{gathered}
\end{equation}
For $n=1$, we adopt the convention that the set of dominant integral weights is $\frac{1}{2}\Z \epsilon_1$.

\begin{rem}[$N=2$]
    When $N=2$,
    \[
        \fso(V) =
        \left\{ \begin{pmatrix} a_1 & 0 \\ 0 & -a_1 \end{pmatrix} : a_1 \in \C \right\}
    \]
    is a one-dimensional abelian Lie algebra.  For $z \in \C$, we call the one-dimensional representation $z \epsilon_1 \colon \fso(V) \to \C \cong \End_\C(\C)$ the \emph{simple highest-weight $\fso(V)$-module with highest weight $z \epsilon_1$} since this will often allow us to make uniform statements for $N \ge 2$.
\end{rem}

\subsection{Type $B$ (odd $N$)\label{subsec:spinB}}

We suppose throughout this subsection that $N=2n+1$ is odd, so that $\fso(V)$ is the simple Lie algebra of type $B_n$.  We continue to assume that $N \ge 2$, that is, $n \ge 1$.

In the ordered basis $\psi_1,\dotsc,\psi_n,\frac{1}{\sqrt{2}} e_{2n+1},\psi_n^\dagger,\dotsc\psi_1^\dagger$, the matrices of $\fso(V)$ are those of the form
\[
    \begin{pmatrix}
        A & u & B & \\
        -v^\transpose & 0 & -u^\transpose \\
        C & v & -A'
    \end{pmatrix}
    , \qquad u,v \in \C^n,\ A,B,C \in \Mat_n(\C),\ B' = -B,\ C' = -C,
\]
where the notation $A'$ is defined in \cref{trip}.  The Cartan subalgebra $\fh$ consists of the diagonal matrices.  For $1 \le i \le n$, define
\[
    \epsilon_i \in \fh^*,\quad
    \epsilon_i( \diag(a_1,\dotsc,a_n,0,-a_n,\dotsc,-a_1) ) = a_i.
\]

Recall that $E_{ij}$, $1 \le i,j \le n$, denote the usual matrix units of $\Mat_n(\C)$, and let $u_i$ be the element of $\C^n$ with a $1$ in the $i$-th position and $0$ in all other positions.  Then define, for $1 \le  i,j \le n$,
\begin{equation} \label{greenB}
    \begin{gathered}
        A_{ij} =
        \begin{pmatrix}
            E_{ij} & 0 & 0 \\
            0 & 0 & 0 \\
            0 & 0 & -E_{ij}'
        \end{pmatrix}
        ,\quad
        X_i =
        \begin{pmatrix}
            0 & u_i & 0 \\
            0 & 0 & -u_{n-i+1}^\transpose\\
            0 & 0 & 0
        \end{pmatrix}
        ,\quad
        Y_i =
        \begin{pmatrix}
            0 & 0 & 0 \\
            -u_i^\transpose & 0 & 0 \\
            0 & u_{n-i+1} & 0
        \end{pmatrix}
        ,
        \\
        B_{ij} =
        \begin{pmatrix}
            0 & 0 & E_{i,n-j+1} - E_{j,n-i+1} \\
            0 & 0 & 0 \\
            0 & 0 & 0
        \end{pmatrix}
        ,\quad
        C_{ij} =
        \begin{pmatrix}
            0 & 0 & 0 \\
            0 & 0 &0 \\
            E_{n-i+1,j} - E_{n-j+1,i} & 0 & 0
        \end{pmatrix}
        .
    \end{gathered}
\end{equation}
Then the $A_{ij}$, $B_{ij}$, $C_{ij}$, $X_i$, and $Y_i$ are the root vectors, with corresponding roots $\epsilon_i - \epsilon_j$, $\epsilon_i + \epsilon_j$, $-\epsilon_i - \epsilon_j$, $\epsilon_i$, and $-\epsilon_i$, respectively.
\details{
    For $A_{ij}$, $B_{ij}$, and $C_{ij}$, the verification goes as in the type $D$ case.  Let
    \[
        H = \diag(a_1,\dotsc,a_n,-a_n,\dotsc,-a_n)
        =
        \begin{pmatrix}
            \sum_{k=1}^n a_k E_{kk} & 0 & 0 \\
            0 & 0 & 0 \\
            0 & 0 & -\sum_{k=1}^n a_k E_{kk}'
        \end{pmatrix}
        .
    \]
    Then
    \[
        [H, X_i]
        =
        \begin{pmatrix}
            0 & a_i u_i & 0 \\
            0 & 0 & - a_i u_{n-i+1}^\transpose \\
            0 & 0 & 0
        \end{pmatrix}
        = a_i X_i
        = \epsilon_i(H) X_i
    \]
    and
    \[
        [H, Y_i]
        =
        \begin{pmatrix}
            0 & 0 & 0 \\
            a_i u_i^\transpose & 0 & 0 \\
            0 & -a_i u_{n-i+1} & 0
        \end{pmatrix}
        = -a_i Y_i
        = -\epsilon_i(H) Y_i.
    \]
}
We choose the positive system of roots given by
\[
    \epsilon_i \pm \epsilon_j,\quad \epsilon_k, \qquad 1 \le i < j \le n,\quad 1 \le k \le n.
\]
Thus, the positive root spaces of $\fso(V)$ are spanned by
\[
    A_{ij},\qquad B_{ij},\qquad X_k,\qquad 1 \le i < j \le n,\quad 1 \le k \le n.
\]
It is straightforward to verify, recalling the definition \cref{thai} of $M_{u,v}$, that we have
\[
    2M_{\psi_i,\psi_j^\dagger} = A_{ij},\quad
    2M_{\psi_i,\psi_j} = B_{ij},\quad
    2M_{\psi_i^\dagger,\psi_j^\dagger} = C_{ij},\quad
    \sqrt{2} M_{\psi_i,e_{2n+1}} = X_i,\quad
    \sqrt{2} M_{\psi_i^\dagger,e_{2n+1}} = Y_i.
\]
\details{
    The first three equalities are verified as in type $D$ case.  For $1 \le i,j \le n$, we have
    \begin{gather*}
        M_{\psi_i,e_{2n+1}} e_{2n+1}
        = \Phi_V(e_{2n+1},e_{2n+1}) \psi_i - \Phi_V(\psi_i,e_{2n+1}) e_{2n+1}
        = \psi_i
        = \tfrac{1}{\sqrt{2}} X_i e_{2n+1},
        \\
        M_{\psi_i,e_{2n+1}} \psi_j^\dagger
        = \Phi_V(e_{2n+1}, \psi_j^\dagger) \psi_i - \Phi_V(\psi_i, \psi_j^\dagger) e_{2n+1}
        = - \tfrac{1}{2} \delta_{ij} e_{2n+1}
        = \tfrac{1}{\sqrt{2}} X_i \psi_j^\dagger,
        \\
        M_{\psi_i,e_{2n+1}} \psi_j = 0 = \tfrac{1}{\sqrt{2}} X_i \psi_j,
    \end{gather*}
    and
    \begin{gather*}
        M_{\psi_i^\dagger, e_{2n+1}} e_{2n+1}
        = \Phi_V(e_{2n+1}, e_{2n+1}) \psi_i^\dagger - \Phi_v(\psi_i^\dagger, e_{2n+1}) e_{2n+1}
        = \psi_i^\dagger
        = \tfrac{1}{\sqrt{2}} Y_i e_{2n+1},
        \\
        M_{\psi_i^\dagger, e_{2n+1}} \psi_j
        = \Phi_V(e_{2n+1},\psi_j) \psi_i^\dagger - \Phi_V(\psi_i^\dagger, \psi_j) e_{2n+1}
        = - \tfrac{1}{2} \delta_{ij} e_{2n+1}
        = \tfrac{1}{\sqrt{2}} Y_i \psi_j,
        \\
        M_{\psi_i^\dagger, e_{2n+1}} \psi_j^\dagger
        = \Phi_V(e_{2n+1}, \psi_j^\dagger) \psi_i^\dagger - \Phi_V(\psi_i^\dagger, \psi_j^\dagger) e_{2n+1}
        = 0
        = \tfrac{1}{\sqrt{2}} Y_i \psi_j^\dagger.
    \end{gather*}
}
Thus, the positive root vectors are
\[
    2M_{\psi_i, \psi_j^\dagger},\qquad
    2M_{\psi_i,\psi_j},\qquad
    \sqrt{2} M_{\psi_k,e_{2n+1}},\qquad
    1 \le i < j \le n,\quad 1 \le k \le n.
\]
The images under the isomorphism $\gamma\inv$, given in \cref{Canberra}, of these elements are
\[
    \psi_i \psi_j^\dagger,\quad
    \psi_i \psi_j,\quad
    \tfrac{1}{\sqrt{2}} \psi_k e_{2n+1},\qquad
    1 \le i < j \le n,\quad 1 \le k \le n.
\]
We also have that
\begin{equation} \label{bambooB}
    \gamma\inv(A_{ii})
    = \gamma\inv \left( 2 M_{\psi_i,\psi_i^\dagger} \right)
    = \tfrac{1}{2} (\psi_i \psi_i^\dagger - \psi_i^\dagger \psi_i)
    \overset{\cref{fermion}}{=} \psi_i \psi_i^\dagger - \tfrac{1}{2},
    \quad 1 \le i \le n.
\end{equation}
The dominant integral weights are those weights of the form
\begin{equation} \label{dominantB}
    \begin{gathered}
        \lambda = \sum_{i=1}^n \lambda_i \epsilon_i,\quad
        \lambda_1 \ge \lambda_2 \ge \dotsb \ge \lambda_{n-1} \ge \lambda_n \ge 0,
        \\
        \text{such that} \quad (\lambda_i \in \tfrac{1}{2} + \Z \text{ for all } 1 \le i \le n) \quad \text{or} \quad
        (\lambda_i \in \Z \text{ for all } 1 \le i \le n).
    \end{gathered}
\end{equation}

\section{Representations of pin and spin groups\label{sec:reps}}

In this section, we collect some facts about representations of the pin and spin groups that will be important for us.

\subsection{The spin and vector modules\label{suave}}

Recall the $\Cl$-module $S$ introduced in \cref{subsec:Cliffmod}.  By restriction, $S$ is a $\Pin(V)$-module and a $\Spin(V)$-module.  Passing to the Lie algebra, we obtain a $\fso(V)$-module structure on $S$, most conveniently computed via the isomorphism $\gamma$ obtained in \cref{Canberra}.  With respect to the Cartan subalgebras introduced in \cref{subsec:spinD,subsec:spinB}, the vectors $x_I$ for $I \subseteq [n]$ are all weight vectors.

First suppose that $N$ is even.  As $\fso(V)$-modules and as $\Spin(V)$-modules, we have a decomposition
\begin{equation}\label{defn:spm}
    S = S^+ \oplus S^-,\qquad
    S^+ = \Span_\C \{ x_I : |I| \text{ is even} \},\qquad
    S^- = \Span_\C \{ x_I : |I| \text{ is odd} \}.
\end{equation}
When $N \ge 2$, we also see that $S^+$ is a simple highest-weight $\fso(V)$-module with highest-weight vector $x_\varnothing$ of weight $\frac{1}{2}(\epsilon_1 + \dotsb + \epsilon_n)$ and that $S^-$ is a simple highest-weight $\fso(V)$-module with highest-weight vector $x_{\{n\}}$ of weight $\frac{1}{2}(\epsilon_1 + \dotsb + \epsilon_{n-1} - \epsilon_n)$.  As a $\Pin(V)$-module, $S$ remains simple; see \cref{hydro} below.

Now suppose that $N$ is odd.  In this case, there are two choices of a $\Cl$-module structure on $S$ depending on the choice of $\varepsilon\in \{\pm 1\}$, as in \cref{mouse}, but they give rise to isomorphic $\Spin(V)$-modules.
\details{
    The fact that they are isomorphic as $\Spin(V)$-modules follows from the fact that they are highest-weight modules of highest weight $\frac{1}{2}(\epsilon_1 + \dotsb + \epsilon_n)$.  However, they are \emph{not} isomorphic as $\Pin(V)$-modules.  To see this, let $w = e_1 e_2 \dotsm e_{2n}$. Then $w e_N$ is central, and so acts by a scalar.  We then compute $w e_{N} x_\varnothing = \varepsilon w x_\varnothing$.  Since $w x_\varnothing$ is independent of choice of $\varepsilon$, we see that $we_{N}$ acts by a different scalar in the two modules.
}
In this case, there is a unique highest-weight vector $x_\varnothing$, so the spin module $S$ is a simple $\fso(V)$-module of highest weight $\frac{1}{2}(\epsilon_1 + \dotsb + \epsilon_n)$.
\details{
    The vector $x_\varnothing$ is clearly killed by the positive root vectors.  Furthermore, for $1 \le i \le n$, we have
    \[
        \left( \psi_i \psi_i^\dagger - \tfrac{1}{2} \right) x_\varnothing
        = \left( 1 - \tfrac{1}{2} \right) x_\varnothing
        = \tfrac{1}{2} x_\varnothing.
    \]
}

We view $V$ as a $\Pin(V)$-module with action
\begin{equation} \label{PinVact}
    g \cdot v := gvg^{-1},\qquad g \in \Pin(V),\ v \in V.
\end{equation}
As a representation of $\fso(V)$, $V$ is simple with highest weight $\epsilon_1$ if $N\geq 3$.

\begin{rem}[Low values of $N$] \label{lowNrep}
    As noted in \cref{lowNgroup}, the cases $N \le 2$ behave differently than the cases $N \ge 3$.
    \begin{itemize}
        \item When $N=0$, we have $V=S^-=0$ and $S^+$ is the nontrivial one-dimensional module for $\Pin(V) \cong C_2$.  Of course, $S$ is the trivial module for the trivial group $\Spin(V)$ and the zero Lie algebra $\fso(V)$.
        \item When $N=1$, we have that $V$ is the trivial $\Pin(V)$-module.  We also have that $S$ is the nontrivial one-dimensional module for $\Spin(V) \cong C_2$.  The $\Pin(V)$-module structure on $S$ depends on the choice of $\varepsilon \in \{\pm 1\}$, as in \cref{mouse}.
        \item When $N=2$, recall the isomorphism $\GG_m \cong \Spin(V)$ of \cref{roti}.  Let $L_r$, $r \in \Z$, denote the one-dimensional $\GG_m$-module with action $t \cdot v = t^r v$, $t \in \GG_m$, $v \in L_r$.  Since
            \[
                \left( t + (t^{-1}-t) \psi_1^\dagger \psi_1 \right) x_\varnothing = t x_\varnothing,\qquad
                \left( t + (t^{-1}-t) \psi_1^\dagger \psi_1 \right) x_{\{1\}} = t^{-1} x_{\{1\}},
            \]
            we have $S^\pm \cong L_{\pm 1}$.  We also have $V \cong L_{-2} \oplus L_2$.
            \details{
                We have
                \begin{multline*}
                    \left( t + (t^{-1}-t) \psi_1^\dagger \psi_1 \right) \psi_1 \left( t + (t^{-1}-t) \psi_1^\dagger \psi_1 \right)^{-1}
                    \\
                    = \left( t + (t^{-1}-t) \psi_1^\dagger \psi_1 \right) \psi_1 \left( t + (t^{-1}-t) \psi_1 \psi_1^\dagger \right)
                    = t^2 \psi_1,
                \end{multline*}
                and so $\C \psi_1 \cong L_2$.  A similar computation shows that $\C \psi_1^\dagger \cong L_{-2}$.
            }
            As $\fso(V)$-modules, we have $L_r = L \left( \frac{r}{2} \epsilon_1 \right)$.  Both $V$ and $S$ are simple as modules for $\Pin(V) \cong \GG_m \rtimes C_2$, with the generator of $C_2$ interchanging the summands $L_r$ and $L_{-r}$.
    \end{itemize}
\end{rem}

\subsection{Classification of simple modules\label{subsec:classirr}}

When $N  \le 1$, the groups $\Spin(V)$ and $\Pin(V)$ are finite (see \cref{lowNgroup}), and their representation theory is straightforward.  Therefore, we assume in this subsection that $N \ge 2$.

We have an exact sequence of groups
\begin{equation} \label{SES}
    \{1\} \to \Spin(V) \to \Pin(V) \xrightarrow{\pi} \{\pm 1\} \to \{1\},
\end{equation}
where $\{\pm 1\}$ is the cyclic group of order $2$, written multiplicatively, and $\pi(g) = (-1)^{\deg g}$.  The finite-dimensional representation theory of $\Pin(V)$ can be described in terms of the representation theory of $\Spin(V)$ using Clifford theory.  Since $\Pin(V)$ and $\Spin(V)$ are reductive, their categories of finite-dimensional representations are both semisimple, and so it suffices to describe their simple modules.  We begin with a discussion of the representation theory of $\Spin(V)$.

The group $\Spin(V)$ is connected and reductive.  Let $H$ denote its abstract Cartan.  This is canonically isomorphic to the abelianisation of every Borel subgroup of $\Spin(V)$. Write $X^\ast(H)=\Hom(H,\GG_m)$ for the weight lattice of $\Spin(V)$. Dominance is defined in the usual way from any choice of Borel subgroup.  We write $X^\ast(H)^+$ for the subset of dominant weights.

The choice of Borel subalgebra of $\fso(V)$ spanned by the $A_{ij}$, $B_{ij}$, and $X_k$, for $1 \leq i < j \leq n$ and $1 \leq k \leq n$ (the $X_k$ only appearing in type $B$) induces an isomorphism $X^\ast(H)\otimes_{\Z} \C \cong \fh^\ast$, which we use to write down elements of $X^\ast(H)$ as linear combinations of $\epsilon_1,\epsilon_2,\dotsc,\epsilon_n$.

Write $\Irr(\Spin(V))$ for the set of isomorphism classes of finite-dimensional simple $\Spin(V)$-modules. These are classified by highest weight theory. Explicitly, there is an isomorphism $X^\ast(H)^+ \cong \Irr(\Spin(V))$, $\la\mapsto L(\la)$, characterised by the following fact: For all Borel subgroups $B$ of $\Spin(V)$, there exists nonzero $v\in L(\la)$ such that $bv=\la(b)v$ for all $b\in B$.

The group $\Pin(V)$ acts on $\Spin(V)$ by conjugation.  For $g \in \Pin(V)$ and $W$ a $\Spin(V)$-module, we let $W^g$ denote the $\Spin(V)$-module that is equal to $W$ as a vector space, but with the twisted action
\begin{equation} \label{twist}
    h \cdot w = \left( ghg^{-1} \right) w,\qquad h \in \Spin(V),\ w \in W^g,
\end{equation}
where the juxtaposition $hw$ denotes the action of $h \in \Spin(V)$ on $w \in W$.

The group $\Pin(V)$ also acts by conjugation on $H$ and hence by precomposition on $X^\ast(H)^+ \cong \Irr(\Spin(V))$.  We let $g \lambda$ denote the result of $g \in \Pin(V)$ acting on $\lambda \in X^\ast(H)$. The subgroup $\Spin(V)$ acts trivially, so this descends to an action of the quotient $\pi_0(\Pin(V)) \cong \{\pm 1\}$.  For $\lambda \in X^\ast(H)^+$ and $g \in \Pin(V)$, we have
\[
    L(\lambda)^g \cong L(g \lambda).
\]
In particular, up to isomorphism, $L(\lambda)^g$ depends only on $\lambda$ and the class of $g$ in $\pi_0(\Pin(V))$.

To pass between representations of $\Spin(V)$ and  $\Pin(V)$ we use the biadjoint pair of restriction and induction functors
\begin{equation} \label{cricket}
    \Res \colon \Pin(V)\md \to \Spin(V)\md
    \qquad \text{and} \qquad
    \Ind \colon \Spin(V)\md \to \Pin(V)\md,
\end{equation}
where $G\md$ denotes the category of finite-dimensional modules of an algebraic group $G$.  These satisfy
\begin{equation}\label{resind}
    \Res \circ \Ind(W) \cong W \oplus W^P,
\end{equation}
where $P$ is any element of $\Pin(V) \setminus \Spin(V)$.  In order to make explicit computations, we will fix
\begin{equation} \label{Pdef}
    P =
    \begin{cases}
        e_1 e_2 \dotsm e_N & \text{if $N$ is odd}, \\
        e_1 e_2 \dotsm e_{N-1} & \text{if $N$ is even}.
    \end{cases}
\end{equation}

\begin{prop} \label{sponge}
    Let $W$ be a simple $\Pin(V)$-module.  Then there exists a unique $\pi_0(\Pin(V))$-orbit $\mathcal{O}$ on $X^\ast(H)^+$ such that
    \begin{equation}\label{orbitres}
        \Res(W) \cong \bigoplus_{\la\in \mathcal{O}} L(\la).
    \end{equation}
    Furthermore, given an orbit $\mathcal{O}$, the number of nonisomorphic simple $\Pin(V)$-modules $W$ satisfying \cref{orbitres} is equal to the size of the stabiliser of $\pi_0(\Pin(V))$ acting on an element of $\mathcal{O}$.
\end{prop}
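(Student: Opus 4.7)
The plan is to apply Clifford theory to the index-two normal subgroup $\Spin(V) \trianglelefteq \Pin(V)$, in two stages: first I would show that the isomorphism classes appearing in $\Res(W)$ form a single $\pi_0(\Pin(V))$-orbit with constant multiplicity, and then I would compute $\End_{\Pin(V)}(\Ind L(\lambda))$ to count the simples and to force the common multiplicity to equal $1$.

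For the first stage, decompose $\Res(W) = \bigoplus_\lambda W_\lambda$ into $L(\lambda)$-isotypic components. For any $g \in \Pin(V)$ and $\Spin(V)$-submodule $M \subseteq W$, the subspace $gM$ is again a $\Spin(V)$-submodule since $\Spin(V) \trianglelefteq \Pin(V)$, and the bijection $m \mapsto gm$ defines a $\Spin(V)$-module isomorphism from $M^{g^{-1}}$ to $gM$, in the twist notation of \cref{twist}. Combined with $L(\lambda)^{g^{-1}} \cong L(g^{-1}\lambda)$, this yields $gW_\lambda = W_{g^{-1}\lambda}$, so $\dim W_\lambda$ is constant on $\pi_0(\Pin(V))$-orbits and $\bigoplus_{\lambda \in \mathcal{O}} W_\lambda$ is a $\Pin(V)$-submodule of $W$ for every orbit $\mathcal{O}$. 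Simplicity of $W$ then forces exactly one orbit $\mathcal{O}$ to support $\Res(W)$.

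For the second stage, fix $\lambda \in \mathcal{O}$ and consider $\Ind L(\lambda)$. By Frobenius reciprocity, a simple $\Pin(V)$-module $W'$ has restriction supported on $\mathcal{O}$ if and only if $W'$ appears as a summand of $\Ind L(\lambda)$. By \cref{resind} and $L(\lambda)^P \cong L(P\lambda)$ we have $\Res \Ind L(\lambda) \cong L(\lambda) \oplus L(P\lambda)$, so Frobenius reciprocity also gives
\[
    \End_{\Pin(V)}(\Ind L(\lambda)) \cong \Hom_{\Spin(V)}\bigl(L(\lambda),\ L(\lambda) \oplus L(P\lambda)\bigr),
\]
whose dimension is the size $s \in \{1,2\}$ of the stabiliser of $\lambda$ in $\pi_0(\Pin(V))$. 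Since $\Pin(V)\md$ is semisimple, this endomorphism algebra is a product of matrix algebras, and matching dimensions forces it to be $\C$ when $s=1$ and $\C \times \C$ when $s=2$. Hence $\Ind L(\lambda)$ decomposes into exactly $s$ pairwise non-isomorphic simple summands, and a dimension count on their restrictions then yields \cref{orbitres} with all multiplicities equal to $1$. The main technical subtlety is ensuring that the induced $\pi_0(\Pin(V))$-action on weights is tracked consistently with the twist convention of \cref{twist}; once that bookkeeping is in place, the remainder of the argument is formal.
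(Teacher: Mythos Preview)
Your proposal is correct and follows essentially the same Clifford-theory approach as the paper: both compute $\dim \End_{\Pin(V)}(\Ind L(\lambda))$ via Frobenius reciprocity and \cref{resind}, obtain the stabiliser size $s \in \{1,2\}$, and use semisimplicity to conclude that $\Ind L(\lambda)$ has exactly $s$ pairwise nonisomorphic simple summands with the required restrictions. The only organizational difference is that you establish the single-orbit support of $\Res(W)$ directly via the isotypic decomposition in your Stage~1, whereas the paper deduces it after the fact from the analysis of $\Ind(M)$; both routes are standard and yield the same conclusion.
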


\begin{proof}
    By Frobenius reciprocity, every simple $\Pin(V)$-module is a simple summand of $\Ind(M)$ for some simple $\Spin(V)$-module $M$.
    \details{
        If $L$ is a simple $\Pin(V)$-module, then
        \[
            \Hom_{\Pin(V)}(L, \Ind \circ \Res(L))
            \cong \Hom_{\Spin(V)}(\Res(L), \Res(L)) \ne 0.
        \]
        Thus, $L$ is a simple summand of $\Ind \circ \Res(L)$.
    }
    Thus, it suffices to prove the result for such simple summands.

    Suppose $M$ is a simple $\Spin(V)$-module.  By Frobenius reciprocity,
    \[
        \dim \Hom_{\Pin(V)}(\Ind(M),\Ind(M))
        = \dim \Hom_{\Spin(V)}(M,\Res \circ \Ind(M))
    \]
    which, by \cref{resind}, is equal to two if $M \cong M^P$, and is equal to one otherwise.  In the former case, $\Ind M$ is of the form $W_1 \oplus W_2$ with $W_1$, $W_2$ nonisomorphic simple modules satisfying  $\Res(W_1) \cong \Res(W_2) \cong M$.  Thus $W_1$ and $W_2$ satisfy \cref{orbitres}, with the orbit $\mathcal{O}$ having one element, namely $M$.  In the latter case, $W=\Ind(M)$ is simple and also satisfies \cref{orbitres} by \cref{resind}.  The final statement of the proposition also follows from this discussion.
\end{proof}

\begin{rem} \label{hongmen}
    It follows from \cref{sponge} that the simple $\Pin(V)$-modules are:
    \begin{itemize}
        \item $\Ind(M)$ for a simple $\Spin(V)$-module $M$ with $M^P \not\cong M$,
        \item the two simple summands of $\Ind(M)$ for a simple $\Spin(V)$-module $M$ with $M^P \cong M$.
    \end{itemize}
    In particular, if $\mathcal{O}$ is an orbit of size two, then the unique simple $\Pin(V)$-module $W$ satisfying \cref{orbitres} is $\Ind (L(\la))$ where $\la$ is any element of $\mathcal{O}$.
\end{rem}

We let $\triv^0$ denote the trivial $\Pin(V)$-module and let $\triv^1$ be the one-dimensional module with action given by $g v = (-1)^{\deg g} v$, $v \in \triv^1$.  If $M$ is a simple $\Spin(V)$-module fixed under the $\Pin(V)$-action (i.e., $M^g \cong M$ as $\Spin(V)$-modules for $g \in \Pin(V)$), and $M'$ and $M''$ are its two lifts to a $\Pin(V)$-module, then these are related by
\begin{equation} \label{trivflip}
    M' \otimes \triv^1 \cong M''.
\end{equation}

We now study the action of $\pi_0(\Pin(V))$ on $\Irr(\Spin(V))\cong X^\ast(H)^+$.  When $N$ is even, define
\begin{equation} \label{reflect}
    \tilde{\lambda} := \lambda_1 \epsilon_1 + \dotsb + \lambda_{n-1} \epsilon_{n-1} - \lambda_n \epsilon_n
    \qquad \text{for }
    \lambda = \lambda_1 \epsilon_1 + \dotsb + \lambda_{n-1} \epsilon_{n-1} + \lambda_n \epsilon_n.
\end{equation}

\begin{prop} \label{hydro}
    If $N$ is odd, then $\pi_0(\Pin(V))$ acts trivially on $\Irr(\Spin(V))\cong X^\ast(H)^+$. If $N$ is even, the action of the nontrivial element $P \in \pi_0(\Pin(V))$ is $P \lambda = \tilde{\lambda}$.
\end{prop}

\begin{proof}
    If $N$ is odd then $P$ is central and so there is nothing to do. From now on, suppose $N=2n$ is even. Then
    \begin{equation} \label{PactD}
        P e_i P^{-1}
        = (-1)^{\delta_{iN}} e_i
        ,\qquad 1 \le i \le N.
    \end{equation}
    It follows that
    \begin{equation} \label{desk}
        P \psi_n P^{-1} = \psi_n^\dagger,\quad
        P \psi_n^\dagger P^{-1} = \psi_n,\quad
        P \psi_i P^{-1} = \psi_i,\quad
        P \psi_i^\dagger P^{-1} = \psi_i^\dagger,\quad
        1 \le i < n.
    \end{equation}
    Hence conjugation by $P$ preserves the set of positive root vectors \cref{ramen} and acts on the elements \cref{bamboo} of the Cartan subalgebra of $\fso(V)$ as
    \begin{gather*}
        P \left( \psi_i \psi_i^\dagger - \tfrac{1}{2} \right) P^{-1}
        = \psi_i \psi_i^\dagger - \tfrac{1}{2},\qquad 1 \le i < n,
        \\
        P \left( \psi_n \psi_n^\dagger - \tfrac{1}{2} \right) P^{-1}
        = \psi_n^\dagger \psi_n - \tfrac{1}{2}
        \overset{\cref{fermion}}{=} - \left( \psi_n \psi_n^\dagger - \tfrac{1}{2} \right).
        \qedhere
    \end{gather*}
\end{proof}

For the remainder of this subsection, we assume that $N$ is even.  It follows from \cref{hydro} that, for any dominant integral weight $\lambda$, we have
\begin{equation}\label{pactweights}
    L(\lambda)^P \cong L(\tilde{\lambda})
    \quad \text{as $\Spin(V)$-modules}.
\end{equation}
In particular, for $N \ge 2$,
\begin{equation} \label{ice}
    (S^\pm)^P \cong S^\mp
    \quad \text{as $\Spin(V)$-modules}.
\end{equation}
For all even $N$,
\begin{equation} \label{fire}
    S^P \cong S,\quad V^P \cong V
    \quad \text{as $\Spin(V)$-modules},
\end{equation}
since $V = L(-\epsilon_1) \oplus L(\epsilon_1)$ when $N=2$ (see \cref{lowNrep}), and $V = L(\epsilon_1)$ for $N \ge 3$.  Note that, when $N \ge 4$,
\[
    S \cong \Ind \left( L \left( \tfrac{1}{2} \epsilon_1 + \tfrac{1}{2} \epsilon_2 + \dotsb + \tfrac{1}{2} \epsilon_{n-1} \pm \tfrac{1}{2} \epsilon_n \right) \right).
\]

\begin{lem} \label{echo}
    Suppose $N$ is even.  Let $M_1$ and $M_2$ be two simple $\Pin(V)$-modules whose restrictions to $\Spin(V)$ are isomorphic. Then, for all $r\geq 1$, the multiplicities of $M_1$ and $M_2$ in $S^{\otimes r}$ are equal.
\end{lem}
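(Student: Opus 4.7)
The plan is to reduce the problem to the assertion that $S \otimes \triv^1 \cong S$ as $\Pin(V)$-modules, from which everything follows by a formal Hom-space calculation.

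If $M_1 \cong M_2$ there is nothing to show, so assume $M_1 \not\cong M_2$. Since $\Res(M_1) \cong \Res(M_2)$, \cref{sponge} places us in the case where the common restriction is a simple $\Spin(V)$-module $L(\lambda)$ fixed by the $\pi_0(\Pin(V))$-action, and $M_1, M_2$ are the two nonisomorphic lifts of $L(\lambda)$. By \cref{trivflip}, we may therefore identify $M_2 \cong M_1 \otimes \triv^1$.

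Next I claim that $S \otimes \triv^1 \cong S$ as $\Pin(V)$-modules. Indeed, $\triv^1$ restricts to the trivial $\Spin(V)$-module, so $S \otimes \triv^1$ and $S$ have the same $\Spin(V)$-restriction, namely $S^+ \oplus S^-$. By \cref{hydro} (see also \cref{ice}), the highest weights $\tfrac{1}{2}(\epsilon_1 + \dotsb + \epsilon_{n-1} + \epsilon_n)$ and $\tfrac{1}{2}(\epsilon_1 + \dotsb + \epsilon_{n-1} - \epsilon_n)$ form a single $\pi_0(\Pin(V))$-orbit of size two. Hence, by the last sentence of \cref{sponge}, there is a \emph{unique} simple $\Pin(V)$-module with restriction $S^+ \oplus S^-$, which forces $S \otimes \triv^1 \cong S$. (In the degenerate case $N = 2$, the same argument applies using $S^\pm \cong L_{\pm 1}$ as described in \cref{lowNrep}.)

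Iterating (and using $r \ge 1$) yields $S^{\otimes r} \otimes \triv^1 \cong S^{\otimes r}$ for all $r \ge 1$. Tensoring with $\triv^1$ is then a self-equivalence of the semisimple category $\Pin(V)\md$ sending $M_1 \mapsto M_2$ and fixing $S^{\otimes r}$ up to isomorphism; equivalently, one verifies directly that
\[
    \dim \Hom_{\Pin(V)}(M_2, S^{\otimes r})
    = \dim \Hom_{\Pin(V)}(M_1 \otimes \triv^1, S^{\otimes r})
    = \dim \Hom_{\Pin(V)}(M_1, S^{\otimes r} \otimes \triv^1)
    = \dim \Hom_{\Pin(V)}(M_1, S^{\otimes r}).
\]
Since the multiplicity of a simple module in a semisimple module equals the dimension of the corresponding Hom space, the desired equality of multiplicities follows. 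The only nontrivial step is the identification $S \otimes \triv^1 \cong S$, and even this is immediate once one records that the two spin weights form a nontrivial $\pi_0(\Pin(V))$-orbit.
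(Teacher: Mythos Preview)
Your proof is correct and takes a genuinely different route from the paper's.

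The paper argues via Frobenius reciprocity: using that $S$ is self-dual and $S \cong \Ind(S^+)$, it rewrites
\[
    \Hom_{\Pin(V)}(S^{\otimes r}, M_i)
    \cong \Hom_{\Pin(V)}\bigl(\Ind(S^+),\, S^{\otimes(r-1)} \otimes M_i\bigr)
    \cong \Hom_{\Spin(V)}\bigl(S^+,\, S^{\otimes(r-1)} \otimes M_i\bigr),
\]
an expression that visibly depends only on $\Res(M_i)$.

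Your approach instead isolates the isomorphism $S \otimes \triv^1 \cong S$ (deduced from \cref{sponge} and the fact that the two spin weights form a size-two $\pi_0(\Pin(V))$-orbit) and combines it with $M_2 \cong M_1 \otimes \triv^1$. This is equally short and has the virtue of making the underlying symmetry $(-)\otimes\triv^1$ of $S^{\otimes r}$ explicit; it is essentially the tensor-categorical reformulation of the paper's induction/restriction argument. The paper's version avoids the case split $M_1 \cong M_2$ versus $M_1 \not\cong M_2$, while yours surfaces a fact ($S \otimes \triv^1 \cong S$) that is useful in its own right.
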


\begin{proof}
    Since $S \cong \Ind(S^\pm)$ is self-dual, we have, for $i \in \{1,2\}$,
    \begin{multline*}
        \Hom_{\Pin(V)}(S^{\otimes r},M_i)
        \cong \Hom_{\Pin(V)}(S, S^{\otimes (r-1)}\otimes M_i)
        \\
        \cong \Hom_{\Pin(V)} \left( \Ind(S^+), S^{\otimes (r-1)} \otimes M_i \right)
        \cong \Hom_{\Spin(V)} (S^+, S^{\otimes(r-1)} \otimes M_i),
    \end{multline*}
    where we used Frobenius reciprocity in the final isomorphism.  Since $M_1$ and $M_2$ are isomorphic upon restriction to $\Spin(V)$, the result follows.
\end{proof}

\subsection{Invariant bilinear form}

For a subset $I$ of $[n]$, we let $I^\complement = [n]\setminus I$ denote its complement.  Define a bilinear form on $S$ by
\begin{equation} \label{Sform}
    \Phi_S(x_I, x_J) =
    \begin{cases}
        (-1)^{\binom{|I|}{2} + nN|I| + |\{(i,j)\in I\times I^\complement : i>j\}|}  & \text{if } J = I^\complement, \\
        0 & \text{otherwise},
    \end{cases}
\end{equation}
and extending by bilinearity.

\begin{lem}
    We have
    \begin{equation} \label{bounce}
        \Phi_S(v x, y) = (-1)^{nN} \Phi_S(x, v y),
        \qquad x,y \in S,\quad v \in V \subseteq \Cl.
    \end{equation}
\end{lem}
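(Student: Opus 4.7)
The plan is to verify the identity by direct computation on a convenient spanning set. By bilinearity, it suffices to check \cref{bounce} for $x = x_I$, $y = x_J$ basis vectors of $S$, and for $v$ running over a spanning set of $V$. A natural choice is $\{\psi_k, \psi_k^\dagger : 1 \le k \le n\}$, together with $e_N$ when $N$ is odd; this exhausts $V$ by \cref{tuna}. So there are three cases to handle.

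The case $v = e_N$ (for $N$ odd) is immediate. From \cref{mouse} we have $e_N x_I = \varepsilon(-1)^{|I|} x_I$, so both sides of \cref{bounce} vanish unless $J = I^\complement$, in which case they reduce to $\varepsilon(-1)^{|I|}\Phi_S(x_I, x_{I^\complement})$ and $\varepsilon(-1)^{|J|}\Phi_S(x_I, x_{I^\complement})$. Since $|J| = |I^\complement| = n - |I|$, the two sides differ by $(-1)^{2|I|-n} = (-1)^n$, which equals $(-1)^{nN}$ when $N$ is odd.

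For $v = \psi_k$, both sides vanish unless $k \in I$ and $J = (I \setminus \{k\})^\complement = I^\complement \cup \{k\}$. Using \cref{mind}, each side reduces to a signed scalar times $\Phi_S(x_{I \setminus \{k\}}, x_{(I \setminus \{k\})^\complement})$ (respectively $\Phi_S(x_I, x_{I^\complement})$). Comparing exponents via \cref{Sform} produces four contributions: the signs from the $\psi_k$-action on $x_I$ and on $x_J$, the difference $\binom{|I|-1}{2} - \binom{|I|}{2}$, the linear-in-$nN$ term $nN(|I|-1) - nN|I| = -nN$, and the change in the inversion count $|\{(i,j) \in I' \times (I')^\complement : i > j\}|$ when $I'$ transitions between $I$ and $I \setminus \{k\}$. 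Setting $\alpha = |\{j \in I : j < k\}|$ and $\beta = |\{j \in I^\complement : j < k\}|$ (so $\alpha + \beta = k-1$), everything can be rewritten in terms of $\alpha, \beta, k, |I|, nN$, and one checks that the combination telescopes modulo $2$ to $nN$. The case $v = \psi_k^\dagger$ is entirely parallel, with $I$ and $I \cup \{k\}$ swapping roles.

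The main obstacle is purely the combinatorial bookkeeping of the inversion-count term, since the other three contributions are routine. The key identity to extract is
\[
    \bigl|\{(i,j) \in I \times I^\complement : i > j\}\bigr| - \bigl|\{(i,j) \in (I \setminus \{k\}) \times (I \setminus \{k\})^\complement : i > j\}\bigr| = k - |I|
\]
for $k \in I$ (with a sign-flipped analogue when $k \notin I$), which follows by splitting the pairs according to whether one coordinate equals $k$. Combined with $\binom{|I|}{2} - \binom{|I|-1}{2} = |I|-1$ and $\alpha - \beta \equiv k - 1 \pmod 2$, the exponent difference collapses to $nN$ modulo $2$, yielding \cref{bounce}.
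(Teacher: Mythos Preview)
Your proposal is correct and follows essentially the same approach as the paper: reduce by bilinearity to $v \in \{\psi_k, \psi_k^\dagger\}$ (plus $e_N$ when $N$ is odd) and $x=x_I$, $y=x_J$, then compare signs. The only cosmetic difference is that the paper packages the inversion count into the notation $\sigma_{I,J} = (-1)^{|\{(i,j)\in I\times J : i>j\}|}$ and uses its multiplicativity $\sigma_{I_1\sqcup I_2,J} = \sigma_{I_1,J}\sigma_{I_2,J}$ together with the identity $\sigma_{I,\{k\}}\sigma_{\{k\},I}=(-1)^{|I|}$, whereas you track the same combinatorics directly via $\alpha,\beta$ and your difference-of-inversions formula; the two computations are interchangeable.
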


\begin{proof}
    Since both sides of \cref{bounce} are linear in $v$, $x$ and $y$, it suffices to prove that
    \begin{equation} \label{lynel}
        \Phi_S(\psi_k^\dagger x_I, x_J) = (-1)^{nN} \Phi_S(x_I, \psi_k^\dagger x_J)
        \quad \text{and} \quad
        \Phi_S(\psi_k x_I, x_J) = (-1)^{nN} \Phi_S(x_I, \psi_k x_J),
    \end{equation}
    for all $1 \le k \le n$ and $I,J \subseteq [n]$, and, if $N$ is odd, that
    \begin{equation} \label{lynel2}
        \Phi_S(e_{2n+1} x_I, x_J) = (-1)^n \Phi_S(x_I, e_{2n+1} x_J),
    \end{equation}
    for all $I,J \subseteq [n]$.

    For $I,J \subseteq [n]$, define
    \begin{equation} \label{hulk}
        \sigma_{I,J}
        := (-1)^{|\{ (i,j) \in I \times J : i > j \}|}.
    \end{equation}
    Then, for $I,J,I_1,J_1,I_2,J_2 \subseteq [n]$, with $I \cap J = I_1 \cap I_2 = J_1 \cap J_2 = \varnothing$, we have
    \begin{equation} \label{sigmund}
        \sigma_{I,J} = (-1)^{|I||J|} \sigma_{J,I},\qquad
        \sigma_{I_1 \sqcup I_2, J} = \sigma_{I_1,J} \sigma_{I_2,J},\qquad
        \sigma_{I,J_1 \sqcup J_2} = \sigma_{I,J_1} \sigma_{I,J_2}.
    \end{equation}

    First note that both sides of the first equation in \cref{lynel} are zero unless $I \cap J = \varnothing$ and $I \cup J = \{1,\dotsc,k-1,k+1,\dotsc,n\}$.  Thus, we assume that $I$ and $J$ satisfy these two conditions.  Then
    \[
        \Phi_S(\psi_k^\dagger x_I, x_J)
        \overset{\cref{mind}}{=} \sigma_{\{k\},I} \Phi_S(x_{I \cup \{k\}}, x_J)
        \overset{\cref{Sform}}{\underset{\cref{sigmund}}{=}} (-1)^{\binom{|I|+1}{2} + nN(|I|+1)} \sigma_{\{k\},I} \sigma_{I,J} \sigma_{\{k\},J}
    \]
    and
    \[
        \Phi_S(x_I, \psi_k^\dagger x_J)
        \overset{\cref{mind}}{=} \sigma_{\{k\},J} \Phi_S(x_I, x_{J \cup \{k\}})
        \overset{\cref{Sform}}{\underset{\cref{sigmund}}{=}} (-1)^{\binom{|I|}{2} + nN|I|} \sigma_{\{k\},J} \sigma_{I,J} \sigma_{I,\{k\}}.
    \]
    Since
    \begin{equation} \label{ghost}
        \sigma_{I,\{k\}} \sigma_{\{k\},I} = (-1)^{|I|}
        \qquad \text{and} \qquad
        \binom{|I|}{2} + |I| = \binom{|I|+1}{2},
    \end{equation}
    the first equality in \cref{lynel} follows.

    Next, note that both sides of the second equality in \cref{lynel} are zero unless $I \cap J = \{k\}$ and $I \cup J = [n]$.  Thus, we assume that $I$ and $J$ satisfy these two conditions.  Then
    \begin{multline*}
        \Phi_S(\psi_k x_I, x_J)
        \overset{\cref{mind}}{=} \sigma_{\{k\},I} \Phi_S(x_{I \setminus \{k\}}, x_J)
        \\
        \overset{\cref{Sform}}{=} (-1)^{\binom{|I|-1}{2} + nN(|I|-1)} \sigma_{\{k\},I} \sigma_{I \setminus \{k\}, J}
        \overset{\cref{sigmund}}{=} (-1)^{\binom{|I|-1}{2} + nN(|I|-1)} \sigma_{\{k\},I} \sigma_{I,J} \sigma_{\{k\}, J}
    \end{multline*}
    and
    \begin{multline*}
        \Phi_S(x_I, \psi_k x_J)
        \overset{\cref{mind}}{=} \sigma_{\{k\}, J} \Phi_S(x_I, x_{J \setminus \{k\}})
        \\
        \overset{\cref{Sform}}{=} (-1)^{\binom{|I|}{2} + nN|I|} \sigma_{\{k\}, J} \sigma_{I,J \setminus \{k\}}
        \overset{\cref{sigmund}}{=} (-1)^{\binom{|I|}{2} + nN|I|} \sigma_{\{k\}, J} \sigma_{I,J} \sigma_{I,\{k\}}.
    \end{multline*}
    Using the second equality in \cref{ghost} with $|I|$ replaced by $|I|-1$ then implies the second equality in \cref{lynel}.

    Now suppose that $N$ is odd. To prove \cref{lynel2}, we assume that $I \cap J = \varnothing$ and $I \cup J = [n]$, since otherwise both sides are zero.  Then we have
    \[
        \Phi_S(e_{2n+1} x_I, x_J)
        \overset{\cref{mouse}}{=} \pm (-1)^{|I|} \Phi_S(x_I, x_J)
        = \pm (-1)^{n + |J|} \Phi_S(x_I, x_J)
        \overset{\cref{mouse}}{=} (-1)^n \Phi_S(x_I, e_{2n+1} x_J),
    \]
    as desired.
\end{proof}

As in the introduction, define
\begin{equation}\label{gvdefn}
    \Group(V) :=
    \begin{cases}
        \Pin(V) & \text{if $N$ is even}, \\
        \Spin(V) & \text{if $N$ is odd}.
    \end{cases}
\end{equation}

\begin{cor} \label{taian}
    We have
    \begin{gather}
       \label{darker}
        \Phi_S(gx,gy) = \Phi_S(x,y)
        \qquad \text{for all } g \in \Group(V),\ x,y \in S,
    \\
    \label{dark}
        \Phi_S(Xx,y) = - \Phi_S(x,Xy)
        \qquad \text{for all } X \in \fso(V),\ x,y \in S.
    \end{gather}
\end{cor}

\begin{proof}
    When $N<2$, the identity \cref{darker} is trivial, since $\Group(V)=\{\pm 1\}$, acting by the scalar $\{\pm 1\}$ on $S$. Now suppose $N \geq 2$.  Let $v_1,\dotsc,v_k \in V$ satisfy $\Phi_V(v_i,v_i) = 1$ for all $1 \le i \le k$.  Then, for $x,y \in S$, it follows from \cref{bounce} that
    \begin{align*}
        \Phi_S(v_1 v_2 \dotsm v_k x, v_1 v_2 \dotsm v_k y)
        &=
        \begin{cases}
            \Phi_S(x, v_k \dotsm v_1 v_1 \dotsm v_k y) & \text{if } N=2n,
            \\
            (-1)^{kn} \Phi_S(x, v_k \dotsm v_1 v_1 \dotsm v_k y) & \text{if } N=2n+1,
        \end{cases}
        \\
        &=
        \begin{cases}
            \Phi_S(x,y) & \text{if } N=2n,
            \\
            (-1)^{kn} \Phi_S(x,y) & \text{if } N=2n+1.
        \end{cases}
    \end{align*}
    Thus, \cref{darker} follows from \cref{pinvgen,spinvgen}.  The identity \cref{dark} follows from \cref{darker} by differentiating.
    \details{
        We give here a direct proof of \cref{dark}.  Since the expression $uv-vu$ is linear in both $u$ and $v$, to prove \cref{dark} it suffices, by \cref{Canberra}, to show that
        \[
            \Phi_S( (uv-vu) x, y ) = - \Phi_S( x, (uv-vu)y )
        \]
        for all $u,v \in \{e_1,\dotsc,e_N\}$ and $x,y \in S$.  This follows easily from \cref{bounce}.
    }
\end{proof}

\begin{prop} \label{spiral}
    We have
    \begin{equation} \label{formsym}
        \Phi_S(x,y) = (-1)^{\binom{n}{2} + nN} \Phi_S(y,x)
        \qquad \text{for all } x,y \in S.
    \end{equation}
\end{prop}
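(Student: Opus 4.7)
The proof will be a direct computation on basis vectors. By bilinearity, it suffices to verify the claimed identity for $x = x_I$ and $y = x_J$ with $I, J \subseteq [n]$. Observe first that if $J \neq I^\complement$, then $\Phi_S(x_I, x_J) = 0$ by definition, and similarly $\Phi_S(x_J, x_I) = 0$ since $I \neq J^\complement$. Hence both sides vanish in that case, and we only need to treat $J = I^\complement$.

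Set $k = |I|$, so $|I^\complement| = n - k$. Writing $\sigma_{I,J}$ as in \cref{hulk}, the definition \cref{Sform} gives
\[
    \Phi_S(x_I, x_{I^\complement}) = (-1)^{\binom{k}{2} + nN k} \sigma_{I, I^\complement},
    \qquad
    \Phi_S(x_{I^\complement}, x_I) = (-1)^{\binom{n-k}{2} + nN(n-k)} \sigma_{I^\complement, I}.
\]
Now I would invoke the first identity of \cref{sigmund}, which yields $\sigma_{I, I^\complement} = (-1)^{k(n-k)} \sigma_{I^\complement, I}$. Substituting this into the first expression, the claim reduces to the congruence
\[
    \binom{k}{2} + nNk + k(n-k) \equiv \binom{n-k}{2} + nN(n-k) + \binom{n}{2} + nN \pmod{2}.
\]

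The main step is to verify this congruence. The key combinatorial identity I would use is $\binom{n}{2} = \binom{k}{2} + \binom{n-k}{2} + k(n-k)$, which rearranges to $\binom{k}{2} + k(n-k) - \binom{n-k}{2} = \binom{n}{2} - 2\binom{n-k}{2}$. Together with $nNk - nN(n-k) = nN(2k - n) \equiv nN \cdot n \equiv nN \pmod 2$ (using $n^2 \equiv n \pmod 2$), this gives exactly the desired congruence. There is no real obstacle here; it is an elementary parity calculation, and the only care needed is keeping track of signs arising from swapping $I$ and $I^\complement$ in the definition of $\sigma_{I,J}$.
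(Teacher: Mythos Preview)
Your proof is correct and follows essentially the same approach as the paper: reduce to basis vectors with $J = I^\complement$, apply the swap identity $\sigma_{I,J} = (-1)^{|I||J|}\sigma_{J,I}$ from \cref{sigmund}, and finish with the combinatorial identity $\binom{n}{2} = \binom{k}{2} + \binom{n-k}{2} + k(n-k)$ together with $n^2 \equiv n \pmod 2$. The paper organizes the sign bookkeeping slightly differently (expressing $\Phi_S(x_J,x_I)$ directly in terms of $\Phi_S(x_I,x_J)$ rather than writing out both and comparing), but the substance is identical.
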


\begin{proof}
    Since $\Phi_S(x_I,x_J) = 0 = \Phi_S(x_J,x_I)$ unless $I \cup J = [n]$ and $I \cap J = \varnothing$, we assume that $I$ and $J$ satisfy these two conditions.  Then, defining $\sigma_{I,J}$ as in \cref{hulk}, we have
    \begin{multline*}
        \Phi_S(x_J, x_I)
        \overset{\cref{Sform}}{=} (-1)^{\binom{|J|}{2} + nN|J|} \sigma_{J,I}
        \overset{\cref{sigmund}}{=} (-1)^{\binom{|J|}{2} + nN|J| + |I||J|} \sigma_{I,J}
        \\
        \overset{\cref{Sform}}{=} (-1)^{\binom{|J|}{2}+\binom{|I|}{2}+nN(|I|+|J|)+|I||J|} \Phi_S(x_I,x_J).
    \end{multline*}
    Then the result follows from the fact that $nN(|I|+|J|) = n^2 N \equiv nN$ modulo $2$ and that $\binom{|I|}{2} + \binom{|J|}{2} + |I||J| = \binom{n}{2}$.
\end{proof}

\begin{rem}\label{gerbil}
    \Cref{taian} implies that $S$ is self-dual.  Since $S$ is also simple, as noted above, $\Phi_S$ is the \emph{unique} invariant bilinear form on $S$, up to scalar multiple.  On the other hand, if $N \equiv 3 \pmod{4}$, then $S$ is not self-dual as a $\Pin(V)$-module, and so there is no $\Pin(V)$-invariant bilinear form on $S$.
    \details{
        Suppose $N \equiv 3 \pmod{4}$. Then $P = e_1 e_2 \dotsb e_N$ is an element in the centre of $\Pin(V)$ with $P^2=-1$.  Since $-1$ acts on $S$ by $-1$, it follows from Schur's lemma that $P$ must act on $S$ by $\pm i$. Therefore, $P$ acts on $S^\ast$ by $\mp i$. Hence $S$ is not isomorphic to $S^\ast$. (Recall that there were two choices of $S$ depending on a choice of $\varepsilon$; see \cref{mouse}.  Taking duals switches these two choices in this case.)
    }
    This is our main motivation for defining $\Group(V)$ to be $\Spin(V)$ when $N$ is odd; see also \cref{hamster}.
\end{rem}

\subsection{Tensor product decompositions}

We now recall some tensor product decompositions that will be important for us.  For a weight $\Spin(V)$-module $M$, we let $\wt(M)$ denote its set of weights.  Thus, for example, when $N \ge 2$,
\[
    \wt(S) = \left\{ \left( \pm \tfrac{1}{2}, \dotsc, \pm \tfrac{1}{2} \right) \right\}.
\]
For the next result, recall, from \cref{subsec:spinD}, that the set of dominant integral weights is $\frac{1}{2}\Z\epsilon_1$ when $N=2$.

\begin{lem} \label{lem:pieri}
    Suppose $N \ge 2$, and let $\la$ be a dominant integral weight. Then
    \[
        S \otimes L(\la) \cong \bigoplus_{\epsilon \in \wt(S)} L(\la+\epsilon)
        \quad \text{as $\Spin(V)$-modules},
    \]
    where we define $L(\la+\epsilon)$ to be zero if $\la+\epsilon$ is not dominant.
\end{lem}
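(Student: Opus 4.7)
The strategy is to exploit the fact that $S$ is a minuscule $\Spin(V)$-module: each weight has multiplicity one, the set $\wt(S) = \{\sum_{i=1}^n \pm \tfrac{1}{2}\epsilon_i\}$ is stable under the Weyl group $W$ of $\fso(V)$, and every weight $\epsilon$ of $S$ satisfies $\langle \epsilon, \alpha^\vee \rangle \in \{-1,0,1\}$ for each simple coroot $\alpha^\vee$. (In type $D$, $S = S^+ \oplus S^-$ decomposes as two minuscule summands, but this makes no difference for the argument.) This is the classical Pieri/Klimyk rule for tensoring with a minuscule representation, so I would give a direct character-theoretic proof.

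The edge case $n=1$, i.e.\ $N=2$, I would dispose of first: there $\Spin(V)$ is a one-dimensional torus, every element of $\tfrac{1}{2}\Z\epsilon_1$ is declared dominant, and the claim reduces to the trivial identity $L_{\pm 1} \otimes L_{2r} \cong L_{2r \pm 1}$. For $N \geq 3$ (so $\fso(V)$ is semisimple), I would invoke the Weyl character formula. Writing $A_\mu = \sum_{w \in W} \sgn(w) e^{w\mu}$ so that $\chi_{L(\lambda)} \cdot A_\rho = A_{\lambda + \rho}$, I would compute
\begin{equation*}
    \chi_S \cdot \chi_{L(\lambda)} \cdot A_\rho
    \;=\; \sum_{\epsilon \in \wt(S)} \sum_{w \in W} \sgn(w)\, e^{w(\lambda + \rho) + \epsilon}
    \;=\; \sum_{\epsilon \in \wt(S)} A_{\lambda + \epsilon + \rho},
\end{equation*}
where the second equality comes from the substitution $\epsilon \mapsto w\epsilon$ (for each fixed $w$) in the outer sum, legitimate because $\wt(S)$ is $W$-stable. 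Dividing by $A_\rho$ expresses $\chi_S \chi_{L(\lambda)}$ as $\sum_{\epsilon \in \wt(S)} A_{\lambda+\epsilon+\rho}/A_\rho$; whenever $\lambda + \epsilon$ is dominant this term is precisely $\chi_{L(\lambda+\epsilon)}$.

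The main obstacle is showing that every remaining term, indexed by $\epsilon$ with $\lambda + \epsilon$ \emph{not} dominant, contributes zero rather than producing the character of some spurious simple. For this I would use the minuscule estimate: if $\lambda + \epsilon$ is not dominant then $\langle \lambda + \epsilon, \alpha^\vee \rangle \leq -1$ for some simple root $\alpha$. Since $\langle \lambda, \alpha^\vee \rangle \geq 0$ and $\langle \epsilon, \alpha^\vee \rangle \geq -1$, the only possibility is $\langle \lambda, \alpha^\vee \rangle = 0$ and $\langle \epsilon, \alpha^\vee \rangle = -1$. Combined with $\langle \rho, \alpha^\vee \rangle = 1$, this yields $\langle \lambda + \epsilon + \rho, \alpha^\vee \rangle = 0$, so the simple reflection $s_\alpha$ fixes $\lambda + \epsilon + \rho$ and therefore $A_{\lambda+\epsilon+\rho} = 0$. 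A quick check that $\wt(S)$ really does satisfy $\langle \epsilon, \alpha^\vee \rangle \in \{-1,0,1\}$ for each simple coroot in both types $B_n$ and $D_n$ (using that the short-root coroot $2\epsilon_n$ in type $B$ pairs with $\pm\tfrac{1}{2}\epsilon_n$ to give $\pm 1$) completes the argument.
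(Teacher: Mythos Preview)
Your proof is correct and follows essentially the same approach as the paper: handle $N=2$ by a direct torus computation, and for $N\ge 3$ use the Weyl character formula together with the $W$-stability of $\wt(S)$ and the bound $\langle\epsilon,\alpha_i^\vee\rangle\ge -1$ to show that the non-dominant terms vanish. Your identification of this as the Klimyk/Pieri rule for a minuscule module and your explicit verification that $\langle\epsilon,\alpha^\vee\rangle\in\{-1,0,1\}$ in both types are a bit more detailed than the paper, but the argument is the same.
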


\begin{proof}
    For $N=2$, this is a straightforward direct computation using the description of $S$ in \cref{lowNrep}. For $N \ge 3$, it is a standard application of the Weyl character formula.
    \details{
        It suffices to prove an equality of characters.  Let $\Delta$ be the Weyl denominator. By the Weyl character formula, we find
        \begin{align*}
            \operatorname{ch}(L(\la)\otimes S)&= \frac{1}{\Delta} \sum_{w\in W} \sgn(w) e^{w(\la+\rho)} \sum_{\epsilon\in \wt(S)} e^{\epsilon} \\
            &=\frac{1}{\Delta} \sum_{w\in W}\sgn(w) e^{w(\la+\rho)} \sum_{\epsilon\in \wt(S)} e^{w\epsilon} \\
            &= \sum_{\epsilon\in \wt(S)} \frac{1}{\Delta} \sum_{w\in W}\sgn(w) e^{w(\la+\epsilon+\rho)},
        \end{align*}
        where, for the second equality, we use the fact that $\wt(S)$ is $W$-invariant.  Now suppose that $\epsilon\in \wt(S)$ is such that $\la+\epsilon$ is not dominant. Note that, for all $i$, $\alpha_i^\vee(\epsilon)\geq -1$.  Since $\la$ is dominant, we must have $\alpha_i^\vee(\la+\epsilon)=-1$ and therefore $\alpha_i^\vee(\la+\epsilon+\rho)=0$.  Hence $s_i(\la+\epsilon+\rho)=\la+\epsilon+\rho$, so in this case
        \[
            \sum_{w\in W}\sgn(w) e^{w(\la+\epsilon+\rho)}=0,
        \]
        and we may conclude using the Weyl character formula.
    }
\end{proof}

\begin{cor} \label{SV}
    \begin{enumerate}[wide]
        \item \label{SV1} If $N=2$, then
            \begin{equation}
                S \otimes V \cong S \oplus \Ind \left( L \left( \tfrac{3}{2} \epsilon_1 \right) \right)
                \quad \text{as $\Pin(V)$-modules}.
            \end{equation}

        \item \label{SV2} If $N=2n+1 \ge 3$ (type $B_n$), then
            \begin{equation} \label{SVB}
                S \otimes V
                \cong S \otimes L(\epsilon_1)
                \cong S \oplus L \left( \tfrac{3}{2} \epsilon_1 + \tfrac{1}{2} \epsilon_2 + \dotsb + \tfrac{1}{2} \epsilon_n \right)
                \quad \text{as $\Spin(V)$-modules}.
            \end{equation}

        \item \label{SV3} If $N=2n \ge 4$ (type $D_n$), then
            \begin{equation} \label{SVD}
                S \otimes V \cong S \otimes L(\epsilon_1)
                \cong S \oplus \Ind \left( L \left( \tfrac{3}{2} \epsilon_1 + \tfrac{1}{2} \epsilon_2 + \dotsb + \tfrac{1}{2} \epsilon_n \right) \right)
                \quad \text{as $\Pin(V)$-modules}.
            \end{equation}
    \end{enumerate}
\end{cor}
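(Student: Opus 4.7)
The plan is to apply the Pieri-type rule \cref{lem:pieri} with $\la = \epsilon_1$ to decompose $S \otimes V$ as a $\Spin(V)$-module (using $V \cong L(\epsilon_1)$ for $N \geq 3$), and then, in the even cases where the statement concerns $\Pin(V)$-modules, to lift this to a $\Pin(V)$-module decomposition via \cref{sponge} together with the explicit action of $\pi_0(\Pin(V))$ on dominant weights from \cref{hydro}. The case $N = 2$ will instead be handled by a direct computation using \cref{lowNrep}. The only real subtlety is checking that each orbit of highest weights appearing in the $\Spin(V)$-decomposition has size $2$, so that \cref{sponge} packages it into a single induced simple $\Pin(V)$-module with multiplicity one.

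For part (b) with $N = 2n+1 \geq 3$, since $\wt(S) = \{(\pm\tfrac{1}{2}, \dotsc, \pm\tfrac{1}{2})\}$, the candidate highest weights $\epsilon_1 + \epsilon$ have the form $(1 \pm \tfrac{1}{2}, \pm\tfrac{1}{2}, \dotsc, \pm\tfrac{1}{2})$. The type $B_n$ dominance condition \cref{dominantB} forces $\la_2, \dotsc, \la_n$ all to equal $+\tfrac{1}{2}$, so only two options survive: the highest weight $\tfrac{1}{2}(\epsilon_1 + \dotsb + \epsilon_n)$ of $S$, and $\tfrac{3}{2}\epsilon_1 + \tfrac{1}{2}\epsilon_2 + \dotsb + \tfrac{1}{2}\epsilon_n$. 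Applying \cref{lem:pieri} then yields \cref{SVB}.

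For part (c) with $N = 2n \geq 4$, the same Pieri computation now produces four summands, since the type $D_n$ dominance condition \cref{dominantD} allows the final coordinate to be $\pm\tfrac{1}{2}$. The resulting $\Spin(V)$-module decomposition is $S^+ \oplus S^- \oplus L(\mu) \oplus L(\tilde{\mu})$, where $\mu = \tfrac{3}{2}\epsilon_1 + \tfrac{1}{2}\epsilon_2 + \dotsb + \tfrac{1}{2}\epsilon_n$ and $\tilde{\mu}$ is defined as in \cref{reflect}. By \cref{hydro}, $\{S^+, S^-\}$ and $\{L(\mu), L(\tilde{\mu})\}$ are each a single $\pi_0(\Pin(V))$-orbit of size $2$, and each simple appears with multiplicity one in $\Res(S \otimes V)$; \cref{sponge} then identifies the corresponding $\Pin(V)$-summands as $\Ind(S^+) \cong S$ and $\Ind(L(\mu))$, giving \cref{SVD}. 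Finally, for part (a) with $N = 2$, the decomposition $S \otimes V \cong L_{-3} \oplus L_{-1} \oplus L_1 \oplus L_3$ as $\Spin(V)$-modules is immediate from \cref{lowNrep}; grouping $\{L_{\pm 1}\}$ and $\{L_{\pm 3}\}$ as the two orbits under $\pi_0(\Pin(V))$ (which acts by inversion on $\GG_m$) and applying \cref{sponge} yields $S \otimes V \cong S \oplus \Ind(L(\tfrac{3}{2}\epsilon_1))$ as $\Pin(V)$-modules.
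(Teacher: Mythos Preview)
Your proof is correct and follows the same approach as the paper: apply \cref{lem:pieri} together with the dominance conditions \cref{dominantB}, \cref{dominantD} for parts (b) and (c), and use a direct computation from \cref{lowNrep} for part (a). The paper's own proof is a one-line citation of these same ingredients; your version is more explicit in spelling out the lift from the $\Spin(V)$-decomposition to the $\Pin(V)$-decomposition via \cref{sponge} and \cref{hydro}, which the paper leaves to the reader.
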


\begin{proof}
    Part~\cref{SV1} is a direct computation using \cref{lowNrep}.  Parts~\cref{SV2,SV3} follow from \cref{lem:pieri,dominantD,dominantB}, where the appearance of $\Ind$ in part~\cref{SV3} follows from \cref{hongmen}.
\end{proof}

\begin{prop} \label{lemon}
    \begin{enumerate}[wide]
        \item \label{lemonB} If $N=2n+1$ (type $B_n$), we have
            \begin{equation} \label{lemonB1}
                \Lambda^k(V) \cong \Lambda^{N-k}(V)
                \quad \text{as $\Pin(V)$-modules},
                \quad 0 \le k \le n,
            \end{equation}
            and $\Lambda^k(V)$ is simple for $0 \le k \le N$.  Furthermore, if $n \ge 1$, we have
            \begin{equation} \label{lemonB2}
                \Lambda^k(V) \cong \Lambda^{N-k}(V) \cong L(\epsilon_1 + \dotsb + \epsilon_k)
                \quad \text{as $\Spin(V)$-modules},
                \quad 0 \le k \le n.
            \end{equation}

        \item \label{lemonD} If $N=2n$ (type $D_n$), we have
            \begin{equation} \label{zest}
                \Lambda^k(V) \not\cong \Lambda^{N-k}(V)
                \quad \text{as $\Pin(V)$-modules},\quad
                0 \le k < n,
            \end{equation}
            and $\Lambda^k(V)$ is simple for $0 \le k \le N$.  Furthermore, if $n \ge 2$, we have
            \begin{gather} \label{lemonD1}
                \Lambda^k(V) \cong \Lambda^{N-k}(V) \cong L(\epsilon_1 + \dotsb + \epsilon_k),\qquad
                0 \le k < n,
                \\ \label{lemonD2}
                \Lambda^n(V) \cong L(\epsilon_1 + \dotsb + \epsilon_{n-1} + \epsilon_n) \oplus L(\epsilon_1 + \dotsb + \epsilon_{n-1} - \epsilon_n)
            \end{gather}
            as $\Spin(V)$-modules.
    \end{enumerate}
\end{prop}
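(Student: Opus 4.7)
My plan is to prove the three claims in each part—$\Pin(V)$-simplicity of $\Lambda^k V$, identification of its $\Spin(V)$-restriction, and the type-$B$ versus type-$D$ behaviour of $\Lambda^k V$ against $\Lambda^{N-k} V$—in sequence. The $\Spin(V)$-structure is handled by highest weight theory, and the $\Pin(V)$ claims then follow from \cref{sponge,hydro} together with a careful accounting of the sign twist between the conjugation action on $V$ from \cref{PinVact} and the natural $\rO(V)$-action on $V$ pulled back via the map $p \colon \Pin(V) \to \rO(V)$ of \cref{icts}. This sign bookkeeping is the main technical obstacle, as it is what distinguishes the two types.

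For the $\Spin(V)$-structure, for $0 \le k \le n$ the vector $\psi_1 \wedge \dotsb \wedge \psi_k$ is a highest weight vector of weight $\epsilon_1 + \dotsb + \epsilon_k$: a direct computation using the action formulas in \cref{subsec:spinD,subsec:spinB} shows it is annihilated by every positive root vector. Matching $\dim \Lambda^k V = \binom{N}{k}$ with the Weyl dimension of $L(\epsilon_1 + \dotsb + \epsilon_k)$ identifies $\Lambda^k V$ with this simple module in all cases except type $D$ with $k = n$, where the analogous computation shows that $\psi_1 \wedge \dotsb \wedge \psi_{n-1} \wedge \psi_n^\dagger$ is a second highest weight vector of weight $\omega_{n-1} - \epsilon_n$, and a dimension count then produces \cref{lemonD2}. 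The wedge pairing $\Lambda^k V \otimes \Lambda^{N-k} V \to \Lambda^N V \cong \C$ is nondegenerate and $\SO(V)$-equivariant; combined with the self-duality of $\Lambda^k V$ via $\Phi_V$, it gives $\Lambda^{N-k} V \cong \Lambda^k V$ as $\Spin(V)$-modules.

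For the $\Pin(V)$-claims, $\Pin(V)$-simplicity of $\Lambda^k V$ is automatic whenever it is $\Spin(V)$-simple; the only remaining case is type $D$ with $k = n$, where the summands $L(\omega_{n-1} \pm \epsilon_n)$ form a single $\pi_0(\Pin(V))$-orbit by \cref{hydro}, so by \cref{sponge}, $\Lambda^n V \cong \Ind L(\omega_{n-1} + \epsilon_n)$ is simple. For the comparison of $\Lambda^k V$ with $\Lambda^{N-k} V$: since $gvg^{-1} = (-1)^{\deg g} p(g)(v)$ for $g \in \Pin(V)$ and $v \in V$, the $\Pin(V)$-action on $\Lambda^k V$ equals the pullback along $p$ of the $\rO(V)$-action, tensored with $(\triv^1)^{\otimes k}$. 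The classical Hodge-star isomorphism $\Lambda^{N-k} V \cong \Lambda^k V \otimes \det$ of $\rO(V)$-modules, together with $\det \circ p = \triv^1$, then yields $\Lambda^{N-k} V \cong \Lambda^k V \otimes (\triv^1)^{\otimes (N+1)}$ as $\Pin(V)$-modules—which equals $\Lambda^k V$ in type $B$ and $\Lambda^k V \otimes \triv^1$ in type $D$. Finally, for the non-isomorphism \cref{zest}, since $\tilde{\omega_k} = \omega_k$ for $k < n$, the two $\Pin(V)$-lifts of $L(\omega_k)$, related by tensoring with $\triv^1$, are non-isomorphic by the discussion after \cref{sponge}; hence $\Lambda^k V \not\cong \Lambda^{N-k} V$.
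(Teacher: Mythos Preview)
Your argument is correct, and the $\Spin(V)$-part (highest-weight vectors plus a Weyl dimension count, then duality via the wedge pairing) is essentially what the paper does, except that the paper outsources the identification $\Lambda^k V \cong L(\epsilon_1+\dotsb+\epsilon_k)$ to \cite[Th.~13.9, Th.~13.11]{Car05} rather than doing the dimension match explicitly.

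The genuinely different part is how you handle the $\Pin(V)$-claims. The paper works with the concrete element $P$ of \cref{Pdef}: in type $B$ it observes that $P$ acts trivially on $V$, hence on every $\Lambda^k V$, so the $\Spin(V)$-isomorphism $\Lambda^k V \cong \Lambda^{N-k}V$ upgrades for free; in type $D$ it writes down the highest-weight vectors $v_k=\psi_1\wedge\dotsb\wedge\psi_k$ and $w_k=\psi_1\wedge\dotsb\wedge\psi_n\wedge\psi_n^\dagger\wedge\dotsb\wedge\psi_{k+1}^\dagger$ and checks via \cref{desk} that $P\cdot v_k=v_k$ while $P\cdot w_k=-w_k$, so the two lifts are distinct. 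Your route instead tracks the sign twist $g\cdot v=(-1)^{\deg g}p(g)(v)$ between the $\Pin(V)$-action \cref{PinVact} and the pullback of the standard $\rO(V)$-action, and combines it with the Hodge-star isomorphism $\Lambda^{N-k}V\cong\Lambda^kV\otimes\det$ of $\rO(V)$-modules and $\det\circ p=\triv^1$ to obtain $\Lambda^{N-k}V\cong\Lambda^kV\otimes(\triv^1)^{\otimes(N+1)}$ uniformly; the parity of $N+1$ then separates types $B$ and $D$ in one stroke. This is more conceptual and gives the slightly sharper conclusion $\Lambda^{N-k}V\cong\Lambda^kV\otimes\triv^1$ in type $D$ (not just non-isomorphism), at the cost of invoking the Hodge-star as an $\rO(V)$-module statement. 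You also make explicit, via \cref{sponge,hydro}, the $\Pin(V)$-simplicity of $\Lambda^n V$ in type $D$, a point the paper's proof leaves implicit.
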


\begin{proof}
    For $N \le 2$, the results follow from straightforward computations using the explicit descriptions of $V$ and $S$ given in \cref{lowNrep}.
    \details{
        Suppose $N=2$.  We have $V = L_{-2} \oplus L_2$ as a module for $\Spin(V) \cong \GG_m$.  This is nontrivial as a module for $\Pin(V) \cong \GG_m \rtimes C_2$ with the nontrivial element of $C_2$ interchanging the summands $L_{-2}$ and $L_2$.  Therefore, the nontrivial element of $C_2$ acts as $-1$ on $\Lambda^2(V)$, and so $\Lambda^2(V)$ is not isomorphic to the trivial module $\Lambda^0(V)$.  Furthermore, $\Lambda^0(V)$ and $\Lambda^1(V)$ are both simple since they are one-dimensional.  Finally, $\Lambda^1(V) \cong V$ is also simple as a $\Pin(V)$-module.  The cases $N \in \{0,1\}$ are completely straightforward.
    }

    Now suppose that $N \ge 3$.  A proof that $\Lambda^k(V) \cong L(\epsilon_1 + \dotsb + \epsilon_k)$ as $\fso(V)$-modules, and hence as $\Spin(V)$-modules, for the given ranges on $k$ can be found, for instance, in \cite[Th.~13.9, Th.~13.11]{Car05}.  (The ranges on $k$ are slightly more restrictive there, since those results relate exterior powers to fundamental modules, but the proofs give the isomorphisms for our ranges on $k$.)  To prove \cref{lemonD2}, one notes that $\epsilon_1 + \dotsb + \epsilon_{n-1} \pm \epsilon_n$ are both weights that appear in $\Lambda^n(V)$.  Furthermore, they are highest weights since adding any simple root produces a weight that does not appear in $\Lambda^n(V)$.  Hence, $\Lambda^n(V)$ contains a submodule isomorphic to the right-hand side of \cref{lemonD2}.  A straightforward application of the Weyl dimension formula then shows that this submodule is all of $\Lambda^n(V)$.

    Next, note that we have a pairing of $\Lambda^k(V)$ with $\Lambda^{N-k}(V)$ given by the composition
    \[
        \Lambda^k(V) \otimes \Lambda^{N-k}(V)
        \xrightarrow{\wedge} \Lambda^{N}(V) \xrightarrow{\cong} \C.
    \]
    This is a $\Spin(V)$-module homomorphism, and so identifies $\Lambda^{N-k}(V)$ with the dual of $\Lambda^k(V)$.  Since $\Lambda^k(V)$ is self-dual, this yields an isomorphism of $\Lambda^{N-k}(V)$ with $\Lambda^k(V)$ as $\Spin(V)$-modules.

    In type $B_n$, the element $P$, defined in \cref{Pdef}, acts trivially on $V$, and so the actions of $P$ on $\Lambda^k(V)$ and $\Lambda^{N-k}(V)$ are also trivial.  This completes the proof of \cref{lemonB1}.

    In type $D_n$, the highest-weight spaces of $\Lambda^k(V)$ and $\Lambda^{N-k}(V)$, $1 \le k < n$, are spanned, respectively, by
    \[
        v_k := \psi_1 \wedge \psi_2 \wedge \dotsb \wedge \psi_k
        \quad \text{and} \quad
        w_k := \psi_1 \wedge \psi_2 \wedge \dotsb \wedge \psi_n \wedge \psi_n^\dagger \wedge \dotsb \wedge \psi_{k+1}^\dagger.
    \]
    By \cref{desk}, the action of $P$ on these highest-weight vectors is given by
    \[
        P \cdot v_k = v_k,\qquad
        P \cdot w_k = - w_k.
    \]
    Thus, $\Lambda^k(V) \not \cong \Lambda^{N-k}(V)$ as $\Pin(V)$-modules.
\end{proof}

\begin{cor} \label{Sdub}
    \begin{enumerate}
        \item When $N=2n+1$ (type $B_n$), we have
        \begin{equation} \label{SdubB}
            S^{\otimes 2} \cong \bigoplus_{k=0}^n \Lambda^k(V)
            \qquad \text{as $\Spin(V)$-modules}.
        \end{equation}

        \item When $N=2n$ (type $D_n$), we have
        \begin{equation} \label{SdubD}
            S^{\otimes 2} \cong \bigoplus_{k=0}^{2n} \Lambda^k(V)
            \qquad \text{as $\Pin(V)$-modules}.
        \end{equation}
    \end{enumerate}
\end{cor}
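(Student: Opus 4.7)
The plan is to compute the $\Spin(V)$-decomposition of $S \otimes S$ by iterating the Pieri rule of \cref{lem:pieri}, and then, in the type $D$ case, to upgrade to an isomorphism of $\Pin(V)$-modules using the multiplicity identity of \cref{echo}. Throughout, write $\mu_k := \epsilon_1 + \dotsb + \epsilon_k$ (with $\mu_0 := 0$); in type $D$ also write $\tilde{\mu}_n := \epsilon_1 + \dotsb + \epsilon_{n-1} - \epsilon_n$.

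\textbf{Type $B$ ($N = 2n+1 \ge 3$).} The spin module $S$ is simple of highest weight $\varpi := \tfrac{1}{2}(\epsilon_1 + \dotsb + \epsilon_n)$, so \cref{lem:pieri} gives
\[
    S \otimes S \cong \bigoplus_{\epsilon \in \wt(S)} L(\varpi + \epsilon).
\]
The coordinates of $\varpi + \epsilon$ lie in $\{0, 1\}$; together with dominance in type $B_n$ (see \cref{dominantB}), this forces $\varpi + \epsilon = \mu_k$ for some $0 \le k \le n$, with each $\mu_k$ arising from a unique $\epsilon \in \wt(S)$. Combining with \cref{lemonB2} yields $S \otimes S \cong \bigoplus_{k=0}^n L(\mu_k) \cong \bigoplus_{k=0}^n \Lambda^k(V)$. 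The low-rank case $N = 1$ is immediate, since $S$ is the nontrivial character of $\Spin(V) \cong C_2$, so $S \otimes S$ is trivial, matching $\Lambda^0(V)$.

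\textbf{Type $D$, $\Spin(V)$-isomorphism ($N = 2n \ge 2$).} Write $S = S^+ \oplus S^-$ with $S^\pm \cong L(\varpi_\pm)$, where $\varpi_+ := \tfrac{1}{2}(\epsilon_1 + \dotsb + \epsilon_n)$ and $\varpi_- := \tfrac{1}{2}(\epsilon_1 + \dotsb + \epsilon_{n-1} - \epsilon_n)$. Applying \cref{lem:pieri} to each summand and checking dominance in type $D_n$ (see \cref{dominantD}) coordinate-by-coordinate, the dominant weights appearing in $S \otimes L(\varpi_+)$ are exactly $\mu_0, \dotsc, \mu_n$, while those in $S \otimes L(\varpi_-)$ are $\mu_0, \dotsc, \mu_{n-1}, \tilde{\mu}_n$, each with multiplicity one. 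Summing gives
\[
    S \otimes S \cong \bigoplus_{k=0}^{n-1} L(\mu_k)^{\oplus 2} \oplus L(\mu_n) \oplus L(\tilde{\mu}_n)
\]
as $\Spin(V)$-modules. By \cref{lemonD1,lemonD2}, the right-hand side of \cref{SdubD} decomposes identically as a $\Spin(V)$-module, using $\Lambda^{N-k}(V) \cong L(\mu_k)$ for $0 \le k < n$ to supply the second copies. The case $N = 0$ is trivial, and $N = 2$ runs the same way using \cref{lowNrep} in place of \cref{lemonD1,lemonD2}.

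\textbf{Type $D$, upgrade to $\Pin(V)$.} For $0 \le k < n$, by \cref{zest} and \cref{lemonD1}, $\Lambda^k(V)$ and $\Lambda^{N-k}(V)$ are the two non-isomorphic simple $\Pin(V)$-lifts of $L(\mu_k)$, so \cref{echo} forces them to occur with equal multiplicity in $S^{\otimes 2}$; since $L(\mu_k)$ has $\Spin(V)$-multiplicity $2$, each lift occurs exactly once. The remaining summand $\Lambda^n(V)$ is the unique simple $\Pin(V)$-lift of the $\pi_0(\Pin(V))$-orbit $\{L(\mu_n), L(\tilde{\mu}_n)\}$, and must therefore occur exactly once in $S \otimes S$ to account for the $L(\mu_n) \oplus L(\tilde{\mu}_n)$ summand of the $\Spin(V)$-decomposition. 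Combining these multiplicities yields the claimed $\Pin(V)$-isomorphism. The main technical point is this last step: \cref{echo} is exactly what forces the multiplicity-two $\Spin(V)$-summands to split evenly between the two $\Pin(V)$-lifts, rather than concentrate on one lift in each pair.
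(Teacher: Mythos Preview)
Your proof is correct and follows essentially the same approach as the paper: apply \cref{lem:pieri} to obtain the $\Spin(V)$-decomposition, identify the summands with exterior powers via \cref{lemon}, and in type $D$ invoke \cref{echo} together with \cref{zest} to pin down the $\Pin(V)$-structure. Your treatment of the $\Pin(V)$-upgrade is somewhat more explicit than the paper's (which simply cites \cref{echo} and \cref{lemon}\cref{lemonD}), but the underlying argument is the same.
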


\begin{proof}
    \begin{enumerate}[wide]
        \item When $N=1$, it follows immediately from the descriptions of $S$ and $V$ given in \cref{lowNrep} that $S^{\otimes 2} \cong \triv^0 \cong \Lambda^0(V)$.  For $N \ge 3$, it follows from \cref{lem:pieri} that
            \[
                S^{\otimes 2} \cong \bigoplus_{k=0}^n L(\epsilon_1 + \dotsb + \epsilon_k)
                \cong \bigoplus_{k=0}^n \Lambda^k(V)
                \qquad \text{as $\Spin(V)$-modules}.
            \]

        \item When $N=0$, it follows immediately from the descriptions of $S$ and $V$ given in \cref{lowNrep} that $S^{\otimes 2} \cong \triv^0 \cong \Lambda^0(V)$.  Now suppose $N=2$.  Then, in the notation of \cref{lowNrep}, we have
            \[
                S^{\otimes 2}
                \cong L_{-2} \oplus L_2 \oplus L_0^{\oplus 2}
            \]
            as modules for $\Spin(V) \cong \GG_m$.  As $\Pin(V)$-modules, the summand $L_{-2} \oplus L_2$ is isomorphic to $\Lambda^1(V) \cong V$.  It remains to show that the summand $L_0^{\oplus 2}$ contains the trivial $\Pin(V)$-module $\Lambda^0(V)$ and the nontrivial $\Pin(V)$-module $\Lambda^2(V)$.  As modules for the subgroup $C_2 \subseteq \GG_m \rtimes C_2 \cong \Pin(V)$, $S$ decomposes as a sum of the trivial module and the nontrivial $C_2$-module.  Hence, $S^{\otimes 2}$ contains two copies of the trivial $C_2$-module and two copies of the nontrivial $C_2$-module.  Since the summand $L_{-2} \oplus L_2$ contains one of each, we are done.

            For $N \ge 4$, it follows from \cref{lem:pieri} and \cref{lemon}\cref{lemonD} that
            \begin{align*}
                S^{\otimes 2}
                &\cong (S^+ \otimes S) \oplus (S^- \otimes S)
                \\
                &\cong \left( \bigoplus_{k=0}^n L(\epsilon_1 + \dotsb + \epsilon_k) \right) \oplus \left( L(\epsilon_1 + \dotsb + \epsilon_{n-1} - \epsilon_n) \oplus \bigoplus_{k=0}^{n-1} L(\epsilon_1 + \dotsb + \epsilon_k) \right)
                \\
                &\cong \bigoplus_{k=0}^{2n} \Lambda^k(V),
            \end{align*}
            as $\Spin(V)$-modules.  The result then follows from \cref{echo} and \cref{lemon}\cref{lemonD}. \qedhere
    \end{enumerate}
\end{proof}

\section{The spin Brauer category\label{sec:spinBrauer}}

In this section, we introduce our main category of interest.   We work over an arbitrary commutative ring $\kk$ in which $2$ is invertible.  (Note, however, that \cref{SBdef} below does not require that $2$ is invertible.)

\begin{defin} \label{SBdef}
    For $d,D \in \kk$ and $\kappa \in \{\pm 1\}$, the \emph{spin Brauer category} $\SB(d,D;\kappa)$ is the strict $\kk$-linear monoidal category presented as follows.  The generating objects are $\Sgo$ and $\Vgo$, whose identity morphisms we depict by a black strand and a dotted blue strand:
    \[
        \idstrand{spin} := 1_\Sgo,\qquad
        \idstrand{vec} := 1_\Vgo.
    \]
    The generating morphisms are
    \begin{gather*}
        \capmor{spin} \colon \Sgo \otimes \Sgo \to \one,\qquad
        \cupmor{spin} \colon \one \to \Sgo \otimes \Sgo,\qquad
        \capmor{vec} \colon \Vgo \otimes \Vgo \to \one,\qquad
        \cupmor{vec} \colon \one \to \Vgo \otimes \Vgo,
        \\
        \crossmor{spin}{spin} \colon \Sgo \otimes \Sgo \to \Sgo \otimes \Sgo,\qquad
        \crossmor{vec}{spin} \colon \Vgo \otimes \Sgo \to \Sgo \otimes \Vgo,\qquad
        \crossmor{spin}{vec} \colon \Sgo \otimes \Vgo \to \Vgo \otimes \Sgo,\qquad
        \crossmor{vec}{vec} \colon \Vgo \otimes \Vgo \to \Vgo \otimes \Vgo,
        \\
        \mergemor{vec}{spin}{spin} \colon \Vgo \otimes \Sgo \to \Sgo.
    \end{gather*}
    To state the defining relations, we will use the convention that a relation involving $r \ge 1$ dashed red strands (as in \cref{brauer}) means we impose the $2^r$ relations obtained from replacing each such strand with either a black strand or a dotted blue strand.  The defining relations on morphisms are then as follows:
    \begin{gather} \label{brauer}
        \begin{tikzpicture}[centerzero]
            \draw[any] (0.2,-0.4) to[out=135,in=down] (-0.15,0) to[out=up,in=225] (0.2,0.4);
            \draw[any] (-0.2,-0.4) to[out=45,in=down] (0.15,0) to[out=up,in=-45] (-0.2,0.4);
        \end{tikzpicture}
        \ =\
        \begin{tikzpicture}[centerzero]
            \draw[any] (-0.2,-0.4) -- (-0.2,0.4);
            \draw[any] (0.2,-0.4) -- (0.2,0.4);
        \end{tikzpicture}
        \ ,\quad
        \begin{tikzpicture}[centerzero]
            \draw[any] (0.4,-0.4) -- (-0.4,0.4);
            \draw[any] (0,-0.4) to[out=135,in=down] (-0.32,0) to[out=up,in=225] (0,0.4);
            \draw[any] (-0.4,-0.4) -- (0.4,0.4);
        \end{tikzpicture}
        \ =\
        \begin{tikzpicture}[centerzero]
            \draw[any] (0.4,-0.4) -- (-0.4,0.4);
            \draw[any] (0,-0.4) to[out=45,in=down] (0.32,0) to[out=up,in=-45] (0,0.4);
            \draw[any] (-0.4,-0.4) -- (0.4,0.4);
        \end{tikzpicture}
        \ ,\quad
        \begin{tikzpicture}[centerzero]
            \draw[any] (-0.3,0.4) -- (-0.3,0) arc(180:360:0.15) arc(180:0:0.15) -- (0.3,-0.4);
        \end{tikzpicture}
        =
        \begin{tikzpicture}[centerzero]
            \draw[any] (0,-0.4) -- (0,0.4);
        \end{tikzpicture}
        = \
        \begin{tikzpicture}[centerzero]
            \draw[any] (-0.3,-0.4) -- (-0.3,0) arc(180:0:0.15) arc(180:360:0.15) -- (0.3,0.4);
        \end{tikzpicture}
        \ ,\quad
        \begin{tikzpicture}[anchorbase]
            \draw[any] (-0.15,-0.4) to[out=60,in=-90] (0.15,0) arc(0:180:0.15) to[out=-90,in=120] (0.15,-0.4);
        \end{tikzpicture}
        = \,
        \capmor{any}
        \ ,\quad
        \begin{tikzpicture}[centerzero]
            \draw[any] (-0.2,-0.3) -- (-0.2,-0.1) arc(180:0:0.2) -- (0.2,-0.3);
            \draw[any] (-0.3,0.3) \braiddown (0,-0.3);
        \end{tikzpicture}
        =
        \begin{tikzpicture}[centerzero]
            \draw[any] (-0.2,-0.3) -- (-0.2,-0.1) arc(180:0:0.2) -- (0.2,-0.3);
            \draw[any] (0.3,0.3) \braiddown (0,-0.3);
        \end{tikzpicture}
        \ ,
        \\ \label{typhoon}
        \begin{tikzpicture}[centerzero]
            \draw[vec] (-0.4,-0.4) -- (-0.1,-0.1);
            \draw[spin] (0,-0.4) -- (-0.1,-0.1) -- (0.4,0.4);
            \draw[spin] (0.4,-0.4) to[out=110,in=-20] (-0.4,0.4);
        \end{tikzpicture}
        =
        \begin{tikzpicture}[centerzero]
            \draw[vec] (-0.4,-0.4) -- (0.2,0.2);
            \draw[spin] (0,-0.4) to[out=45,in=-90] (0.2,0.2) -- (0.4,0.4);
            \draw[spin] (0.4,-0.4) to[out=150,in=-60] (-0.4,0.4);
        \end{tikzpicture}
         \ ,\qquad
          \begin{tikzpicture}[centerzero]
            \draw[vec] (-0.4,-0.4) -- (-0.1,-0.1);
            \draw[spin] (0,-0.4) -- (-0.1,-0.1) -- (0.4,0.4);
            \draw[vec] (0.4,-0.4) to[out=110,in=-20] (-0.4,0.4);
        \end{tikzpicture}
        =
        \begin{tikzpicture}[centerzero]
            \draw[vec] (-0.4,-0.4) -- (0.2,0.2);
            \draw[spin] (0,-0.4) to[out=45,in=-90] (0.2,0.2) -- (0.4,0.4);
            \draw[vec] (0.4,-0.4) to[out=150,in=-60] (-0.4,0.4);
        \end{tikzpicture}
        \\ \label{swishy}
        \begin{tikzpicture}[anchorbase]
            \draw[spin] (0,0) -- (0,0.1) arc (0:180:0.2) -- (-0.4,-0.6);
            \draw[spin] (0,0) to[out=-45, in=left] (0.2,-0.2) arc(-90:0:0.15) -- (0.35,0.4);
            \draw[vec] (0,0) to[out=-135,in=up] (-0.2,-0.3) to[out=down,in=down] (0.6,-0.3) -- (0.6,0.4);
        \end{tikzpicture}
        =
        \begin{tikzpicture}[anchorbase,xscale=-1]
            \draw[spin] (0,0) -- (0,0.1) arc (0:180:0.2) -- (-0.4,-0.6);
            \draw[vec] (0,0) to[out=-45, in=left] (0.2,-0.2) arc(-90:0:0.15) -- (0.35,0.4);
            \draw[spin] (0,0) to[out=-135,in=up] (-0.2,-0.3) to[out=down,in=down] (0.6,-0.3) -- (0.6,0.4);
        \end{tikzpicture}
        \ ,
        \\ \label{fishy}
        \begin{tikzpicture}[anchorbase]
            \draw[vec] (0.2,-0.5) to [out=135,in=down] (-0.15,-0.2) to[out=up,in=-135] (0,0);
            \draw[spin] (-0.2,-0.5) to[out=45,in=down] (0.15,-0.2) to[out=up,in=-45] (0,0) -- (0,0.2);
        \end{tikzpicture}
        = \kappa\
        \begin{tikzpicture}[anchorbase]
            \draw[spin] (0,0) -- (0,0.1) arc (0:180:0.2) -- (-0.4,-0.4);
            \draw[spin] (0,0) to[out=-45, in=left] (0.2,-0.2) arc(-90:0:0.15) -- (0.35,0.4);
            \draw[vec] (0,0) to[out=-135,in=up] (-0.2,-0.4);
        \end{tikzpicture}
        \ ,
        \\ \label{oist}
        \begin{tikzpicture}[centerzero]
            \draw[vec] (-0.4,-0.4) -- (0,0.1);
            \draw[vec] (0,-0.4) -- (0.2,-0.15);
            \draw[spin] (0.4,-0.4) --(0,0.1) -- (0,0.4);
        \end{tikzpicture}
        +
        \begin{tikzpicture}[centerzero]
            \draw[vec] (-0.4,-0.4) -- (0.2,-0.15);
            \draw[vec] (0,-0.4) to[out=135,in=-135] (0,0.1);
            \draw[spin] (0.4,-0.4) -- (0,0.1) -- (0,0.4);
        \end{tikzpicture}
        = 2\
        \begin{tikzpicture}[centerzero]
            \draw[vec] (-0.4,-0.4) -- (-0.4,-0.3) arc(180:0:0.2) -- (0,-0.4);
            \draw[spin] (0.4,-0.4) \braidup (0,0.4);
        \end{tikzpicture}
        \ ,
        \\ \label{dimrel}
        \bubble{vec} = d 1_\one,\qquad
        \bubble{spin} = D 1_\one.
    \end{gather}
    This concludes the definition of $\SB(d,D;\kappa)$.
\end{defin}

The third and fourth relations in \cref{brauer} imply that $\SB(d,D;\kappa)$ is a rigid monoidal category, with the objects $\Sgo$ and $\Vgo$ being self-dual.  The first, second, and sixth relations in \cref{brauer}, together with \cref{typhoon} imply that $\SB(d,D;\kappa)$ is symmetric monoidal, with symmetry given by the crossings.  Then \cref{swishy} implies that $\SB(d,D;\kappa)$ is strict pivotal, with duality given by rotating diagrams through $180\degree$.  This means that diagrams are isotopy invariant, and so rotated versions of all the defining relations hold.  For example, we have
\[
    \begin{tikzpicture}[centerzero]
        \draw[any] (-0.2,0.3) -- (-0.2,0.1) arc(180:360:0.2) -- (0.2,0.3);
        \draw[any] (-0.3,-0.3) to[out=up,in=down] (0,0.3);
    \end{tikzpicture}
    =
    \begin{tikzpicture}[centerzero]
        \draw[any] (-0.2,0.3) -- (-0.2,0.1) arc(180:360:0.2) -- (0.2,0.3);
        \draw[any] (0.3,-0.3) to[out=up,in=down] (0,0.3);
    \end{tikzpicture}
    \ ,\qquad
    \begin{tikzpicture}[anchorbase]
        \draw[any] (-0.15,0.4) to[out=-45,in=up] (0.15,0) arc(0:-180:0.15) to[out=up,in=-135] (0.15,0.4);
    \end{tikzpicture}
    = \cupmor{any}
    \ ,\qquad
    \begin{tikzpicture}[anchorbase]
        \draw[any] (-0.4,0.2) to[out=down,in=225,looseness=2] (0,0) to[out=45,in=up,looseness=2] (0.4,-0.2);
        \draw[any] (-0.2,0.2) -- (0.2,-0.2);
    \end{tikzpicture}
    =
    \crossmor{any}{any}
    =
    \begin{tikzpicture}[anchorbase]
        \draw[any] (0.2,0.2) -- (-0.2,-0.2);
        \draw[any] (0.4,0.2) to[out=down,in=-45,looseness=2] (0,0) to[out=135,in=up,looseness=2] (-0.4,-0.2);
    \end{tikzpicture}
    \ .
\]
Throughout this document, we will refer to a relation by its equation number even when we are, in fact, using a rotated version of that relation.

We introduce other trivalent morphisms by successive clockwise rotation:
\begin{equation} \label{vortex}
    \splitmor{spin}{vec}{spin}
    :=
    \begin{tikzpicture}[anchorbase]
        \draw[vec] (-0.4,0.2) to[out=down,in=180] (-0.2,-0.2) to[out=0,in=225] (0,0);
        \draw[spin] (0,0) -- (0,0.2);
        \draw[spin] (0.3,-0.3) -- (0,0);
    \end{tikzpicture}
    \ ,\qquad
    \mergemor{spin}{spin}{vec}
    :=
    \begin{tikzpicture}[anchorbase]
        \draw[spin] (0.4,-0.2) to[out=up,in=0] (0.2,0.2) to[out=180,in=45] (0,0);
        \draw[spin] (0,0) -- (0,-0.2);
        \draw[vec] (-0.3,0.3) -- (0,0);
    \end{tikzpicture}
    \ ,\qquad
    \splitmor{spin}{spin}{vec}
    :=
    \begin{tikzpicture}[anchorbase]
        \draw[spin] (-0.4,0.2) to[out=down,in=180] (-0.2,-0.2) to[out=0,in=225] (0,0);
        \draw[vec] (0,0) -- (0,0.2);
        \draw[spin] (0.3,-0.3) -- (0,0);
    \end{tikzpicture}
    \ ,\qquad
    \mergemor{spin}{vec}{spin}
    :=
    \begin{tikzpicture}[anchorbase]
        \draw[vec] (0.4,-0.2) to[out=up,in=0] (0.2,0.2) to[out=180,in=45] (0,0);
        \draw[spin] (0,0) -- (0,-0.2);
        \draw[spin] (-0.3,0.3) -- (0,0);
    \end{tikzpicture}
    \ ,\qquad
    \splitmor{vec}{spin}{spin}
    :=
    \begin{tikzpicture}[anchorbase]
        \draw[spin] (-0.4,0.2) to[out=down,in=180] (-0.2,-0.2) to[out=0,in=225] (0,0);
        \draw[spin] (0,0) -- (0,0.2);
        \draw[vec] (0.3,-0.3) -- (0,0);
    \end{tikzpicture}
    \ .
\end{equation}
Since $\SB(d,D;\kappa)$ is strict pivotal, the trivalent morphisms are also related in the natural way by counterclockwise rotation:
\begin{equation} \label{vortex2}
    \begin{tikzpicture}[anchorbase]
        \draw[spin] (0.4,0.2) to[out=down,in=0] (0.2,-0.2) to[out=180,in=-45] (0,0);
        \draw[spin] (0,0) -- (0,0.2);
        \draw[vec] (-0.3,-0.3) -- (0,0);
    \end{tikzpicture}
    =
    \splitmor{vec}{spin}{spin}
    \ ,\quad
    \begin{tikzpicture}[anchorbase]
        \draw[spin] (-0.4,-0.2) to[out=up,in=180] (-0.2,0.2) to[out=0,in=-225] (0,0);
        \draw[vec] (0,0) -- (0,-0.2);
        \draw[spin] (0.3,0.3) -- (0,0);
    \end{tikzpicture}
    =
    \mergemor{spin}{vec}{spin}
    \ ,\quad
    \begin{tikzpicture}[anchorbase]
        \draw[vec] (0.4,0.2) to[out=down,in=0] (0.2,-0.2) to[out=180,in=-45] (0,0);
        \draw[spin] (0,0) -- (0,0.2);
        \draw[spin] (-0.3,-0.3) -- (0,0);
    \end{tikzpicture}
    =
    \splitmor{spin}{spin}{vec}
    \ ,\quad
    \begin{tikzpicture}[anchorbase]
        \draw[spin] (-0.4,-0.2) to[out=up,in=180] (-0.2,0.2) to[out=0,in=-225] (0,0);
        \draw[spin] (0,0) -- (0,-0.2);
        \draw[vec] (0.3,0.3) -- (0,0);
    \end{tikzpicture}
    =
    \mergemor{spin}{spin}{vec}
    \ ,\quad
    \begin{tikzpicture}[anchorbase]
        \draw[spin] (0.4,0.2) to[out=down,in=0] (0.2,-0.2) to[out=180,in=-45] (0,0);
        \draw[vec] (0,0) -- (0,0.2);
        \draw[spin] (-0.3,-0.3) -- (0,0);
    \end{tikzpicture}
    =
    \splitmor{spin}{vec}{spin}
    \ ,\quad
    \begin{tikzpicture}[anchorbase]
        \draw[vec] (-0.4,-0.2) to[out=up,in=180] (-0.2,0.2) to[out=0,in=-225] (0,0);
        \draw[spin] (0,0) -- (0,-0.2);
        \draw[spin] (0.3,0.3) -- (0,0);
    \end{tikzpicture}
    =
    \mergemor{vec}{spin}{spin}
    \ .
\end{equation}

\begin{lem}
    We have
    \begin{equation} \label{lobster}
        \begin{tikzpicture}[anchorbase]
            \draw[vec] (0.2,-0.5) to [out=135,in=down] (-0.15,-0.2) to[out=up,in=-135] (0,0);
            \draw[spin] (-0.2,-0.5) to[out=45,in=down] (0.15,-0.2) to[out=up,in=-45] (0,0) -- (0,0.2);
        \end{tikzpicture}
        =
        \kappa\, \mergemor{spin}{vec}{spin}
        \ ,\qquad
        \begin{tikzpicture}[anchorbase]
            \draw[spin] (0.2,-0.5) to [out=135,in=down] (-0.15,-0.2) to[out=up,in=-135] (0,0) -- (0,0.2);
            \draw[vec] (-0.2,-0.5) to[out=45,in=down] (0.15,-0.2) to[out=up,in=-45] (0,0);
        \end{tikzpicture}
        =
        \kappa\, \mergemor{vec}{spin}{spin}
        \ ,\qquad
        \begin{tikzpicture}[anchorbase]
            \draw[spin] (0.2,-0.5) to [out=135,in=down] (-0.15,-0.2) to[out=up,in=-135] (0,0);
            \draw[spin] (-0.2,-0.5) to[out=45,in=down] (0.15,-0.2) to[out=up,in=-45] (0,0);
            \draw[vec] (0,0) -- (0,0.2);
        \end{tikzpicture}
        = \kappa\,
        \mergemor{spin}{spin}{vec}
        \ .
    \end{equation}
\end{lem}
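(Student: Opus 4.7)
The first equation in \cref{lobster} is obtained directly from \cref{fishy}: its left-hand side coincides with that of \cref{fishy}, which by \cref{fishy} equals $\kappa$ times the right-hand side of \cref{fishy}. The latter, interpreted as a diagram in the strict pivotal category $\SB(d,D;\kappa)$, is planar-isotopic to $\mergemor{spin}{vec}{spin}$ from \cref{vortex}: both are trivalent vertices with legs of colors spin, spin, vec in the same cyclic order around the vertex and with boundary endpoints (one spin input, one vec input, one spin output) in the same counterclockwise cyclic order around the disk.

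The second equation follows from the first by pre-composing both sides with the vec-spin crossing. Using the first relation in \cref{brauer} (double crossings cancel), the left-hand side reduces to $\mergemor{vec}{spin}{spin}$; multiplying both sides by $\kappa$ and using $\kappa^2 = 1$ then yields the second equation.

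For the third equation, I compare two different ways to rotate the generating merge $\mergemor{vec}{spin}{spin}$ into a morphism $\Sgo \otimes \Sgo \to \Vgo$. In \emph{Way A}, bend the left $\Vgo$ input up-and-to-the-left via the vec cap and the top $\Sgo$ output down-and-to-the-right via the spin cup; up to planar isotopy, the result is $\mergemor{spin}{spin}{vec}$. In \emph{Way B}, bend the left $\Vgo$ input up-and-to-the-right and the top $\Sgo$ output down-and-to-the-left. The Way B diagram differs from Way A in two ways: (i) the two bottom $\Sgo$ inputs appear in swapped order, so Way B represents $\mergemor{spin}{spin}{vec}$ pre-composed with the spin-spin crossing; and (ii) the bent $\Vgo$ strand must cross over the $\Sgo$ output of the merge, an extra vec-spin crossing which, by the first equation together with \cref{typhoon} to slide the crossing into position just below the merge, contributes a factor of $\kappa$. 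Equating the two expressions for Way B yields the third equation. The main obstacle is this third step, where carefully identifying the extra vec-spin crossing in Way B and absorbing it via the first equation (and \cref{typhoon}) requires careful diagrammatic bookkeeping in the strict pivotal symmetric monoidal category $\SB(d,D;\kappa)$.
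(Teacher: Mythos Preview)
Your treatment of the first two equations matches the paper exactly: the first is a rewriting of \cref{fishy} using the rotated trivalent morphisms of \cref{vortex}/\cref{vortex2}, and the second follows by composing with the appropriate crossing and using the first relation in \cref{brauer}.

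For the third equation, however, your Way~A/Way~B argument has a genuine gap. As you describe them, Way~A and Way~B are \emph{different} morphisms $\Sgo\otimes\Sgo\to\Vgo$: in Way~A the left input is the original $\Sgo$-input of $\mergemor{vec}{spin}{spin}$ and the right input is the bent $\Sgo$-output, while in Way~B these are swapped (you say so yourself in point~(i)). So they are not two computations of the same rotated merge, and there is no a priori reason to equate them; strict pivotality only guarantees that a given rotation is well-defined, not that two rotations landing with different boundary orderings coincide. Consequently your ``two expressions for Way~B'' never get identified, and the argument does not close.

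The paper's route is more direct and avoids this issue. Starting from the left-hand side of the third equation, one applies a rotated form of \cref{typhoon} to drag one spin strand around the vertex, then uses the twist relation (fourth equation in \cref{brauer}) to straighten the resulting curl, and finally recognizes the diagram as a rotation of the right-hand side of \cref{fishy}, producing the factor $\kappa$. The ingredients you list (the first equation of \cref{lobster} and \cref{typhoon}) are exactly the ones needed, but they should be applied directly to the left-hand side rather than packaged into a comparison of two inequivalent rotations.
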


\begin{proof}
    The first relation in \cref{lobster} is simply a rewriting of \cref{fishy}, using \cref{vortex2}.  Then, composing on the bottom of the first relation in \cref{lobster} with $\crossmor{vec}{spin}$ and using the first relation in \cref{brauer} gives the second relation in \cref{lobster}.  For the third relation in \cref{lobster}, we compute
    \[
        \begin{tikzpicture}[anchorbase]
            \draw[spin] (0.2,-0.5) to [out=135,in=down] (-0.15,-0.2) to[out=up,in=-135] (0,0);
            \draw[spin] (-0.2,-0.5) to[out=45,in=down] (0.15,-0.2) to[out=up,in=-45] (0,0);
            \draw[vec] (0,0) -- (0,0.2);
        \end{tikzpicture}
        \overset{\cref{typhoon}}{=}
        \begin{tikzpicture}[anchorbase]
            \draw[spin] (0,0) -- (-0.3,-0.5);
            \draw[vec] (0,0) -- (0,0.5);
            \draw[spin] (0,0) to[out=135,in=right] (-0.25,0.15) to[out=left,in=up] (-0.4,0) to[out=down,in=left] (-0.3,-0.15) to[out=right,in=left] (0,0.3) to[out=right,in=up] (0.3,-0.5);
        \end{tikzpicture}
        \overset{\cref{brauer}}{=}
        \begin{tikzpicture}[anchorbase]
            \draw[spin] (0,0) -- (-0.3,-0.5);
            \draw[vec] (0,0) -- (0,0.5);
            \draw[spin] (0,0) to[out=left,in=left,looseness=2] (0,0.3) to[out=right,in=up] (0.3,-0.5);
        \end{tikzpicture}
        \overset{\cref{fishy}}{=}
        \kappa\,
        \mergemor{spin}{spin}{vec}
        \ .
        \qedhere
    \]
\end{proof}

\begin{rem}
    The need for the choice of $\kappa \in \{\pm 1 \}$ in the definition of the spin Brauer category arises from that fact that, under the incarnation functor to be defined in \cref{sec:incarnation}, the objects $\Sgo$ and $\Vgo$ will be sent to the spin and vector representations, respectively, of $\Pin(N)$ or $\Spin(N)$.  For some values of $N$, the trivial representation and the vector representation either both live in the symmetric square or both live in the exterior square of the spin representation.  In this case, we can take $\kappa=1$.  However, for other values of $N$, one of the trivial or vector representations lives in the symmetric square while the other lives in the exterior square.  In this case, we need to take $\kappa=-1$.  See \cref{crazy,incarnation} for details.
\end{rem}

It will also sometimes be convenient to draw horizontal strands.  Since $\SB(d,D;\kappa)$ is strict pivotal, the meaning of diagrams containing such strands is unambiguous.  For example,
\begin{equation} \label{barbell}
    \begin{tikzpicture}[centerzero]
        \draw[spin] (-0.2,-0.3) -- (-0.2,0.3);
        \draw[spin] (0.2,-0.3) -- (0.2,0.3);
        \draw[vec] (-0.2,0) -- (0.2,0);
    \end{tikzpicture}
    \ =\
    \begin{tikzpicture}[centerzero]
        \draw[spin] (-0.2,-0.3) -- (-0.2,0.3);
        \draw[spin] (0.2,-0.3) -- (0.2,0.3);
        \draw[vec] (-0.2,0) to[out=-45,in=-135,looseness=1.8] (0.2,0);
    \end{tikzpicture}
    \ =\
    \begin{tikzpicture}[centerzero]
        \draw[spin] (-0.2,-0.3) -- (-0.2,0.3);
        \draw[spin] (0.2,-0.3) -- (0.2,0.3);
        \draw[vec] (-0.2,0) to[out=45,in=135,looseness=1.8] (0.2,0);
    \end{tikzpicture}
    \ =\
    \begin{tikzpicture}[centerzero]
        \draw[spin] (-0.2,-0.3) -- (-0.2,0.3);
        \draw[spin] (0.2,-0.3) -- (0.2,0.3);
        \draw[vec] (-0.2,0.15) -- (0.2,-0.15);
    \end{tikzpicture}
    \ =\
    \begin{tikzpicture}[centerzero]
        \draw[spin] (-0.2,-0.3) -- (-0.2,0.3);
        \draw[spin] (0.2,-0.3) -- (0.2,0.3);
        \draw[vec] (-0.2,-0.15) -- (0.2,0.15);
    \end{tikzpicture}
    \ .
\end{equation}

\begin{lem}
    We have
    \begin{equation} \label{bump}
        \begin{tikzpicture}[centerzero]
            \draw[spin] (0,-0.4) -- (0,0.4);
            \draw[vec] (0,-0.2) -- (-0.1,-0.2) arc(270:90:0.2) -- (0,0.2);
        \end{tikzpicture}
        = d\
        \begin{tikzpicture}[centerzero]
            \draw[spin] (0,-0.4) -- (0,0.4);
        \end{tikzpicture}
        \ .
    \end{equation}
\end{lem}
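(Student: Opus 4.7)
The plan is to derive \cref{bump} by specializing the relation \cref{oist}. Both sides of \cref{oist} are morphisms $\Vgo \otimes \Vgo \otimes \Sgo \to \Sgo$; I would precompose both sides with $\cupmor{vec} \otimes \idstrand{spin}$ to obtain an equation of morphisms $\Sgo \to \Sgo$.

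On the right-hand side, the resulting composition places the vec cup supplied by $\cupmor{vec}$ directly underneath the vec cap in the third diagram of \cref{oist}. Cup-cap becomes a vec bubble $\bubble{vec}$, which equals $d \cdot 1_\one$ by \cref{dimrel}, leaving a lone spin identity strand. Hence the right-hand side becomes $2d \cdot \idstrand{spin}$.

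On the left-hand side, the two terms in \cref{oist} differ precisely by a transposition of the two vec inputs at the bottom: the second term is the first term precomposed with $\crossmor{vec}{vec} \otimes \idstrand{spin}$. Since $\SB(d,D;\kappa)$ is a symmetric strict pivotal category (a consequence of the relations in \cref{brauer} together with \cref{typhoon}), we have $\crossmor{vec}{vec} \circ \cupmor{vec} = \cupmor{vec}$. Therefore, after composition with the cup, both terms contribute the same morphism, and the left-hand side becomes $2$ times this common morphism.

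It then remains to identify this morphism with the left-hand side of \cref{bump}. The cup joins the two bottom vec endpoints of the first term of \cref{oist}, creating a single vec loop attached to the spin strand at two trivalent vertices (the two merges occurring in that first term). By isotopy invariance, this loop can be pulled from beneath the spin strand to lie on its left, producing exactly the vec bump of \cref{bump}. Combining, we obtain $2 \cdot (\text{bump}) = 2d \cdot \idstrand{spin}$, so $(\text{bump}) = d \cdot \idstrand{spin}$. The only step that requires real care is the isotopy together with the symmetric-cup identity $\crossmor{vec}{vec} \circ \cupmor{vec} = \cupmor{vec}$, but both are routine consequences of the strict pivotal symmetric monoidal structure already verified for $\SB(d,D;\kappa)$.
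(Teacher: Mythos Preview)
Your argument is correct and is essentially the same as the paper's: both precompose (a rotated form of) \cref{oist} with a vec cup, use the symmetry $\crossmor{vec}{vec} \circ \cupmor{vec} = \cupmor{vec}$ to identify the two left-hand terms as the bump, and evaluate the resulting vec bubble via \cref{dimrel}, then cancel the factor of $2$. The paper merely runs the computation in the other direction, starting from $2\cdot(\text{bump})$ and rewriting one copy as the crossed version before invoking \cref{oist}.
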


\begin{proof}
    We compute
    \[
        2\
        \begin{tikzpicture}[centerzero]
            \draw[spin] (0,-0.4) -- (0,0.4);
            \draw[vec] (0,-0.2) -- (-0.1,-0.2) arc(270:90:0.2) -- (0,0.2);
        \end{tikzpicture}
        \overset{\cref{brauer}}{=}
        \begin{tikzpicture}[centerzero]
            \draw[spin] (0,-0.4) -- (0,0.4);
            \draw[vec] (0,-0.2) -- (-0.1,-0.2) arc(270:90:0.2) -- (0,0.2);
        \end{tikzpicture}
        +
        \begin{tikzpicture}[centerzero]
            \draw[spin] (0,-0.4) -- (0,0.4);
            \draw[vec] (0,-0.2) to[out=left,in=right] (-0.4,0.2) arc(90:270:0.2) to[out=right,in=left] (0,0.2);
        \end{tikzpicture}
        \overset{\cref{oist}}{=} 2\
        \begin{tikzpicture}[centerzero]
            \bub{vec}{-0.4,0};
            \draw[spin] (0,-0.4) -- (0,0.4);
        \end{tikzpicture}
        \overset{\cref{dimrel}}{=}
        2d\
        \begin{tikzpicture}[centerzero]
            \draw[spin] (0,-0.4) -- (0,0.4);
        \end{tikzpicture}
        \ .
    \]
    Since we have assumed that $2$ is invertible in the ground ring $\kk$, the result follows.
\end{proof}

Let $\cC^\op$ denote the opposite of a category $\cC$, and let $\cC^\rev$ denote the reverse of a monoidal category $\cC$, where we reverse the order of the tensor product.  We have an isomorphism of monoidal categories
\begin{equation} \label{opflip}
    \SB(d,D;\kappa) \to \SB(d,D;\kappa)^\op
\end{equation}
that is the identity on objects and reflects morphisms in the horizontal axis.  We also have an isomorphism of monoidal categories
\begin{equation} \label{revflip}
    \SB(d,D;\kappa) \to \SB(d,D;\kappa)^\rev
\end{equation}
that is the identity on objects and reflects morphisms in the vertical axis.

\begin{lem}
    We have
    \begin{equation} \label{zombie}
        2 \kappa\
        \begin{tikzpicture}[centerzero]
            \draw[spin] (-0.35,0.35) -- (0.35,-0.35);
            \draw[vec] (-0.35,-0.35) -- (0.35,0.35);
        \end{tikzpicture}
        =
        \begin{tikzpicture}[centerzero]
            \draw[spin] (-0.35,0.35) -- (0,0.15) -- (0,-0.15) -- (0.35,-0.35);
            \draw[vec] (0,0.15) -- (0.35,0.35);
            \draw[vec] (0,-0.15) -- (-0.35,-0.35);
        \end{tikzpicture}
        +
        \begin{tikzpicture}[centerzero]
            \draw[spin] (-0.35,0.35) -- (-0.15,0) -- (0.15,0) -- (0.35,-0.35);
            \draw[vec] (-0.35,-0.35) -- (-0.15,0);
            \draw[vec] (0.35,0.35) -- (0.15,0);
        \end{tikzpicture}
        .
    \end{equation}
\end{lem}

\begin{proof}
    By \cref{oist}, we have
    \[
        2\
        \begin{tikzpicture}[centerzero]
            \draw[spin] (0,-0.4) -- (0,0.4);
            \draw[vec] (-0.3,-0.4) -- (-0.3,0.4);
        \end{tikzpicture}
        =
        \begin{tikzpicture}[centerzero]
            \draw[spin] (0.35,0.35) -- (0,0.15) -- (0,-0.15) -- (0.35,-0.35);
            \draw[vec] (0,0.15) -- (-0.35,0.35);
            \draw[vec] (0,-0.15) -- (-0.35,-0.35);
        \end{tikzpicture}
        +
        \begin{tikzpicture}[centerzero]
            \draw[spin] (0,-0.4) -- (0,0.4);
            \draw[vec] (-0.3,-0.4) -- (0,0.25);
            \draw[vec] (-0.3,0.4) -- (0,-0.25);
        \end{tikzpicture}
        \ .
    \]
    Then \cref{zombie} follows after composing on top with $\crossmor{vec}{spin}$ and using \cref{typhoon,brauer,lobster}.
\end{proof}

It will be convenient to introduce a shorthand for multiple strands:
\begin{equation} \label{multistrand}
    \begin{tikzpicture}[centerzero]
        \draw[vec] (0,-0.3) -- (0,0.3) node[midway,anchor=east] {\strandlabel{0}};
    \end{tikzpicture}
    := 1_\one
    \ ,\qquad
    \begin{tikzpicture}[centerzero]
        \draw[spin] (0,-0.3) -- (0,0.3) node[midway,anchor=east] {\strandlabel{0}};
    \end{tikzpicture}
    := 1_\one
    \ ,\qquad
    \begin{tikzpicture}[centerzero]
        \draw[vec] (0,-0.3) -- (0,0.3) node[midway,anchor=east] {\strandlabel{r}};
    \end{tikzpicture}
    :=
    \begin{tikzpicture}[centerzero]
        \draw[vec] (-0.3,-0.3) -- (-0.3,0.3);
        \draw[vec] (0.3,-0.3) -- (0.3,0.3);
        \node at (0.03,0) {$\cdots$};
        \draw[decorate, decoration={brace,mirror}] (-0.35,-0.35) -- (0.35,-0.35) node[midway,anchor=north] {\strandlabel{r}};
    \end{tikzpicture}
    \ ,\qquad
    \begin{tikzpicture}[centerzero]
        \draw[spin] (0,-0.3) -- (0,0.3) node[midway,anchor=east] {\strandlabel{r}};
    \end{tikzpicture}
    :=
    \begin{tikzpicture}[centerzero]
        \draw[spin] (-0.3,-0.3) -- (-0.3,0.3);
        \draw[spin] (0.3,-0.3) -- (0.3,0.3);
        \node at (0.03,0) {$\cdots$};
        \draw[decorate, decoration={brace,mirror}] (-0.35,-0.35) -- (0.35,-0.35) node[midway,anchor=north] {\strandlabel{r}};
    \end{tikzpicture}
    \ ,\quad r \ge 1.
\end{equation}
The first two relations in \cref{brauer} imply that we can interpret any element of the symmetric group $\fS_r$ on $r$ strands as a morphism in $\End_{\SB(d,D;\kappa)}(\Vgo^{\otimes r})$.  For a permutation $g \in \fS_r$, let $\sgn(g)$ denote its sign.  Then define the \emph{antisymmetrizer}
\begin{equation} \label{altbox}
    \begin{tikzpicture}[centerzero]
        \draw[vec] (0,-0.6) -- (0,0.6);
        \altbox{-0.5,-0.15}{0.5,0.15}{r};
    \end{tikzpicture}
    :=
    \begin{tikzpicture}[centerzero]
        \draw[vec] (0,-0.6) -- (0,-0.15) node[midway,anchor=west] {\strandlabel{r}};
        \draw[vec] (0,0.15) -- (0,0.6) node[midway,anchor=west] {\strandlabel{r}};
        \altbox{-0.5,-0.15}{0.5,0.15}{r};
    \end{tikzpicture}
    :=
    \sum_{g \in \fS_r} \sgn(g)\
    \begin{tikzpicture}[centerzero]
        \draw[vec] (0,-0.6) -- (0,-0.15) node[midway,anchor=west] {\strandlabel{r}};
        \draw[vec] (0,0.15) -- (0,0.6) node[midway,anchor=west] {\strandlabel{r}};
        \genbox{-0.5,-0.15}{0.5,0.15}{g};
    \end{tikzpicture}
    \ ,\quad r \ge 0,
\end{equation}
where we label the strands by $r$ when we wish to emphasize how many there are.  Thus, for example,
\[
    \begin{tikzpicture}[centerzero]
        \draw[vec] (0,-0.5) -- (0,0.5);
        \altbox{-0.25,-0.15}{0.25,0.15}{3};
    \end{tikzpicture}
    \ =\
    \begin{tikzpicture}[centerzero]
        \draw[vec] (-0.15,-0.5) -- (-0.15,0.5);
        \draw[vec] (0,-0.5) -- (0,0.5);
        \draw[vec] (0.15,-0.5) -- (0.15,0.5);
        \altbox{-0.25,-0.15}{0.25,0.15}{3};
    \end{tikzpicture}
    \ =\
    \begin{tikzpicture}[centerzero]
        \draw[vec] (-0.4,-0.4) -- (-0.4,0.4);
        \draw[vec] (0,-0.4) -- (0,0.4);
        \draw[vec] (0.4,-0.4) -- (0.4,0.4);
    \end{tikzpicture}
    \ -\
    \begin{tikzpicture}[centerzero]
        \draw[vec] (-0.4,-0.4) -- (0,0.4);
        \draw[vec] (0,-0.4) -- (-0.4,0.4);
        \draw[vec] (0.4,-0.4) -- (0.4,0.4);
    \end{tikzpicture}
    \ -\
    \begin{tikzpicture}[centerzero]
        \draw[vec] (-0.4,-0.4) -- (-0.4,0.4);
        \draw[vec] (0,-0.4) -- (0.4,0.4);
        \draw[vec] (0.4,-0.4) -- (0,0.4);
    \end{tikzpicture}
    \ +\
    \begin{tikzpicture}[centerzero]
        \draw[vec] (-0.4,-0.4) -- (0,0.4);
        \draw[vec] (0,-0.4) -- (0.4,0.4);
        \draw[vec] (0.4,-0.4) -- (-0.4,0.4);
    \end{tikzpicture}
    \ +\
    \begin{tikzpicture}[centerzero]
        \draw[vec] (-0.4,-0.4) -- (0.4,0.4);
        \draw[vec] (0,-0.4) -- (-0.4,0.4);
        \draw[vec] (0.4,-0.4) -- (0,0.4);
    \end{tikzpicture}
    \ -\
    \begin{tikzpicture}[centerzero]
        \draw[vec] (0.4,-0.4) -- (-0.4,0.4);
        \draw[vec] (0,-0.4) to[out=135,in=down] (-0.32,0) to[out=up,in=225] (0,0.4);
        \draw[vec] (-0.4,-0.4) -- (0.4,0.4);
    \end{tikzpicture}
    \ .
\]
It follows from \cref{multistrand} that the antisymmetrizer \cref{altbox} is $1_\one$ when $r=0$.  It also follows directly from the definition that
\begin{equation} \label{absorb}
    \begin{tikzpicture}[anchorbase]
        \draw[vec] (0,-1.2) -- (0,-0.8) node[midway,anchor=west] {\strandlabel{r}};
        \draw[vec] (-0.3,-0.5) -- (-0.3,0) node[midway,anchor=east] {\strandlabel{s}};
        \draw[vec] (0.3,-0.5) -- (0.3,0) node[midway,anchor=west] {\strandlabel{r-s-2}};
        \draw[vec] (-0.1,-0.5) -- (0.1,0);
        \draw[vec] (0.1,-0.5) -- (-0.1,0);
        \altbox{-0.5,-0.8}{0.5,-0.5}{r};
    \end{tikzpicture}
    = -\
    \begin{tikzpicture}[centerzero]
        \draw[vec] (0,-0.6) -- (0,0.6);
        \altbox{-0.5,-0.15}{0.5,0.15}{r};
    \end{tikzpicture}
    \qquad \text{for all } 0 \le s \le r-2.
\end{equation}

\begin{prop} \label{Delprop}
    Suppose that $r$ is a positive integer such that either
    \begin{itemize}
        \item $r$ is even and invertible, or
        \item $r$ is odd and $r-d$ is invertible.
    \end{itemize}
    Then
    \begin{equation} \label{Deligne}
        \begin{tikzpicture}[anchorbase]
            \draw[spin] (-0.1,0) -- (0.1,0) arc(-90:90:0.15) -- (-0.1,0.3) arc(90:270:0.15);
            \draw[vec] (0,-1) -- (0,0);
            \altbox{-0.2,-0.65}{0.2,-0.35}{r};
        \end{tikzpicture}
        = 0.
    \end{equation}
\end{prop}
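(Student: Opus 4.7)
Write $\Theta_r \colon \Vgo^{\otimes r} \to \one$ for the morphism on the left-hand side of \cref{Deligne} with the antisymmetrizer $\mathrm{alt}_r$ of \cref{altbox} removed; the goal is to show $\Theta_r \circ \mathrm{alt}_r = 0$.

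For even $r$, I would use the categorical-trace structure of the closed spin bubble. The strict pivotal symmetric monoidal axioms make sliding a trivalent attachment all the way around the spin loop a topological identity, giving $\Theta_r \circ C^k = \Theta_r$ for every $k \geq 0$, where $C$ is the $r$-cycle $(v_1, \dots, v_r) \mapsto (v_2, \dots, v_r, v_1)$, of sign $(-1)^{r-1}$. Combining with the identity $C^k \circ \mathrm{alt}_r = (-1)^{k(r-1)} \mathrm{alt}_r$ (which specialises to $(-1)^k \mathrm{alt}_r$ for even $r$, since $r-1$ is odd) and summing over $k = 0, 1, \dots, r-1$:
\[
    r \cdot (\Theta_r \circ \mathrm{alt}_r) = \sum_{k=0}^{r-1} \Theta_r \circ C^k \circ \mathrm{alt}_r = \Bigl(\sum_{k=0}^{r-1} (-1)^k\Bigr) (\Theta_r \circ \mathrm{alt}_r) = 0,
\]
and invertibility of $r$ finishes this case.

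For odd $r$, the cyclic argument is vacuous (since $(-1)^{r-1} = +1$), and the strategy is instead to combine \cref{oist} with \cref{bump}. Augment $\Theta_r$ by attaching to the spin loop an auxiliary vec arc whose two new trivalent vertices are adjacent on the loop: by \cref{bump}, this enlarged diagram equals $d \cdot (\Theta_r \circ \mathrm{alt}_r)$. I would then apply \cref{oist} iteratively to exchange the auxiliary arc past each of the $r$ external vec attachments in turn, using that $\mathrm{alt}_r$ annihilates any vec cap placed between two of its adjacent inputs (a direct consequence of the symmetry of $\capmor{vec}$ combined with $s_i \circ \mathrm{alt}_r = -\mathrm{alt}_r$) to kill the cross-terms, leaving exactly $r$ copies of $\Theta_r \circ \mathrm{alt}_r$. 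Equating the two evaluations of the augmented diagram gives $(r-d)(\Theta_r \circ \mathrm{alt}_r) = 0$, and invertibility of $r-d$ completes the proof. The main technical obstacle is the bookkeeping in this odd case: one must verify that, as the auxiliary arc is slid around the loop, precisely the $r$ desired contributions survive after the antisymmetrizer-kills-cap cancellations, whereas the even case is essentially automatic from cyclicity.
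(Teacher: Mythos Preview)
Your even-$r$ argument is correct and differs genuinely from the paper's approach. Since $\SB(d,D;\kappa)$ is compact closed and symmetric, the $\Sgo$-trace is cyclic, so $\Theta_r\circ C=\Theta_r$; combined with $C\circ\mathrm{alt}_r=(-1)^{r-1}\mathrm{alt}_r$ this yields $r\cdot(\Theta_r\circ\mathrm{alt}_r)=0$ exactly as you write (indeed $k=1$ alone already gives $2\cdot(\Theta_r\circ\mathrm{alt}_r)=0$). The paper does not invoke trace cyclicity; instead it gives a single computation valid for both parities, attaching an auxiliary vec arc via \cref{bump} and pushing one endpoint around the loop using \cref{zombie} to obtain $\big(2r+((-1)^r-1)d\big)(\Theta_r\circ\mathrm{alt}_r)=0$. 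Your route is shorter for even $r$ but says nothing about odd $r$; the paper's is uniform.

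Your odd-$r$ sketch has the right architecture but misidentifies the correction terms. When you use \cref{oist} (equivalently, its rotation \cref{zombie}) to exchange the moving arc endpoint with an external attachment, the cross-term produced is a vec cap joining the \emph{arc endpoint} to that single external input --- not a cap between two adjacent \emph{external} inputs. Since the arc endpoint is tethered through the auxiliary arc to the stationary endpoint still sitting on the loop, this cross-term is, after isotopy, another copy of the $r$-attachment diagram with the affected input landing at a shifted cyclic position on the loop: it equals $\pm(\Theta_r\circ\mathrm{alt}_r)$ and is \emph{not} killed by the antisymmetrizer. The fact you cite (that $\mathrm{alt}_r$ annihilates a cap placed on two of its own outputs) is true but never enters here. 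What actually happens --- and what the paper carries out via \cref{zombie}, then \cref{absorb}, \cref{typhoon}, and \cref{lobster} --- is that one keeps all $r$ of these surviving corrections together with their signs and the $(-1)^r d$ contribution from the fully-slid arc, arriving at $d\cdot(\Theta_r\circ\mathrm{alt}_r)=(2r-d)\cdot(\Theta_r\circ\mathrm{alt}_r)$ for odd $r$. That sign bookkeeping, not a cap-cancellation, is the content of the odd case.
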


\begin{proof}
    We have
    \begin{multline*}
        d\
        \begin{tikzpicture}[anchorbase]
            \draw[vec] (0,-1.2) -- (0,-0.8) node[midway,anchor=east] {\strandlabel{r}};
            \draw[vec] (0,-0.5) -- (0,0) node[midway,anchor=east] {\strandlabel{r}};
            \draw[spin] (-0.4,0) -- (0.4,0) arc(-90:90:0.15) -- (-0.4,0.3) arc(90:270:0.15);
            \altbox{-0.5,-0.8}{0.5,-0.5}{r};
        \end{tikzpicture}
        \overset{\cref{bump}}{=}
        \begin{tikzpicture}[anchorbase]
            \draw[vec] (0,-1.2) -- (0,-0.8) node[midway,anchor=east] {\strandlabel{r}};
            \draw[vec] (0,-0.5) -- (0,0) node[midway,anchor=east] {\strandlabel{r}};
            \draw[vec] (0.2,0) -- (0.2,0.3);
            \draw[spin] (-0.4,0) -- (0.4,0) arc(-90:90:0.15) -- (-0.4,0.3) arc(90:270:0.15);
            \altbox{-0.5,-0.8}{0.5,-0.5}{r};
        \end{tikzpicture}
        \overset{\cref{zombie}}{=} -
        \begin{tikzpicture}[anchorbase]
            \draw[vec] (0,-1.2) -- (0,-0.8) node[midway,anchor=east] {\strandlabel{r}};
            \draw[vec] (-0.2,-0.5) -- (-0.2,0) node[midway,anchor=east] {\strandlabel{r-1}};
            \draw[vec] (0.2,-0.5) -- (0.2,0);
            \draw[spin] (-0.4,0) -- (0.4,0) arc(-90:90:0.15) -- (-0.4,0.3) arc(90:270:0.15);
            \altbox{-0.5,-0.8}{0.5,-0.5}{r};
            \draw[vec] (0,0) -- (0,0.3);
        \end{tikzpicture}
        + 2\kappa
        \begin{tikzpicture}[anchorbase]
            \draw[vec] (0,-1.2) -- (0,-0.8) node[midway,anchor=east] {\strandlabel{r}};
            \draw[vec] (-0.2,-0.5) -- (-0.2,0) node[midway,anchor=east] {\strandlabel{r-1}};
            \draw[vec] (0.2,-0.5) -- (0.2,0.3);
            \draw[spin] (-0.4,0) -- (0.4,0) arc(-90:90:0.15) -- (-0.4,0.3) arc(90:270:0.15);
            \altbox{-0.5,-0.8}{0.5,-0.5}{r};
        \end{tikzpicture}
        \\
        \overset{\cref{zombie}}{=}
        \begin{tikzpicture}[anchorbase]
            \draw[vec] (0,-1.2) -- (0,-0.8) node[midway,anchor=east] {\strandlabel{r}};
            \draw[vec] (-0.2,-0.5) -- (-0.2,0) node[midway,anchor=east] {\strandlabel{r-2}};
            \draw[vec] (0.2,-0.5) -- (0.2,0) node[midway,anchor=west] {\strandlabel{2}};
            \draw[spin] (-0.4,0) -- (0.4,0) arc(-90:90:0.15) -- (-0.4,0.3) arc(90:270:0.15);
            \altbox{-0.5,-0.8}{0.5,-0.5}{r};
            \draw[vec] (0,0) -- (0,0.3);
        \end{tikzpicture}
        - 2\kappa
        \begin{tikzpicture}[anchorbase]
            \draw[vec] (0,-1.2) -- (0,-0.8) node[midway,anchor=east] {\strandlabel{r}};
            \draw[vec] (-0.2,-0.5) -- (-0.2,0) node[midway,anchor=east] {\strandlabel{r-2}};
            \draw[vec] (0,-0.5) -- (0,0.3);
            \draw[vec] (0.2,-0.5) -- (0.2,0);
            \draw[spin] (-0.4,0) -- (0.4,0) arc(-90:90:0.15) -- (-0.4,0.3) arc(90:270:0.15);
            \altbox{-0.5,-0.8}{0.5,-0.5}{r};
        \end{tikzpicture}
        + 2\kappa
        \begin{tikzpicture}[anchorbase]
            \draw[vec] (0,-1.2) -- (0,-0.8) node[midway,anchor=east] {\strandlabel{r}};
            \draw[vec] (-0.2,-0.5) -- (-0.2,0) node[midway,anchor=east] {\strandlabel{r-1}};
            \draw[vec] (0.2,-0.5) -- (0.2,0.3);
            \draw[spin] (-0.4,0) -- (0.4,0) arc(-90:90:0.15) -- (-0.4,0.3) arc(90:270:0.15);
            \altbox{-0.5,-0.8}{0.5,-0.5}{r};
        \end{tikzpicture}
        = \dotsb = (-1)^r\
        \begin{tikzpicture}[anchorbase]
            \draw[vec] (0,-1.2) -- (0,-0.8) node[midway,anchor=east] {\strandlabel{r}};
            \draw[vec] (0,-0.5) -- (0,0) node[midway,anchor=west] {\strandlabel{r}};
            \draw[spin] (-0.4,0) -- (0.4,0) arc(-90:90:0.15) -- (-0.4,0.3) arc(90:270:0.15);
            \altbox{-0.5,-0.8}{0.5,-0.5}{r};
            \draw[vec] (-0.2,0) -- (-0.2,0.3);
        \end{tikzpicture}
        + 2\kappa \sum_{s=0}^{r-1} (-1)^s\
        \begin{tikzpicture}[anchorbase]
            \draw[vec] (0,-1.2) -- (0,-0.8) node[midway,anchor=east] {\strandlabel{r}};
            \draw[vec] (-0.2,-0.5) -- (-0.2,0) node[midway,anchor=east] {\strandlabel{r-s-1}};
            \draw[vec] (0,-0.5) -- (0,0.3);
            \draw[vec] (0.2,-0.5) -- (0.2,0) node[midway,anchor=west] {\strandlabel{s}};
            \draw[spin] (-0.4,0) -- (0.4,0) arc(-90:90:0.15) -- (-0.4,0.3) arc(90:270:0.15);
            \altbox{-0.5,-0.8}{0.5,-0.5}{r};
        \end{tikzpicture}
        \\
        \overset{\cref{absorb}}{\underset{\cref{typhoon}}{=}} (-1)^r
        \begin{tikzpicture}[anchorbase]
            \draw[vec] (0,-1.2) -- (0,-0.8) node[midway,anchor=east] {\strandlabel{r}};
            \draw[vec] (0,-0.5) -- (0,0) node[midway,anchor=west] {\strandlabel{r}};
            \draw[spin] (-0.4,0) -- (0.4,0) arc(-90:90:0.15) -- (-0.4,0.3) arc(90:270:0.15);
            \altbox{-0.5,-0.8}{0.5,-0.5}{r};
            \draw[vec] (-0.2,0) -- (-0.2,0.3);
        \end{tikzpicture}
        + 2\kappa \sum_{s=0}^{r-1}\
        \begin{tikzpicture}[anchorbase]
            \draw[vec] (0,-1.2) -- (0,-0.8) node[midway,anchor=east] {\strandlabel{r}};
            \draw[vec] (-0.2,-0.5) -- (-0.2,0) node[midway,anchor=east] {\strandlabel{r-1}};
            \draw[vec] (0.2,-0.5) -- (0.2,0.3);
            \draw[spin] (-0.4,0) -- (0.4,0) arc(-90:90:0.15) -- (-0.4,0.3) arc(90:270:0.15);
            \altbox{-0.5,-0.8}{0.5,-0.5}{r};
        \end{tikzpicture}
        \overset{\cref{bump}}{\underset{\cref{lobster}}{=}}
        \left( 2 r + (-1)^r d \right)\
        \begin{tikzpicture}[anchorbase]
            \draw[vec] (0,-1.2) -- (0,-0.8) node[midway,anchor=east] {\strandlabel{r}};
            \draw[vec] (0,-0.5) -- (0,0) node[midway,anchor=east] {\strandlabel{r}};
            \draw[spin] (-0.4,0) -- (0.4,0) arc(-90:90:0.15) -- (-0.4,0.3) arc(90:270:0.15);
            \altbox{-0.5,-0.8}{0.5,-0.5}{r};
        \end{tikzpicture}
        \ .
    \end{multline*}
    Thus
    \[
        \big( 2r + ((-1)^r-1)d \big)
        \begin{tikzpicture}[anchorbase]
            \draw[vec] (0,-1.2) -- (0,-0.8) node[midway,anchor=east] {\strandlabel{r}};
            \draw[vec] (0,-0.5) -- (0,0) node[midway,anchor=east] {\strandlabel{r}};
            \draw[spin] (-0.4,0) -- (0.4,0) arc(-90:90:0.15) -- (-0.4,0.3) arc(90:270:0.15);
            \altbox{-0.5,-0.8}{0.5,-0.5}{r};
        \end{tikzpicture}
        \ = 0.
    \]
    If $r$ is even, then the coefficient on the left-hand side above is $2r$ and so \cref{Deligne} follows when $r$ is invertible.  If $r$ is odd, then the coefficient is $2(r-d)$ and \cref{Deligne} follows as long as $r - d$ is invertible.
\end{proof}

\begin{rem} \label{sofa}
    Note that the case $r=d \in 2 \N +1$ is not covered by \cref{Delprop}.  In fact, the diagram in \cref{Deligne} is \emph{not} zero in this case; see \cref{Delignefail}.
\end{rem}

\begin{defin}
    For $d \in 2\N + 1$, let $\overline{\SB}(d,D;\kappa)$ denote the quotient of $\SB(d,D;\kappa)$ by the relation
    \begin{equation} \label{extra}
        \begin{tikzpicture}[centerzero]
            \draw[spin] (0,0.25) circle(0.15);
            \draw[spin] (0,-0.25) circle(0.15);
            \draw[vec] (0,0.4) -- (0,1.1);
            \draw[vec] (0,-0.4) -- (0,-1.1);
            \altbox{-0.2,-0.9}{0.2,-0.6}{d};
            \altbox{-0.2,0.6}{0.2,0.9}{d};
        \end{tikzpicture}
        = D^2 (d!)^2\
        \begin{tikzpicture}[centerzero]
            \draw[vec] (0,-1.1) -- (0,1.1);
            \altbox{-0.2,-0.15}{0.2,0.15}{d};
        \end{tikzpicture}
        \ .
    \end{equation}
    For $d \notin 2\N+1$, let $\overline{\SB}(d,D;\kappa) = \SB(d,D;\kappa)$.
\end{defin}

The next result gives sufficient conditions under which all closed diagrams in $\overline{\SB}(d,D;\kappa)$ can be reduced to a multiple of the empty diagram $1_\one$.

\begin{prop} \label{popping}
   Suppose that $\kk$ is a $\mathbb{Q}$-algebra and $r-d$ is invertible for all $r \in (2\N + 1) \setminus \{d\}$. (For instance, this is satisfied when $\kk$ is a field of characteristic zero.)  Then $\End_{\overline{\SB}(d,D;\kappa)}(\one) = \kk 1_\one$.
\end{prop}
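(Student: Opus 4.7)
The plan is to prove the claim by strong induction on the number $T$ of trivalent vertices in a closed planar diagram representing an endomorphism of $\one$. Since $\SB(d,D;\kappa)$ is symmetric monoidal by \cref{brauer} and \cref{typhoon}, every morphism in $\End_{\overline{\SB}(d,D;\kappa)}(\one)$ is a $\kk$-linear combination of closed planar diagrams whose edges are colored spin or vector and whose trivalent vertices merge one vector edge with two spin edges. In the base case $T=0$, any such diagram is a disjoint union of closed spin loops and closed vector loops, evaluating to $D^a d^b 1_\one$ via \cref{dimrel}.

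For $T \ge 1$, locate an \emph{innermost} spin loop $L$, meaning a closed spin curve bounding a disc whose interior contains no other closed spin components. Using \cref{brauer} I simplify the interior of $L$, reducing it (up to factors of $d$ from closed vector loops via \cref{dimrel}) to a disc containing no edges. Let $r \ge 0$ be the number of trivalent vertices on $L$. If $r=0$, remove $L$ as a disjoint spin circle contributing a factor of $D$ by \cref{dimrel}, and apply the inductive hypothesis. If $r \ge 1$, shrink $L$ to a small bubble with $r$ dangling vector strands emerging into the rest of the diagram.

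Iteratively apply \cref{zombie}, which rewrites two adjacent routings of vector strands through trivalent vertices on a spin strand as $2\kappa$ times a bypass configuration (with two fewer trivalent vertices) minus the other routing. After iterating over all pairs of adjacent dangling strands, this expresses the local configuration at $L$ as a $\kk$-linear combination of the antisymmetrizer of the $r$ dangling strands (placed just before they attach to $L$) and correction terms with strictly fewer total trivalent vertices. The correction terms are handled by the inductive hypothesis. For the antisymmetrized term, \cref{Delprop} yields zero whenever $r$ is even (using that $r$ is invertible in the $\Q$-algebra $\kk$) or $r$ is odd with $r \ne d$ (using that $r-d$ is invertible by hypothesis).

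The remaining case is $r = d$ odd. Since $d$ is odd, the $d$ dangling strands cannot fully pair into vector cup-caps, so at least one strand must connect to another spin component. Applying the same antisymmetrization analysis to every spin loop reachable through the dangling strands produces two spin bubbles each carrying $d$ antisymmetrized strands, joined by an antisymmetrized matching of strands; the defining relation \cref{extra} of $\overline{\SB}(d,D;\kappa)$ then replaces this pair of bubbles by the scalar $D^2(d!)^2$ times the antisymmetrized identity, a diagram with $2d$ fewer trivalent vertices that is handled by induction. The main obstacle is this final case: rigorously producing the pair of antisymmetrized spin bubbles demanded by \cref{extra} through a global analysis of how the $d$ dangling strands at one spin bubble propagate to other spin components in the diagram, possibly requiring an auxiliary induction on the number of spin loops.
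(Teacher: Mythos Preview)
Your overall strategy coincides with the paper's: induct on the number of trivalent vertices, isolate a spin loop, clear its interior, and use the antisymmetrizer relation \cref{Deligne} to reduce. Your use of \cref{zombie} to extract the antisymmetrized term plus lower-order corrections is equivalent to the paper's use of \cref{oist} in the opposite direction (the paper expands the antisymmetrizer in \cref{Deligne} to obtain $0 = r!\,[\text{plain bubble}] + A$ with $A$ having fewer trivalent vertices).

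The gap is exactly where you flag it, the $r=d$ odd case, and your proposed fix is more complicated than necessary and not quite right as stated. You do not need to trace where the $d$ dangling strands go, and in general they will \emph{not} all terminate on a single second spin loop, so you cannot expect ``two spin bubbles \ldots\ joined by an antisymmetrized matching of strands'' without further work. The paper's argument is cleaner: since each blue edge in a closed diagram joins two trivalent vertices, the total number of trivalent vertices is even; as $d$ is odd there must be a second spin loop carrying at least one trivalent vertex. Clear that loop's interior as well. If it has $r'\neq d$ trivalent vertices, apply \cref{Deligne} to \emph{it} and reduce. Otherwise both loops carry exactly $d$ vertices, and by isotopy the two bubbles together form a sub-morphism $V^{\otimes 2d}\to\one$, irrespective of where their $2d$ strands connect in the rest of the diagram. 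Bending one set of $d$ strands and inserting antisymmetrizers via \cref{oist} (at the cost of lower-order terms), this sub-morphism is rewritten using \cref{extra} as $D^2$ times a purely blue antisymmetrized cap plus diagrams with fewer trivalent vertices, completing the induction.
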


\begin{proof}
    We give an algorithm to simplify any diagram in $\End_{\overline{\SB}(d,D;\kappa)}(\one)$ to a scalar multiple of the identity. Throughout, we freely use our observations about isotopy invariance of diagrams in $\End_{\SB(d,D;\kappa)}(\one)$, as discussed immediately after the definition of $\SB(d,D;\kappa)$.  We proceed by induction on the number of trivalent vertices in the diagram.

    Suppose we have a closed diagram with at least one trivalent vertex. Consider the black curve that is part of that trivalent vertex. Since every trivalent vertex has exactly two black strings incident to it, this curve is part of a loop. Our first goal is to remove all self-intersections of this loop, and make the interior of this loop empty. We can separate all other black strands from this loop and remove self-intersections of this loop using \cref{brauer} and \cref{typhoon}. We separate all other dotted blue strands that do not have any trivalent vertices on this loop in the same manner. We can then use the same techniques, in addition to \cref{fishy}, to ensure the interior of this loop is empty. Let $r$ be the number of trivalent vertices on this loop.

    Unless $r=d$ and $d$ is an odd number, we have
    \[
        0
        \overset{\cref{Deligne}}{=}
        \begin{tikzpicture}[anchorbase]
            \draw[spin] (-0.1,0) -- (0.1,0) arc(-90:90:0.15) -- (-0.1,0.3) arc(90:270:0.15);
            \draw[vec] (0,-1) -- (0,0);
            \altbox{-0.2,-0.65}{0.2,-0.35}{r};
        \end{tikzpicture}
        \overset{\cref{oist}}{=} r!
        \begin{tikzpicture}[anchorbase]
            \draw[spin] (-0.1,0) -- (0.1,0) arc(-90:90:0.15) -- (-0.1,0.3) arc(90:270:0.15);
            \draw[vec] (0,-1) -- (0,0) node[midway,anchor=west] {\strandlabel{r}};
        \end{tikzpicture}
        + A,
    \]
    where $A$ is a linear combination of diagrams with fewer than $r$ dotted blue strands attached to the black circle.  Since $\kk$ is a $\Q$-algebra, $r!$ is invertible in $\kk$. We can then use this relation to write our diagram as a linear combination of diagrams with fewer trivalent vertices, as is our inductive goal.

    Suppose instead that $r=d$ is an odd positive integer. Since the total number of trivalent vertices is even, there must be another black loop with a trivalent vertex. We can repeat the process discussed above with that loop, and can either rewrite in terms of diagrams with fewer trivalent vertices, or that other loop also has $r$ trivalent vertices, in which case we end up with a subdiagram of the form
    \[
        \begin{tikzpicture}[anchorbase]
            \draw[spin] (-0.1,0) -- (0.1,0) arc(-90:90:0.15) -- (-0.1,0.3) arc(90:270:0.15);
            \draw[vec] (0,-0.5) -- (0,0) node[midway,anchor=west] {\strandlabel{r}};
        \end{tikzpicture}
        \begin{tikzpicture}[anchorbase]
            \draw[spin] (-0.1,0) -- (0.1,0) arc(-90:90:0.15) -- (-0.1,0.3) arc(90:270:0.15);
            \draw[vec] (0,-0.5) -- (0,0) node[midway,anchor=west] {\strandlabel{r}};
        \end{tikzpicture}
        \ .
    \]
    Then we have
    \[
        D^2 (r!)^2\
        \begin{tikzpicture}[anchorbase]
            \draw[vec] (-0.2,0) -- (-0.2,1) arc(180:0:0.2) -- (0.2,0);
            \altbox{-0.4,0.3}{0,0.6}{r};
        \end{tikzpicture}
        \overset{\cref{extra}}{=}
        \begin{tikzpicture}[anchorbase]
            \draw[spin] (-0.1,0) -- (0.1,0) arc(-90:90:0.15) -- (-0.1,0.3) arc(90:270:0.15);
            \draw[vec] (0,-1) -- (0,0);
            \altbox{-0.2,-0.65}{0.2,-0.35}{r};
        \end{tikzpicture}
        \
        \begin{tikzpicture}[anchorbase]
            \draw[spin] (-0.1,0) -- (0.1,0) arc(-90:90:0.15) -- (-0.1,0.3) arc(90:270:0.15);
            \draw[vec] (0,-1) -- (0,0);
            \altbox{-0.2,-0.65}{0.2,-0.35}{r};
        \end{tikzpicture}
        \overset{\cref{oist}}{=} (r!)^2
        \begin{tikzpicture}[anchorbase]
            \draw[spin] (-0.1,0) -- (0.1,0) arc(-90:90:0.15) -- (-0.1,0.3) arc(90:270:0.15);
            \draw[vec] (0,-1) -- (0,0) node[midway,anchor=west] {\strandlabel{r}};
        \end{tikzpicture}
        \begin{tikzpicture}[anchorbase]
            \draw[spin] (-0.1,0) -- (0.1,0) arc(-90:90:0.15) -- (-0.1,0.3) arc(90:270:0.15);
            \draw[vec] (0,-1) -- (0,0) node[midway,anchor=west] {\strandlabel{r}};
        \end{tikzpicture}
        + A,
    \]
    where again $A$ is a linear combination of diagrams with fewer trivalent vertices, and we proceed as before.

    This completes the inductive step and reduces us to considering the case where there are zero trivalent vertices. In this case, the relations \cref{brauer} (which are the same as in the Brauer category) suffice to rewrite our diagrams as a disjoint union of circles, which are evaluated as scalars by \cref{dimrel}.
\end{proof}

\begin{rem}
    \begin{enumerate}[wide]
        \item Note that \cref{popping} does not imply that $\End_{\overline{\SB}(d,D;\kappa)}(\one)$ is a free $\kk$-module of rank one.  Rather, it states that it is spanned by $1_\one$.  A priori, this endomorphism algebra could be a quotient of $\kk$.  However, see \cref{lunch} for conditions that insure it is free of rank one.

        \item When $d$ is an odd positive integer, the authors believe that \cref{popping} is false when $\overline{\SB}(d,D;\kappa)$ is replaced by $\SB(d,D;\kappa)$, because of the condition on $r$ in \cref{Delprop} (see \cref{sofa}).  For instance, if $d=3$, the authors do not now how to reduce the diagrams
            \[
                \begin{tikzpicture}[centerzero]
                    \draw[vec] (-0.5,0.5) -- (0.5,0.5);
                    \draw[vec] (-0.5,0.5) -- (-0.5,-0.5);
                    \draw[vec] (-0.5,0.5) -- (0.5,-0.5);
                    \draw[vec] (0.5,0.5) -- (-0.5,-0.5);
                    \draw[vec] (-0.5,-0.5) --(0.5,-0.5);
                    \draw[vec] (0.5,0.5) -- (0.5,-0.5);
                    \filldraw[spin,fill=white] (-0.5,0.5) circle(0.2);
                    \filldraw[spin,fill=white] (0.5,0.5) circle(0.2);
                    \filldraw[spin,fill=white] (-0.5,-0.5) circle(0.2);
                    \filldraw[spin,fill=white] (0.5,-0.5) circle(0.2);
                \end{tikzpicture}
                \qquad \text{or} \qquad
                \begin{tikzpicture}[centerzero]
                    \draw[vec] (-0.5,0.6) -- (0.5,0.6);
                    \draw[vec] (-0.5,0.4) -- (0.5,0.4);
                    \draw[vec] (-0.5,0.5) -- (-0.5,-0.5);
                    \draw[vec] (-0.5,-0.4) --(0.5,-0.4);
                    \draw[vec] (-0.5,-0.6) -- (0.5,-0.6);
                    \draw[vec] (0.5,0.5) -- (0.5,-0.5);
                    \filldraw[spin,fill=white] (-0.5,0.5) circle(0.2);
                    \filldraw[spin,fill=white] (0.5,0.5) circle(0.2);
                    \filldraw[spin,fill=white] (-0.5,-0.5) circle(0.2);
                    \filldraw[spin,fill=white] (0.5,-0.5) circle(0.2);
                \end{tikzpicture}
            \]
            to a scalar multiple of the empty diagram $1_\one$ without the additional relation \cref{extra}.
    \end{enumerate}
\end{rem}

\begin{lem}
    We have
    \begin{equation} \label{grapes}
        \begin{tikzpicture}[centerzero]
            \draw[spin] (-0.4,-0.4) -- (-0.4,0.4);
            \draw[spin] (0,-0.4) -- (0,0.4);
            \draw[spin] (0.4,-0.4) -- (0.4,0.4);
            \draw[vec] (-0.4,0.2) -- (0,0.2);
            \draw[vec] (-0.4,0) -- (0,0);
            \draw[vec] (0,-0.2) -- (0.4,-0.2);
        \end{tikzpicture}
        \ + 2\
        \begin{tikzpicture}[centerzero]
            \draw[spin] (-0.4,-0.4) -- (-0.4,0.4);
            \draw[spin] (0,-0.4) -- (0,0.4);
            \draw[spin] (0.4,-0.4) -- (0.4,0.4);
            \draw[vec] (-0.4,0.2) -- (0,0.2);
            \draw[vec] (-0.4,-0.2) -- (0,-0.2);
            \draw[vec] (0,0) -- (0.4,0);
        \end{tikzpicture}
        \ +\
        \begin{tikzpicture}[centerzero]
            \draw[spin] (-0.4,-0.4) -- (-0.4,0.4);
            \draw[spin] (0,-0.4) -- (0,0.4);
            \draw[spin] (0.4,-0.4) -- (0.4,0.4);
            \draw[vec] (-0.4,-0.2) -- (0,-0.2);
            \draw[vec] (-0.4,0) -- (0,0);
            \draw[vec] (0,0.2) -- (0.4,0.2);
        \end{tikzpicture}
        \ = 4\
        \begin{tikzpicture}[centerzero]
            \draw[spin] (-0.4,-0.4) -- (-0.4,0.4);
            \draw[spin] (0,-0.4) -- (0,0.4);
            \draw[spin] (0.4,-0.4) -- (0.4,0.4);
            \draw[vec] (0,0) -- (0.4,0);
        \end{tikzpicture}
        \ .
    \end{equation}
\end{lem}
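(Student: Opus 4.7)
The plan is to read each summand of \cref{grapes} as a vertical composition of ``barbell'' endomorphisms of $\Sgo^{\otimes 3}$. Let $b_{ij}$ denote the endomorphism of $\Sgo^{\otimes 3}$ that inserts a single horizontal vec bar joining spin strands $i$ and $j$ (in the sense of \cref{barbell}), with strand $j$ passing over or under any intermediate strand; the two choices agree since $\SB(d,D;\kappa)$ is symmetric monoidal by \cref{brauer} and \cref{typhoon}. Reading the three diagrams on the left of \cref{grapes} as vertical compositions of such bars (for the first diagram: $b_{23}$ at the bottom, then $b_{12}$, then $b_{12}$ on top; the other two diagrams permute which height hosts the $b_{23}$), the identity we need to prove becomes
\[
    b_{12}^2 b_{23} + 2 b_{12} b_{23} b_{12} + b_{23} b_{12}^2 = 4 b_{23}.
\]

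The key intermediate identity I would establish first is that, for any spin strand in the triple carrying two vec trivalent attachments at distinct heights, the sum of the two vertical orderings of those attachments equals twice the morphism obtained by making that strand a through-strand and capping the two vec attachments directly to one another. Applied to strand $2$, whose two attachments in $b_{12} b_{23}$ and $b_{23} b_{12}$ link to strands $1$ and $3$, this gives $b_{12} b_{23} + b_{23} b_{12} = 2 b_{13}$, and applied to strand $1$, whose attachments in $b_{12} b_{13}$ and $b_{13} b_{12}$ link to strands $2$ and $3$, it gives $b_{12} b_{13} + b_{13} b_{12} = 2 b_{23}$. The key identity is a local instance of \cref{oist}, since that relation expresses exactly such a symmetrisation over vertical orderings of two vec attachments to a single spin strand, with the right-hand side consisting of the spin strand as a through-strand and the two vec legs joined by a cap.

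Granting the intermediate identity, the target identity follows by a short algebraic manipulation: splitting the middle term and regrouping,
\[
    b_{12}^2 b_{23} + 2 b_{12} b_{23} b_{12} + b_{23} b_{12}^2
    = b_{12}(b_{12} b_{23} + b_{23} b_{12}) + (b_{12} b_{23} + b_{23} b_{12}) b_{12}
    = 2 b_{12} b_{13} + 2 b_{13} b_{12}
    = 4 b_{23}.
\]

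The main obstacle is justifying the intermediate identity, because \cref{oist} is stated with both vec strands entering the spin strand from below on the same side, whereas in our barbells the two vec strands attach to the spin strand horizontally, possibly from opposite sides. This is resolved by invoking the strict pivotal structure discussed immediately after \cref{SBdef} to isotope each horizontal vec leg (through the appropriate cups and caps of $\Vgo$) into the configuration of \cref{oist}, and then checking that the resulting capped configuration, after isotoping the vec ends back to the spin strands they originally linked to, is precisely the bridge $b_{ik}$ (with no residual sign, since \cref{oist} does not involve $\kappa$).
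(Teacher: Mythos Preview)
Your overall structure is exactly the paper's: both arguments split the left side as
\[
b_{12}(b_{12}b_{23}+b_{23}b_{12})+(b_{12}b_{23}+b_{23}b_{12})b_{12},
\]
simplify the inner brackets on strand $2$, and then simplify once more on strand $1$.

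The gap is in your two intermediate identities, which should carry factors of $\kappa$:
\[
b_{12}b_{23}+b_{23}b_{12}=2\kappa\,b_{13},\qquad
b_{12}b_{13}+b_{13}b_{12}=2\kappa\,b_{23}.
\]
Your claim that there is ``no residual sign, since \cref{oist} does not involve $\kappa$'' overlooks two points. First, on strand $2$ the two vec legs attach on \emph{opposite} sides (one to strand $1$ on the left, one to strand $3$ on the right); that is precisely the configuration of \cref{zombie}, not of \cref{oist}, and \cref{zombie} contributes a $\kappa$. Second, even if you apply \cref{oist} near strand $1$ for the second identity, the capped vec you obtain runs from strand $2$ to strand $3$ but still crosses strand $2$ once (that crossing was already present in $b_{13}$); straightening that crossing through the trivalent vertex uses \cref{lobster} (equivalently \cref{fishy}) and produces a $\kappa$. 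A quick check under the incarnation functor with $N=3$ (so $\kappa_N=-1$) shows your unsigned identities are false there: $\bF(b_{ij})=\kappa_N\beta_{ij}$, hence $\bF(b_{12}b_{23}+b_{23}b_{12})=2\beta_{13}=2\kappa_N\,\bF(b_{13})$.

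Because $\kappa^2=1$, the two missing factors cancel and your final computation gives the correct $4b_{23}$. So the route is right and matches the paper; you just need to cite \cref{zombie} (and \cref{lobster}/\cref{typhoon} in the second step) and keep track of the $\kappa$'s.
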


\begin{proof}
    We have
    \[
        \begin{tikzpicture}[centerzero]
            \draw[spin] (-0.4,-0.4) -- (-0.4,0.4);
            \draw[spin] (0,-0.4) -- (0,0.4);
            \draw[spin] (0.4,-0.4) -- (0.4,0.4);
            \draw[vec] (-0.4,0.2) -- (0,0.2);
            \draw[vec] (-0.4,0) -- (0,0);
            \draw[vec] (0,-0.2) -- (0.4,-0.2);
        \end{tikzpicture}
        \ + 2\
        \begin{tikzpicture}[centerzero]
            \draw[spin] (-0.4,-0.4) -- (-0.4,0.4);
            \draw[spin] (0,-0.4) -- (0,0.4);
            \draw[spin] (0.4,-0.4) -- (0.4,0.4);
            \draw[vec] (-0.4,0.2) -- (0,0.2);
            \draw[vec] (-0.4,-0.2) -- (0,-0.2);
            \draw[vec] (0,0) -- (0.4,0);
        \end{tikzpicture}
        \ +\
        \begin{tikzpicture}[centerzero]
            \draw[spin] (-0.4,-0.4) -- (-0.4,0.4);
            \draw[spin] (0,-0.4) -- (0,0.4);
            \draw[spin] (0.4,-0.4) -- (0.4,0.4);
            \draw[vec] (-0.4,-0.2) -- (0,-0.2);
            \draw[vec] (-0.4,0) -- (0,0);
            \draw[vec] (0,0.2) -- (0.4,0.2);
        \end{tikzpicture}
        \overset{\cref{zombie}}{=}
        2\kappa\
        \begin{tikzpicture}[centerzero]
            \draw[spin] (-0.4,-0.4) -- (-0.4,0.4);
            \draw[spin] (0,-0.4) -- (0,0.4);
            \draw[spin] (0.4,-0.4) -- (0.4,0.4);
            \draw[vec] (-0.4,0.15) -- (0,0.15);
            \draw[vec] (-0.4,-0.15) -- (0.4,-0.15);
        \end{tikzpicture}
        \ +2\kappa\
        \begin{tikzpicture}[centerzero]
            \draw[spin] (-0.4,-0.4) -- (-0.4,0.4);
            \draw[spin] (0,-0.4) -- (0,0.4);
            \draw[spin] (0.4,-0.4) -- (0.4,0.4);
            \draw[vec] (-0.4,-0.15) -- (0,-0.15);
            \draw[vec] (-0.4,0.15) -- (0.4,0.15);
        \end{tikzpicture}
        \overset{\substack{\cref{typhoon} \\ \cref{oist}}}{\underset{\cref{lobster}}{=}}
        4\
        \begin{tikzpicture}[centerzero]
            \draw[spin] (-0.4,-0.4) -- (-0.4,0.4);
            \draw[spin] (0,-0.4) -- (0,0.4);
            \draw[spin] (0.4,-0.4) -- (0.4,0.4);
            \draw[vec] (0,0) -- (0.4,0);
        \end{tikzpicture}
        \ . \qedhere
    \]
\end{proof}

\begin{rem}
    The image of the relation \cref{grapes} under the incarnation functor to be defined in \cref{incarnation} corresponds to \cite[Lem.~1.3]{Wen20}, which plays a key role in the arguments of that paper.  Note that our $\beta$, defined in \cref{baton}, is equal to $2C$, where $C$ is defined in \cite[\S1.4]{Wen20}.
\end{rem}

We conclude this section with two lemmas that will be needed in the sequel.

\begin{lem}
    We have
    \begin{equation} \label{monkey2}
        \begin{tikzpicture}[anchorbase]
            \draw[vec] (0,-0.15) arc(180:360:0.2) -- (0.4,0.15) arc(0:180:0.2);
            \altbox{-0.2,-0.15}{0.2,0.15}{r};
        \end{tikzpicture}
        = d(d-1) \dotsm (d-r+1) 1_\one,\qquad r \in \N,
    \end{equation}
    where we interpret the right-hand side as $1_\one$ when $r=0$.
\end{lem}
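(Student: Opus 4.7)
The strategy is to expand the antisymmetrizer via its definition \cref{altbox} and reduce the claim to a classical identity in the symmetric group.  The diagram on the left of \cref{monkey2} is the categorical trace in $\SB(d,D;\kappa)$ of $\sum_{g \in \fS_r} \sgn(g)\, \bar{g}$, where $\bar{g} \in \End_{\SB(d,D;\kappa)}(\Vgo^{\otimes r})$ denotes the vec-coloured permutation diagram realising $g \in \fS_r$ (well-defined by the first two relations of \cref{brauer}).  By linearity, it suffices to compute $\Tr(\bar{g})$ for each $g \in \fS_r$ separately.

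For any $g \in \fS_r$, the closed diagram $\Tr(\bar{g})$ involves only vec-coloured crossings together with the $r$ closing arcs; in particular it contains no cups, caps, trivalent vertices, or black strands apart from those built into the permutation and the trace.  Using the first two relations of \cref{brauer} to straighten crossings, this diagram is isotopic to a disjoint union of $c(g)$ loops, where $c(g)$ denotes the number of cycles of the permutation $g$.  Each loop evaluates to $d \cdot 1_\one$ by \cref{dimrel}, so $\Tr(\bar{g}) = d^{c(g)} 1_\one$.  Summing, the left-hand side of \cref{monkey2} equals $\left( \sum_{g \in \fS_r} \sgn(g)\, d^{c(g)} \right) 1_\one$.

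The proof then concludes by invoking the classical polynomial identity
\[
    \sum_{g \in \fS_r} \sgn(g)\, x^{c(g)} = x(x-1)(x-2) \dotsm (x-r+1) \qquad \text{in } \Z[x],
\]
which is easily verified by induction on $r$ using the coset decomposition $\fS_r = \bigsqcup_{k=0}^{r-1} \fS_{r-1} \cdot (r{-}k, r{-}k{+}1, \dotsc, r)$ into (left) cosets indexed by cyclic permutations involving the final position.  No substantive obstacle arises: the entire argument takes place within the Brauer-type subcategory of $\SB(d,D;\kappa)$ generated by $\Vgo$ together with its cup, cap, and crossing, and is the categorical analogue of the familiar fact that on a $d$-dimensional vector space the antisymmetrizer acts by a rank-one projector onto $\Lambda^r$ of trace $r! \binom{d}{r} = d(d-1) \dotsm (d-r+1)$.
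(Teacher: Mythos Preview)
Your proof is correct, and it takes a genuinely different route from the paper's.

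The paper argues by induction on $r$ directly at the diagrammatic level: it uses the one-strand recursion \cref{hoff} for the antisymmetrizer, closes up, and simplifies each term with \cref{absorb} and \cref{dimrel} to obtain the recursion $\Tr(\text{alt}_{r+1}) = (d-r)\,\Tr(\text{alt}_r)$. Your argument instead expands the antisymmetrizer fully as a signed sum over $\fS_r$, evaluates the trace of each permutation as $d^{c(g)}$ by isotoping to $c(g)$ disjoint circles, and then appeals to the cycle-index identity $\sum_{g} \sgn(g)\,x^{c(g)} = x(x-1)\dotsm(x-r+1)$ (equivalently, the substitution $x \mapsto -x$ in the unsigned rising-factorial identity). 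Your approach is quicker and connects the result to a standard combinatorial fact; the paper's approach stays entirely within the diagrammatic calculus and needs no external identity, which is perhaps more in keeping with the surrounding text. Both are perfectly valid, and the choice is largely stylistic.
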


\begin{proof}
    We prove the result by induction on $r$.  The base case $r=0$ is immediate.  (The case $r=1$ is the first relation in \cref{dimrel}.)  For the inductive step, note that, by the standard decomposition of $S_{r+1}$ as a union of right $S_r$-cosets, we have
    \begin{equation} \label{hoff}
        \begin{tikzpicture}[centerzero]
            \draw[vec] (0,-0.5) -- (0,0.5);
            \altbox{-0.35,-0.15}{0.35,0.16}{r+1};
        \end{tikzpicture}
        = \sum_{i=0}^r (-1)^i\
        \begin{tikzpicture}[anchorbase]
            \draw[vec] (0,-0.5) -- (0,-0.3);
            \draw[vec] (-0.1,0) -- (-0.1,0.5) node[midway,anchor=east] {\strandlabel{r-i}};
            \draw[vec] (0.4,0.4) node[anchor=west] {\strandlabel{i}};
            \draw[vec] (0.1,0) \braidup (0.4,0.5);
            \draw[vec] (0.4,-0.5) -- (0.4,0) \braidup (0.1,0.5);
            \altbox{-0.2,-0.3}{0.2,0}{r};
        \end{tikzpicture}
        \ .
    \end{equation}
    Thus,
    \[
        \begin{tikzpicture}[centerzero]
            \draw[vec] (0,-0.15) -- (0,0.15) arc(180:0:0.3) -- (0.6,-0.15) arc(360:180:0.3);
            \altbox{-0.35,-0.15}{0.35,0.16}{r+1};
        \end{tikzpicture}
        \overset{\cref{hoff}}{\underset{\cref{brauer}}{=}}
        \begin{tikzpicture}[centerzero]
            \draw[vec] (0,0.15) to[out=up,in=up] (1,0.15) -- (1,-0.15) to[out=down,in=down] (0,-0.15);
            \draw[vec] (0.4,0) arc(180:-180:0.2);
            \altbox{-0.2,-0.15}{0.2,0.15}{r};
        \end{tikzpicture}
        + \sum_{i=1}^r (-1)^i
        \begin{tikzpicture}[anchorbase]
            \altbox{-0.4,-0.3}{0.4,0}{r};
            \draw[vec] (-0.2,0) -- (-0.2,0.6) node[midway,anchor=east] {\strandlabel{r-i}} to[out=up,in=up] (1,0.6) -- (1,-0.3) to[out=down,in=down] (-0.2,-0.3);
            \draw[vec] (0,0) to[out=up,in=up] (0.6,0) -- (0.6,-0.3) to[out=down,in=down] (0.2,-0.3);
            \node[vec] at (0.45,0.35) {\strandlabel{i-1}};
            \draw[vec] (0.2,0) \braidup (0,0.6) to[out=up,in=up] (0.8,0.6) -- (0.8,-0.3) to[out=down,in=down] (0,-0.3);
        \end{tikzpicture}
        \overset{\cref{absorb}}{=}
        (d-r)\
        \begin{tikzpicture}[anchorbase]
            \draw[vec] (0,-0.15) arc(180:360:0.2) -- (0.4,0.15) arc(0:180:0.2);
            \altbox{-0.2,-0.15}{0.2,0.15}{r};
        \end{tikzpicture}
        = d(d-1) \dotsm (d-r)
    \]
    by the inductive hypothesis.
\end{proof}

\begin{lem}
    We have
    \begin{equation} \label{monkey1}
        \begin{tikzpicture}[anchorbase]
            \draw[spin] (-0.1,0) -- (0.1,0) arc(-90:90:0.15) -- (-0.1,0.3) arc(90:270:0.15);
            \draw[vec] (0,0) -- (0,-0.65) arc(180:360:0.2) -- (0.4,0.3) arc(0:180:0.2);
            \altbox{-0.2,-0.65}{0.2,-0.35}{r};
        \end{tikzpicture}
        = r! D d (d-1) \dotsm (d-r+1) 1_\one,
        \qquad r \in \N,
    \end{equation}
    where we interpret the right-hand side as $D 1_\one$ when $r=0$.
\end{lem}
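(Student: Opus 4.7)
The plan is to prove the identity by induction on $r$, following the strategy of the proof of \cref{monkey2}.

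For the base case $r=0$, the antisymmetrizer on zero strands is $1_\one$ and there are no trivalent vertices, so the diagram reduces to a single spin bubble.  By the second relation of \cref{dimrel}, this equals $D \cdot 1_\one$, which agrees with the interpretation of the RHS at $r=0$.

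For the inductive step, denote the LHS of \cref{monkey1} by $L_r$, and suppose the formula holds at rank $r$.  It suffices to establish the recursion
\[
    L_{r+1} = (r+1)(d-r)\, L_r,
\]
since this, together with the inductive hypothesis, yields the claimed value at rank $r+1$.  My plan is to substitute the expansion \cref{hoff} of the $(r+1)$-antisymmetrizer into $L_{r+1}$, obtaining an alternating sum of $r+1$ diagrams, each featuring an $r$-antisymmetrizer together with a single ``extracted'' vec strand braided through the others.  Then, parallel to the computation used in the proof of \cref{monkey2}, simplify each summand using \cref{brauer} and \cref{typhoon} to straighten out the external vec-vec crossings, \cref{absorb} to absorb residual crossings into the $r$-antisymmetrizer as sign changes, and \cref{bump} together with \cref{lobster} to collapse the resulting isolated chord on the spin loop to a factor of $d$ (with the accompanying $\kappa$-twists tracked carefully).

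Summing the $r+1$ contributions then produces a factor $(d-r)$ exactly as in the proof of \cref{monkey2}, while the additional multiplicative factor of $r+1$ comes from the closed spin loop: by the rotational symmetry provided by \cref{typhoon}, each of the $r+1$ terms of the \cref{hoff} expansion contributes an equal copy of the reduced diagram, so after the $\sgn(\tau_i)$ signs are compensated by the signs extracted via \cref{absorb} the contributions add rather than cancel.  The main obstacle, beyond the analogous computation for \cref{monkey2}, is the detailed combinatorial bookkeeping of signs, $\kappa$-twists, and crossings needed to verify both the $(r+1)$-fold symmetry and the correct coefficient $(d-r)$ in the recursion.
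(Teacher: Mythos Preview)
Your overall framework matches the paper's: induction on $r$, the same base case, and the same target recursion $L_{r+1}=(r+1)(d-r)L_r$ obtained by expanding the $(r{+}1)$-antisymmetrizer via \cref{hoff}.  Where the argument breaks is in your justification of that recursion.

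You claim that ``by the rotational symmetry provided by \cref{typhoon}, each of the $r+1$ terms of the \cref{hoff} expansion contributes an equal copy of the reduced diagram.''  This is false.  Denoting those terms by $A_0,\dotsc,A_r$ as in the paper, one finds $A_i=(d-2i)L_r$; in particular $A_0=dL_r$ while $A_r=(d-2r)L_r$, and these are generically different.  The relation \cref{typhoon} lets strands slide past a trivalent vertex, but it does not let you move the single unboxed vec strand past the $r$ antisymmetrized strands for free: the extracted strand occupies a definite position among the trivalent attachments on the spin loop, and different positions give genuinely different values.  So there is no rotational symmetry that equalizes the $A_i$, and the factor $r+1$ does not arise in the way you describe.

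The ingredient you are missing is \cref{oist}.  That relation swaps two adjacent vec attachments on a spin strand at the cost of a cap term, and applying it to the extracted strand and its neighbour yields
\[
    A_i \;=\; A_{i-1} \;-\; 2L_r,
\]
after the cap term is absorbed into the $r$-antisymmetrizer via \cref{absorb}.  Iterating gives $A_i=A_0-2iL_r$, and together with $A_0=dL_r$ (which is indeed \cref{bump}, as you said) the sum $\sum_{i=0}^r A_i$ telescopes to $(r+1)(d-r)L_r$.  The factor $(r+1)$ and the factor $(d-r)$ thus come out of the \emph{same} sum $\sum_{i=0}^r (d-2i)$, not from two separate mechanisms.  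The parallel with the proof of \cref{monkey2} is real, but there the analogous terms are $d,-1,-1,\dotsc,-1$; here they are $d,d-2,d-4,\dotsc,d-2r$, and that difference is exactly what \cref{oist} produces.
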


\begin{proof}
    We prove the result by induction on $r$.  The base case $r=0$ is precisely the second relation in \cref{dimrel}.  Now suppose the result holds for some $r \ge 0$.  Using \cref{hoff}, we have
    \begin{equation} \label{duck1}
        \begin{tikzpicture}[anchorbase]
            \draw[spin] (-0.1,0) -- (0.1,0) arc(-90:90:0.15) -- (-0.1,0.3) arc(90:270:0.15);
            \draw[vec] (0,0) -- (0,-0.65) arc(180:360:0.3) -- (0.6,0.3) arc(0:180:0.3);
            \altbox{-0.3,-0.65}{0.3,-0.34}{r+1};
        \end{tikzpicture}
        =
        \sum_{i=0}^r A_i,\qquad \text{where} \quad
        A_i
        = (-1)^i\
        \begin{tikzpicture}[anchorbase]
            \draw[vec] (0,-0.3) to[out=down,in=down] (1,-0.3) -- (1,0.8) to[out=up,in=up,looseness=1.5] (0,0.8);
            \draw[vec] (-0.1,0) -- (-0.1,0.5) node[midway,anchor=east] {\strandlabel{r-i}};
            \draw[vec] (0.3,0.36) node[anchor=west] {\strandlabel{i}};
            \draw[vec] (0.1,0) \braidup (0.4,0.5);
            \draw[vec] (0.1,0.5) \braiddown (0.4,0) arc(180:360:0.2) -- (0.8,0.8) arc(0:180:0.2);
            \altbox{-0.2,-0.3}{0.2,0}{r};
            \draw[spin] (-0.2,0.5) arc(270:90:0.15) -- (0.5,0.8) arc(90:-90:0.15) -- (-0.2,0.5);
        \end{tikzpicture}
        \ .
    \end{equation}
    Now,
    \begin{equation} \label{duck2}
        A_i
        \overset{\cref{oist}}{=}
        A_{i-1} + 2(-1)^i\
        \begin{tikzpicture}[anchorbase]
            \draw[vec] (-0.2,-0.3) -- (-0.2,0.3) node[midway,anchor=east] {\strandlabel{r-i}};
            \draw[vec] (0.2,-0.3) -- (0.2,0.3);
            \draw[vec] (0.2,0.2) node[anchor=west] {\strandlabel{i-1}};
            \draw[vec] (0,-0.3) \braidup (1.2,0) -- (1.2,0.6) to[out=up,in=up] (0.2,0.6);
            \draw[vec] (0,-0.6) to[out=down,in=down] (1.4,-0.6) -- (1.4,0.6) to[out=up,in=up,looseness=1.5] (-0.2,0.6);
            \altbox{-0.3,-0.6}{0.3,-0.3}{r};
            \draw[spin] (-0.2,0.3) arc(270:90:0.15) -- (0.2,0.6) arc(90:-90:0.15) -- (-0.2,0.3);
        \end{tikzpicture}
        \overset{\cref{absorb}}{=}
        A_{i-1} - 2\
        \begin{tikzpicture}[anchorbase]
            \draw[spin] (-0.1,0) -- (0.1,0) arc(-90:90:0.15) -- (-0.1,0.3) arc(90:270:0.15);
            \draw[vec] (0,0) -- (0,-0.65) arc(180:360:0.2) -- (0.4,0.3) arc(0:180:0.2);
            \altbox{-0.2,-0.65}{0.2,-0.35}{r};
        \end{tikzpicture}
        \ .
    \end{equation}
    Starting from \cref{duck1}, repeated use of \cref{duck2} gives
    \[
        \begin{tikzpicture}[anchorbase]
            \draw[spin] (-0.1,0) -- (0.1,0) arc(-90:90:0.15) -- (-0.1,0.3) arc(90:270:0.15);
            \draw[vec] (0,0) -- (0,-0.65) arc(180:360:0.3) -- (0.6,0.3) arc(0:180:0.3);
            \altbox{-0.3,-0.65}{0.3,-0.34}{r+1};
        \end{tikzpicture}
        = 2 A_{r-1} + \sum_{i=0}^{r-1} A_i - 2\
        \begin{tikzpicture}[anchorbase]
            \draw[spin] (-0.1,0) -- (0.1,0) arc(-90:90:0.15) -- (-0.1,0.3) arc(90:270:0.15);
            \draw[vec] (0,0) -- (0,-0.65) arc(180:360:0.2) -- (0.4,0.3) arc(0:180:0.2);
            \altbox{-0.2,-0.65}{0.2,-0.35}{r};
        \end{tikzpicture}
        = \dotsb
        = (r+1) A_0 - r(r+1)\
        \begin{tikzpicture}[anchorbase]
            \draw[spin] (-0.1,0) -- (0.1,0) arc(-90:90:0.15) -- (-0.1,0.3) arc(90:270:0.15);
            \draw[vec] (0,0) -- (0,-0.65) arc(180:360:0.2) -- (0.4,0.3) arc(0:180:0.2);
            \altbox{-0.2,-0.65}{0.2,-0.35}{r};
        \end{tikzpicture}
        \ .
    \]
    Since
    \[
        A_0 =
        \begin{tikzpicture}[anchorbase]
            \draw[vec] (0,-0.3) to[out=down,in=down] (1,-0.3) -- (1,0.8) to[out=up,in=up,looseness=1.5] (0,0.8);
            \draw[vec] (0,0) -- (0,0.5) node[midway,anchor=east] {\strandlabel{r}};
            \draw[vec] (0.4,0.5) arc(180:360:0.2) -- (0.8,0.8) arc(0:180:0.2);
            \altbox{-0.2,-0.3}{0.2,0}{r};
            \draw[spin] (-0.2,0.5) arc(270:90:0.15) -- (0.5,0.8) arc(90:-90:0.15) -- (-0.2,0.5);
        \end{tikzpicture}
        \overset{\cref{bump}}{=}
        d\
        \begin{tikzpicture}[anchorbase]
            \draw[spin] (-0.1,0) -- (0.1,0) arc(-90:90:0.15) -- (-0.1,0.3) arc(90:270:0.15);
            \draw[vec] (0,0) -- (0,-0.65) arc(180:360:0.2) -- (0.4,0.3) arc(0:180:0.2);
            \altbox{-0.2,-0.65}{0.2,-0.35}{r};
        \end{tikzpicture}
        \ ,
    \]
    it follows that
    \[
        \begin{tikzpicture}[anchorbase]
            \draw[spin] (-0.1,0) -- (0.1,0) arc(-90:90:0.15) -- (-0.1,0.3) arc(90:270:0.15);
            \draw[vec] (0,0) -- (0,-0.65) arc(180:360:0.3) -- (0.6,0.3) arc(0:180:0.3);
            \altbox{-0.3,-0.65}{0.3,-0.34}{r+1};
        \end{tikzpicture}
        = (d-r)(r+1)\
        \begin{tikzpicture}[anchorbase]
            \draw[spin] (-0.1,0) -- (0.1,0) arc(-90:90:0.15) -- (-0.1,0.3) arc(90:270:0.15);
            \draw[vec] (0,0) -- (0,-0.65) arc(180:360:0.2) -- (0.4,0.3) arc(0:180:0.2);
            \altbox{-0.2,-0.65}{0.2,-0.35}{r};
        \end{tikzpicture}
        = (r+1)! Dd(d-1) \dotsm (d-r) 1_\one
    \]
    by the inductive hypothesis.
\end{proof}

\section{The incarnation functor\label{sec:incarnation}}

In this section we relate the spin Brauer category to the representation theory of the spin and pin groups.  Throughout this section, we assume $\kk = \C$.

Fix an vector space $V$ of finite dimension $N$, equipped with a nondegenerate symmetric bilinear form $\Phi_V$, and let $n = \left\lfloor \frac{N}{2} \right\rfloor$. Recall the definition of $\Group(V)$ from \cref{gvdefn}, the spin $\Group(V)$-module $S$ and the vector $\Group(V)$-module $V$ from \cref{suave}, and the bilinear form $\Phi_S$ on $S$ defined in \cref{Sform}.  Let
\begin{gather} \label{crazy}
    \sigma_N
    := (-1)^{\binom{n}{2} + nN},
    \qquad
    \kappa_N := (-1)^{nN},
    \\
    \SB(V) := \SB(N,\sigma_N 2^n; \kappa_N),
    \qquad \overline{\SB}(V) = \overline{\SB}(N,\sigma_N 2^n; \kappa_N).
\end{gather}
(Recall that $\sigma_N$ is the sign appearing in \cref{formsym}, describing the symmetry of the form $\Phi_S$.)

Fix a basis $\bB_S$ of $S$, and let $\bB_S^\vee = \{x^\vee : x \in \bB_S\}$ denote the left dual basis with respect to $\Phi_S$, defined by
\[
    \Phi_S(x^\vee, y) = \delta_{x,y},\qquad x,y \in \bB_S.
\]
We fix a basis $\bB_V$ of $V$ and define the left dual basis $\bB_V^\vee = \{v^\vee : v \in V\}$ similarly.  Then we have $\Group(V)$-module homomorphisms
\begin{align}
    \Phi_S^\vee &\colon \kk \to S \otimes S,&
    \lambda &\mapsto \lambda \sum_{x \in \bB_S} x \otimes x^\vee,\quad
    \lambda \in \kk,
    \\
    \Phi_V^\vee &\colon \kk \to V \otimes V,&
    \lambda &\mapsto \lambda \sum_{v \in \bB_V} v \otimes v^\vee,\quad
    \lambda \in \kk.
\end{align}
These are independent of the choices of bases.

It follows from \cref{spiral} and the fact that the form $\Phi_V$ is symmetric that the left dual bases of $\bB_V^\vee$ and $\bB_S^\vee$ are given by
\begin{equation} \label{falafel}
    (v^\vee)^\vee = v,\quad v \in \bB_V,
    \qquad \text{and} \qquad
    (x^\vee)^\vee = \sigma_N x,\quad x \in \bB_S,
\end{equation}
respectively.

For any $\kk$-modules $U$ and $W$, we define the linear map
\begin{equation} \label{flip}
    \flip = \flip_{U,W} \colon U \otimes W \to W \otimes U,\qquad
    u \otimes w \mapsto w \otimes u,
\end{equation}
extended by linearity.  If $U$ and $W$ are $\Group(V)$-modules, then $\flip$ is a homomorphism of $\Group(V)$-modules.  We also let
\begin{equation} \label{triaction}
    \tau \colon V \otimes S \to S,\quad v \otimes x \mapsto vx,
\end{equation}
denote the homomorphism of $\Group(V)$-modules induced by multiplication in the Clifford algebra $\Cl(V)$; see \cref{snow}.
\details{
    This is a homomorphism of $\Group(V)$-modules since, for $g \in \Group(V)$, $v \in V$, and $x \in S$, we have
    \[
        \tau(g(v \otimes x))
        = \tau(gvg^{-1} \otimes gx)
        = gvx
        = g \tau(v \otimes x).
    \]
}

\begin{theo} \label{incarnation}
    There is a unique monoidal functor
    \[
        \bF \colon \SB(V) \to \Group(V)\md
    \]
    given on objects by $\Sgo \mapsto S$, $\Vgo \mapsto V$, and on morphisms by
    \begin{gather} \label{incarnate1}
        \capmor{spin} \mapsto \Phi_S,\qquad
        \capmor{vec} \mapsto \Phi_V,\qquad
        \mergemor{vec}{spin}{spin} \mapsto \tau,
        \\ \label{incarnate2}
        \crossmor{spin}{spin} \mapsto \sigma_N \flip_{S,S},\qquad
        \crossmor{spin}{vec} \mapsto \flip_{S,V},\qquad
        \crossmor{vec}{spin} \mapsto \flip_{V,S},\qquad
        \crossmor{vec}{vec} \mapsto \flip_{V,V}.
    \end{gather}
    Furthermore, we have
    \begin{equation} \label{incarnate3}
        \cupmor{spin} \mapsto \Phi_S^\vee,\qquad
        \cupmor{vec} \mapsto \Phi_V^\vee.
    \end{equation}
\end{theo}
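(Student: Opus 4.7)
The plan is to prove uniqueness first and then existence. Uniqueness is immediate: $\SB(V)$ is generated as a strict monoidal category by the objects $\Sgo, \Vgo$ and the generating morphisms listed in \cref{SBdef}, so once $\bF$ is specified on these, it is determined on all of $\SB(V)$. For existence, I would define $\bF$ on the generators by the formulas \cref{incarnate1,incarnate2,incarnate3} (extended monoidally) and verify that each defining relation of $\SB(V)$ maps to an identity in $\Group(V)\md$. The formula \cref{incarnate3} for the cups is forced, once the caps and crossings are fixed, by the snake identities among the relations \cref{brauer}; equivalently, we may take \cref{incarnate3} as part of the definition and verify the snake identities directly. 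These identities are immediate from the defining property of the dual bases $\bB_S^\vee, \bB_V^\vee$.

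For the remaining Brauer-type relations \cref{brauer}, the purely $\Vgo$-coloured cases are standard, since $\Phi_V$ is symmetric and nondegenerate. The purely $\Sgo$-coloured cases carry extra factors of $\sigma_N$ from the crossing: $(\sigma_N \flip_{S,S})^2 = \sigma_N^2 \flip_{S,S}^2 = \id$; the braid relation has three $\Sgo$-$\Sgo$ crossings on each side and so both sides acquire a common factor $\sigma_N^3$; and the snake-cross identities make essential use of the twist by $\sigma_N$ in \cref{falafel}. The mixed-colour braid relations \cref{typhoon} and the pivotal rotation relation \cref{swishy} for the trivalent vertex follow from naturality of the flip combined with the $\Group(V)$-equivariance of $\tau$. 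The dimension relations \cref{dimrel} are a direct computation: $\Phi_V \circ \Phi_V^\vee$ evaluates to $\dim V = N$, and $\Phi_S \circ \Phi_S^\vee = \sum_{x \in \bB_S} \Phi_S(x, x^\vee) = \sigma_N |\bB_S| = \sigma_N 2^n$, by \cref{spiral,falafel,paneer}, matching the parameters $d = N$ and $D = \sigma_N 2^n$ in the definition of $\SB(V)$.

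The two substantive checks are \cref{fishy} and \cref{oist}. For \cref{fishy}, tracing both sides on a basis vector $x \otimes v \in S \otimes V$ and unwinding the pivotal structure reduces the assertion to the identity $\Phi_S(vx, y) = \kappa_N \Phi_S(x, vy)$ for all $x, y \in S$ and $v \in V$, which is precisely \cref{bounce} since $\kappa_N = (-1)^{nN}$. For \cref{oist}, evaluating both sides on a pure tensor $x \otimes v \otimes w$ (and its symmetric counterpart) reduces the claim to the Clifford commutation rule $vwx + wvx = 2 \Phi_V(v, w) x$ for all $v, w \in V$ and $x \in S$, which holds by \cref{ecomm} and the $\Cl(V)$-module structure on $S$ described in \cref{subsec:Cliffmod}.

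The main obstacle is the sign bookkeeping: one must verify that the sign $\sigma_N$ (introduced into the $\Sgo$-$\Sgo$ crossing to match the symmetry of $\Phi_S$ via \cref{spiral}) and the sign $\kappa_N$ (appearing in \cref{fishy} to match the Clifford-equivariance sign of $\Phi_S$ via \cref{bounce}) are exactly the constants declared in \cref{crazy}, and that no residual signs emerge in the other relations. Once the signs are correctly tracked, everything else reduces to standard facts about rigid symmetric monoidal categories together with the $\Group(V)$-equivariance of $\Phi_S$, $\Phi_V$, and $\tau$.
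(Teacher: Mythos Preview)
Your outline is essentially the same approach as the paper's, and most of your reductions are correct: \cref{oist} does reduce to the Clifford relation, \cref{fishy} does reduce to \cref{bounce}, the dimension bubbles evaluate as you say, and the Brauer-type relations \cref{brauer} with the $\sigma_N$ twist behave as you describe.

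There is one genuine imprecision. You claim that the pivotal rotation relation \cref{swishy} ``follows from naturality of the flip combined with the $\Group(V)$-equivariance of $\tau$''. But \cref{swishy} contains no crossings at all, so naturality of the flip is irrelevant to it; and mere $\Group(V)$-equivariance of $\tau$ is not enough either, since both sides of \cref{swishy} are $\Group(V)$-equivariant maps $S \to S \otimes V$ and a priori there could be more than one such map. The paper verifies \cref{swishy} by an explicit computation, and that computation uses \cref{bounce} in exactly the same way as your check of \cref{fishy}: one unwinds each side to a sum of the form $\sum_{v,y} \Phi_S(x,vy)\, y^\vee \otimes v^\vee$ (respectively $\sum_{v,y} \Phi_S(v^\vee y^\vee,x)\, y \otimes v$) and then applies \cref{bounce} to rewrite both as $(-1)^{nN}\sum_v vx \otimes v^\vee$. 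So \cref{bounce} is needed for \emph{both} \cref{swishy} and \cref{fishy}, not just the latter. (One can alternatively invoke the abstract pivotality of $\Group(V)\md$, but that is a different argument from the one you gave.)

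A very minor slip: in your check of \cref{oist} the domain is $\Vgo \otimes \Vgo \otimes \Sgo$, so the pure tensor should be $v \otimes w \otimes x$, not $x \otimes v \otimes w$.
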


We call $\bF$ the \emph{incarnation functor}.

\begin{proof}
    We first show that \cref{incarnate1,incarnate2,incarnate3} indeed yield a functor $\bF$.  We must show that $\bF$ respects the relations of \cref{SBdef}.  The fifth relation in \cref{brauer} follows from \cref{spiral} and the fact that $\Phi_V$ is symmetric.  The remaining relations in \cref{brauer} are straightforward.  Relation \cref{typhoon} is also straightforward.

    The image under $\bF$ of the left-hand side of \cref{swishy} is the map $S \to S \otimes V$ given by
    \begin{multline*}
        x \mapsto \sum_{\substack{v \in \bB_V \\ y \in \bB_S}} x \otimes v \otimes y \otimes y^\vee \otimes v^\vee
        \mapsto \sum_{\substack{v \in \bB_V \\ y \in \bB_S}} x \otimes vy \otimes y^\vee \otimes v^\vee
        \mapsto \sum_{\substack{v \in \bB_V \\ y \in \bB_S}} \Phi_S(x,vy) y^\vee \otimes v^\vee
        \\
        \overset{\cref{bounce}}{=} (-1)^{nN} \sum_{\substack{v \in \bB_V \\ y \in \bB_S}} \Phi_S(vx,y) y^\vee \otimes v^\vee
        = (-1)^{nN} \sum_{v \in \bB_V} vx \otimes v^\vee.
    \end{multline*}
    On the other hand, the image under $\bF$ of the diagram in the right-hand side of \cref{swishy} is the map given by
    \begin{multline*}
        x \mapsto \sum_{\substack{v \in \bB_V \\ y \in \bB_S}} y \otimes v \otimes v^\vee \otimes y^\vee \otimes x
        \mapsto \sum_{\substack{v \in \bB_V \\ y \in \bB_S}} y \otimes v \otimes v^\vee y^\vee \otimes x
        \mapsto \sum_{\substack{v \in \bB_V \\ y \in \bB_S}} \Phi_S(v^\vee y^\vee, x) y \otimes v
        \\
        \overset{\cref{bounce}}{=} (-1)^{nN} \sum_{\substack{v \in \bB_V \\ y \in \bB_S}} \Phi_S(y^\vee, v^\vee x) y \otimes v
        = (-1)^{nN} \sum_{v \in \bB_V} v^\vee x \otimes v
        \overset{\cref{falafel}}{=} (-1)^{nN} \sum_{v \in \bB_V} v x \otimes v^\vee.
    \end{multline*}
    Since $\kappa_N=(-1)^{nN}$, $\bF$ respects relation \cref{swishy}.

    The image under $\bF$ of the left-hand side of \cref{fishy} is the map $S \otimes V \mapsto S$ given by
    \[
        x \otimes v \mapsto v \otimes x
        \mapsto vx.
    \]
    On the other hand, the image under $\bF$ of the right-hand side of \cref{fishy} is the map given by
    \begin{multline*}
        x \otimes v
        \mapsto \sum_{y \in \bB_S} x \otimes v \otimes y \otimes y^\vee
        \mapsto \sum_{y \in \bB_S} x \otimes vy \otimes y^\vee
        \mapsto \sum_{y \in \bB_S} \Phi_S(x,vy) y^\vee
        \\
        \overset{\cref{bounce}}{=} (-1)^{nN} \sum_{y \in \bB_S} \Phi_S(vx,y) y^\vee
        = (-1)^{nN} vx.
    \end{multline*}
    Thus, $\bF$ respects \cref{fishy}.

    The image under $\bF$ of the left-hand side of relation \cref{oist} is the map $V \otimes V \otimes S \to S$ given by
    \[
        v \otimes w \otimes x
        \mapsto (vw+wv)x
        \overset{\cref{Clifford}}{=} 2 \Phi_V(v,w) x,
    \]
    which agrees with the image under $\bF$ of the right-hand side of \cref{oist}.

    For the first relation in \cref{dimrel}, we use the fact that $\Phi_V$ is symmetric and that $\dim_\kk(V) = N$ to compute
    \[
        \bF
        \left(
            \begin{tikzpicture}[centerzero]
                \draw[vec] (-0.2,0) arc(180:-180:0.2);
            \end{tikzpicture}
        \right)
        (1)
        = \sum_{v \in \bB_V} \Phi_V(v, v^\vee)
        = \sum_{v \in \bB_V} \Phi_V(v^\vee, v)
        = N.
    \]
    Finally, for the second relation in \cref{dimrel}, we use \cref{spiral} and the fact that $\dim_\kk(S) = 2^n$ to compute
    \[
        \bF
        \left(
            \begin{tikzpicture}[centerzero]
                \draw[spin] (-0.2,0) arc(180:-180:0.2);
            \end{tikzpicture}
        \right)
        (1)
        = \sum_{x \in \bB_S} \Phi_S(x, x^\vee)
        = \sum_{x \in \bB_S} \sigma_N \Phi_S(x^\vee, x)
        = \sigma_N 2^n.
    \]

    It remains to prove that, for any functor as in the first sentence of the theorem, we have \cref{incarnate3}.  Suppose that
    \[
        \bF(\cupmor{spin}) \colon 1 \mapsto \sum_{x,y \in \bB_S} a_{xy} x \otimes y,\qquad
        a_{xy} \in \kk.
    \]
    Then, for all $z \in \bB_S$, we have
    \[
        z = \bF
        \left(\
            \begin{tikzpicture}[centerzero]
                \draw[spin] (0,-0.4) -- (0,0.4);
            \end{tikzpicture}
        \ \right)
        (z)
        =
        \left(
            \begin{tikzpicture}[centerzero]
                \draw[spin] (-0.3,0.4) -- (-0.3,0) arc(180:360:0.15) arc(180:0:0.15) -- (0.3,-0.4);
            \end{tikzpicture}
        \right)
        (z)
        = \sum_{x,y \in \bB_S} a_{xy} \Phi_S(y,z) x
        = \sum_{x \in \bB_S} a_{xz} z.
    \]
    It follows that $a_{xz} = \delta_{xz}$ for all $x,z \in \bB_S$, and so $\bF(\cupmor{spin}) = \Phi_S^\vee$.  The proof that $\bF(\cupmor{vec}) = \Phi_V^\vee$ is analogous.
\end{proof}

\begin{cor} \label{lunch}
    Let $\kk_0=\Q[d,D][\frac{1}{d-1},\frac{1}{d-3},\frac{1}{d-5},\ldots]$, and suppose that $\kk$ is a commutative $\kk_0$-algebra.  (In particular, this holds when $\kk$ is a field of characteristic zero and $d \notin 2\N+1$.)  Then
    \[
        \End_{\SB(d,D;1)}(\one)\cong \kk.
    \]
\end{cor}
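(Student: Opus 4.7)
The plan is to first handle the universal case $\kk = \kk_0$, and then to deduce the general statement by base change. Surjectivity of the structure map $\phi_\kk \colon \kk \to \End_{\SB(d,D;1)}(\one)$, $a \mapsto a \cdot 1_\one$, is immediate from \cref{popping}: since $d - r$ is a unit of $\kk_0$ (hence of $\kk$) for every $r \in 2\N+1$, we have $d \ne r$ in $\kk$, so $d \notin 2\N+1$ and therefore $\overline{\SB}(d,D;1) = \SB(d,D;1)$, and the hypotheses of \cref{popping} are satisfied.

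To establish injectivity of $\phi_{\kk_0}$, I would invoke the incarnation functor of \cref{incarnation} applied at many even values of $N$. For each even integer $N \geq 0$, one has $\kappa_N = (-1)^{nN} = 1$, and the assignment $d \mapsto N$, $D \mapsto \sigma_N 2^{N/2}$ defines a ring homomorphism $\kk_0 \to \C$ because $N - r$ is a nonzero odd integer (hence a unit of $\C$) for every odd positive $r$. Composing the induced base-change functor with $\bF$ yields an algebra homomorphism
\[
    \alpha_N \colon \End_{\SB(d,D;1)_{\kk_0}}(\one) \to \End_{\Pin(V)}(\C) = \C, \qquad \alpha_N(1_\one) = 1.
\]
Hence, if $\phi_{\kk_0}(a) = 0$ for some $a \in \kk_0$, then $a$, viewed as a rational function in $d, D$, vanishes on the infinite set $\Sigma := \{(2n, \sigma_{2n} 2^n) : n \in \N\}$.

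The key remaining step is to show that no nonzero element of $\kk_0$ vanishes on $\Sigma$. Multiplying by a finite product of the invertible factors $(d-r)$, it suffices to treat $p \in \Q[d,D]$. Writing $p(d,D) = \sum_{k=0}^K p_k(d) D^k$ with $p_K \ne 0$, one has
\[
    \bigl| p(2n, \sigma_{2n} 2^n) \bigr| \geq |p_K(2n)| \cdot 2^{nK} - \sum_{k=0}^{K-1} |p_k(2n)| \cdot 2^{nk},
\]
and the leading term $|p_K(2n)| \cdot 2^{nK}$ grows strictly faster in $n$ than each of the subtracted terms (polynomial in $n$ times $2^{nk}$ for $k < K$), so the right-hand side is strictly positive for all sufficiently large $n$, contradicting vanishing on $\Sigma$. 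This gives $\phi_{\kk_0}$ injective, so $\End_{\SB(d,D;1)_{\kk_0}}(\one) \cong \kk_0$.

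For a general $\kk_0$-algebra $\kk$, the presentation of $\SB(d,D;1)$ by generators and relations yields
\[
    \Hom_{\SB(d,D;1)_\kk}(X, Y) = \Hom_{\SB(d,D;1)_{\kk_0}}(X, Y) \otimes_{\kk_0} \kk
\]
for all objects $X, Y$ (by right-exactness of $- \otimes_{\kk_0} \kk$ applied to the exact sequence cutting the free $\kk_0$-module on diagrams by the defining relations), so $\End_{\SB(d,D;1)}(\one) = \kk_0 \otimes_{\kk_0} \kk = \kk$. The main obstacle is the polynomial-vanishing argument of the third paragraph, and in particular verifying that enough specializations are available (only even $N$ work, since odd $N$ would require inverting $d - N$); once that exponential-over-polynomial comparison is in hand, the remaining pieces follow routinely from \cref{popping,incarnation} and standard base-change considerations.
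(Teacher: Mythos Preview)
Your proof is correct and follows essentially the same approach as the paper: reduce to $\kk_0$, use \cref{popping} for surjectivity, and use the incarnation functors at even $N$ (where $\kappa_N=1$) together with a growth argument to show that no nonzero element of $\kk_0$ can vanish at all the specialization points. You supply more detail than the paper on both the exponential-versus-polynomial comparison and the base-change step, but the underlying strategy is identical.
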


\begin{proof}
    It suffices to prove the result when $\kk=\kk_0$, since the general result then follows after extending scalars from $\kk_0$ to $\kk$.  By \cref{popping}, we have $\End_{\SB(d,D,1)}(\one)\cong \kk_0/I$ for some ideal $I$ of $\kk_0$. (This is where we use our assumption that $d-1, d-3, \dotsc$ are invertible.) Suppose there exists a nonzero element $f(d,D)\in I$. Then there exists a positive integer $n$ such that $f \left( 2n,(-1)^{\binom{n}{2}} 2^{2n} \right) \neq 0$.
    \details{
        As $n \to \infty$, one of the monomials appearing in $f$ will grow faster than all the others when evaluated at these points.
    }
    Viewing $\C$ as a $\kk_0$-module via the map $d \mapsto 2n$, $D \mapsto (-1)^{\binom{n}{2}} 2^{2n}$, we can extend scalars in $\SB(d,D;1)$ and we then have an incarnation functor
    \[
        \SB(d,D;1) \otimes_{\kk_0} \C \to \Group(V)\md.
    \]
    This functor sends $f 1_\one$ to a nonzero element of $\End_{\Group(V)}(\triv^0)$, which is a contradiction.
\end{proof}

Our next goal is to show that the incarnation functor factors through $\overline{\SB}(V)$.

\begin{lem} \label{cheese}
    When $N$ is an odd positive integer, we have
    \begin{equation} \label{eggs}
        \bF
        \left(
            \begin{tikzpicture}[anchorbase]
                \draw[spin] (-0.1,0) -- (0.1,0) arc(-90:90:0.15) -- (-0.1,0.3) arc(90:270:0.15);
                \draw[spin] (-0.1,-1.3) -- (0.1,-1.3) arc(-90:90:0.15) -- (-0.1,-1) arc(90:270:0.15);
                \draw[vec] (0,0) -- (0,-1);
                \altbox{-0.2,-0.65}{0.2,-0.35}{N};
            \end{tikzpicture}
        \right)
        = 2^{N-1}(N!)^2.
    \end{equation}
\end{lem}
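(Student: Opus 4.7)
The plan is to compute $\bF(E)$ directly, where $E$ denotes the diagram in \cref{eggs}. Setting $T := \bF(\text{top loop})\colon V^{\otimes N}\to \C$, $B := \bF(\text{bottom loop})\colon \C\to V^{\otimes N}$, and $A := \bF(\text{antisymmetrizer})$, we have $\bF(E) = T(A(B(1)))$. The first step is to identify $T$ and $B$ in terms of Clifford traces. The top loop yields $T(v_1\otimes\cdots\otimes v_N) = \sigma_N \tr_S(v_1v_2\cdots v_N)$, where $\tr_S$ is the usual operator trace on $S$; the factor $\sigma_N$ appears because closing a spin strand via $\Phi_S$ and $\Phi_S^\vee$ gives the usual trace only up to the symmetry sign of $\Phi_S$, by \cref{spiral}. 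The bottom loop is the $180^\circ$ rotation of the top loop, and pivotal duality together with the self-duality of $V$ under $\Phi_V$, plus the reversal of multiplication order produced by the rotation, yields
\[
    B(1) = \sigma_N\sum_{\vec i \in [N]^N} \tr_S(e_{i_N} e_{i_{N-1}}\cdots e_{i_1})\, e_{i_1}\otimes\cdots\otimes e_{i_N}
\]
in the orthonormal basis $e_1,\ldots,e_N$ (so that $e_i^\vee = e_i$).

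Next I show that only permutation multi-indices contribute. Using $e_a^2=1$ and $e_ae_b=-e_be_a$ for $a\ne b$, any product with a repeated index reduces (up to sign) to a product $e_{j_1}\cdots e_{j_m}$ of distinct $e_j$'s with $m<N$ and $m$ odd; by \cref{mind,mouse} such a product permutes the basis $\{x_I\}$ of $S$ off-diagonally, hence is traceless. For a permutation $\tau\in \fS_N$, the reversal identity $e_{\tau(N)}\cdots e_{\tau(1)} = (-1)^{\binom{N}{2}}\sgn(\tau)\,e_1\cdots e_N$ lets me rewrite
\[
    B(1) = \sigma_N(-1)^{\binom{N}{2}}\tr_S(e_1 e_2\cdots e_N)\cdot A(e_1\otimes e_2\otimes\cdots\otimes e_N),
\]
so $B(1)$ is itself antisymmetric and $A(B(1)) = N!\,B(1)$.

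Finally, a short computation using \cref{mind,mouse} (or equivalently \cref{signlemma} with $\varpi=\id$) shows that $e_1e_2\cdots e_N$ is central and acts on $S$ as the scalar $\varepsilon(\sqrt{-1})^n$, so $\tr_S(e_1\cdots e_N) = 2^n\varepsilon(\sqrt{-1})^n$. A similar calculation gives $T(A(e_1\otimes\cdots\otimes e_N)) = \sigma_N N!\,\tr_S(e_1\cdots e_N)$, and combining everything yields
\[
    \bF(E) = \sigma_N^2(-1)^{\binom{N}{2}}(N!)^2\bigl(\tr_S(e_1\cdots e_N)\bigr)^2 = (-1)^{\binom{N}{2}+n}(N!)^2\cdot 2^{N-1}.
\]
Since $\binom{N}{2} = n(2n+1)\equiv n\pmod 2$ for $N=2n+1$, the two signs cancel and we obtain $\bF(E) = 2^{N-1}(N!)^2$. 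The main obstacle is careful sign bookkeeping: correctly deriving the factor $(-1)^{\binom{N}{2}}$ coming from the order reversal in the formula for $B(1)$, tracking the $(-1)^n$ coming from $(\sqrt{-1})^{2n}$, and verifying the arithmetic identity $\binom{2n+1}{2}\equiv n\pmod 2$ that forces them to cancel. Everything else is a routine calculation in the Clifford algebra.
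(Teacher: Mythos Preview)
Your proof is correct and takes a genuinely different—and in some ways cleaner—route than the paper's. The paper works in the $\psi$ basis of $V$ and the $x_I$ basis of $S$, invoking \cref{signlemma} (a somewhat delicate sign computation for $\psi_{\varpi(-n)}\cdots\psi_{\varpi(n)}x_I$) together with the explicit value $\Phi_S(x_I,x_I^\vee)=(-1)^{\binom{n+1}{2}}$. You instead work in the orthonormal $e$ basis, interpret both loops as Clifford traces on $S$, and exploit the fact that $e_1\cdots e_N$ is central in $\Cl(V)$ and acts on $S$ by the scalar $\varepsilon i^n$. This replaces the permutation-by-permutation sign analysis of \cref{signlemma} with a single global computation, and packages all bilinear-form signs into $\sigma_N^2=1$. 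Both approaches ultimately reduce to the same parity identity $\binom{2n+1}{2}\equiv n\pmod 2$, but yours reaches it with less bookkeeping.

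One small gap: your justification that a product $e_{j_1}\cdots e_{j_m}$ of $m<N$ distinct factors (with $m$ odd) is traceless is not quite right as stated. Such a product need not act off-diagonally on the $x_I$ basis; for instance, when $N=5$ the element $e_1e_2e_5$ is diagonal (each $e_{2j-1}e_{2j}$ and $e_N$ are diagonal). The correct argument is immediate: since $m<N$, choose $b\notin\{j_1,\ldots,j_m\}$; then $e_b$ anticommutes with $e_{j_1}\cdots e_{j_m}$ (as $m$ is odd) and $e_b^2=1$ on $S$, so conjugation by $e_b$ gives $\tr_S(e_{j_1}\cdots e_{j_m}) = -\tr_S(e_{j_1}\cdots e_{j_m})=0$. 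With this fix, your argument is complete.
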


\begin{proof}
    Let
    \[
        Y = \{-n,1-n,\dotsc,n\}.
    \]
    In what follows, we will use the fact that
    \begin{equation} \label{beaver}
        \Phi_S(x_I, x_I^\vee)
        \overset{\cref{formsym}}{=} (-1)^{\binom{n}{2} + n(2n+1)} \Phi_S(x_I^\vee, x_I)
        = (-1)^{\binom{n+1}{2}}
        \qquad \text{for all } I \subseteq [n].
    \end{equation}

    Recall the definition of $\psi_i$ for $i \le 0$ from \cref{spiky}.  The dual of the ordered basis $\{\psi_{-n},\psi_{1-n},\ldots,\psi_{n}\}$ of $V$ is $\{2\psi_{n},2\psi_{n-1},\dotsc,2\psi_{-n}\}$.  Therefore, by \cref{vortex,incarnate3},
    \[
        \bF
        \left(
            \splitmor{spin}{vec}{spin}
        \right)
        \colon x \mapsto 2 \sum_{i=-n}^n \psi_{-i} \otimes \psi_i x.
    \]
    Thus,
    \[
        \bF
        \left(
            \begin{tikzpicture}[anchorbase]
                \draw[spin] (-0.1,-1.3) -- (0.1,-1.3) arc(-90:90:0.15) -- (-0.1,-1) arc(90:270:0.15);
                \draw[vec] (0,-0.5) -- (0,-1) node[midway,anchor=west] {\strandlabel{N}};
            \end{tikzpicture}
        \right)
        =
        \bF
        \left(
            \begin{tikzpicture}[anchorbase,rotate=45]
                \draw[spin] (-0.1,-1.3) -- (0.1,-1.3) arc(-90:90:0.15) -- (-0.1,-1) arc(90:270:0.15);
                \draw[vec] (0,-0.5) node[anchor=west] {\strandlabel{N}} -- (0,-1);
            \end{tikzpicture}
        \right)
    \]
    is the map
    \begin{align*}
        1
        &\mapsto \sum_{I \subseteq [n]} x_I \otimes x_I^\vee
        \\
        &\mapsto 2^N \sum_{I \subseteq [n]} \sum_{i_{-n},i_{1-n},\dotsc,i_n=-n}^n \psi_{-i_n}\otimes \psi_{-i_{n-1}} \otimes \dotsb \otimes \psi_{-i_{-n}} \otimes \psi_{i_{-n}} \dotsm \psi_{i_{n-1}} \psi_{i_n} x_I \otimes x_I^\vee
        \\
        &\mapsto 2^N \sum_{I \subseteq [n]} \sum_{i_{-n},i_{1-n},\dotsc,i_n=-n}^n \Phi_S(\psi_{i_{-n}} \dotsm \psi_{i_{n-1}} \psi_{i_n} x_I, x_I^\vee) \psi_{-i_n}\otimes \psi_{-i_{n-1}} \otimes \dotsb \otimes \psi_{-i_{-n}}.
    \end{align*}
    Applying the antisymmetrizer
    \(
        \bF
        \left(
            \begin{tikzpicture}[anchorbase]
                \draw[vec] (0,-0.3) -- (0,0.3);
                \altbox{-0.2,-0.15}{0.2,0.15}{N};
            \end{tikzpicture}
        \right)
    \),
    which annihilates any terms for which the map $j \mapsto i_j$ is not some permutation $\varpi \in \fS_Y$, we obtain
    \begin{align*}
        2^N  &\sum_{\varpi,\varpi' \in \fS_Y} \sgn(\varpi') \sum_{I \subseteq [n]} \Phi_S(\psi_{\varpi(-n)} \dotsm \psi_{\varpi(n-1)} \psi_{\varpi(n)} x_I, x_I^\vee) \psi_{-\varpi \varpi'(n)} \otimes \psi_{-\varpi \varpi'(n-1)} \otimes \dotsb \otimes \psi_{-\varpi \varpi'(-n)}
        \\
        &\overset{\mathclap{\cref{dragon}}}{=}\ \, 2^{N-1/2} \varepsilon \sum_{\varpi,\varpi' \in \fS_Y} \sgn(\varpi\varpi') \Phi_S(x_{I_\varpi}, x_{I_\varpi}^\vee) \psi_{-\varpi\varpi'(n)} \otimes \psi_{-\varpi\varpi'(n-1)} \otimes \dotsb \otimes \psi_{-\varpi\varpi'(-n)}
        \\
        &\overset{\mathclap{\cref{beaver}}}{=}\ \, 2^{N-1/2} \varepsilon (-1)^{\binom{n+1}{2}} \sum_{\varpi,\varpi' \in \fS_Y} \sgn(\varpi\varpi') \psi_{-\varpi\varpi'(n)} \otimes \psi_{-\varpi\varpi'(n-1)} \otimes \dotsb \otimes \psi_{-\varpi\varpi'(-n)}
        \\
        &= 2^{N-1/2} \varepsilon (-1)^{\binom{n+1}{2}} N! \sum_{\varpi \in \fS_Y} \sgn(\varpi) \psi_{\varpi(-n)} \otimes \psi_{\varpi(1-n)} \otimes \dotsb \otimes \psi_{\varpi(n)},
    \end{align*}
    where, in the last equality, we re-indexed the summation, noting that the sign of the permutation $\varpi(-j) \mapsto -\varpi \varpi'(j)$ is $\sgn(\omega')$.

    We now apply
    \[
        \bF
        \left(
            \begin{tikzpicture}[anchorbase]
                \draw[spin] (-0.1,1.3) -- (0.1,1.3) arc(90:-90:0.15) -- (-0.1,1) arc(270:90:0.15);
                \draw[vec] (0,0.5) -- (0,1) node[midway,anchor=west] {\strandlabel{N}};
            \end{tikzpicture}
        \right)
        =
        \bF
        \left(
            \begin{tikzpicture}[anchorbase,rotate=-45]
                \draw[spin] (-0.1,1.3) -- (0.1,1.3) arc(90:-90:0.15) -- (-0.1,1) arc(270:90:0.15);
                \draw[vec] (0,0.5) node[anchor=south] {\strandlabel{N}} -- (0,1);
            \end{tikzpicture}
        \right)
    \]
    to obtain
    \begin{multline*}
        2^{N-1/2} \varepsilon (-1)^{\binom{n+1}{2}} N! \sum_{\varpi \in \fS_Y} \sum_{I \subseteq [n]} \sgn(\varpi) \Phi_S(\psi_{\varpi(-n)} \psi_{\varpi(1-n)} \psi_{\varpi(n)} x_I, x_I^\vee)
        \\
        \overset{\cref{dragon}}{=} 2^{N-1} (-1)^{\binom{n+1}{2}} N! \sum_{\varpi \in \fS_Y} \Phi_S(x_{I_\varpi}, x_{I_\varpi}^\vee)
        \overset{\cref{beaver}}{=} 2^{N-1} (N!)^2.
        \qedhere
    \end{multline*}
\end{proof}

\begin{theo} \label{beacon}
    The incarnation functor $\bF$ of \cref{incarnation} factors through $\overline{\SB}(V)$.
\end{theo}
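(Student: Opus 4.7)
The plan is as follows. Since $\overline{\SB}(V) = \SB(V)$ whenever $d \notin 2\mathbb{N}+1$, nothing needs to be checked in that range, and the only substantive case is $d = N$ with $N$ a positive odd integer. For this case, I must verify that $\bF$ sends the additional relation \cref{extra} to an equality of morphisms in $\Group(V)\md$.

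Write $A \in \End_{\Group(V)}(V^{\otimes N})$ for the image under $\bF$ of the antisymmetrizer on $N$ vec strands, and let $\epsilon \colon V^{\otimes N} \to \C$, $\eta \colon \C \to V^{\otimes N}$ denote the images under $\bF$ of a closed spin loop carrying $N$ downward-oriented, respectively upward-oriented, vec trivalent legs. These $\epsilon$ and $\eta$ are precisely the two ``loop'' pieces surrounding the central antisymmetrizer in the closed diagram of \cref{cheese}. Using isotopy invariance in the strict pivotal category $\SB(V)$, and noting that the two halves of the diagram on the left-hand side of \cref{extra} are disconnected, I would rewrite the image of the left-hand side of \cref{extra} under $\bF$ as $A \circ \eta \circ \epsilon \circ A$, while the image of the right-hand side is $D^2(N!)^2 \cdot A$.

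The key observation is that $\dim V = N$, which forces the image of $A$ to be the one-dimensional space of top antisymmetric tensors in $V^{\otimes N}$; hence $A$ is a rank-one linear endomorphism. Consequently, for any linear map $X \colon V^{\otimes N} \to V^{\otimes N}$, the sandwich $A \circ X \circ A$ is automatically a scalar multiple of $A$. Applying this with $X = \eta \circ \epsilon$ gives $A \circ \eta \circ \epsilon \circ A = c \cdot A$ for some $c \in \C$, and the problem reduces to pinning down $c$.

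To extract $c$, I would post-compose the identity on the left by $\epsilon$ and pre-compose on the right by $\eta$, yielding $(\epsilon \circ A \circ \eta)^2 = c \cdot (\epsilon \circ A \circ \eta)$. By \cref{cheese}, the scalar $\epsilon \circ A \circ \eta$ equals $2^{N-1}(N!)^2$, which is nonzero, and dividing gives $c = 2^{N-1}(N!)^2$. Finally, for $N = 2n+1$ one computes $D^2 = \sigma_N^2 \cdot 2^{2n} = 2^{N-1}$, so $c = D^2(N!)^2$, matching the coefficient on the right-hand side of \cref{extra}. The only conceptual input is the rank-one observation; after that, \cref{cheese} handles all the numerics.
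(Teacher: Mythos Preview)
Your argument is correct and follows essentially the same path as the paper's proof: both reduce to the odd-$N$ case, both use the one-dimensionality of the image of the antisymmetrizer on $V^{\otimes N}$ to conclude that the left-hand side of \cref{extra} maps to a scalar multiple of the antisymmetrizer, and both invoke \cref{cheese} to pin down that scalar.

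The only difference is in how the scalar is extracted. The paper takes the categorical trace (closing off with $N$ nested cups and caps), then uses \cref{absorb} and \cref{monkey2} to evaluate the two resulting closed diagrams. Your approach instead sandwiches by $\epsilon$ and $\eta$, which directly produces the square $(\epsilon \circ A \circ \eta)^2$ on the left and $c\,(\epsilon \circ A \circ \eta)$ on the right; this avoids appealing to \cref{monkey2} altogether and is a slight streamlining of the paper's computation.
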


\begin{proof}
    If $N$ is even, there is nothing to prove, since $\overline{\SB}(V) = \SB(V)$ in this case.  Therefore, we suppose that $N$ is odd.  The images under $\bF$ of the two sides of \cref{extra} are endomorphisms of $\End_{\Group(V)}(\Lambda^N(V))$, which is one dimensional.  Therefore, there exists a scalar $a \in \kk$ such \[
        \bF
        \left(
            \begin{tikzpicture}[centerzero]
                \draw[spin] (0,0.25) circle(0.15);
                \draw[spin] (0,-0.25) circle(0.15);
                \draw[vec] (0,0.4) -- (0,1.1);
                \draw[vec] (0,-0.4) -- (0,-1.1);
                \altbox{-0.2,-0.9}{0.2,-0.6}{N};
                \altbox{-0.2,0.6}{0.2,0.9}{N};
            \end{tikzpicture}
        \right)
        = a \bF
        \left(
            \begin{tikzpicture}[centerzero]
                \draw[vec] (0,-1.1) -- (0,1.1);
                \altbox{-0.2,-0.15}{0.2,0.15}{N};
            \end{tikzpicture}
        \right)
        \ .
    \]
    We then have
    \[
        2^{N-1} (N!)^2
        \overset{\cref{eggs}}{=}
        \bF
        \left(
            \begin{tikzpicture}[anchorbase]
                \draw[spin] (-0.1,0) -- (0.1,0) arc(-90:90:0.15) -- (-0.1,0.3) arc(90:270:0.15);
                \draw[spin] (-0.1,-1.3) -- (0.1,-1.3) arc(-90:90:0.15) -- (-0.1,-1) arc(90:270:0.15);
                \draw[vec] (0,0) -- (0,-1);
                \altbox{-0.2,-0.65}{0.2,-0.35}{N};
            \end{tikzpicture}
        \right)
        \overset{\cref{absorb}}{=} \frac{1}{N!} \bF
        \left(
            \begin{tikzpicture}[centerzero]
                \draw[spin] (0,0.25) circle(0.15);
                \draw[spin] (0,-0.25) circle(0.15);
                \draw[vec] (0,0.4) -- (0,0.9) arc (180:0:0.2) -- (0.4,-0.9) arc (360:180:0.2) -- (0,-0.4);
                \altbox{-0.2,-0.9}{0.2,-0.6}{N};
                \altbox{-0.2,0.6}{0.2,0.9}{N};
            \end{tikzpicture}
            \,
        \right)
        = \frac{a}{N!} \bF
        \left(
            \begin{tikzpicture}[anchorbase]
                \draw[vec] (0,-0.15) arc(180:360:0.2) -- (0.4,0.15) arc(0:180:0.2);
                \altbox{-0.2,-0.15}{0.2,0.15}{N};
            \end{tikzpicture}
        \right)
        \overset{\cref{monkey2}}{=}
        a.
    \]
    It follows that $a = 2^{N-1} (N!)^2$, as desired.
\end{proof}

\begin{lem}
    We have
    \begin{align} \label{black}
        \bF \left( \mergemor{spin}{vec}{spin} \right) &\colon S \otimes V \to S,&
        x \otimes v &\mapsto (-1)^{nN} vx,
        \\ \label{baction}
        \bF \left(
            \begin{tikzpicture}[centerzero]
                \draw[spin] (-0.2,-0.3) -- (-0.2,0.3);
                \draw[spin] (0.2,-0.3) -- (0.2,0.3);
                \draw[vec] (-0.2,0) -- (0.2,0);
            \end{tikzpicture}
        \right)
        &\colon S \otimes S \to S \otimes S,&
        x \otimes y &\mapsto (-1)^{nN} \beta(x \otimes y),
    \end{align}
    where
    \begin{equation} \label{baton}
        \beta := \sum_{i=1}^N e_i \otimes e_i \in \Cl^{\otimes 2}.
    \end{equation}
\end{lem}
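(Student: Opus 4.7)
The plan is to deduce both formulas from the first identity of \cref{lobster} and the values of $\bF$ on generators computed in \cref{incarnation}.

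For \cref{black}: the first identity of \cref{lobster} reads
\[
    \begin{tikzpicture}[anchorbase]
        \draw[vec] (0.2,-0.5) to [out=135,in=down] (-0.15,-0.2) to[out=up,in=-135] (0,0);
        \draw[spin] (-0.2,-0.5) to[out=45,in=down] (0.15,-0.2) to[out=up,in=-45] (0,0) -- (0,0.2);
    \end{tikzpicture}
    = \kappa_N\, \mergemor{spin}{vec}{spin},
\]
whose left-hand side is $\mergemor{vec}{spin}{spin} \circ \flip_{\Sgo,\Vgo}$. The proof of \cref{incarnation} already shows that the image of this left-hand side under $\bF$ is the map $x \otimes v \mapsto v \otimes x \mapsto vx$. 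Since $\kappa_N = (-1)^{nN}$ satisfies $\kappa_N^2 = 1$, dividing by $\kappa_N$ yields $\bF(\mergemor{spin}{vec}{spin})(x \otimes v) = (-1)^{nN} vx$.

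For \cref{baction}: I would invoke \cref{barbell} to redraw the horizontal vec strand as a vec cup at the bottom, exhibiting the barbell as the composition $(\mergemor{spin}{vec}{spin} \otimes \mergemor{vec}{spin}{spin}) \circ (\id_{\Sgo} \otimes \cupmor{vec} \otimes \id_{\Sgo})$. Using the orthonormal basis $\{e_1,\dots,e_N\}$ of $V$ fixed at the start of the paper, one has $e_i^\vee = e_i$, so $\bF(\Phi_V^\vee)(1) = \sum_i e_i \otimes e_i$. Applying \cref{black} at the left vertex and $\bF(\mergemor{vec}{spin}{spin}) = \tau$ at the right, the composition sends
\[
    x \otimes y \;\mapsto\; \sum_{i=1}^N x \otimes e_i \otimes e_i \otimes y \;\mapsto\; \sum_{i=1}^N ((-1)^{nN} e_i x) \otimes (e_i y) = (-1)^{nN} \beta(x \otimes y),
\]
which is the required formula.

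Neither step presents a genuine difficulty: each is a bookkeeping exercise once the pivotal identifications are made. The subtlest point is ensuring that the two trivalent vertices arising from the barbell once the cup is pulled down are labelled $\mergemor{spin}{vec}{spin}$ and $\mergemor{vec}{spin}{spin}$ as claimed; this is forced by the isotopy invariance of diagrams discussed immediately after \cref{SBdef}, so there is no real obstacle.
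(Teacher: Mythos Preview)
Your proof is correct and follows essentially the same route as the paper: the paper also deduces \cref{black} from the first identity in \cref{lobster} together with the computation made when verifying \cref{fishy} in the proof of \cref{incarnation}, and then obtains \cref{baction} by rewriting the barbell via \cref{barbell} as the cup-then-merge composite and evaluating with the orthonormal basis $e_i$. The only cosmetic slip is writing $\flip_{\Sgo,\Vgo}$ for what is the crossing $\crossmor{spin}{vec}$ in $\SB(V)$ (whose image under $\bF$ is $\flip_{S,V}$), but this does not affect the argument.
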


\begin{proof}
    Using \cref{lobster}, we see that \cref{black} follows from the part of the proof of \cref{incarnation} where we verified \cref{fishy}.  Then we have
    \[
        \bF \left(
            \begin{tikzpicture}[centerzero]
                \draw[spin] (-0.2,-0.3) -- (-0.2,0.3);
                \draw[spin] (0.2,-0.3) -- (0.2,0.3);
                \draw[vec] (-0.2,0) -- (0.2,0);
            \end{tikzpicture}
        \right)
        \overset{\cref{barbell}}{=}
        \bF \left(
            \begin{tikzpicture}[centerzero]
                \draw[spin] (-0.2,-0.3) -- (-0.2,0.3);
                \draw[spin] (0.2,-0.3) -- (0.2,0.3);
                \draw[vec] (-0.2,0) to[out=-45,in=-135,looseness=1.8] (0.2,0);
            \end{tikzpicture}
        \right)
        \colon x \otimes y
        \mapsto (-1)^{nN} \sum_{i=1}^n e_i x \otimes e_i y
        = (-1)^{nN} \beta(x \otimes y).
        \qedhere
    \]
\end{proof}

\begin{rem} \label{vampire}
    There are other possible incarnation functors.  In particular, for $m,k \in \N$, let $\OSp(m|2k)$ be the corresponding orthosymplectic supergroup, defined to be the supergroup preserving a nondegenerate supersymmetric bilinear form $\Phi_W$ on a vector superspace $W$ whose even part has dimension $m$ and odd part has dimension $2k$.  Then there is a unique monoidal functor
    \[
        \SB(N,\sigma_N (m-2k) 2^n;\kappa_N) \to (\Group(V) \times \OSp(m|2k))\md
    \]
    given on objects by $\Sgo \mapsto S \otimes W$, $\Vgo \mapsto V$, and on morphisms by
    \begin{gather*}
        \capmor{spin} \mapsto \Phi_S \otimes \Phi_W,\qquad
        \capmor{vec} \mapsto \Phi_V,\qquad
        \mergemor{vec}{spin}{spin} \mapsto \tau \otimes \id_W,
        \\
        \crossmor{spin}{spin} \mapsto \sigma_N \flip_{S \otimes W, S \otimes W},\qquad
        \crossmor{spin}{vec} \mapsto \flip_{S \otimes W, V},\qquad
        \crossmor{vec}{spin} \mapsto \flip_{V, S \otimes W},\qquad
        \crossmor{vec}{vec} \mapsto \flip_{V,V},
    \end{gather*}
    where we now use the super analogue of the map $\flip$ of \cref{flip}, given by $u \otimes w \mapsto (-1)^{\bar{u} \bar{w}} w \otimes u$, where $\bar{v}$ is the parity of a homogeneous vector $v$.  The proof of the existence and uniqueness of this functor is similar to that of \cref{incarnation}, as is the proof that it factors through $\overline{\SB}(N,\sigma_N(m-2k)2^n; \kappa_N)$.
    \details{
        As an example of the proof of existence, we verify that this functor respects the relation \cref{fishy}.  The image under the functor of the left-hand side of \cref{fishy} is the map $S \otimes V \mapsto S$ given by
        \[
            x \otimes w \otimes v
            \mapsto v \otimes x \otimes w
            \mapsto vx \otimes w.
        \]
        On the other hand, the image under the functor of the right-hand side of the first relation in \cref{fishy} is the map given by
        \begin{multline*}
            x \otimes w \otimes v
            \mapsto \sum_{\substack{y \in \bB_S \\ u \in \bB_W}} x \otimes w \otimes v \otimes y \otimes u \otimes y^\vee \otimes u^\vee
            \mapsto \sum_{\substack{y \in \bB_S \\ u \in \bB_W}} x \otimes w \otimes vy \otimes u \otimes y^\vee \otimes u^\vee
            \\
            \mapsto \sum_{\substack{y \in \bB_S \\ u \in \bB_W}} \Phi_S(x,vy) \Phi_W(w,u) y^\vee \otimes u^\vee
            \overset{\cref{bounce}}{=} (-1)^{nN} \Phi_S(vx,y) y^\vee \otimes w
            = (-1)^{nN} \Phi(w,u) vx \otimes w.
        \end{multline*}
        Thus, $\bF$ respects \cref{fishy}.
    }
    \details{
        To show that the modified incarnation functors of \cref{vampire} also respect the relation \cref{extra} when $d$ is an odd integer, we compute the analogue of \cref{eggs} under these functors.

        Let $\bB_W$ be a basis of $W$, and let $w^\vee$, $w \in \bB_W$, denote the dual basis, defined by
        \[
            \Phi_W(w^\vee, u) = \delta_{wu},\qquad w,u \in \bB_W.
        \]
        Then we have
        \[
            \Phi_W(w,w^\vee) = (-1)^{\bar{w}} \Phi(w^\vee, w) = (-1)^{\bar{w}}.
        \]

        By \cref{vortex},
        \[
            \bF
            \left(
                \splitmor{spin}{vec}{spin}
            \right)
            \colon S \otimes W \to V \otimes S \otimes W,\qquad
            x \otimes w \mapsto 2 \sum_{i=-n}^n \psi_{-i} \otimes \psi_i x \otimes w.
        \]
        Thus,
        \[
            \bF
            \left(
                \begin{tikzpicture}[anchorbase]
                    \draw[spin] (-0.1,-1.3) -- (0.1,-1.3) arc(-90:90:0.15) -- (-0.1,-1) arc(90:270:0.15);
                    \draw[vec] (0,-0.5) -- (0,-1) node[midway,anchor=west] {\strandlabel{N}};
                \end{tikzpicture}
            \right)
            =
            \bF
            \left(
                \begin{tikzpicture}[anchorbase,rotate=45]
                    \draw[spin] (-0.1,-1.3) -- (0.1,-1.3) arc(-90:90:0.15) -- (-0.1,-1) arc(90:270:0.15);
                    \draw[vec] (0,-0.5) node[anchor=west] {\strandlabel{N}} -- (0,-1);
                \end{tikzpicture}
            \right)
        \]
        is the map
        \begin{align*}
            1
            &\mapsto \sum_{\substack{I \subseteq [n] \\ w \in \bB_W}} x_I \otimes w \otimes x_I^\vee \otimes w^\vee
            \\
            &\mapsto 2^N \sum_{\substack{I \subseteq [n] \\ w \in \bB_W}} \sum_{i_{-n},i_{1-n},\dotsc,i_n=-n}^n \psi_{-i_n}\otimes \psi_{-i_{n-1}} \otimes \dotsb \otimes \psi_{-i_{-n}} \otimes \psi_{i_{-n}} \dotsm \psi_{i_{n-1}} \psi_{i_n} x_I \otimes w \otimes x_I^\vee \otimes w^\vee
            \\
            &\mapsto 2^N \sum_{\substack{I \subseteq [n] \\ w \in \bB_W}} \sum_{i_{-n},i_{1-n},\dotsc,i_n=-n}^n \Phi_S(\psi_{i_{-n}} \dotsm \psi_{i_{n-1}} \psi_{i_n} x_I, x_I^\vee) \Phi_W(w,w^\vee) \psi_{-i_n}\otimes \psi_{-i_{n-1}} \otimes \dotsb \otimes \psi_{-i_{-n}}
            \\
            &= (m-2k) 2^N \sum_{\substack{I \subseteq [n] \\ w \in \bB_W}} \sum_{i_{-n},i_{1-n},\dotsc,i_n=-n}^n \Phi_S(\psi_{i_{-n}} \dotsm \psi_{i_{n-1}} \psi_{i_n} x_I, x_I^\vee) \psi_{-i_n}\otimes \psi_{-i_{n-1}} \otimes \dotsb \otimes \psi_{-i_{-n}}
        \end{align*}
        Applying the antisymmetrizer
        \(
            \bF
            \left(
                \begin{tikzpicture}[anchorbase]
                    \draw[vec] (0,-0.3) -- (0,0.3);
                    \altbox{-0.2,-0.15}{0.2,0.15}{N};
                \end{tikzpicture}
            \right)
        \),
        we obtain, as in the proof of \cref{cheese},
        \[
            (m-2k) 2^{N-1/2} \varepsilon (-1)^{\binom{n+1}{2}} N! \sum_{\varpi \in \fS_Y} \sgn(\varpi) \psi_{\varpi(-n)} \otimes \psi_{\varpi(1-n)} \otimes \dotsb \otimes \psi_{\varpi(n)}.
        \]
        We now apply
        \[
            \bF
            \left(
                \begin{tikzpicture}[anchorbase]
                    \draw[spin] (-0.1,1.3) -- (0.1,1.3) arc(90:-90:0.15) -- (-0.1,1) arc(270:90:0.15);
                    \draw[vec] (0,0.5) -- (0,1) node[midway,anchor=west] {\strandlabel{N}};
                \end{tikzpicture}
            \right)
            =
            \bF
            \left(
                \begin{tikzpicture}[anchorbase,rotate=-45]
                    \draw[spin] (-0.1,1.3) -- (0.1,1.3) arc(90:-90:0.15) -- (-0.1,1) arc(270:90:0.15);
                    \draw[vec] (0,0.5) node[anchor=south] {\strandlabel{N}} -- (0,1);
                \end{tikzpicture}
            \right)
        \]
        to obtain
        \begin{multline*}
            (m-2k) 2^{N-1/2} \varepsilon (-1)^{\binom{n+1}{2}} N! \sum_{\varpi \in \fS_Y} \sum_{\substack{I \subseteq [n] \\ w \in \bB_W}} \sgn(\varpi) \Phi_S(\psi_{\varpi(-n)} \psi_{\varpi(1-n)} \psi_{\varpi(n)} x_I, x_I^\vee) \Phi_W(w,w^\vee)
            \\
            = (m-2k)^2 2^{N-1} (-1)^{\binom{n+1}{2}} N! \sum_{\varpi \in \fS_Y} \Phi_S(x_{I_\varpi}, x_{I_\varpi}^\vee)
            \overset{\cref{beaver}}{=} (m-2k)^2 2^{N-1} (N!)^2.
            \qedhere
        \end{multline*}

        We can then repeat the argument of \cref{beacon} to show that
        \[
            \bF
            \left(
                \begin{tikzpicture}[centerzero]
                    \draw[spin] (0,0.25) circle(0.15);
                    \draw[spin] (0,-0.25) circle(0.15);
                    \draw[vec] (0,0.4) -- (0,1.1);
                    \draw[vec] (0,-0.4) -- (0,-1.1);
                    \altbox{-0.2,-0.9}{0.2,-0.6}{N};
                    \altbox{-0.2,0.6}{0.2,0.9}{N};
                \end{tikzpicture}
            \right)
            = (m-2k)^2 2^{N-1} (N!)^2 \bF
            \left(
                \begin{tikzpicture}[centerzero]
                    \draw[vec] (0,-1.1) -- (0,1.1);
                    \altbox{-0.2,-0.15}{0.2,0.15}{N};
                \end{tikzpicture}
            \right)
            \ .
        \]
    }
\end{rem}

\begin{cor} \label{carrot}
    Suppose that $\kk$ is a $\Q$-algebra and $d$ is an odd positive integer. Then
    \[
        \End_{\overline{\SB}(d,D,\kappa_d)}(\one)\cong \kk.
    \]
\end{cor}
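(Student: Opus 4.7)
The plan is to combine Proposition~\ref{popping} with a base-change argument, using the orthosymplectic incarnation functors of Remark~\ref{vampire} to rule out torsion. First I would apply Proposition~\ref{popping}: since $d$ is a fixed odd positive integer and $\kk$ is a $\Q$-algebra, for every $r \in (2\N+1) \setminus \{d\}$ the element $r-d$ is a nonzero integer and hence invertible in $\kk$. The hypotheses of Proposition~\ref{popping} are therefore met, giving $\End_{\overline{\SB}(d,D;\kappa_d)}(\one) = \kk\, 1_\one$. It then remains only to show that the canonical map $\kk \to \End_{\overline{\SB}(d,D;\kappa_d)}(\one)$, $a \mapsto a \cdot 1_\one$, is injective.

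Next, I would reduce to the universal case. The category $\overline{\SB}(d,D;\kappa_d)$ is presented by generators and relations whose coefficients involve $D$ only as a scalar, so over an arbitrary $\Q$-algebra $\kk$ with chosen $D \in \kk$ it is obtained from the universal category $\mathcal{C}_0 := \overline{\SB}(d, D; \kappa_d)$ over $\kk_0 := \Q[D]$ by extending scalars along the map $\kk_0 \to \kk$ sending $D \mapsto D$. Hence it suffices to prove $\End_{\mathcal{C}_0}(\one) \cong \kk_0$, i.e.\ that the kernel $I$ of the surjection $\kk_0 \twoheadrightarrow \End_{\mathcal{C}_0}(\one)$ supplied by the previous paragraph is zero.

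Now suppose for contradiction that there is a nonzero $f(D) \in I$. For each pair $m, k \in \N$, Remark~\ref{vampire} provides, over $\C$, a monoidal functor
\[
    \overline{\SB}\bigl(d,\; \sigma_d (m-2k) 2^{(d-1)/2};\; \kappa_d\bigr) \to \bigl(\Group(V) \times \OSp(m|2k)\bigr)\md,
\]
where $V$ has dimension $d$. This functor is obtained from $\mathcal{C}_0$ by extending scalars along $\kk_0 \to \C$, $D \mapsto \sigma_d (m-2k) 2^{(d-1)/2}$, and then applying the orthosymplectic incarnation; in particular, $1_\one$ is sent to $\id_{\triv^0}$, which is nonzero. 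Since $f(D) \in I$ implies $f(D) \cdot 1_\one = 0$ in $\mathcal{C}_0$, it follows after base change and application of this functor that $f\bigl(\sigma_d (m-2k) 2^{(d-1)/2}\bigr) \cdot \id_{\triv^0} = 0$, and hence
\[
    f\bigl(\sigma_d (m-2k) 2^{(d-1)/2}\bigr) = 0
    \qquad \text{for all } m, k \in \N.
\]

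Finally, as $(m,k)$ ranges over $\N \times \N$, the integer $m-2k$ takes every value in $\Z$, so the scalars $\sigma_d (m-2k) 2^{(d-1)/2}$ comprise infinitely many distinct elements of $\C$. A nonzero polynomial in $\Q[D]$ has only finitely many complex roots, so this contradicts $f \ne 0$. Hence $I = 0$, which proves $\End_{\mathcal{C}_0}(\one) \cong \kk_0$, and the conclusion follows for general $\kk$ by base change. The only point requiring care is the parenthetical claim in Remark~\ref{vampire} that the orthosymplectic functor factors through $\overline{\SB}$; that is asserted there and is essential to this argument, since it legitimises evaluating these functors on relation \cref{extra}.
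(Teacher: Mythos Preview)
Your proof is correct and follows essentially the same approach as the paper, which simply says the argument is analogous to that of Corollary~\ref{lunch} using the extra incarnation functors of Remark~\ref{vampire}. You have accurately filled in the details: reducing to the universal base ring $\Q[D]$, applying Proposition~\ref{popping}, and then using the orthosymplectic functors (which factor through $\overline{\SB}$, as asserted in Remark~\ref{vampire}) to obtain infinitely many specialisations of $D$ at which any element of the kernel must vanish.
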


\begin{proof}
    The proof is analogous to that of \cref{lunch}, using the extra incarnation functors of \cref{vampire}.
    \details{
        In the proof of \cref{lunch}, we already had a Zariski dense set of incarnation functors. Here, we have fixed where we specialise $d$, and so we need a Zariski-dense choice of parameters to which we can specialise $D$, with $d$ fixed.  This is why we need the extra incarnation functors of \cref{vampire}.
    }
\end{proof}

\begin{rem}
    When $\kk$ is a field of characteristic not equal to two, we have an incarnation functor from $\SB(V)$ to the category of tilting modules for the group $\Group(V)$, given in an analogous manner to \cref{incarnation}. This functor exists since our constructions can be carried out over $\Z[\frac{1}{2}]$, the defining and spin representations are tilting away from characteristic two, and the category of tilting modules is closed under tensor products. The restriction on the characteristic is necessary since the module $V$ is not tilting in characteristic two.  We expect that this incarnation functor is full.
\end{rem}

\section{Fullness of the incarnation functor\label{sec:full}}

In the current section, we prove that the incarnation functor of \cref{incarnation} is full.  Until the statement of \cref{surly}, we assume that $N \ge 2$.

Recall the element $\beta \in \Cl^{\otimes 2}$ from \cref{baton}.  We define a \emph{barbell} to be any element of the form $1^{\otimes t} \otimes \beta \otimes 1^{r-t-2} \in \Cl^{\otimes r}$, $0 \le t \le r-2$, $r \ge 2$.  The action of a barbell yields an element of $\End_{\Group(V)}(S^{\otimes r})$.

\begin{lem}\label{barbellgen}
    The action of the barbell $\beta$ generates $\End_{\Group(V)}(S\otimes S)$.
\end{lem}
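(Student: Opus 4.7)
The plan is to show that $\beta$ acts by pairwise distinct scalars on the simple summands of $S \otimes S$; since $\End_{\Group(V)}(S \otimes S)$ is commutative semisimple of dimension equal to the number of summands, this immediately implies that $\beta$ generates the endomorphism algebra. By \cref{Sdub} together with \cref{lemon}, the simple summands of $S \otimes S$ are the pairwise non-isomorphic $\Lambda^k(V)$, indexed by $0 \le k \le n$ in type $B$ and by $0 \le k \le N$ in type $D$; Schur's lemma then gives $\beta|_{\Lambda^k(V)} = \beta_k \cdot \id$ for some scalar $\beta_k$, and everything reduces to proving distinctness of the $\beta_k$.

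The key step will be to compute $\beta_k^2$ via the quadratic Casimir. Using $\gamma^{-1}(M_{e_i,e_j}) = \tfrac12 e_i e_j$ in $\Cl(V)$ (from \cref{Canberra}), a direct calculation shows that $\sum_{i<j} M_{e_i,e_j}^2 = -C$ in $U(\fso(V))$, where $C$ is the standard quadratic Casimir acting on $L(\lambda)$ by $(\lambda, \lambda + 2\rho)$. Applying the coproduct identity $\Delta(\sum M^2) - \sum M^2 \otimes 1 - 1 \otimes \sum M^2 = 2\sum M_{e_i,e_j} \otimes M_{e_i,e_j}$ and evaluating both sides on $S \otimes S$ via $\rho \otimes \rho$---noting that $\sum M_{e_i,e_j} \otimes M_{e_i,e_j}$ acts there as $\tfrac{1}{8}(\beta^2 - N)$---yields $\beta^2 = N + 8c_S - 4c_\lambda$ on each $L(\lambda) \subseteq S \otimes S$. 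Substituting $c_\lambda = k(N-k)$ for the summand $\Lambda^k(V)$ simplifies to the clean formula $\beta_k^2 = (N-2k)^2$.

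In type $B$, $|N-2k|$ takes the pairwise distinct nonzero values $N, N-2, \dots, 1$ as $k$ ranges over $\{0,1,\dots,n\}$, so $\beta_k^2$ already separates the simple summands, and a fortiori so does $\beta_k$. In type $D$ the formula only gives $\beta_k^2 = \beta_{N-k}^2$, and the crucial remaining step is to verify $\beta_k = -\beta_{N-k}$, along with $\beta_n = 0$ and $\beta_k \ne 0$ for $k \ne n$; the latter two are immediate from $(N-2k)^2 = 0 \iff k=n$ in type $D$. For the sign, I will compute on highest-weight vectors directly: at the weight $\epsilon_1 + \cdots + \epsilon_{n-1}$, the two copies of $L(\omega_{n-1})$ in $S \otimes S$ are spanned by $x_\emptyset \otimes x_{\{n\}}$ and $x_{\{n\}} \otimes x_\emptyset$ (sitting in $\Lambda^{n-1}(V) \oplus \Lambda^{n+1}(V)$), and a short calculation using $e_{2j-1} = \psi_j + \psi_j^\dagger$ and $e_{2j} = -\sqrt{-1}(\psi_j - \psi_j^\dagger)$ gives $\beta(x_\emptyset \otimes x_{\{n\}}) = 2\, x_{\{n\}} \otimes x_\emptyset$ and $\beta(x_{\{n\}} \otimes x_\emptyset) = 2\, x_\emptyset \otimes x_{\{n\}}$, so $\beta$ has eigenvalues $\pm 2$ on these two $\Pin(V)$-summands. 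Carrying out the analogous computation at a highest-weight vector for each pair $(k, N-k)$ finishes the proof.

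The main technical obstacle will be this sign check in type $D$: rather than doing case-by-case highest-weight computations for every pair $(\Lambda^k(V), \Lambda^{N-k}(V))$, it would be cleaner to exhibit a single $\Pin(V)$-equivariant involution of $S \otimes S$ that interchanges these two summands and anticommutes with $\beta$, which would then force $\beta_k = -\beta_{N-k}$ uniformly across all $k$.
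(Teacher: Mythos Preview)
Your overall strategy---show $\beta$ has pairwise distinct eigenvalues on the simple summands of the multiplicity-free module $S\otimes S$---is exactly what the paper uses, though the paper simply cites \cite[Lem.~1.2]{Wen20} for the eigenvalue computation rather than carrying it out. Your computation $\beta_k^2=(N-2k)^2$ is correct (indeed it is essentially \cref{stand} combined with \cref{koala} and \cref{ibis}), and this immediately finishes type~$B$.

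The type~$D$ argument, however, has a genuine gap. Your single computation at weight $\epsilon_1+\dotsb+\epsilon_{n-1}$ is fine, but for $k<n-1$ the full weight space of weight $\epsilon_1+\dotsb+\epsilon_k$ in $S\otimes S$ has dimension $2^{n-k}$, and the two-dimensional highest-weight subspace is not spanned by simple tensors; so the ``analogous computation'' is not analogous. More seriously, the clean fix you propose at the end \emph{cannot work as stated}: a $\Pin(V)$-equivariant involution interchanging $\Lambda^k(V)$ and $\Lambda^{N-k}(V)$ would in particular give an isomorphism $\Lambda^k(V)\cong\Lambda^{N-k}(V)$ of $\Pin(V)$-modules, contradicting \cref{zest}.

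The repair is to drop the equivariance requirement: you only need a linear automorphism of $S\otimes S$ anticommuting with $\beta$, since this already forces the spectrum of $\beta$ to be symmetric about zero with matching multiplicities. Take $Q\otimes 1$ with $Q=e_1e_2\dotsm e_N\in\Cl$. For $N$ even one has $Qe_i=-e_iQ$, hence $(Q\otimes 1)\beta=-\beta(Q\otimes 1)$. Now $\beta_k=\pm(N-2k)$, and if $\beta_k=\beta_{N-k}$ for some $k<n$ then the eigenvalue $N-2k$ would occur with multiplicity $2\binom{N}{k}$ while $-(N-2k)$ would not occur at all (no other $j$ has $(N-2j)^2=(N-2k)^2$), contradicting the spectral symmetry. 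Hence $\beta_k=-\beta_{N-k}$ for all $k$, and the eigenvalues are distinct.
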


\begin{proof}
    By \cref{Sdub,lemon}, the $\Group(V)$-module $S \otimes S$ is multiplicity free and, by \cite[Lem.~1.2]{Wen20}, the action of $\beta$ has a different eigenvalue on each summand. (There is a typo in \cite[Lem.~1.2(b)]{Wen20}; the index $j$ should run from $0$ to $k$ inclusive.)
\end{proof}

\begin{lem} \label{calculator}
    For all $k \ge 0$, the morphism
    \[
        \bF
        \left(
            \begin{tikzpicture}[centerzero]
                \draw[multispin] (-0.2,0.5) -- (-0.2,0.3) arc(180:360:0.2) -- (0.2,0.5);
                \draw (0.2,0.35) node[anchor=east] {\strandlabel{k}};
                \draw[multispin] (-0.2,-0.5) -- (-0.2,-0.3) arc(180:0:0.2) -- (0.2,-0.5);
                \draw (0.2,-0.35) node[anchor=east] {\strandlabel{k}};
            \end{tikzpicture}
        \right)
    \]
    lies in the subalgebra of $\End_{\Group(V)}(S^{\otimes 2k})$ generated by barbells, where the thick cup and cap labelled by $k$ denote $k$ nested cups and caps, respectively.
\end{lem}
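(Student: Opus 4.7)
The plan is to proceed by induction on $k$. Write $M_k := \bF(\text{depth-}k\text{ nested cap-cup}) \in \End_{\Group(V)}(S^{\otimes 2k})$ for the morphism in question. The base case $k=0$ is trivial, since $M_0 = \id_\one$. The case $k=1$ is immediate from \cref{barbellgen}: $M_1$ lies in $\End_{\Group(V)}(S \otimes S)$, which is generated by the single barbell $\beta$.

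For the inductive step from $k \ge 1$ to $k+1$, I plan to exploit the factorisation of the depth-$(k+1)$ nested cap (and analogously the nested cup) as an outer cap on the extremal strand pair $\{1, 2k+2\}$ composed with the depth-$k$ nesting applied to the inner $2k$ strands. After applying $\bF$, and noting that the outer and inner factors act on disjoint strand-sets and hence commute in $\End_{\Group(V)}(S^{\otimes 2k+2})$, a routine computation on basis vectors of $S^{\otimes 2k+2}$ (using multiplicativity of the nested $\Phi_S$-pairing and of the nested $\Phi_S^\vee$-copairing) should yield the decomposition
\[
    M_{k+1} = A_{\textup{inner}} \circ B_{\textup{outer}},
\]
where $A_{\textup{inner}} := \id_S \otimes M_k \otimes \id_S$ applies $M_k$ to the inner $2k$ strands, and $B_{\textup{outer}}$ applies $M_1$ to strands $1$ and $2k+2$ with the identity on the $2k$ strands in between.

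The factor $A_{\textup{inner}}$ will lie in the barbell subalgebra of $\End_{\Group(V)}(S^{\otimes 2k+2})$ by the inductive hypothesis: the polynomial expression for $M_k$ in the $2k$-strand barbells becomes, after tensoring with $\id_S$ on each side, a polynomial in the barbells $1^{\otimes s} \otimes \beta \otimes 1^{\otimes 2k-s}$ for $1 \le s \le 2k-1$.

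The hard part will be $B_{\textup{outer}}$, since strands $1$ and $2k+2$ are not adjacent. The plan to overcome this is to combine \cref{barbellgen} with the $k=1$ case: the subalgebra generated by an adjacent barbell is all of $\End_{\Group(V)}(S \otimes S)$ on that pair, so it contains $\flip_{S,S}$ (equivalently $\bF(\crossmor{spin}{spin}) = \sigma_N \flip_{S,S}$). Composing such adjacent flips produces every strand permutation inside the barbell algebra; taking a permutation $\sigma$ that brings strand $2k+2$ next to strand $1$, I expect to express
\[
    B_{\textup{outer}} = \sigma^{-1} \circ \left( M_1 \otimes \id_{S^{\otimes 2k}} \right) \circ \sigma,
\]
exhibiting $B_{\textup{outer}}$ as a product of elements in the barbell algebra (the middle factor lies there by the $k=1$ case tensored with identity). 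Hence $M_{k+1} = A_{\textup{inner}} \circ B_{\textup{outer}}$ will lie in the barbell algebra, completing the induction.
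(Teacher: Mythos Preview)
Your proof is correct. Both you and the paper argue by induction on $k$ using \cref{barbellgen} as the engine, and both ultimately rely on the fact that $\flip_{S,S}$ (hence arbitrary strand permutations) lies in the barbell subalgebra, so that non-adjacent versions of $M_1$ or of $\beta$ can be conjugated back to adjacent ones. The organization differs: the paper writes $M_k = (1 \otimes \text{Cup}_{k-1} \otimes 1) \circ M_1 \circ (1 \otimes \text{Cap}_{k-1} \otimes 1)$, replaces the central $M_1$ by a polynomial in $\beta$ via \cref{barbellgen}, and then uses a diagrammatic sliding identity to pull each $\beta$ out past the inner cup, producing a ``long barbell'' $\beta_{1,2k}$ composed with $1 \otimes M_{k-1} \otimes 1$. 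You instead factor $M_{k+1} = (1 \otimes M_k \otimes 1) \circ B_{\text{outer}}$ directly and handle $B_{\text{outer}}$ by explicit permutation conjugation. Your route is a bit more explicit about the permutation step (which the paper leaves implicit in concluding from the long barbell), while the paper's sliding picture is a cleaner diagrammatic way to see the reduction to $M_{k-1}$; but the underlying content is the same.
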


\begin{proof}
    We have
    \[
        \bF
        \left(
            \begin{tikzpicture}[centerzero]
                \draw[multispin] (-0.2,0.5) -- (-0.2,0.3) arc(180:360:0.2) -- (0.2,0.5);
                \draw (0.2,0.35) node[anchor=east] {\strandlabel{k}};
                \draw[multispin] (-0.2,-0.5) -- (-0.2,-0.3) arc(180:0:0.2) -- (0.2,-0.5);
                \draw (0.2,-0.35) node[anchor=east] {\strandlabel{k}};
            \end{tikzpicture}
        \right)
        =
        \bF
        \left(\,
            \begin{tikzpicture}[centerzero]
                \draw[multispin] (-0.4,0.7) -- (-0.4,0.5) to[out=down,in=down] (0.4,0.5) -- (0.4,0.7);
                \node at (0,0.5) {\strandlabel{k-1}};
                \draw[spin] (-0.6,0.7) -- (-0.6,0.5) to[out=down,in=down,looseness=1.2] (0.6,0.5) -- (0.6,0.7);
                \draw[multispin] (-0.4,-0.7) -- (-0.4,-0.5) to[out=up,in=up] (0.4,-0.5) -- (0.4,-0.7);
                \draw[spin] (-0.6,-0.7) -- (-0.6,-0.5) to[out=up,in=up,looseness=1.2] (0.6,-0.5) -- (0.6,-0.7);
                \node at (0,-0.5) {\strandlabel{k-1}};
            \end{tikzpicture}
        \, \right)
        \ .
    \]
    By \cref{barbellgen}, the innermost $\bF(\hourglass)$ can be written as a polynomial in
    \[
        \bF
        \left(
            \begin{tikzpicture}[centerzero]
                \draw[spin] (-0.2,-0.3) -- (-0.2,0.3);
                \draw[spin] (0.2,-0.3) -- (0.2,0.3);
                \draw[vec] (-0.2,0) -- (0.2,0);
            \end{tikzpicture}
        \right).
    \]
    Next, note that
    \[
        \begin{tikzpicture}[centerzero]
            \draw[multispin] (-0.4,0.6) -- (-0.4,0.4) to[out=down,in=down] (0.4,0.4) -- (0.4,0.6);
            \node at (0,0.4) {\strandlabel{k-1}};
            \draw[multispin] (-0.4,-0.6) -- (-0.4,-0.4) to[out=up,in=up] (0.4,-0.4) -- (0.4,-0.6);
            \draw[spin] (-0.6,-0.6) -- (-0.6,0.6);
            \draw[spin] (0.6,-0.6) -- (0.6,0.6);
            \node at (0,-0.4) {\strandlabel{k-1}};
            \draw[vec] (-0.6,0) -- (0.6,0);
        \end{tikzpicture}
        =
        \begin{tikzpicture}[centerzero]
            \draw[multispin] (-0.4,0.6) -- (-0.4,0.2) to[out=down,in=down] (0.4,0.2) -- (0.4,0.6);
            \node at (0,0.2) {\strandlabel{k-1}};
            \draw[multispin] (-0.4,-0.6) -- (-0.4,-0.4) to[out=up,in=up] (0.4,-0.4) -- (0.4,-0.6);
            \draw[spin] (-0.6,-0.6) -- (-0.6,0.6);
            \draw[spin] (0.6,-0.6) -- (0.6,0.6);
            \node at (0,-0.4) {\strandlabel{k-1}};
            \draw[vec] (-0.6,0.4) -- (0.6,0.4);
        \end{tikzpicture}
        \ .
    \]
    A straightforward proof by induction, using \cref{zombie}, shows that
    \[
        \begin{tikzpicture}[centerzero]
            \draw[spin] (-1,-0.3) -- (-1,0.3);
            \draw[multispin] (-0.5,-0.3) \botlabel{k-1} -- (-0.5,0.3);
            \draw[multispin] (0.5,-0.3) \botlabel{k-1} -- (0.5,0.3);
            \draw[spin] (1,-0.3) -- (1,0.3);
            \draw[vec] (-1,0) -- (1,0);
        \end{tikzpicture}
    \]
    is in the image of the barbells.  Thus, the lemma follows by induction.
\end{proof}

\begin{lem} \label{summand}
    Let $r\in \N$, and let $\la=(\la_1,\la_2,\ldots,\la_n)$ be a dominant integral weight with $\la_1 = \frac{r}{2}$. Then $L(\la)$ is a direct summand of the $\Spin(V)$-module $S^{\otimes r}$.
\end{lem}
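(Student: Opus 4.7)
The plan is to proceed by induction on $r$, using the Pieri-type decomposition of \cref{lem:pieri} to peel off one tensor factor of $S$ at a time. For the base case $r=0$, dominance together with $\lambda_1 = 0$ forces $\lambda = 0$, and $L(0)$ is trivially a summand of $S^{\otimes 0} = \kk$.

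For the inductive step, given $\lambda$ dominant with $\lambda_1 = r/2$ and $r \ge 1$, the goal is to produce $\epsilon \in \wt(S) = \{\pm \tfrac{1}{2}\}^n$ such that $\mu := \lambda - \epsilon$ is dominant integral with $\mu_1 = (r-1)/2$. Once such $\mu$ is found, the inductive hypothesis provides $L(\mu)$ as a direct summand of $S^{\otimes(r-1)}$, and \cref{lem:pieri} gives $L(\lambda) = L(\mu+\epsilon)$ as a direct summand of $S \otimes L(\mu)$; hence $L(\lambda)$ is a direct summand of $S \otimes S^{\otimes(r-1)} = S^{\otimes r}$.

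To construct $\epsilon$, I would set $\epsilon_1 := 1/2$ (forced by the requirement $\mu_1 = (r-1)/2$), and for $j \ge 2$ take $\epsilon_j := 1/2$ if $\lambda_j > 0$ and $\epsilon_j := -1/2$ if $\lambda_j \le 0$, with the single modification that in type $D$ with $\lambda_n = 0$ I would instead take $\epsilon_n := -1/2$ so that $\mu_n = 1/2$. Verifying that $\mu$ is dominant is then a short case analysis whose key point is the following: if $\lambda_j > 0$ while $\lambda_{j+1} \le 0$ (so $\epsilon_j - \epsilon_{j+1} = 1$), then the parity constraint on dominant integral weights (either all $\lambda_i \in \Z$ or all $\lambda_i \in \tfrac{1}{2}+\Z$, see \cref{dominantD,dominantB}) forces $\lambda_j - \lambda_{j+1} \ge 1$, which is exactly what is needed so that $\mu_j - \mu_{j+1} \ge 0$. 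The first inequality $\mu_1 \ge \mu_2$ is immediate since $\mu_1 = (r-1)/2$ while $|\mu_j| \le \max(\lambda_1, \lambda_1) = r/2$ together with the same parity argument forces $\mu_2 \le (r-1)/2$.

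The only real (and still minor) obstacle is checking the last-coordinate condition: $\mu_{n-1} \ge |\mu_n|$ in type $D$, or $\mu_{n-1} \ge \mu_n \ge 0$ in type $B$. This splits into subcases according to the sign of $\lambda_n$. When $\lambda_n > 0$ or $\lambda_n < 0$, the choice $\epsilon_n = \tfrac{1}{2}\operatorname{sgn}(\lambda_n)$ gives $|\mu_n| = |\lambda_n| - 1/2$, and $\mu_{n-1} \ge |\mu_n|$ follows directly from $\lambda_{n-1} \ge |\lambda_n|$; when $\lambda_n = 0$ in type $D$ (forcing integral $\lambda$), the choice $\epsilon_n = -1/2$ gives $|\mu_n| = 1/2$, and the inequality $\mu_{n-1} \ge 1/2$ reduces to $\lambda_{n-1} \ge 1$, which holds because $\lambda_{n-1}$ is either a positive integer or equals $0$---and the latter case propagates back to $\lambda_j = 0$ for all $j \ge 2$, handled by setting $\epsilon_j = -1/2$ there as well. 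Since all $\lambda_j, \mu_j$ arise as $\lambda_j \pm 1/2$, the uniform parity condition is automatic.
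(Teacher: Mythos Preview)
Your proof is correct and follows essentially the same route as the paper: both argue by induction on $r$ and choose the very same weight $\epsilon \in \wt(S)$, namely $\epsilon_j = \tfrac{1}{2}$ precisely when $\lambda_j > 0$ (your coordinate-wise rule and the paper's definition via the largest index $k$ with $\lambda_k > 0$ produce identical $\epsilon$, since the $\lambda_j$ with $j<n$ are nonnegative and decreasing). The paper simply asserts that $\lambda - \epsilon$ is dominant integral, whereas you spell out the parity and last-coordinate case analysis; the added detail is fine, though the ``modification'' clause for $\lambda_n = 0$ in type $D$ is redundant (your rule already gives $\epsilon_n = -\tfrac{1}{2}$ there).
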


\begin{proof}
    We prove this by induction on $r$, the base case $r=0$ being trivial.  Suppose $r \geq 1$ and let $\la=(\la_1,\la_2,\ldots,\la_n)$ with $\la_1=\frac{r}{2}$.   Let $k$ be the largest index for which $\la_k> 0$.  Let $\epsilon = \left( \frac{1}{2}, \dotsc, \frac{1}{2}, -\frac{1}{2}, \dotsc, -\frac{1}{2} \right)$, where there are $k$ occurrences of $\frac{1}{2}$. It then follows directly from the characterisations \cref{dominantD,dominantB} of dominant integral weights that $\la-\epsilon$ is dominant integral.  By the inductive hypothesis, $L(\la-\epsilon)$ is a direct summand of $S^{\otimes (r-1)}$. \Cref{lem:pieri} implies that $L(\la)$ is a summand of $S \otimes L(\la-\epsilon)$, hence is a direct summand of $S^{\otimes r}$, as required.
\end{proof}

\begin{cor}\label{pinsummand}
    Suppose $N$ is even, and let $r$ be a positive integer.  Let $\la=(\la_1,\la_2,\dotsc,\la_n)$ be a dominant integral weight with $\la_1=\frac{r}{2}$, and let $M$ be a simple $\Pin(V)$-module whose restriction to $\Spin(V)$ contains $L(\la)$ as submodule.  Then $M$ is a summand of the $\Pin(V)$-module $S^{\otimes r}$.
\end{cor}

\begin{proof}
    This follows from \cref{summand,echo}.
\end{proof}

\begin{rem} \label{mitre}
    Note that the condition $r>0$ in \cref{pinsummand} is necessary since $\triv^1$ is not a summand of the trivial $\Pin(V)$-module $S^{\otimes 0}$.  However, when $N$ is even, $\triv^1$ is a summand of $S^{\otimes r}$ for $r \in 2\N$, $r > 0$, by \cref{echo}.
\end{rem}

Let $r \in \N$.  For $N > 2$, let $X_r$ be the $\Spin(V)$-submodule of $S^{\otimes r}$ that is the sum of all simple summands of highest weight $\lambda$ with $\lambda_1 = \frac{r}{2}$.  For $N=2$, let $X_r$ be the $\Spin(V)$-submodule of $S^{\otimes r}$ that is the sum of all simple summands of highest weight $\lambda_1 = \pm \frac{r}{2}$. It follows from \cref{hongmen,hydro} that $X_r$ is also $\Group(V)$-submodule.

Recall the definition of $A_{ij}$ from \cref{greenD,greenB}, and let $Y_r$ be the $\frac{r}{2}$-eigenspace of $A_{11}$ on $X_r$.  Since $\lambda_1 \le \frac{r}{2}$ for all weights $\lambda$ appearing as a highest weight in $S^{\otimes r}$, $Y_r$ is also the $\frac{r}{2}$-eigenspace of $A_{11}$ on $S^{\otimes r}$.  Recall the definition of the elements $x_I \in S$ from \cref{xIdef}.  It follows from \cref{bamboo,bambooB} that the space $Y_r$ is the span of all $x_{I_1} \otimes x_{I_2} \otimes \dotsb \otimes x_{I_r}$ where $1 \in I_i$ for all $i$.

Let
\[
    W =
    \begin{cases}
        \Span_\kk \left\{ \psi_2,\psi_3,\dotsc,\psi_n,\psi_n^\dagger,\dotsc,\psi_3^\dagger,\psi_2^\dagger \right\} \subseteq V & \text{if } N \in 2\N, \\
        \Span_\kk \left\{ \psi_2,\psi_3,\dotsc,\psi_n,e_N,\psi_n^\dagger,\dotsc,\psi_3^\dagger,\psi_2^\dagger \right\} \subseteq V & \text{if } N \in 2\N + 1,
    \end{cases}
\]
where we adopt the convention that $W = \{0\}$ when $N=2$.  We have a natural inclusion of groups $\Group(W) \subseteq \Group(V)$.  Since the actions of $A_{11}$ and $\Group(W)$ on $V$ commute, $Y_r$ is a $\Group(W)$-submodule of $V$.
\details{
    We justify the fact the actions of $A_{11}$ and $\Pin(W)$ commute.  Pick an orthonormal basis $v_1,\dotsc,v_N$ of $V$ such that $v_1$ and $v_N$ span the same subspace as $\psi_1$ and $\psi_1^\dagger$, and that $v_2,\dotsc,v_{N-1}$ span $W$.  Then any element of the 1-parameter subgroup generated by $A_{11}$ is of the form $a + b v_1 v_N$ for $a,b\in\C$, $a^2+b^2=1$. We compute directly that $v_2(a+bv_1v_N) = (a+bv_1v_N)v_2$. This proves the commutativity result for one element of $\Pin(W)$ not in the identity component; the Lie algebra computation implies the rest.
}
Let $S_W$ be the spin module for $\Group(W)$.  Then $x_I$, $I \subseteq \{2,3,\dotsc,n\}$, is a basis for $S_W$.  It is straightforward to verify that the map
\begin{equation} \label{beach}
    Y_r \to S_W^{\otimes r},\quad
    x_{I_1} \otimes \dotsb \otimes x_{I_r}
    \mapsto x_{I_1 \setminus \{1\}} \otimes \dotsb \otimes x_{I_r \setminus \{1\}},
\end{equation}
is an isomorphism of $\Group(W)$-modules.

\begin{lem}
    For every $f \in \End_{\Group(V)}(X_r)$, we have $f(Y_r) \subseteq Y_r$.  Furthermore, restriction to $Y_r$ yields an isomorphism of $\kk$-modules
    \begin{equation} \label{snafu}
        \End_{\Group(V)}(X_r) \xrightarrow{\cong} \End_{\Group(W)}(Y_r).
    \end{equation}
\end{lem}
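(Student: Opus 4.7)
My plan is as follows. The preservation $f(Y_r) \subseteq Y_r$ will be immediate: any $f \in \End_{\Group(V)}(X_r)$ is $\Spin(V)$-equivariant, so by differentiation it commutes with the $\fso(V)$-action, in particular with $A_{11}$, and hence preserves its eigenspaces. To make sense of $f|_{Y_r}$ as an element of $\End_{\Group(W)}(Y_r)$, I will check that $Y_r$ is $\Group(W)$-stable. This follows because $\fso(W) \subseteq \fso(V)$ consists of root vectors for roots not involving $\epsilon_1$, hence commutes with the Cartan element $A_{11}$; and, when $N$ is even, any $P \in \Pin(W)$ is a product of vectors in $W \subseteq \Cl$, each of which anticommutes in $\Cl$ with both $\psi_1$ and $\psi_1^\dagger$, so $P$ commutes with $\gamma\inv(A_{11}) = \psi_1\psi_1^\dagger - \tfrac{1}{2}$ and therefore with the action of $A_{11}$ on $S^{\otimes r}$.

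For the isomorphism, my plan is to decompose $X_r = \bigoplus_L L \otimes M_L$ into $\Group(V)$-isotypic components (with $L$ running over distinct simple summands and $M_L$ the multiplicity space) and to identify a compatible decomposition of $Y_r$ as a $\Group(W)$-module. The crucial input will be the Levi decomposition of $\fso(V)$ with respect to $\epsilon_1$: for each simple $\fso(V)$-module $L(\lambda)$ with $\lambda_1 = \tfrac{r}{2}$, its $\tfrac{r}{2}$-eigenspace for $A_{11}$ coincides with the subspace annihilated by the positive root vectors involving $\epsilon_1$ (since $\tfrac{r}{2}$ is the largest possible first coordinate of any weight in $S^{\otimes r}$), and as an $\fso(W)$-module this is the simple module of highest weight $\lambda' := (\lambda_2,\dotsc,\lambda_n)$. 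Since $\lambda_1$ is fixed, $\lambda \mapsto \lambda'$ is a bijection on the relevant dominant weights, and it intertwines the $\pi_0(\Pin(V))$- and $\pi_0(\Pin(W))$-actions (both flip the sign of $\lambda_n$). Combined with the fact that the sign representation $\triv^1$ of $\Pin(V)$ restricts to the sign representation of $\Pin(W)$, this will upgrade to a bijection $L \leftrightarrow L'$ between simple $\Group(V)$-summands of $X_r$ and simple $\Group(W)$-summands of $Y_r$ with $L \cap Y_r \cong L'$, giving $Y_r \cong \bigoplus_L L' \otimes M_L$ with matching multiplicity spaces.

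Two applications of Schur's lemma will then identify both endomorphism algebras with $\prod_L \End(M_L)$, under which the restriction map becomes the obvious identity. The main obstacle I anticipate is the $\pi_0$-equivariance check in the $\Pin$ case---specifically, handling the two non-isomorphic $\Pin$-lifts that occur when $\lambda_n = 0$---but this should follow cleanly from the restriction behavior of the sign representation indicated above, since tensoring by $\triv^1$ interchanges the two lifts simultaneously on both the $\Pin(V)$- and $\Pin(W)$-sides.
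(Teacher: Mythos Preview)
Your proposal is correct, and it rests on the same underlying Lie-theoretic fact as the paper's proof, but the organization is genuinely different and worth contrasting.

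The paper argues in two separate steps: first it proves injectivity by showing directly that the $\fso(V)$-highest-weight vectors in $X_r$ coincide with the $\fso(W)$-highest-weight vectors in $Y_r$ (via an explicit computation with $A_{12}$ and the bound $\lambda_1 \le r/2$), so any endomorphism is determined by its restriction; then it does a dimension count, matching multiplicities of highest weights using that $\lambda_1 = r/2$ is fixed. Your approach is more structural: you decompose $X_r = \bigoplus_L L \otimes M_L$ into $\Group(V)$-isotypic pieces, invoke the standard parabolic/Levi branching fact that the top $A_{11}$-eigenspace of each simple $L$ is a simple $\fso(W)$-module $L'$, check that $L \mapsto L'$ is injective, and thus identify both endomorphism algebras with $\prod_L \End(M_L)$ so that restriction becomes the identity. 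What your route buys is a cleaner treatment of the $\Pin$ case: you explicitly handle the two non-isomorphic $\Pin(V)$-lifts of $L(\lambda)$ when $\lambda_n = 0$ by observing that $\triv^1_V$ restricts to $\triv^1_W$, whereas the paper's dimension-count paragraph is phrased in terms of weights and is somewhat elliptical on this point. What the paper's route buys is self-containment---it proves the needed branching statement by hand rather than citing it---and avoids having to set up the full isotypic bookkeeping. One small remark: your parenthetical ``since $r/2$ is the largest possible first coordinate'' only directly gives the inclusion of the $r/2$-eigenspace into $L(\lambda)^{\mathfrak u}$; the reverse inclusion follows once you know $L(\lambda)^{\mathfrak u}$ is simple for the Levi (so has a single $A_{11}$-eigenvalue), which you assert in the next clause, so the logic is fine but could be sequenced more clearly.
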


\begin{proof}
    The first assertion follows from the fact that any element of $f \in \End_{\Group(V)}(X_r)$ commutes with the action of $A_{11}$, hence leaves the eigenspace $Y_r$ invariant.  Since $\Group(W)$ is a subgroup of $\Group(V)$, it follows that the restriction of $f$ lies in $\End_{\Group(W)}(Y_r)$. Thus, we have a homomorphism of $\kk$-modules
    \begin{equation} \label{pdub}
        \End_{\Group(V)}(X_r) \to \End_{\Group(W)}(Y_r).
    \end{equation}
    Our goal is to show that this linear map is an isomorphism.  The result is trivial if $N=2$, and so we assume $N>2$.

    Since $\Group(V)$-mod is a semisimple category, any element of $\End_{\Group(V)}(X_r)$ is determined by its action on highest-weight vectors.  An analogous statement holds for $\End_{\Group(W)}(Y_r)$.  Therefore, to see that \cref{pdub} is injective, it suffices to show that the space of highest-weight vectors of the $\Group(V)$-module $X_r$ is equal to the space of highest-weight vectors of the $\Group(W)$-module $Y_r$.

    First suppose that $v$ is a highest-weight vector in $X_r$ of weight $(\lambda_1,\dotsc,\lambda_n)$.  Then $v \in Y_r$, since $\lambda_1 = \frac{r}{2}$ by definition of $X_r$.  Since the inclusion $\Group(W) \subseteq \Group(V)$ respects upper triangularity, it follows that $v$ is a highest-weight vector of the $\Group(W)$-module $Y_r$.   Conversely, suppose $v$ is a highest-weight vector in the $\Group(W)$-module $Y_r$. Then, since $[A_{11},A_{12}] = A_{12}$, we have
    \[
        A_{11} A_{12} v
        = A_{12} A_{11} v + [A_{11},A_{12}] v
        = \left( \tfrac{r}{2}+1 \right) A_{12} v.
    \]
    Since there are no nonzero $w \in S^{\otimes r}$ satisfying $A_{11}w = (\frac{r}{2}+1)w$, we have $A_{12}(v)=0$. If $\n$ and $\n'$ are the subalgebras of strictly upper-triangular matrices in $\fso(V)$ and $\fso(W)$ respectively, then $\n$ is generated by $\n'$ and $A_{12}$. Hence $\n$ annihilates $v$, and so $v$ is a highest-weight vector in $X_r$.  This completes the argument that \cref{pdub} is injective.

    To finish the proof of the lemma, it suffices to show that the dimensions of $\End_{\Group(V)}(X_r)$ and $\End_{\Group(W)}(Y_r)$ are equal.  Since $\Group(V)$-mod and $\Group(W)$-mod are both semisimple categories, these endomorphism algebras are isomorphic to products of matrix algebras.  To see that their dimensions are equal, it is enough to show that the identification of highest-weight spaces given above preserves multiplicities of weights.  This, in turn, follows from the fact that two highest-weight vectors in $X_r$ have equal $\Group(V)$-weights if and only if they have equal $\Group(W)$-weights, since the highest weights in $X_r$ are constrained to have $\lambda_1 = \frac{r}{2}$.
\end{proof}

Since $X_r$ is a sum of isotypic components of $S^{\otimes r}$, any endomorphism of $S^{\otimes r}$ leaves $X_r$ invariant.  Therefore, restriction to $X_r$ yields a natural projection $\End_{\Group(V)}(S^{\otimes r}) \twoheadrightarrow \End_{\Group(V)}(X_r)$.  We thus have a surjective composite of $\kk$-module homomorphisms
\begin{equation} \label{okinawa}
    \Psi_r \colon
    \End_{\Group(V)}(S^{\otimes r}) \twoheadrightarrow \End_{\Group(V)}(X_r)
    \xrightarrow[\cref{snafu}]{\cong} \End_{\Group(W)}(Y_r)
    \xrightarrow{\cong} \End_{\Group(W)}(S_W^{\otimes r}),
\end{equation}
where the final isomorphism is induced by \cref{beach}.

\begin{lem} \label{couch}
    For $0 \le t \le r-2$, $r \ge 2$, we have
    \[
        \Psi_r \left( 1^{\otimes r} \otimes \beta \otimes 1^{\otimes (t-r-2)} \right)
        = 1^{\otimes r} \otimes \beta_W \otimes 1^{\otimes (t-r-2)},
    \]
    where $\beta_W = \sum_{i=3}^N e_i \otimes e_i$ is the barbell for $W$.  (By convention, $\beta_W = 0$ if $N=2$.)
\end{lem}

\begin{proof}
    Using \cref{tuna}, we compute that
    \begin{equation} \label{beqn}
        e_1 \otimes e_1 + e_2 \otimes e_2
        = 2 ( \psi_1 \otimes \psi_1^\dagger + \psi_1^\dagger \otimes \psi_1 ).
    \end{equation}
    Now suppose that $I_1,I_2, \dotsc, I_r \subseteq [n]$ satisfy $1 \in I_k$ for all $1 \le k \le r$.  Then
    \begin{multline*}
        \left( 1^{\otimes t} \otimes \psi_1 \otimes \psi_1^\dagger \otimes 1^{\otimes (r-t-2)} \right)
        (x_{I_1} \otimes x_{I_2} \otimes \dotsb \otimes x_{I_r})
        = 0
        \\
        = \left( 1^{\otimes t} \otimes \psi_1^\dagger \otimes \psi_1 \otimes 1^{\otimes (r-t-2)} \right)
        (x_{I_1} \otimes x_{I_2} \otimes \dotsb \otimes x_{I_r}).
    \end{multline*}
    Thus, the result follows from \cref{beqn}.
\end{proof}

For the remainder of this section, we drop the assumption that $N \ge 2$.

\begin{theo} \label{surly}
    Suppose $r,r_1,r_2 \in \N$.
    \begin{enumerate}
        \item \label{Sfull} The incarnation functor $\bF$ induces a surjection
            \[
                \Hom_{\SB(V)}(\Sgo^{\otimes r_1}, \Sgo^{\otimes r_2})
                \twoheadrightarrow \Hom_{\Group(V)}(S^{\otimes r_1}, S^{\otimes r_2}).
            \]

        \item \label{BBgen} The algebra $\End_{\Group(V)}(S^{\otimes r})$ is generated by barbells.
    \end{enumerate}
\end{theo}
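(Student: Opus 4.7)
We prove (b) first, then deduce (a); the engine throughout is the surjection $\Psi_r$ of \cref{okinawa} combined with \cref{couch,barbellgen}. For (b) we argue by double induction, the outer one on $N = \dim V$ and the inner one on $r$. The base cases are handled directly: for $r \le 1$ the endomorphism algebra is $\kk$; for $r = 2$ the statement is exactly \cref{barbellgen}; and for $N \le 2$ the explicit descriptions of $\Group(V)$ and $S$ in \cref{lowNrep} reduce the claim to a finite verification using the structure of $S^{\otimes r}$ as an iterated tensor product of one- or two-dimensional modules.

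For the inductive step with $N \ge 3$ and $r \ge 3$, let $f \in \End_{\Group(V)}(S^{\otimes r})$. By the outer inductive hypothesis applied to $W$ with $\dim W = N-2$, the algebra $\End_{\Group(W)}(S_W^{\otimes r})$ is generated by $W$-barbells. The surjectivity of $\Psi_r$, together with \cref{couch} (which says $\Psi_r$ sends $V$-barbells to $W$-barbells), lets us lift $\Psi_r(f)$ to a polynomial $g$ in $V$-barbells, so that $h := f - g$ lies in $\ker \Psi_r$. Hence it suffices to show that every $h \in \ker \Psi_r$ lies in the subalgebra $B$ generated by $V$-barbells. Such an $h$ satisfies $h|_{X_r} = 0$ and factors through the complementary isotypic sum $X_r^\perp = \bigoplus_{\mu : \mu_1 < r/2} L(\mu)^{\oplus m_\mu^{(r)}}$. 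For each $1 \le i \le r-1$, let $e_i \in B$ denote the image under $\bF$ of the idempotent $\pi_0$ of \cref{Sproject} inserted at strands $i, i+1$; by \cref{barbellgen}, $e_i$ is a polynomial in the barbell $\beta_i$, and $\mathrm{Im}(e_i)$ is a copy of $S^{\otimes(r-2)}$ whose simple summands all have first weight-coordinate $\le (r-2)/2 < r/2$, so $\mathrm{Im}(e_i) \subseteq X_r^\perp$. Via the identification $e_i \End_{\Group(V)}(S^{\otimes r}) e_i \cong \End_{\Group(V)}(S^{\otimes(r-2)})$ and the inner inductive hypothesis, the corner algebra is generated by the $r$-barbells $\beta_j$ with $j \ne i-1, i, i+1$ (which commute with $e_i$) multiplied by $e_i$ itself, and so is contained in $B$.

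The key structural step is a semisimple-algebra identification: the two-sided ideal $\ker \Psi_r \subseteq \End_{\Group(V)}(S^{\otimes r})$ decomposes as the sum of Wedderburn blocks indexed by $\{\mu : \mu_1 < r/2\}$, and the two-sided ideal generated by the $e_i$'s decomposes as the sum of blocks indexed by $\{\mu : L(\mu) \text{ appears in } S^{\otimes(r-2)}\}$; these index sets agree because $L(\mu)$ appears in $S^{\otimes m}$ precisely when $\mu$ is dominant integral with appropriate parity and $\mu_1 \le m/2$. Consequently $h \in B e_i B$ for some $i$, and writing $h$ as a sum of products $u\, e_i\, v$ and applying the corner-isomorphism argument inductively to $u$ and $v$ shows $h \in B$, completing the inductive step.

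Part (a) follows from (b) by rigidity. The cups and caps of $\SB(V)$ are sent by $\bF$ onto the morphisms realising the self-duality of $S$, so the rigidity adjunction identifies $\Hom_{\Group(V)}(S^{\otimes r_1}, S^{\otimes r_2})$ with $(S^{\otimes (r_1+r_2)})^{\Group(V)}$, compatibly with the analogous identification on the $\SB(V)$-side. The target vanishes if $r_1+r_2$ is odd; otherwise, setting $s = (r_1+r_2)/2$, the invariant $\bF(\cup^{\otimes s})$ is nonzero (composition with its cap gives $D^s \ne 0$), hence is a cyclic vector for the action of the $\triv^0$-block of $\End_{\Group(V)}(S^{\otimes(r_1+r_2)})$ on the invariants. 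Every invariant is therefore of the form $\bF(g) \circ \bF(\cup^{\otimes s})$, and by (b) such $g$ is a polynomial in barbells, proving (a). The main obstacle is the multiplicity-matching step in the inductive proof of (b), namely verifying that the simple summands of $S^{\otimes r}$ with $\mu_1 < r/2$ coincide with those of $S^{\otimes(r-2)}$; we anticipate reducing this to iterating the Pieri rule \cref{lem:pieri} and using \cref{Sdub}.
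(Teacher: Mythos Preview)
Your overall architecture matches the paper's: double induction on $N$ and $r$, lifting modulo $\ker\Psi_r$ via \cref{couch} and the outer hypothesis, then handling $\ker\Psi_r$ separately; your deduction of (a) from (b) via rigidity and a cyclic-vector argument is correct and amounts to the paper's diagram \cref{dogman}. The genuine gap is in your treatment of $\ker\Psi_r$.

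Write $A=\End_{\Group(V)}(S^{\otimes r})$ and $B\subseteq A$ for the barbell subalgebra. You correctly identify $\ker\Psi_r$ with the two-sided ideal $Ae_iA$ (the multiplicity-matching you flag as the main obstacle is in fact routine, via \cref{summand,pinsummand}). But knowing $e_i\in B$ and $e_iAe_i\subseteq B$ does \emph{not} force $Ae_iA\subseteq B$: already for $A=M_2(\C)$, $e=E_{11}$, and $B$ the diagonal matrices one has $eAe\subseteq B$ yet $AeA=A\not\subseteq B$. Your sentence ``consequently $h\in Be_iB$'' is therefore a non sequitur---the block identification only gives $h\in Ae_iA$---and ``applying the corner-isomorphism argument inductively to $u$ and $v$'' does not repair this, because $ue_i\in Ae_i\cong\Hom_{\Group(V)}(S^{\otimes(r-2)},S^{\otimes r})$ and, having chosen to prove (b) before (a), you have no inductive control over this Hom space. (There is also a smaller counting error: for interior $i$ only $r-4$ of the $\beta_j$ commute with $e_i$, one short of the $r-3$ generators needed for $e_iAe_i\cong\End_{\Group(V)}(S^{\otimes(r-2)})$; one must take $i\in\{1,r-1\}$.)

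The paper fills this gap with the rigidity identities \cref{smoke1,smoke2}: every element of $\Hom_{\Group(V)}(S^{\otimes(r-2k)},S^{\otimes r})$ has the form $(1^{\otimes k}\otimes a)$ composed with $k$ nested cups on the left, where $a\in\End_{\Group(V)}(S^{\otimes(r-k)})$, and dually for the other Hom space. Hence each element of $\ker\Psi_r$ is a sum of terms $(1^{\otimes k}\otimes a)\circ\bF(\text{nested cup-caps})\circ(1^{\otimes k}\otimes b)$; the inner induction (part (b) for $r-k<r$) puts $1^{\otimes k}\otimes a$ and $1^{\otimes k}\otimes b$ into $B$, and \cref{calculator} puts the nested cup-caps into $B$. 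The crucial point is that $\End_{\Group(V)}(S^{\otimes(r-k)})$ with $r-k<r$ (not $\End_{\Group(V)}(S^{\otimes(r-2)})$ alone) is what appears, so the inner induction on (b) suffices without any appeal to (a)---precisely what your corner-algebra approach was unable to arrange.
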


\begin{proof}
    Since the components of the weights of $S^{\otimes r}$ lie in $\frac{r}{2} + \Z$, we have $\Hom_{\Group(V)}(S^{\otimes r_1}, S^{\otimes r_2}) = 0$ when $r_1+r_2 \notin 2\Z$.  Thus, for statement \cref{Sfull}, we will assume for the remainder of this proof that $r_1 + r_2 \in 2\Z$.

    We have a commutative diagram
    \begin{equation} \label{dogman}
        \begin{tikzcd}
            \Hom_{\SB(V)}(\Sgo^{\otimes r_1}, \Sgo^{\otimes r_2}) \arrow[d, "\bF"] \arrow[r, "\cong"]
            & \Hom_{\SB(V)}(\Sgo^{\otimes (r_1+r_2)}, \one) \arrow[d, "\bF"]
            \\
            \Hom_{\Group(V)} \big( S^{\otimes r_1}, S^{\otimes r_2} \big) \arrow[r, "\cong"]
            & \Hom_{\Group(V)} \big( S^{\otimes (r_1+r_2)}, \triv^0 \big)
        \end{tikzcd}
    \end{equation}
    where the horizontal maps are the usual isomorphisms that hold in any rigid monoidal category.  In particular, the top horizontal map is the $\kk$-linear isomorphism given on diagrams by
    \[
        \begin{tikzpicture}[centerzero]
            \draw (0,-0.2) rectangle (1,0.2);
            \draw[spin] (0.2,0.2) -- (0.2,0.7);
            \node at (0.52,0.45) {$\cdots$};
            \draw[spin] (0.8,0.2) -- (0.8,0.7);
            \draw[spin] (0.2,-0.2) -- (0.2,-0.7);
            \draw[spin] (0.8,-0.2) -- (0.8,-0.7);
            \node at (0.52,-0.45) {$\cdots$};
        \end{tikzpicture}
        \mapsto
        \begin{tikzpicture}[centerzero]
            \draw (0,-0.2) rectangle (1,0.2);
            \draw[spin] (0.2,0.2) arc(0:180:0.2) -- (-0.2,-0.7);
            \node at (0,0.8) {$\vdots$};
            \draw[spin] (0.8,0.2) arc(0:180:0.8) -- (-0.8,-0.7);
            \draw[spin] (0.2,-0.2) -- (0.2,-0.7);
            \draw[spin] (0.8,-0.2) -- (0.8,-0.7);
            \node at (0.52,-0.45) {$\cdots$};
            \node at (-0.48,-0.45) {$\cdots$};
        \end{tikzpicture}
        \qquad \text{with inverse} \qquad
        \begin{tikzpicture}[centerzero]
            \draw (-1,-0.2) rectangle (1,0.2);
            \draw[spin] (-0.8,-0.2) -- (-0.8,-1);
            \draw[spin] (-0.2,-0.2) -- (-0.2,-1);
            \node at (-0.48,-0.6) {$\cdots$};
            \draw[spin] (0.2,-0.2) -- (0.2,-1);
            \draw[spin] (0.8,-0.2) -- (0.8,-1);
            \node at (0.52,-0.6) {$\cdots$};
        \end{tikzpicture}
        \mapsto\,
        \begin{tikzpicture}[centerzero]
            \draw (-1,-0.2) rectangle (1,0.2);
            \draw[spin] (-0.8,-0.2) arc(360:180:0.2) -- (-1.2,1);
            \draw[spin] (-0.2,-0.2) arc(360:180:0.8) -- (-1.8,1);
            \node at (-1.45,0.6) {$\cdots$};
            \node at (-1,-0.6) {$\vdots$};
            \draw[spin] (0.2,-0.2) -- (0.2,-1);
            \draw[spin] (0.8,-0.2) -- (0.8,-1);
            \node at (0.52,-0.6) {$\cdots$};
        \end{tikzpicture}
        \ ,
    \]
    where the rectangles denote some diagram.  Therefore, part \cref{Sfull} holds for $r_1,r_2 \in \N$ if and only if it holds for all other $r_1',r_2' \in \N$ satisfying $r_1 + r_2 = r_1' + r_2'$.  It also follows that, for $0 \le k \le \frac{r}{2}$,
    \begin{gather} \label{smoke1}
        \Hom_{\Group(V)}(S^{\otimes (r-2k)}, S^{\otimes r})
        = \left( 1^{\otimes k} \otimes \End_{\Group(V)}(S^{\otimes (r-k)}) \right) \circ \bF
        \left(
            \begin{tikzpicture}[centerzero]
                \draw[spin] (-0.2,0.2) -- (-0.2,0) arc(180:360:0.2) -- (0.2,0.2);
                \node at (0,0) {\strandlabel{k}};
            \end{tikzpicture}
            \otimes 1^{\otimes (r-2k)}
        \right),
        \\ \label{smoke2}
        \Hom_{\Group(V)}(S^{\otimes r}, S^{\otimes (r-2k)})
        = \bF
        \left(
            \begin{tikzpicture}[centerzero]
                \draw[spin] (-0.2,-0.2) -- (-0.2,0) arc(180:0:0.2) -- (0.2,-0.2);
                \node at (0,0) {\strandlabel{k}};
            \end{tikzpicture}
            \otimes 1^{\otimes (r-2k)}
        \right)
        \circ \left( 1^{\otimes k} \otimes \End_{\Group(V)}(S^{\otimes (r-k)}) \right),
    \end{gather}
    where, as in \cref{calculator}, the thick cup and cap labelled by $k$ denote $k$ nested cups and caps, respectively.

    We now prove the theorem by induction on $N = \dim V$. The base cases are $N=0$ and $N=1$. In these cases, $S$ is one-dimensional, and so $\End_{\Group(V)}(S^{\otimes r})$ only consists of scalars, which makes the theorem trivial in these cases.

    Now suppose that $N \ge 2$ and that the result holds for $0 \le \dim V < N$.  For $r \in \N$, let $F(r)$ be the statement that \cref{Sfull} holds for all $r_1+r_2 = 2r$ and that \cref{BBgen} holds.  By the argument given above, $F(r)$ is equivalent to the statement that \cref{Sfull} holds for \emph{some} $r_1,r_2 \in \N$ satisfying $r_1+r_2=2r$ and that \cref{BBgen} holds.  We will prove that $F(r)$ holds for all $r \in \N$ by induction on $r$.  The base of the induction consists of the cases $r \leq 2$. The cases $r \leq 1$ are trivial as $S$ is a simple $\Group(V)$-module, and so $\Hom_{\Group(V)}(S^{\otimes r})$ consists of scalar multiples of the identity. The case $r=2$ follows from \cref{barbellgen}.

    Now suppose that $r \ge 3$, and that $F(k)$ holds for all $k<r$.  Recall the surjective $\kk$-linear map
    \[
        \Psi_r \colon \End_{\Group(V)}(S^{\otimes r}) \twoheadrightarrow \End_{\Group(W)}(S_W^{\otimes r})
    \]
    defined in \cref{okinawa}.  The kernel of $\Psi_r$ consists of all elements of $\End_{\Group(V)}(S^{\otimes r})$ that factor through simple modules with highest weights $\la$ with $\la_1<r/2$ (when $N>2$) or $-r/2 < \lambda_1 < r_2$ (when $N=2$).  By \cref{pinsummand,mitre}, these simple modules are precisely the simple modules that occur as summands in $S^{\otimes (r-2k)}$ for $0 < k \leq \frac{r}{2}$.
    \details{
        Here we need $r \ge 3$ because of the hypotheses in \cref{pinsummand}; see \cref{mitre}.  This is why we need the base case $r=2$.
    }
    Therefore, $\ker \Psi_r$ is the sum of all images of all compositions
    \[
        \Hom_{\Group(V)}(S^{\otimes r},S^{\otimes (r-2k)}) \times \Hom_{\Group(V)}(S^{\otimes (r-2k)},S^{\otimes r})
        \to \End_{\Group(V)}(S^{\otimes r}),\qquad
        0 < k \leq  \frac{r}{2}.
    \]
    It follows from \cref{smoke1}, \cref{smoke2}, \cref{calculator}, and the inductive hypothesis that $\ker \Psi_r$ is generated by barbells and hence is in the image of $\bF$.

    By the inductive hypothesis for our induction on $N$, barbells generate $\End_{\Group(W)}(S_W^{\otimes r})$.  By \cref{couch}, every barbell in $\End_{\Group(W)}(S_W^{\otimes r})$ is in the image of $\Psi_r$.  Thus, if $U$ denotes the subalgebra of $\End_{\Group(V)}(S^{\otimes r})$ generated by barbells, we have $\Psi_r(U) = \End_{\Group(W)}(S_W^{\otimes r})$.  Since $\Psi_r$ is surjective, this implies that
    \[
        U + \ker \Psi_r = \End_{\Group(V)}(S^{\otimes r}).
    \]
    The subspace $U$ lies in the image of $\bF$ since all barbells do.  This completes the proof of the statement $F(r)$.
\end{proof}

\begin{theo} \label{fullness}
    The incarnation functor $\bF$ is full.
\end{theo}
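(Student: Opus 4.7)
The strategy is to bootstrap from Theorem \ref{surly}\cref{Sfull}, which establishes fullness for morphisms between pure $\Sgo$-tensor powers, by realising $\Vgo$ as a direct summand of $\Sgo \otimes \Sgo$ through explicit morphisms in $\SB(V)$. Using the crossings $\crossmor{spin}{vec}$ and $\crossmor{vec}{spin}$ (which $\bF$ sends to flip maps) to freely reorder tensor factors, it suffices to prove surjectivity of
\[
\bF \colon \Hom_{\SB(V)} \big( \Sgo^{\otimes a} \otimes \Vgo^{\otimes b},\, \Sgo^{\otimes c} \otimes \Vgo^{\otimes d} \big) \to \Hom_{\Group(V)} \big( S^{\otimes a} \otimes V^{\otimes b},\, S^{\otimes c} \otimes V^{\otimes d} \big)
\]
for all $a,b,c,d \in \N$. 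The case $N = 0$ is trivial since $V = 0$ forces the codomain to vanish whenever $b + d > 0$, while $b = d = 0$ is covered by Theorem \ref{surly}. I therefore assume $N \ge 1$.

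The key step is to construct morphisms $\iota \in \Hom_{\SB(V)}(\Vgo, \Sgo \otimes \Sgo)$ and $p \in \Hom_{\SB(V)}(\Sgo \otimes \Sgo, \Vgo)$ satisfying $\bF(p) \circ \bF(\iota) = \id_V$. I take $\iota = \splitmor{vec}{spin}{spin}$ and $p = c^{-1}\, \mergemor{spin}{spin}{vec}$ for a suitable scalar $c \in \kk^\times$. Both trivalent vertices are pivotal rotations of the generator $\mergemor{vec}{spin}{spin}$, whose image under $\bF$ is the nonzero Clifford action $\tau$; consequently $\bF(\iota)$ and $\bF(\mergemor{spin}{spin}{vec})$ are nonzero $\Group(V)$-module maps between $V$ and $S \otimes S$. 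By Corollary \ref{Sdub} (together with \cref{lemon}\cref{lemonD} in the even case, to distinguish $\Lambda^1(V)$ from $\Lambda^{N-1}(V)$ as $\Pin(V)$-modules), the simple module $V$ occurs with multiplicity exactly one in $S \otimes S$. Schur's lemma then yields $\bF(\mergemor{spin}{spin}{vec}) \circ \bF(\iota) = c \cdot \id_V$ for a nonzero $c \in \kk$, and the rescaling gives $\bF(p) \circ \bF(\iota) = \id_V$.

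Given any $f \in \Hom_{\Group(V)}(S^{\otimes a} \otimes V^{\otimes b}, S^{\otimes c} \otimes V^{\otimes d})$, define
\[
\tilde{f} := \big( 1_S^{\otimes c} \otimes \bF(\iota)^{\otimes d} \big) \circ f \circ \big( 1_S^{\otimes a} \otimes \bF(p)^{\otimes b} \big) \in \Hom_{\Group(V)} \big( S^{\otimes a + 2b},\, S^{\otimes c + 2d} \big).
\]
By Theorem \ref{surly}\cref{Sfull}, $\tilde{f}$ lifts to some $\tilde{F} \in \Hom_{\SB(V)}(\Sgo^{\otimes a+2b}, \Sgo^{\otimes c+2d})$. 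Setting
\[
F := \big( 1_\Sgo^{\otimes c} \otimes p^{\otimes d} \big) \circ \tilde{F} \circ \big( 1_\Sgo^{\otimes a} \otimes \iota^{\otimes b} \big) \in \Hom_{\SB(V)} \big( \Sgo^{\otimes a} \otimes \Vgo^{\otimes b},\, \Sgo^{\otimes c} \otimes \Vgo^{\otimes d} \big),
\]
the identity $\bF(p) \circ \bF(\iota) = \id_V$ collapses the inserted $(p,\iota)$-pairs on both sides, yielding $\bF(F) = f$, as desired.

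The principal (and rather mild) technical content lies in the second paragraph: exhibiting $\iota, p$ in $\SB(V)$ itself (not merely in its Karoubi envelope), and verifying that their composite has nonzero image under $\bF$. This reduces to Schur's lemma applied to the multiplicity-free decomposition of $S \otimes S$, together with the non-degeneracy of the Clifford action. Everything else is formal manipulation.
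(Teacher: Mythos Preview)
Your proof is correct and follows essentially the same strategy as the paper: reduce to pure $\Sgo$-tensor powers via Theorem~\ref{surly}\cref{Sfull} by threading each $\Vgo$ through $\Sgo \otimes \Sgo$ using the trivalent morphisms $\splitmor{vec}{spin}{spin}$ and $\mergemor{spin}{spin}{vec}$. The only difference is that the paper identifies the scalar $c$ explicitly as $D$ using the bubble relation established in the proof of \cref{Sproject}\cref{Sproject2}, whereas you argue $c \neq 0$ abstractly via Schur's lemma and the multiplicity-one occurrence of $V$ in $S \otimes S$ from \cref{Sdub}; both justifications are valid and the overall argument is the same.
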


\begin{proof}
    We must show that
    \[
        \bF \colon \Hom_{\SB(V)}(\mathsf{X}, \mathsf{Y})
        \to \Hom_{\Group(V)}(\bF(\mathsf{X}), \bF(\mathsf{Y}))
    \]
    is surjective for all objects $\mathsf{X}$ and $\mathsf{Y}$ in $\SB(V)$.  By the first relation in \cref{brauer}, we have mutually inverse isomorphisms
    \[
        \crossmor{spin}{vec} \colon \Sgo \otimes \Vgo \xrightarrow{\cong} \Vgo \otimes \Sgo
        \qquad \text{and} \qquad
        \crossmor{vec}{spin} \colon \Vgo \otimes \Sgo \xrightarrow{\cong} \Sgo \otimes \Vgo.
    \]
    Therefore, it suffices to consider the case where $\mathsf{X} = \Sgo^{\otimes k_1} \otimes \Vgo^{\otimes l_1}$ and $\mathsf{Y} = \Sgo^{\otimes k_2} \otimes \Vgo^{\otimes l_2}$ for some $k_1,l_1,k_2,l_2 \in \N$.  Consider an arbitrary morphism
    \[
        f \in \Hom_{\Group(V)} \left( S^{\otimes k_1} \otimes V^{\otimes l_1}, S^{\otimes k_2} \otimes V^{\otimes l_2} \right).
    \]
    Define
    \[
        f' =
        \left(
            1_S^{\otimes k_2} \otimes \bF
            \left(
                \begin{tikzpicture}[anchorbase]
                    \draw[spin] (-0.15,0.25) -- (-0.15,0.15) arc(180:360:0.15) -- (0.15,0.25);
                    \draw[vec] (0,0) -- (0,-0.3);
                \end{tikzpicture}
            \right)^{\otimes l_2}
        \right)
        \circ f \circ
        \left(
            1_S^{\otimes k_1} \otimes \bF
            \left(
                \begin{tikzpicture}[anchorbase]
                    \draw[spin] (-0.15,-0.25) -- (-0.15,-0.15) arc(180:0:0.15) -- (0.15,-0.25);
                    \draw[vec] (0,0) -- (0,0.3);
                \end{tikzpicture}
            \right)^{\otimes l_1}
        \right)
        \in \Hom_{\Group(V)} \left( S^{\otimes (k_1+2l_1)}, S^{\otimes (k_2+2l_2)} \right).
    \]
    By \cref{surly}\cref{Sfull}, there exists a $g' \in \Hom_{\SB(V)}(\Sgo^{\otimes (k_1+2l_1)}, \Sgo^{\otimes (k_2+2l_2)})$ such that $\bF(g') = f'$.  Let
    \[
        g =
        \left(
            1_S^{\otimes k_2} \otimes
            \left(
                \begin{tikzpicture}[anchorbase]
                    \draw[spin] (-0.15,-0.25) -- (-0.15,-0.15) arc(180:0:0.15) -- (0.15,-0.25);
                    \draw[vec] (0,0) -- (0,0.3);
                \end{tikzpicture}
            \right)^{\otimes l_2}
        \right)
        \circ g' \circ
        \left(
            1_S^{\otimes k_1} \otimes
            \left(
                \begin{tikzpicture}[anchorbase]
                    \draw[spin] (-0.15,0.25) -- (-0.15,0.15) arc(180:360:0.15) -- (0.15,0.25);
                    \draw[vec] (0,0) -- (0,-0.3);
                \end{tikzpicture}
            \right)^{\otimes l_1}
        \right).
    \]
    Using \cref{Delprop} with $r=2$ we have
    \[
        0
        =
        \begin{tikzpicture}[anchorbase]
            \draw[spin] (-0.1,0) -- (0.1,0) arc(-90:90:0.15) -- (-0.1,0.3) arc(90:270:0.15);
            \draw[vec] (-0.1,-0.4) -- (-0.1,0);
            \draw[vec] (0.1,-0.4) -- (0.1,0);
        \end{tikzpicture}
        -
        \begin{tikzpicture}[anchorbase]
            \draw[spin] (-0.1,0) -- (0.1,0) arc(-90:90:0.15) -- (-0.1,0.3) arc(90:270:0.15);
            \draw[vec] (-0.1,-0.4) -- (0.1,0);
            \draw[vec] (0.1,-0.4) -- (-0.1,0);
        \end{tikzpicture}
        \overset{\cref{oist}}{=}
        2\
        \begin{tikzpicture}[anchorbase]
            \draw[spin] (-0.1,0) -- (0.1,0) arc(-90:90:0.15) -- (-0.1,0.3) arc(90:270:0.15);
            \draw[vec] (-0.1,-0.4) -- (-0.1,0);
            \draw[vec] (0.1,-0.4) -- (0.1,0);
        \end{tikzpicture}
        - 2\
        \begin{tikzpicture}[anchorbase]
            \draw[spin] (-0.1,0) -- (0.1,0) arc(-90:90:0.15) -- (-0.1,0.3) arc(90:270:0.15);
            \draw[vec] (-0.15,-0.4) -- (-0.15,-0.3) arc(180:0:0.15) -- (0.15,-0.4);
        \end{tikzpicture}
        \overset{\cref{dimrel}}{=}
        2\
        \begin{tikzpicture}[anchorbase]
            \draw[spin] (-0.1,0) -- (0.1,0) arc(-90:90:0.15) -- (-0.1,0.3) arc(90:270:0.15);
            \draw[vec] (-0.1,-0.4) -- (-0.1,0);
            \draw[vec] (0.1,-0.4) -- (0.1,0);
        \end{tikzpicture}
        - 2 D\
        \begin{tikzpicture}[anchorbase]
            \draw[vec] (-0.2,-0.4) -- (-0.2,-0.2) arc(180:0:0.2) -- (0.2,-0.4);
        \end{tikzpicture}
        \implies
        \begin{tikzpicture}[centerzero]
            \draw[vec] (0,-0.5) -- (0,-0.2);
            \draw[vec] (0,0.5) -- (0,0.2);
            \draw[spin] (0,0) circle(0.2);
        \end{tikzpicture}
        = D\
        \begin{tikzpicture}[centerzero]
            \draw[vec] (0,-0.5) -- (0,0.5);
        \end{tikzpicture}
        \ .
    \]
    It follows that
    \[
        \bF(g)
        = D^{l_1 + l_2} f,
    \]
    completing the proof.
\end{proof}

\section{Essential surjectivity of the incarnation functor\label{sec:essential}}

In this section, we prove that, after passing to the additive Karoubi envelope, the incarnation functor is essentially surjective, that is, it induces a surjection on isomorphism classes of objects.   We then give explicit descriptions of some important idempotents.  When discussing the incarnation functor, we always assume that $\kk =\C$.

Let $\Kar(\overline{\SB}(d,D;\kappa))$ be the additive Karoubi envelope (that is, the idempotent completion of the additive envelope) of $\SB(d,D;\kappa)$.  Since $\Group(V)\md$ is additive and idempotent complete, $\bF$ induces a monoidal functor
\begin{gather} \label{Karnation}
    \Kar(\bF) \colon \Kar \big( \overline{\SB}(V) \big) \to \Group(V)\md.
\end{gather}

\begin{theo} \label{essential}
    For all $N \in \N$, the functor $\Kar(\bF)$ is essentially surjective.
\end{theo}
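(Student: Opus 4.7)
My plan is to show that every simple $\Group(V)$-module lies in the essential image of $\Kar(\bF)$; this suffices since $\Group(V)\md$ is semisimple (characteristic zero, reductive group) and the Karoubi envelope is closed under finite direct sums.

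I would first identify a rich collection of objects already present in the essential image. The generating objects contribute $S$ and $V$. The idempotents $\pi_k$ of \cref{Sproject} split off every exterior power $\Lambda^k(V)$, for $0 \le k \le N$, as summands of $S^{\otimes 2}$. In particular, for even $N$, the Pin-sign module $\triv^1 \cong \Lambda^N(V)$ is in the essential image. Applying \cref{Vproject} and \cref{soju} yields further summands ($W \subset V^{\otimes 2}$ and $S \subset S \otimes V$), so every fundamental $\Group(V)$-module lies in the essential image.

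Next, I would induct on the highest weight $\lambda$ of a simple $L(\lambda)$ using the dominance order. When $\lambda$ is integral, $L(\lambda)$ is a summand of $V^{\otimes r}$ for some $r$ by classical Schur--Weyl duality for the orthogonal group: the Brauer subcategory of $\SB(V)$ generated by $\Vgo$ already contains enough idempotents to isolate it, and twisting by $\Lambda^N(V) \cong \triv^1$ handles the corresponding $\Pin$-lifts when $N$ is even. When $\lambda$ is half-integral, I would realise $L(\lambda)$ as a summand of $S \otimes L(\mu)$ for a suitable integral dominant $\mu$ via \cref{lem:pieri}, where $L(\mu)$ is already in the essential image by the integral case.

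The main obstacle is constructing, in each case, an explicit idempotent in $\SB(V)$ whose image under $\bF$ is the projection onto the desired simple summand---we need these idempotents to lie in $\End_{\SB(V)}$, not merely in $\End_{\Group(V)}$. For integer weights, one can appeal to the classical idempotent decomposition of the Brauer algebra, which sits inside $\End_{\SB(V)}(\Vgo^{\otimes r})$. For half-integer weights, the projection onto the $L(\lambda)$-summand of $S \otimes L(\mu)$ is characterised as the projection onto the highest-weight constituent, which I would realise by composing the trivalent vertex $\mergemor{vec}{spin}{spin}$ with the $\pi_k$ idempotents and the standard caps, cups, and crossings of $\SB(V)$; subtracting off the projections onto the smaller-weight summands (in the essential image by the inductive hypothesis) yields the required idempotent. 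The technical crux is to verify that each inductive step can be realised by morphisms in $\SB(V)$, which is feasible thanks to the rigid, pivotal, symmetric monoidal structure of $\SB(V)$ together with the explicit Pieri-type decomposition in \cref{lem:pieri}.
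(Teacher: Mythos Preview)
Your approach differs from the paper's. The paper does not split into integral versus half-integral weights or induct on dominance; it observes that each simple $\Spin(V)$-module $L(\lambda)$ is a summand of a tensor product of fundamental modules $L(\omega_k)$, and that each fundamental is already in the essential image: $L(\omega_k)\cong\Lambda^k(V)$ via the idempotent $\pi_k$ of \cref{Sproject}, and the spin-type fundamentals are $S$ (type $B$) or $S^\pm$ (type $D$). The $\Pin$ case for even $N$ is then a short argument using $\triv^1\cong\Lambda^N(V)$ and \cref{trivflip}. Your integral-weight case, via the classical Brauer-algebra surjection onto $\End_{\rO(V)}(V^{\otimes r})$, is correct and self-contained, but it is not the route the paper takes.

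There is a genuine gap in your half-integral inductive step. Knowing that a lower summand $L(\mu+\epsilon)$ lies in the essential image of $\Kar(\bF)$, say $L(\mu+\epsilon)\cong\Kar(\bF)(Y_\epsilon)$, does not produce the projection of $S\otimes L(\mu)$ onto that summand as a morphism in the image of $\bF$: for that you would need maps in $\Kar(\SB(V))$ between $Y_\epsilon$ and $\Sgo\otimes Z$ (where $\Kar(\bF)(Z)\cong L(\mu)$), and nothing in your sketch supplies them. Rigidity, pivotality, and the Pieri rule tell you which summands occur, not how to lift the projection idempotents. The clean resolution of this obstacle is fullness of $\bF$ (\cref{fullness}): once $\bF$ is full, every idempotent in $\End_{\Group(V)}(\bF(X))$ lifts to $\End_{\SB(V)}(X)$, and the essential image of $\Kar(\bF)$ is then automatically closed under summands, making the explicit idempotent-construction program you outline unnecessary.
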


\begin{proof}
    The spin module $S$ is a self-dual faithful $\Group(V)$-module. Hence it is a tensor generator of $\Group(V)\md$. Let $M$ be a simple $\Group(V)$-module. The above implies that $M$ is a direct summand of $S^{\otimes k}$ for some $k$. By \cref{surly}\cref{Sfull}, the idempotent in $\End_{\Group(V)}(S^{\otimes k})$ projecting onto $M$ is in the image of the incarnation functor $\bF$, hence $M$ is in the essential image of $\Kar(\bF)$. Since $\Group(V)\md$ is semisimple, this completes the proof that $\Kar(\bF)$ is essentially surjective.\footnote{Note added after publication:  There is a gap in this proof.  The lifting of idempotents needs further justification, since the morphism spaces in the spin Brauer category may be infinite dimensional.  This gap can be fixed using the strategy of the proof of \cite[Th.~8.5]{SM25} which considers the quantum setting.}
\end{proof}

\begin{rem} \label{panda}
    The Lie algebra of $\Group(V)$ is $\fso(V)$. Passage to the Lie algebra induces a functor $\Group(V)\md \to \fso(V)\md$, where $\fso(V)\md$ denotes the category of finite-dimensional $\fso(V)$-modules.  We can compose the incarnation functor $\bF$ with this passage to the Lie algebra to yield a functor  $\bF' \colon \SB(V) \to \fso(V)\md$, which factors through $\overline{\SB}(V)$.
    However, while we have shown in \cref{fullness} and \cref{essential} that $\bF$ is full and $\Kar(\bF)$ is essentially surjective, the functor $\bF'$ is \emph{not} full and the functor $\Kar(\bF')$ is \emph{not} essentially surjective when $N$ is even.  For example, as $\fso(V)$-modules, $\Lambda^N(V)$ is isomorphic to the trivial module.  But this isomorphism is not contained in the image of $\bF'$ since $\Lambda^N(V)$ is nontrivial as a $\Pin(V)$-module when $N$ is even, by \cref{lemonD1}.  The functor $\Kar(\bF')$ is not essentially surjective since there are modules for $\fso(V)$ that are not the restriction of $\Group(V)$-modules; an example is either of the two summands of the spin module $S$.
     This is our main motivation for considering the larger group $\Pin(V)$ when $N$ is even.
\end{rem}

It is straightforward to verify that $\SB(d,D;\kappa)$ is a spherical pivotal category, hence so is its idempotent completion $\Kar(\SB(d,D;\kappa))$.  (We refer the reader to \cite[\S 4.4.3]{Sel11} for the definition of a spherical pivotal category.)  In any spherical pivotal category $\cC$, we have a trace map $\Tr \colon \bigoplus_{X \in \cC} \End_\cC(X) \to \End_\cC(\one)$.  In terms of string diagrams, this corresponds to closing a diagram off to the right or left:
\begin{equation} \label{natal}
    \Tr
    \left(
        \begin{tikzpicture}[centerzero]
            \draw[line width=2] (0,-0.5) -- (0,0.5);
            \filldraw[fill=white,draw=black] (-0.25,0.2) rectangle (0.25,-0.2);
            \node at (0,0) {$\scriptstyle{f}$};
        \end{tikzpicture}
    \right)
    =
    \begin{tikzpicture}[centerzero]
        \draw[line width=2] (0,0.2) arc(180:0:0.3) -- (0.6,-0.2) arc(360:180:0.3);
        \filldraw[fill=white,draw=black] (-0.25,0.2) rectangle (0.25,-0.2);
        \node at (0,0) {$\scriptstyle{f}$};
    \end{tikzpicture}
    =
    \begin{tikzpicture}[centerzero]
        \draw[line width=2] (0,0.2) arc(0:180:0.3) -- (-0.6,-0.2) arc(180:360:0.3);
        \filldraw[fill=white,draw=black] (-0.25,0.2) rectangle (0.25,-0.2);
        \node at (0,0) {$\scriptstyle{f}$};
    \end{tikzpicture}
    \ ,
\end{equation}
where the second equality follows from the axioms of a spherical category.  We say that a morphism $f \in \Hom_\cC(X,Y)$ is \emph{negligible} if $\Tr(f \circ g) = 0$ for all $g \in \Hom_\cC(Y,X)$.  The negligible morphisms form a two-sided tensor ideal $\cN$ of $\cC$, and the quotient $\cC/\cN$ is called the \emph{semisimplification} of $\cC$.

\begin{theo} \label{negligible}
    For all $N \in \N$, the kernel of the functor $\Kar(\bF)$ of \cref{Karnation} is equal to the tensor ideal of negligible morphisms of $\Kar(\overline{\SB}(V))$.  The functor $\Kar(\bF)$ induces an equivalence of categories from the semisimplification of $\Kar(\overline{\SB}(V))$ to $\Group(V)\md$.
\end{theo}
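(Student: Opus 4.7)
The plan is to exploit the fullness (\cref{fullness}) and essential surjectivity (\cref{essential}) of $\Kar(\bF)$ to characterise $\ker \Kar(\bF)$ as the tensor ideal $\cN$ of negligible morphisms. The key preliminary observation is that $\bF$, being a monoidal functor between spherical pivotal categories that sends the generating cups and caps to their counterparts in $\Group(V)\md$, commutes with the trace maps: for every endomorphism $\phi$ in $\Kar(\overline{\SB}(V))$, we have $\bF(\Tr(\phi)) = \tr(\bF(\phi))$, where $\tr$ denotes the categorical trace in $\Group(V)\md$. Moreover, by \cref{lunch} and \cref{carrot}, we have $\End_{\Kar(\overline{\SB}(V))}(\one) \cong \C$, and since $\bF(1_\one) = \id_{\triv^0}$ is nonzero in $\End_{\Group(V)}(\triv^0) \cong \C$, the functor $\bF$ is injective on $\End(\one)$.

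For the inclusion $\ker \Kar(\bF) \subseteq \cN$, suppose $f \colon X \to Y$ satisfies $\bF(f) = 0$. Then for any $g \colon Y \to X$ in $\Kar(\overline{\SB}(V))$, we have
\[
    \bF(\Tr(g \circ f)) = \tr(\bF(g) \circ \bF(f)) = 0.
\]
Since $\bF$ is injective on $\End(\one)$, this forces $\Tr(g \circ f) = 0$, and $f$ is negligible. For the reverse inclusion $\cN \subseteq \ker \Kar(\bF)$, suppose instead that $\bF(f) \ne 0$. Since $\Group(V)\md$ is semisimple, the trace pairing
\[
    \Hom_{\Group(V)}(\bF(X), \bF(Y)) \times \Hom_{\Group(V)}(\bF(Y), \bF(X)) \to \C, \qquad (\alpha,\beta) \mapsto \tr(\beta \circ \alpha),
\]
is nondegenerate, so there exists $h \colon \bF(Y) \to \bF(X)$ with $\tr(h \circ \bF(f)) \ne 0$. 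By \cref{fullness}, we may lift $h = \bF(g)$ for some $g \colon Y \to X$ in $\Kar(\overline{\SB}(V))$, and then $\bF(\Tr(g \circ f)) = \tr(h \circ \bF(f)) \ne 0$ gives $\Tr(g \circ f) \ne 0$, so $f$ is not negligible. Combining the two inclusions yields $\ker \Kar(\bF) = \cN$.

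For the final statement, the functor $\Kar(\bF)$ descends to a well-defined functor $\Kar(\overline{\SB}(V))/\cN \to \Group(V)\md$. This induced functor is essentially surjective by \cref{essential}, full by \cref{fullness}, and faithful because any morphism in the source that maps to zero in $\Group(V)\md$ lifts to an element of $\ker \Kar(\bF) = \cN$, hence vanishes in the quotient. Therefore it is an equivalence of categories.

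The main obstacle is the injectivity of $\bF$ on $\End_{\Kar(\overline{\SB}(V))}(\one)$: without it, the chain of implications $\bF(\Tr(gf)) = 0 \Rightarrow \Tr(gf) = 0$ breaks down, and one only concludes that $\ker \Kar(\bF)$ is contained in a slightly larger ideal. This is precisely why the extra relation \cref{extra} must be imposed when $N$ is odd to pass from $\SB(V)$ to $\overline{\SB}(V)$, and why \cref{popping}, \cref{lunch}, and \cref{carrot} are essential inputs to this argument.
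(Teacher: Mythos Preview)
Your proof is correct and follows essentially the same approach as the paper: both rely on fullness (\cref{fullness}), essential surjectivity (\cref{essential}), and the fact that $\End_{\overline{\SB}(V)}(\one)\cong\C$. The only difference is cosmetic: the paper packages the trace-pairing argument you spell out into a citation of \cite[Prop.~6.9]{SW22} together with \cref{popping}, whereas you unpack that argument directly and cite the slightly sharper \cref{lunch} and \cref{carrot} for the one-dimensionality of $\End(\one)$.
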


\begin{proof}
    By \cref{fullness,essential} the functor $\Kar(\bF)$ is full and essentially surjective.  It follows from \cref{popping} and \cite[Prop.~6.9]{SW22} that its kernel is the tensor ideal of negligible morphisms.
\end{proof}

We spend the rest of this section explicitly constructing idempotents in the spin Brauer category that correspond, under the incarnation functor, to projections onto the simple summands of the tensor products $S^{\otimes 2}$, as in \cref{Sdub}, and onto the summand $S$ in $S \otimes V$.  For the rest of this section, we assume that $\kk$ is a field of characteristic zero.  For statements involving the incarnation functor, we assume that $\kk = \C$.

Recall the definition of the antisymmetrizer \cref{altbox}.  If $D \ne 0$, define
\begin{equation}
    \pi_r :=
    \frac{1}{D(r!)^2}\
    \begin{tikzpicture}[centerzero]
        \draw[vec] (0,-0.5) -- (0,0.5);
        \altbox{-0.2,-0.15}{0.2,0.15}{r};
        \draw[spin] (-0.2,0.8) -- (-0.2,0.7) arc(180:360:0.2) -- (0.2,0.8);
        \draw[spin] (-0.2,-0.8) -- (-0.2,-0.7) arc(180:0:0.2) -- (0.2,-0.8);
    \end{tikzpicture}
    \ \in \End_{\SB(d,D;\kappa)} \left( S^{\otimes 2} \right),\qquad
    r \in \N.
\end{equation}

\begin{prop} \label{Sproject}
    \begin{enumerate}
        \item \label{Sproject1} If $d \notin 2\N + 1$, then $\pi_r \pi_s = 0$ for all $r \ne s$.

        \item \label{Sproject1b} If $d \in 2\N + 1$, then $\pi_r \pi_s = 0$ for all $r \ne s$, $0 \le r+s < d$.

        \item \label{Sproject2} If $d \notin \{0,1,\dotsc,r-1\}$, then $\pi_r^2 = \pi_r$.

        \item \label{Sproject3} If $d=N$, $D = \sigma_N 2^n$, and $0 \le r \le N$, then $\bF(\pi_r)$ is the projection $S^{\otimes 2} \twoheadrightarrow \Lambda^r(V)$ with respect to the decompositions of \cref{Sdub}.
    \end{enumerate}
\end{prop}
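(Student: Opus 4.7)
The plan is to decompose $\pi_r = \frac{1}{D(r!)^2}\,\beta_r \circ \mathrm{Alt}_r \circ \alpha_r$, where $\alpha_r\colon S\otimes S\to\Vgo^{\otimes r}$ is the ``bottom cup with $r$ trivalent vertices'' portion of $\pi_r$ and $\beta_r\colon\Vgo^{\otimes r}\to S\otimes S$ is the top half. Composition then gives
\[
    \pi_r\pi_s = \tfrac{1}{D^2(r!)^2(s!)^2}\,\beta_r \circ M_{r,s}\circ \alpha_s,\qquad M_{r,s} := \mathrm{Alt}_r\circ(\alpha_r\circ\beta_s)\circ\mathrm{Alt}_s,
\]
with $\alpha_r\circ\beta_s\colon\Vgo^{\otimes s}\to\Vgo^{\otimes r}$ visually a closed spin loop carrying $r+s$ vector trivalent vertices. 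By the strict pivotal structure, this can be redrawn as a spin bubble $B\colon\one\to\Vgo^{\otimes(r+s)}$ with all $r+s$ vectors emerging on one side.

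For parts (a), (b) (case $r\ne s$), the goal is to reduce $(\mathrm{Alt}_r\otimes\mathrm{Alt}_s)\circ B$ to a nonzero scalar multiple of $\mathrm{Alt}_{r+s}\circ B$ and then invoke \cref{Delprop} at $r+s$. The mechanism uses the shuffle identity $\mathrm{Alt}_{r+s} = \sum_{\tau\in\mathrm{Sh}(r,s)}\sgn(\tau)\,\tau\circ(\mathrm{Alt}_r\otimes\mathrm{Alt}_s)$ together with a combinatorial rearrangement of the spin bubble's trivalent vertices using \cref{typhoon}, \cref{lobster}, and \cref{oist} to identify the various shuffled summands. Under the hypotheses of (a) or (b), \cref{Delprop} applies with $r+s$, yielding $\mathrm{Alt}_{r+s}\circ B = 0$ and hence $\pi_r\pi_s = 0$. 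For part (c), I will show $M_{r,r} = \lambda\,\mathrm{Alt}_r$ for some scalar $\lambda$, via a Brauer-algebra-style argument (the outer $\mathrm{Alt}_r$'s annihilate any internal cap--cup pairings arising in the expansion of $\alpha_r\circ\beta_r$ through the spin-bubble relations). The scalar is then pinned down by taking traces: closing the diagram and applying \cref{monkey1} (together with $\mathrm{Alt}_r^2 = r!\,\mathrm{Alt}_r$) gives $\tr(M_{r,r}) = (r!)^2\, D\, d(d-1)\cdots(d-r+1)$, while $\tr(\mathrm{Alt}_r) = d(d-1)\cdots(d-r+1)$ by \cref{monkey2}; the hypothesis $d\notin\{0,\ldots,r-1\}$ keeps this nonzero, forcing $\lambda = D(r!)^2$ and thus $\pi_r^2 = \pi_r$.

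For part (d), specialize to $\kk=\C$, $d=N$, $D=\sigma_N 2^n$. Compute $\bF(\alpha_r)$ and $\bF(\beta_r)$ explicitly via the Clifford action $\tau$ and the form $\Phi_S$; both factor through $\Lambda^r(V)$ after composition with the antisymmetrizer. Invoking the Clifford-algebraic identification underlying \cref{Sdub}, $\bF(\pi_r)$ is identified as the projection $S^{\otimes 2}\twoheadrightarrow \Lambda^r(V)$, with the constant $\frac{1}{D(r!)^2}$ providing the precise normalization (verified by evaluation on a highest-weight vector of the $\Lambda^r(V)$-summand, or alternatively deduced from parts (a)--(c) that guarantee the $\bF(\pi_r)$ form a family of orthogonal idempotents summing to the identity on $S^{\otimes 2}$). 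The hard part will be the shuffle-combinatorial argument in parts (a), (b), which requires delicate manipulation of trivalent vertices on the closed spin bubble via the defining relations.
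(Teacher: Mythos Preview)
Your overall architecture---factoring $\pi_r$ as $\beta_r\circ\mathrm{Alt}_r\circ\alpha_r$, rewriting $\alpha_r\circ\beta_s$ as a spin bubble $B$ with $r+s$ vector legs, relating $(\mathrm{Alt}_r\otimes\mathrm{Alt}_s)\circ B$ to $\mathrm{Alt}_{r+s}\circ B$ via shuffles, and pinning down the scalar in part~(c) by the trace computation with \cref{monkey1} and \cref{monkey2}---matches the paper's approach closely. The trace argument for part~(c) is exactly right.

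There is, however, a real gap in your treatment of parts~(a), (b), and the structural half of~(c). When you use the shuffle identity and then try to ``identify the various shuffled summands'' by rearranging trivalent vertices on the spin bubble, each adjacent swap via \cref{oist} produces a correction term: a cap joining two vector legs. These correction terms do \emph{not} get annihilated by the outer antisymmetrizers as you suggest; rather, each one is a spin bubble with $(r-a)+(s-a)$ legs for some $a\ge 1$, still sandwiched between antisymmetrizers. The paper handles this by induction on $r+s$: the correction terms are exactly instances of the same statement at strictly smaller parameters, hence vanish by the inductive hypothesis. Without this induction your reduction to $\mathrm{Alt}_{r+s}\circ B$ is incomplete. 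The same issue arises in part~(c): the claim ``$M_{r,r}=\lambda\,\mathrm{Alt}_r$'' requires an induction on $r$ to absorb the cap--cup correction terms into multiples of the full cap, not merely the presence of outer antisymmetrizers.

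For part~(d), your plan works but is heavier than necessary, and the alternative route you sketch (deducing that the $\bF(\pi_r)$ sum to the identity from~(a)--(c)) fails when $N$ is odd, since \cref{Sproject1b} only gives orthogonality for $r+s<d$. The paper's argument is much shorter: $\bF(\pi_r)$ is an idempotent endomorphism of $S^{\otimes 2}$ that factors through $\Lambda^r(V)$ (visibly, since $\alpha_r$ and $\beta_r$ pass through $V^{\otimes r}$ and the antisymmetrizer projects onto $\Lambda^r(V)$), and its trace is nonzero by \cref{monkey1}; since $\Lambda^r(V)$ appears with multiplicity one in $S^{\otimes 2}$ by \cref{Sdub}, this forces $\bF(\pi_r)$ to be the projection.
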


\begin{proof}
    Recall that, for $r,s \in \N$, an \emph{$(r,s)$-shuffle} is a permutation $g$ of the set $\{1,\dotsc,r+s\}$ such that
    \[
        g(1) < \dotsb < g(r)
        \quad \text{and} \quad
        g(r+1) < \dotsb g(r+s).
    \]
    The set $\Shuffle(r,s)$ of $(r,s)$-shuffles is a complete set of representatives of the left cosets of the subgroup $\fS_r \times \fS_s$ of $\fS_{r+s}$.  Thus, we have
    \begin{equation} \label{beard}
        \begin{tikzpicture}[anchorbase]
            \draw[spin] (-0.1,0) -- (0.1,0) arc(-90:90:0.15) -- (-0.1,0.3) arc(90:270:0.15);
            \draw[vec] (0,-1) -- (0,0);
            \altbox{-0.4,-0.65}{0.4,-0.35}{r+s};
        \end{tikzpicture}
        = \sum_{g \in \Shuffle(r,s)} \sgn(g)\
        \begin{tikzpicture}[anchorbase]
            \draw[spin] (-0.1,0) -- (0.1,0) arc(-90:90:0.15) -- (-0.1,0.3) arc(90:270:0.15);
            \draw[vec] (0,-0.5) -- (0,0);
            \draw[vec] (-0.3,-1.4) -- (-0.3,-0.5);
            \draw[vec] (0.3,-1.4) -- (0.3,-0.5);
            \altbox{-0.4,-0.65}{0.4,-0.35}{g};
            \altbox{-0.5,-1.2}{-0.1,-0.9}{r};
            \altbox{0.5,-1.2}{0.1,-0.9}{s};
        \end{tikzpicture}
        \overset{\cref{oist}}{\underset{\cref{absorb}}{=}} \sum_{g \in \Shuffle(r,s)}
        \begin{tikzpicture}[anchorbase]
            \draw[spin] (-0.4,0) -- (0.4,0) arc(-90:90:0.15) -- (-0.4,0.3) arc(90:270:0.15);
            \draw[vec] (-0.27,-1) -- (-0.27,0);
            \draw[vec] (0.27,-1) -- (0.27,0);
            \altbox{-0.5,-0.65}{-0.1,-0.35}{r};
            \altbox{0.5,-0.65}{0.1,-0.35}{s};
        \end{tikzpicture}
        + \sum_{a=1}^{\min(r,s)} c_a\
        \begin{tikzpicture}[anchorbase]
            \draw[spin] (-0.4,0.2) -- (0.4,0.2) arc(-90:90:0.15) -- (-0.4,0.5) arc(90:270:0.15);
            \draw[vec] (-0.4,-0.8) -- (-0.4,0.2);
            \draw[vec] (-0.3,-1.1) -- (-0.3,-0.8);
            \draw[vec] (0.4,-0.8) -- (0.4,0.2);
            \draw[vec] (0.3,-1.1) -- (0.3,-0.8);
            \draw[vec] (-0.2,-0.5) arc(180:0:0.2) node[midway,anchor=south] {\strandlabel{a}};
            \altbox{-0.5,-0.8}{-0.1,-0.5}{r};
            \altbox{0.5,-0.8}{0.1,-0.5}{s};
        \end{tikzpicture}
        \ ,
    \end{equation}
    for some $c_a \in \kk$, $1 \le a \le \min(r,s)$.

    We now prove \cref{Sproject1} by induction on $r+s$.  Suppose $d \notin 2\N+1$ and $r \ne s$.  It suffices to show that
    \[
        \begin{tikzpicture}[anchorbase]
            \draw[spin] (-0.4,0) -- (0.4,0) arc(-90:90:0.15) -- (-0.4,0.3) arc(90:270:0.15);
            \draw[vec] (-0.27,-1) -- (-0.27,0);
            \draw[vec] (0.27,-1) -- (0.27,0);
            \altbox{-0.5,-0.65}{-0.1,-0.35}{r};
            \altbox{0.5,-0.65}{0.1,-0.35}{s};
        \end{tikzpicture}
        = 0.
    \]
    For the base case $r+s=1$, the result follows immediately from the $r=1$ case of \cref{Deligne}.  Now suppose $r + s > 1$.  Then, for $1 \le a \le \min(r,s)$, we have
    \[
        \begin{tikzpicture}[anchorbase]
            \draw[spin] (-0.4,0.2) -- (0.4,0.2) arc(-90:90:0.15) -- (-0.4,0.5) arc(90:270:0.15);
            \draw[vec] (-0.4,-0.8) -- (-0.4,0.2);
            \draw[vec] (-0.3,-1.1) -- (-0.3,-0.8);
            \draw[vec] (0.4,-0.8) -- (0.4,0.2);
            \draw[vec] (0.3,-1.1) -- (0.3,-0.8);
            \draw[vec] (-0.2,-0.5) arc(180:0:0.2) node[midway,anchor=south] {\strandlabel{a}};
            \altbox{-0.5,-0.8}{-0.1,-0.5}{r};
            \altbox{0.5,-0.8}{0.1,-0.5}{s};
        \end{tikzpicture}
        \overset{\cref{absorb}}{=} \tfrac{1}{(r-a)!(s-a)!}\
        \begin{tikzpicture}[anchorbase]
            \draw[spin] (-0.6,0.7) -- (0.6,0.7) arc(-90:90:0.15) -- (-0.6,1) arc(90:270:0.15);
            \draw[vec] (-0.4,-0.8) -- (-0.4,0.7);
            \draw[vec] (-0.3,-1.1) -- (-0.3,-0.8);
            \draw[vec] (0.4,-0.8) -- (0.4,0.7);
            \draw[vec] (0.3,-1.1) -- (0.3,-0.8);
            \draw[vec] (-0.2,-0.5) arc(180:0:0.2) node[midway,anchor=south] {\strandlabel{a}};
            \altbox{-0.5,-0.8}{-0.1,-0.5}{r};
            \altbox{0.5,-0.8}{0.1,-0.5}{s};
            \altbox{-0.8,0.4}{-0.1,0.1}{r-a};
            \altbox{0.8,0.4}{0.1,0.1}{s-a};
        \end{tikzpicture}
        = 0,
    \]
    where the last equality follows from the inductive hypothesis.  Therefore,
    \begin{equation} \label{hail}
        0 \overset{\cref{Deligne}}{=}
        \begin{tikzpicture}[anchorbase]
            \draw[spin] (-0.1,0) -- (0.1,0) arc(-90:90:0.15) -- (-0.1,0.3) arc(90:270:0.15);
            \draw[vec] (0,-1) -- (0,0);
            \altbox{-0.4,-0.65}{0.4,-0.35}{r+s};
        \end{tikzpicture}
        \overset{\cref{beard}}{=} \tfrac{(r+s)!}{r!s!}
        \begin{tikzpicture}[anchorbase]
            \draw[spin] (-0.4,0) -- (0.4,0) arc(-90:90:0.15) -- (-0.4,0.3) arc(90:270:0.15);
            \draw[vec] (-0.27,-1) -- (-0.27,0);
            \draw[vec] (0.27,-1) -- (0.27,0);
            \altbox{-0.5,-0.65}{-0.1,-0.35}{r};
            \altbox{0.5,-0.65}{0.1,-0.35}{s};
        \end{tikzpicture}
        + \sum_{a=1}^{\min(r,s)} c_a\
        \begin{tikzpicture}[anchorbase]
            \draw[spin] (-0.4,0.2) -- (0.4,0.2) arc(-90:90:0.15) -- (-0.4,0.5) arc(90:270:0.15);
            \draw[vec] (-0.4,-0.8) -- (-0.4,0.2);
            \draw[vec] (-0.3,-1.1) -- (-0.3,-0.8);
            \draw[vec] (0.4,-0.8) -- (0.4,0.2);
            \draw[vec] (0.3,-1.1) -- (0.3,-0.8);
            \draw[vec] (-0.2,-0.5) arc(180:0:0.2) node[midway,anchor=south] {\strandlabel{a}};
            \altbox{-0.5,-0.8}{-0.1,-0.5}{r};
            \altbox{0.5,-0.8}{0.1,-0.5}{s};
        \end{tikzpicture}
        = \tfrac{(r+s)!}{r!s!}
        \begin{tikzpicture}[anchorbase]
            \draw[spin] (-0.4,0) -- (0.4,0) arc(-90:90:0.15) -- (-0.4,0.3) arc(90:270:0.15);
            \draw[vec] (-0.27,-1) -- (-0.27,0);
            \draw[vec] (0.27,-1) -- (0.27,0);
            \altbox{-0.5,-0.65}{-0.1,-0.35}{r};
            \altbox{0.5,-0.65}{0.1,-0.35}{s};
        \end{tikzpicture}
        \ ,
    \end{equation}
    and the result follows.  The proof of \cref{Sproject1b} is identical, except that the first equality in \cref{hail} uses the assumption that $r+s < d$.

    Next, we prove \cref{Sproject2}.  We first show, by induction on $r$, that
    \begin{equation} \label{lego}
        \begin{tikzpicture}[centerzero]
            \draw[vec] (0,-1) -- (0,-0.15);
            \draw[vec] (0,0.15) -- (0,1);
            \draw[spin] (-0.1,-0.15) -- (0.1,-0.15) arc(-90:90:0.15) -- (-0.1,0.15) arc(90:270:0.15);
            \altbox{-0.2,-0.75}{0.2,-0.45}{r};
            \altbox{-0.2,0.75}{0.2,0.45}{r};
        \end{tikzpicture}
        = b_r\
        \begin{tikzpicture}[centerzero]
            \draw[vec] (0,-0.6) -- (0,0.6);
            \altbox{-0.2,-0.15}{0.2,0.15}{r};
        \end{tikzpicture}
        \qquad \text{for some } b_r \in \kk.
    \end{equation}
    The base case $r=0$ follows immediately from the second relation in \cref{dimrel}.  Now assume $r > 1$ and that the result holds for $r-1$.  Then, for $a \ge 1$,
    \begin{equation} \label{rocks}
        \begin{tikzpicture}[anchorbase]
            \draw[spin] (-0.4,0.2) -- (0.4,0.2) arc(-90:90:0.15) -- (-0.4,0.5) arc(90:270:0.15);
            \draw[vec] (-0.4,-0.8) -- (-0.4,0.2);
            \draw[vec] (-0.3,-1.1) -- (-0.3,-0.8);
            \draw[vec] (0.4,-0.8) -- (0.4,0.2);
            \draw[vec] (0.3,-1.1) -- (0.3,-0.8);
            \draw[vec] (-0.2,-0.5) arc(180:0:0.2) node[midway,anchor=south] {\strandlabel{a}};
            \altbox{-0.5,-0.8}{-0.1,-0.5}{r};
            \altbox{0.5,-0.8}{0.1,-0.5}{r};
        \end{tikzpicture}
        \overset{\cref{absorb}}{=} \tfrac{1}{((r-a)!)^2}\
        \begin{tikzpicture}[anchorbase]
            \draw[spin] (-0.6,0.7) -- (0.6,0.7) arc(-90:90:0.15) -- (-0.6,1) arc(90:270:0.15);
            \draw[vec] (-0.4,-0.8) -- (-0.4,0.7);
            \draw[vec] (-0.3,-1.1) -- (-0.3,-0.8);
            \draw[vec] (0.4,-0.8) -- (0.4,0.7);
            \draw[vec] (0.3,-1.1) -- (0.3,-0.8);
            \draw[vec] (-0.2,-0.5) arc(180:0:0.2) node[midway,anchor=south] {\strandlabel{a}};
            \altbox{-0.5,-0.8}{-0.1,-0.5}{r};
            \altbox{0.5,-0.8}{0.1,-0.5}{r};
            \altbox{-0.8,0.4}{-0.1,0.1}{r-a};
            \altbox{0.8,0.4}{0.1,0.1}{r-a};
        \end{tikzpicture}
        = \tfrac{b_{r-a}}{((r-a)!)^2}\
        \begin{tikzpicture}[anchorbase]
            \draw[vec] (-0.4,-0.8) -- (-0.4,0.4) arc(180:0:0.4) -- (0.4,-0.8);
            \draw[vec] (-0.3,-1.1) -- (-0.3,-0.8);
            \draw[vec] (0.3,-1.1) -- (0.3,-0.8);
            \draw[vec] (-0.2,-0.5) arc(180:0:0.2) node[midway,anchor=south] {\strandlabel{a}};
            \altbox{-0.5,-0.8}{-0.1,-0.5}{r};
            \altbox{0.5,-0.8}{0.1,-0.5}{r};
            \altbox{-0.8,0.4}{-0.1,0.1}{r-a};
        \end{tikzpicture}
        \overset{\cref{absorb}}{=} \tfrac{b_{r-a}}{(r-a)!}\
        \begin{tikzpicture}[anchorbase]
            \draw[vec] (-0.3,-1.1) -- (-0.3,-0.4) arc(180:0:0.3) -- (0.3,-1.1);
            \altbox{-0.5,-0.8}{-0.1,-0.5}{r};
            \altbox{0.5,-0.8}{0.1,-0.5}{r};
        \end{tikzpicture}
        \ .
    \end{equation}
    Thus,
    \[
        0 \overset{\cref{Deligne}}{=}
        \begin{tikzpicture}[anchorbase]
            \draw[spin] (-0.1,0) -- (0.1,0) arc(-90:90:0.15) -- (-0.1,0.3) arc(90:270:0.15);
            \draw[vec] (0,-1) -- (0,0);
            \altbox{-0.3,-0.65}{0.3,-0.35}{2r};
        \end{tikzpicture}
        \overset{\cref{beard}}{=} \tfrac{(2r)!}{(r!)^2}\
        \begin{tikzpicture}[anchorbase]
            \draw[spin] (-0.4,0) -- (0.4,0) arc(-90:90:0.15) -- (-0.4,0.3) arc(90:270:0.15);
            \draw[vec] (-0.27,-1) -- (-0.27,0);
            \draw[vec] (0.27,-1) -- (0.27,0);
            \altbox{-0.5,-0.65}{-0.1,-0.35}{r};
            \altbox{0.5,-0.65}{0.1,-0.35}{r};
        \end{tikzpicture}
        + \sum_{a=1}^r c_a\
        \begin{tikzpicture}[anchorbase]
            \draw[spin] (-0.4,0.2) -- (0.4,0.2) arc(-90:90:0.15) -- (-0.4,0.5) arc(90:270:0.15);
            \draw[vec] (-0.4,-0.8) -- (-0.4,0.2);
            \draw[vec] (-0.3,-1.1) -- (-0.3,-0.8);
            \draw[vec] (0.4,-0.8) -- (0.4,0.2);
            \draw[vec] (0.3,-1.1) -- (0.3,-0.8);
            \draw[vec] (-0.2,-0.5) arc(180:0:0.2) node[midway,anchor=south] {\strandlabel{a}};
            \altbox{-0.5,-0.8}{-0.1,-0.5}{r};
            \altbox{0.5,-0.8}{0.1,-0.5}{r};
        \end{tikzpicture}
        \overset{\cref{rocks}}{=} \tfrac{(2r)!}{(r!)^2}\
        \begin{tikzpicture}[anchorbase]
            \draw[spin] (-0.4,0) -- (0.4,0) arc(-90:90:0.15) -- (-0.4,0.3) arc(90:270:0.15);
            \draw[vec] (-0.27,-1) -- (-0.27,0);
            \draw[vec] (0.27,-1) -- (0.27,0);
            \altbox{-0.5,-0.65}{-0.1,-0.35}{r};
            \altbox{0.5,-0.65}{0.1,-0.35}{r};
        \end{tikzpicture}
        + \sum_{a=1}^r \tfrac{c_a b_{r-a}}{(r-a)!}\
        \begin{tikzpicture}[anchorbase]
            \draw[vec] (-0.3,-1.1) -- (-0.3,-0.4) arc(180:0:0.3) -- (0.3,-1.1);
            \altbox{-0.5,-0.8}{-0.1,-0.5}{r};
            \altbox{0.5,-0.8}{0.1,-0.5}{r};
        \end{tikzpicture}
        \ ,
    \]
    and \cref{lego} follows.

    Now, taking the trace of both sides of \cref{lego}, we see that
    \[
        b_r\
        \begin{tikzpicture}[anchorbase]
            \draw[vec] (0,-0.15) arc(180:360:0.2) -- (0.4,0.15) arc(0:180:0.2);
            \altbox{-0.2,-0.15}{0.2,0.15}{r};
        \end{tikzpicture}
        \ =\
        \begin{tikzpicture}[centerzero]
            \draw[vec] (0,0.15) -- (0,0.8) arc (180:0:0.2) -- (0.4,-0.8) arc(360:180:0.2) -- (0,-0.15);
            \draw[spin] (-0.1,-0.15) -- (0.1,-0.15) arc(-90:90:0.15) -- (-0.1,0.15) arc(90:270:0.15);
            \altbox{-0.2,-0.75}{0.2,-0.45}{r};
            \altbox{-0.2,0.75}{0.2,0.45}{r};
        \end{tikzpicture}
        \ \overset{\cref{absorb}}{=} r!\
        \begin{tikzpicture}[anchorbase]
            \draw[spin] (-0.1,0) -- (0.1,0) arc(-90:90:0.15) -- (-0.1,0.3) arc(90:270:0.15);
            \draw[vec] (0,0) -- (0,-0.65) arc(180:360:0.2) -- (0.4,0.3) arc(0:180:0.2);
            \altbox{-0.2,-0.65}{0.2,-0.35}{r};
        \end{tikzpicture}
        \ .
    \]
    Thus, by \cref{monkey1,monkey2}, we have
    \[
        b_r d(d-1) \dotsm (d-r+1) = D (r!)^2 d(d-1) \dotsb (d-r+1),
    \]
    and so $b_r = D(r!)^2$.

    Finally, to prove \cref{Sproject3}, note that $F(\pi_r)$ is an idempotent $\Pin(V)$-module homomorphism $S^{\otimes 2} \to \Lambda^r(V) \to S^{\otimes 2}$.  Since its trace is nonzero by \cref{monkey1}, the result follows from \cref{Sdub}.
\end{proof}

For $N \ge 1$, we have
\begin{equation} \label{volley1}
    V^{\otimes 2} \cong S^2(V) \oplus \Lambda^2(V),\qquad
    S^2(V) \cong \triv^0 \oplus W,\qquad
    \text{as $\Pin(V)$-modules}.
\end{equation}
When $N=1$, we have $W = \Lambda^2(V) = 0$.  For $N \ge 2$, the $\Pin(V)$-modules $W$ and $\Lambda^2(V)$ are simple.  We have
\begin{equation} \label{temple}
    \Lambda^2(V) = \triv^1
    \quad \text{and} \quad
    W = \Ind(L(2\epsilon_1))
    \quad \text{when } N=2.
\end{equation}
\details{
    As a $(\GG_m \rtimes C_2)$-module, we have $W = L_{-4} \oplus L_4$, with the generator of $C_2$ interchanging the two summands.
}
Moreover, when $N \ge 3$, we have
\begin{equation} \label{volley2}
    W \cong L(2 \epsilon_1),
    \qquad
    \Lambda^2(V) \cong
    \begin{cases}
        L(\epsilon_1 + \epsilon_2) & \text{if } N > 3, \\
        L(\epsilon_1) & \text{if } N = 3,
    \end{cases}
    \qquad \text{as $\Spin(V)$-modules}.
\end{equation}

\begin{rem} \label{Delignefail}
    Suppose $d=N$ is odd.  It follows from \cref{Sproject}\cref{Sproject3}, \cref{lemon}, and \cref{Sdub} that $\bF(\pi_0)$ and $\bF(\pi_N)$ are both the projection from $S^{\otimes 2}$ onto its trivial $\Pin(V)$-module summand.  It follows that $\bF(\pi_0 \pi_N) \ne 0$, and so $\pi_0 \pi_N \ne 0$.  This shows that the equality in \cref{Deligne} \emph{fails} when $r = d$ is odd.
\end{rem}

\begin{prop} \label{Vproject}
    If $d \ne 0$, the morphisms
    \begin{equation} \label{Vprojectmor}
        \frac{1}{d}\
        \begin{tikzpicture}[centerzero]
            \draw[vec] (-0.2,-0.35) -- (-0.2,-0.3) arc(180:0:0.2) -- (0.2,-0.35);
            \draw[vec] (-0.2,0.35) -- (-0.2,0.3) arc(180:360:0.2) -- (0.2,0.35);
        \end{tikzpicture}
        \ ,\qquad
        \frac{1}{2}
        \left(\,
            \begin{tikzpicture}[centerzero]
                \draw[vec] (-0.2,-0.35) -- (-0.2,0.35);
                \draw[vec] (0.2,-0.35) -- (0.2,0.35);
            \end{tikzpicture}
            \, +\,
            \begin{tikzpicture}[centerzero]
                \draw[vec] (-0.2,-0.35) -- (0.2,0.35);
                \draw[vec] (0.2,-0.35) -- (-0.2,0.35);
            \end{tikzpicture}
        \, \right)
        \, - \frac{1}{d}\
        \begin{tikzpicture}[centerzero]
            \draw[vec] (-0.2,-0.35) -- (-0.2,-0.3) arc(180:0:0.2) -- (0.2,-0.35);
            \draw[vec] (-0.2,0.35) -- (-0.2,0.3) arc(180:360:0.2) -- (0.2,0.35);
        \end{tikzpicture}
        \ ,\qquad
        \frac{1}{2}
        \left(\,
            \begin{tikzpicture}[centerzero]
                \draw[vec] (-0.2,-0.35) -- (-0.2,0.35);
                \draw[vec] (0.2,-0.35) -- (0.2,0.35);
            \end{tikzpicture}
            \, -\,
            \begin{tikzpicture}[centerzero]
                \draw[vec] (-0.2,-0.35) -- (0.2,0.35);
                \draw[vec] (0.2,-0.35) -- (-0.2,0.35);
            \end{tikzpicture}
        \, \right)
        \quad \in \End_{\SB(d,D;\kappa)}(\Vgo^{\otimes 2})
    \end{equation}
    are orthogonal idempotents.  When $d=N \ge 1$ and $D = \sigma_N 2^n$, their images under the incarnation functor $\bF$ are the projections onto the summands $\triv^0$, $W$, and $\Lambda^2(V)$, respectively, of $V^{\otimes 2}$.
\end{prop}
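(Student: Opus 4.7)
The strategy is to verify the idempotent and orthogonality relations by direct string-diagram calculation, and then identify the images under $\bF$ by exploiting that we already know what the summands of $V^{\otimes 2}$ are. Denote the three morphisms by $e_0$, $e_W$, $e_\wedge$ respectively. All three involve only the dotted blue strand, so the only defining relations of $\SB(d,D;\kappa)$ available for these computations are the relations in \cref{brauer} restricted to vector strands, together with the first relation in \cref{dimrel}.

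For the idempotent/orthogonality statement, first note that $e_0+e_W+e_\wedge = 1_{\Vgo^{\otimes 2}}$ by construction. The identities $e_0^2 = e_0$ and $e_\wedge^2 = e_\wedge$ are immediate: the former uses $\bubble{vec} = d$ together with straightening of a $\cup$-$\cap$ pair via the third relation in \cref{brauer}, and the latter follows from the fact that the crossing on vector strands squares to the identity (first relation in \cref{brauer}). For the products $e_0 e_\wedge$ and $e_\wedge e_0$, the key computation is the identity
\[
\begin{tikzpicture}[centerzero]
    \draw[vec] (-0.2,0) -- (-0.2,-0.05) arc(180:360:0.2) -- (0.2,0);
    \draw[vec] (-0.3,0.4) -- (0.3,0);
    \draw[vec] (0.3,0.4) -- (-0.3,0);
\end{tikzpicture}
\ =\
\begin{tikzpicture}[centerzero]
    \draw[vec] (-0.2,0) -- (-0.2,-0.05) arc(180:360:0.2) -- (0.2,0);
    \draw[vec] (-0.3,0.4) -- (-0.3,0);
    \draw[vec] (0.3,0.4) -- (0.3,0);
\end{tikzpicture}
\ ,
\]
which is a vector-strand specialization of the last relation in \cref{brauer}. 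This gives $e_0 e_\wedge = e_\wedge e_0 = 0$ at once. The remaining identities $e_0 e_W = e_W e_0 = 0$, $e_\wedge e_W = e_W e_\wedge = 0$, and $e_W^2 = e_W$ then follow formally from the three verified identities together with $e_0 + e_W + e_\wedge = 1$.

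For the statement about the incarnation functor, apply $\bF$. By \cref{incarnation} together with \cref{incarnate3}, $\bF(e_0)$ is the map
\[
V \otimes V \to V \otimes V,\qquad
v \otimes w \mapsto \tfrac{1}{d}\Phi_V(v,w) \sum_{u \in \bB_V} u \otimes u^\vee.
\]
Its image lies in the one-dimensional trivial summand spanned by the $\Group(V)$-invariant vector $\sum_{u} u \otimes u^\vee$, and it is an idempotent with nonzero trace (equal to $1$ when $d = N$), so it must be the projection onto $\triv^0$ in the decomposition \cref{volley1}. Next, $\bF(e_\wedge) = \tfrac{1}{2}(\id - \flip_{V,V})$ is manifestly the standard projection onto $\Lambda^2(V) \subseteq V^{\otimes 2}$. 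Since $\bF(e_0)$, $\bF(e_W)$, $\bF(e_\wedge)$ are pairwise orthogonal idempotents summing to $\id_{V^{\otimes 2}}$, the image of $\bF(e_W)$ is the complementary summand, namely $W$.

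The only genuinely non-routine input is the requirement $d \neq 0$ (so that the scalar $1/d$ in the definitions makes sense); everything else reduces to Brauer-category computations that already hold in $\End_{\Brauer(d)}(\Vgo^{\otimes 2})$ and to the identification of standard projectors on $V^{\otimes 2}$, so there is no serious obstacle.
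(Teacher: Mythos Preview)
Your proposal is correct and follows essentially the same approach as the paper: the paper's proof simply says the orthogonal-idempotent claim is ``a straightforward diagrammatic computation, analogous to the corresponding computation in the Brauer category,'' and that the final statement follows since the images of the symmetrizer and antisymmetrizer are the usual symmetrizer and antisymmetrizer on $V^{\otimes 2}$, while the first morphism factors through the trivial module. You have written out exactly those details; the only minor quibble is that the cup-crossing identity you invoke is more precisely the rotated form of the fourth relation in \cref{brauer} (which the paper notes holds since $\SB(d,D;\kappa)$ is strict pivotal) rather than the fifth.
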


\begin{proof}
    The proof that these are orthogonal idempotents is a straightforward diagrammatic computation, analogous to the corresponding computation in the Brauer category.  Since the images under $\bF$ of
    \[
        \frac{1}{2}
        \left(\,
            \begin{tikzpicture}[centerzero]
                \draw[vec] (-0.2,-0.35) -- (-0.2,0.35);
                \draw[vec] (0.2,-0.35) -- (0.2,0.35);
            \end{tikzpicture}
            \, +\,
            \begin{tikzpicture}[centerzero]
                \draw[vec] (-0.2,-0.35) -- (0.2,0.35);
                \draw[vec] (0.2,-0.35) -- (-0.2,0.35);
            \end{tikzpicture}
        \, \right)
        \qquad \text{and} \qquad
        \frac{1}{2}
        \left(\,
            \begin{tikzpicture}[centerzero]
                \draw[vec] (-0.2,-0.35) -- (-0.2,0.35);
                \draw[vec] (0.2,-0.35) -- (0.2,0.35);
            \end{tikzpicture}
            \, -\,
            \begin{tikzpicture}[centerzero]
                \draw[vec] (-0.2,-0.35) -- (0.2,0.35);
                \draw[vec] (0.2,-0.35) -- (-0.2,0.35);
            \end{tikzpicture}
        \, \right)
    \]
    are the symmetrizer and antisymmetrizer, respectively, and the first morphism in \cref{Vprojectmor} clearly factors through the trivial module, the final statement in the proposition follows.
\end{proof}

Recall the decomposition of $S \otimes V$ given in \cref{SV}.

\begin{lem} \label{soju}
    If $d \ne 0$, the morphism
    \begin{equation} \label{fork}
        \frac{1}{d}\
        \begin{tikzpicture}[centerzero]
            \draw[spin] (-0.35,0.35) -- (0,0.15) -- (0,-0.15) -- (-0.35,-0.35);
            \draw[vec] (0,0.15) -- (0.35,0.35);
            \draw[vec] (0,-0.15) -- (0.35,-0.35);
        \end{tikzpicture}
        \quad \in \End_{\SB(d,D;\kappa)}(\Sgo \otimes \Vgo)
    \end{equation}
    is an idempotent.  When $d=N \ge 1$ and $D = \sigma_N 2^n$, its image under the incarnation functor $\bF$ is the projection onto the summand $S$ of $S \otimes V$.
\end{lem}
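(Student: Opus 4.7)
The plan is to verify the two claims in sequence. Writing $f$ for the diagram displayed in \eqref{fork} (without the scalar $1/d$), we have the factorisation $f = \splitmor{spin}{spin}{vec} \circ \mergemor{spin}{vec}{spin}$. Then
\[
    f \circ f = \splitmor{spin}{spin}{vec} \circ \bigl(\mergemor{spin}{vec}{spin} \circ \splitmor{spin}{spin}{vec}\bigr) \circ \mergemor{spin}{vec}{spin},
\]
and the inner composition is a morphism $\Sgo \to \Sgo$ given by a spin strand that briefly sprouts a vec loop between two trivalent vertices. This is precisely the configuration of \cref{bump} (up to moving the vec loop from one side of the spin strand to the other, which is legitimate since $\SB(d,D;\kappa)$ is strict pivotal and symmetric monoidal, so diagrams are isotopy invariant). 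Hence this inner composition equals $d \cdot 1_\Sgo$, giving $f \circ f = d\, f$ and showing that $\tfrac{1}{d} f$ is idempotent.

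For the image under the incarnation functor, assume $d = N$ and $D = \sigma_N 2^n$. Since $\bF(f) = \bF\bigl(\splitmor{spin}{spin}{vec}\bigr) \circ \bF\bigl(\mergemor{spin}{vec}{spin}\bigr)$, the image of $\bF(f)$ is contained in the image of $\bF\bigl(\splitmor{spin}{spin}{vec}\bigr) \colon S \to S \otimes V$. As $S$ is a simple $\Group(V)$-module (see \cref{suave}), this image is either $0$ or an isomorphic copy of $S$. By \cref{SV}, the summand $S$ appears in $S \otimes V$ with multiplicity exactly one, so if nonzero this image must coincide with the unique $S$-summand, and then $\bF\bigl(\tfrac{1}{N} f\bigr)$ is the corresponding projection.

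It therefore suffices to rule out vanishing, and for this I would compute $\tr(\bF(f))$. Partial-tracing $f$ over $\Vgo$ (closing the right-hand vec strand into a loop) produces exactly the bump configuration of \cref{bump}, so $\tr_\Vgo(f) = d \cdot 1_\Sgo$; taking the remaining trace over $\Sgo$ and applying \cref{dimrel} gives $\tr(f) = dD$. Under $\bF$ with $d = N$, $D = \sigma_N 2^n$, this is $N \sigma_N 2^n \ne 0$, confirming $\bF(f) \ne 0$.

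The only point requiring care is the identification of $\mergemor{spin}{vec}{spin} \circ \splitmor{spin}{spin}{vec}$ with the left-hand side of \cref{bump}; since the composition of the generating trivalent vertex with its rotation produces a vec loop on the right rather than the left of the spin strand, one appeals to the isotopy invariance of diagrams in a strict pivotal symmetric monoidal category, or alternatively rederives the analogue of \cref{bump} for a right-sided bump using the same steps (\cref{brauer}, \cref{oist}, \cref{dimrel}).
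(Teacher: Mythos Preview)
Your proof is correct and follows essentially the same approach as the paper: both compute $f \circ f$ via \cref{bump} to get $d\,f$, then argue that $\bF(f)$ factors through $S$ and rule out vanishing by computing the trace $\Tr(f) = dD \ne 0$ using \cref{bump} and \cref{dimrel}. Your explicit remark about the bump appearing on the right rather than the left is a minor point the paper handles implicitly via the blanket isotopy invariance noted after \cref{SBdef}.
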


\begin{proof}
    We have
    \[
        \left(
            \frac{1}{d}\
            \begin{tikzpicture}[centerzero]
                \draw[spin] (-0.35,0.35) -- (0,0.15) -- (0,-0.15) -- (-0.35,-0.35);
                \draw[vec] (0,0.15) -- (0.35,0.35);
                \draw[vec] (0,-0.15) -- (0.35,-0.35);
            \end{tikzpicture}
        \right)^{\circ 2}
        = \frac{1}{d^2}\
        \begin{tikzpicture}[centerzero]
            \draw[spin] (-0.35,-0.55) -- (0,-0.35) -- (0,0.35) -- (-0.35,0.55);
            \draw[vec] (0,0.35) -- (0.35,0.55);
            \draw[vec] (0,-0.35) -- (0.35,-0.55);
            \draw[vec] (0,-0.2) arc(-90:90:0.2);
        \end{tikzpicture}
        \overset{\cref{bump}}{=}
        \frac{1}{d}\
        \begin{tikzpicture}[centerzero]
            \draw[spin] (-0.35,0.35) -- (0,0.15) -- (0,-0.15) -- (-0.35,-0.35);
            \draw[vec] (0,0.15) -- (0.35,0.35);
            \draw[vec] (0,-0.15) -- (0.35,-0.35);
        \end{tikzpicture}
        \ .
    \]
    Thus, the morphism \cref{fork} is an idempotent.  When $d=N$ and $D = \sigma_N 2^n$, its image under $\bF$ is a morphism $S \otimes V \to S \to S \otimes V$.  Thus, it is the projection onto the summand $S$ of $S \otimes V$ as long as it is nonzero.  Recalling the trace map of \cref{natal}, we have
    \[
        \Tr
        \left(
            \frac{1}{d}\
            \begin{tikzpicture}[centerzero]
                \draw[spin] (-0.35,0.35) -- (0,0.15) -- (0,-0.15) -- (-0.35,-0.35);
                \draw[vec] (0,0.15) -- (0.35,0.35);
                \draw[vec] (0,-0.15) -- (0.35,-0.35);
            \end{tikzpicture}
            \,
        \right)
        =
        \begin{tikzpicture}[centerzero]
            \draw[spin] (0,-0.2) -- (0,0.2) arc(180:0:0.2) -- (0.4,-0.15) arc(360:180:0.2);
            \draw[vec] (0,0.2) arc(90:-90:0.2);
        \end{tikzpicture}
        \overset{\cref{bump}}{\underset{\cref{dimrel}}{=}} dD 1_\one \ne 0,
    \]
    and the result follows.
\end{proof}

\section{The affine spin Brauer category\label{sec:affine}}

In this section we define an affine version of the spin Brauer category, together with an affine incarnation functor.  This can be thought of as a spin version of the affine Brauer category introduced in \cite{RS19}.

\begin{defin} \label{ABdef}
    For $d,D \in \kk$ and $\kappa \in \{\pm 1\}$, the \emph{affine spin Brauer category} is the strict $\kk$-linear monoidal category $\ASB(d,D;\kappa)$ obtained from $\SB(d,D;\kappa)$ by adjoining two additional generating morphisms
    \[
        \dotstrand{spin} \colon \Sgo \to \Sgo,\qquad
        \dotstrand{vec} \colon \Vgo \to \Vgo,
    \]
    which we call \emph{dots}, subject to the relations
    \begin{gather} \label{dotcross1}
        \begin{tikzpicture}[centerzero]
            \draw[vec] (-0.35,-0.35) -- (0.35,0.35);
            \draw[vec] (0.35,-0.35) -- (-0.35,0.35);
            \singdot{-0.17,-0.17};
        \end{tikzpicture}
        -
        \begin{tikzpicture}[centerzero]
            \draw[vec] (-0.35,-0.35) -- (0.35,0.35);
            \draw[vec] (0.35,-0.35) -- (-0.35,0.35);
            \singdot{0.17,0.17};
        \end{tikzpicture}
        = 2
        \left(\,
            \begin{tikzpicture}[centerzero]
                \draw[vec] (-0.2,-0.35) -- (-0.2,0.35);
                \draw[vec] (0.2,-0.35) -- (0.2,0.35);
            \end{tikzpicture}
            \, -\,
            \begin{tikzpicture}[centerzero]
                \draw[vec] (-0.2,-0.35) -- (-0.2,-0.3) arc(180:0:0.2) -- (0.2,-0.35);
                \draw[vec] (-0.2,0.35) -- (-0.2,0.3) arc(180:360:0.2) -- (0.2,0.35);
            \end{tikzpicture}
        \, \right)
        \ ,\qquad
        \begin{tikzpicture}[centerzero]
            \draw[spin] (-0.35,-0.35) -- (0.35,0.35);
            \draw[spin] (0.35,-0.35) -- (-0.35,0.35);
            \singdot{-0.17,-0.17};
        \end{tikzpicture}
        -
        \begin{tikzpicture}[centerzero]
            \draw[spin] (-0.35,-0.35) -- (0.35,0.35);
            \draw[spin] (0.35,-0.35) -- (-0.35,0.35);
            \singdot{0.17,0.17};
        \end{tikzpicture}
        = \frac{1}{8}
        \left(
            \begin{tikzpicture}[centerzero]
                \draw[spin] (-0.35,-0.35) -- (0.35,0.35);
                \draw[spin] (0.35,-0.35) -- (-0.35,0.35);
                \draw[vec] (-0.25,-0.25) -- (-0.25,0.25);
                \draw[vec] (0.25,-0.25) -- (0.25,0.25);
            \end{tikzpicture}
            -
            \begin{tikzpicture}[centerzero]
                \draw[spin] (-0.35,-0.35) -- (0.35,0.35);
                \draw[spin] (0.35,-0.35) -- (-0.35,0.35);
                \draw[vec] (-0.25,-0.25) -- (0.25,-0.25);
                \draw[vec] (-0.25,0.25) -- (0.25,0.25);
            \end{tikzpicture}
        \right)
        ,\
        \\ \label{dotcross2}
        \begin{tikzpicture}[centerzero]
            \draw[spin] (-0.35,-0.35) -- (0.35,0.35);
            \draw[vec] (0.35,-0.35) -- (-0.35,0.35);
            \singdot{-0.17,-0.17};
        \end{tikzpicture}
        -
        \begin{tikzpicture}[centerzero]
            \draw[spin] (-0.35,-0.35) -- (0.35,0.35);
            \draw[vec] (0.35,-0.35) -- (-0.35,0.35);
            \singdot{0.17,0.17};
        \end{tikzpicture}
        =
        \begin{tikzpicture}[centerzero]
            \draw[spin] (-0.35,-0.35) -- (0.35,0.35);
            \draw[vec] (0.35,-0.35) -- (-0.35,0.35);
        \end{tikzpicture}
        - \kappa
        \begin{tikzpicture}[centerzero]
            \draw[spin] (0.35,0.35) -- (0,0.15) -- (0,-0.15) -- (-0.35,-0.35);
            \draw[vec] (0,0.15) -- (-0.35,0.35);
            \draw[vec] (0,-0.15) -- (0.35,-0.35);
        \end{tikzpicture}
        \ ,\qquad
        \begin{tikzpicture}[centerzero]
            \draw[vec] (-0.35,-0.35) -- (0.35,0.35);
            \draw[spin] (0.35,-0.35) -- (-0.35,0.35);
            \singdot{-0.17,-0.17};
        \end{tikzpicture}
        -
        \begin{tikzpicture}[centerzero]
            \draw[vec] (-0.35,-0.35) -- (0.35,0.35);
            \draw[spin] (0.35,-0.35) -- (-0.35,0.35);
            \singdot{0.17,0.17};
        \end{tikzpicture}
        =
        \begin{tikzpicture}[centerzero]
            \draw[vec] (-0.35,-0.35) -- (0.35,0.35);
            \draw[spin] (0.35,-0.35) -- (-0.35,0.35);
        \end{tikzpicture}
        - \kappa
        \begin{tikzpicture}[centerzero]
            \draw[spin] (-0.35,0.35) -- (0,0.15) -- (0,-0.15) -- (0.35,-0.35);
            \draw[vec] (0,0.15) -- (0.35,0.35);
            \draw[vec] (0,-0.15) -- (-0.35,-0.35);
        \end{tikzpicture}
        \ ,
        \\ \label{dotcap}
        \begin{tikzpicture}[anchorbase]
            \draw[spin] (-0.2,-0.3) -- (-0.2,-0.1) arc(180:0:0.2) -- (0.2,-0.3);
            \singdot{-0.2,-0.1};
        \end{tikzpicture}
        \ = -\
        \begin{tikzpicture}[anchorbase]
            \draw[spin] (-0.2,-0.3) -- (-0.2,-0.1) arc(180:0:0.2) -- (0.2,-0.3);
            \singdot{0.2,-0.1};
        \end{tikzpicture}
        \ ,\qquad
        \begin{tikzpicture}[anchorbase]
            \draw[vec] (-0.2,-0.3) -- (-0.2,-0.1) arc(180:0:0.2) -- (0.2,-0.3);
            \singdot{-0.2,-0.1};
        \end{tikzpicture}
        \ = -\
        \begin{tikzpicture}[anchorbase]
            \draw[vec] (-0.2,-0.3) -- (-0.2,-0.1) arc(180:0:0.2) -- (0.2,-0.3);
            \singdot{0.2,-0.1};
        \end{tikzpicture}
        \ ,
        \\ \label{dotvertex}
        \begin{tikzpicture}[centerzero]
            \draw[spin] (0,0.4) -- (0,0) -- (0.35,-0.35);
            \draw[vec] (-0.35,-0.35) -- (0,0);
            \singdot{0,0.2};
        \end{tikzpicture}
        =
        \begin{tikzpicture}[centerzero]
            \draw[spin] (0,0.4) -- (0,0) -- (0.35,-0.35);
            \draw[vec] (-0.35,-0.35) -- (0,0);
            \singdot{-0.175,-0.175};
        \end{tikzpicture}
        +
        \begin{tikzpicture}[centerzero]
            \draw[spin] (0,0.4) -- (0,0) -- (0.35,-0.35);
            \draw[vec] (-0.35,-0.35) -- (0,0);
            \singdot{0.175,-0.175};
        \end{tikzpicture}
        \ .
    \end{gather}
    Let $\overline{\ASB}(d,D;\kappa)$ denote the quotient of $\ASB(d,D;\kappa)$ by \cref{extra}.
\end{defin}

\begin{prop}
    The following relations hold in $\ASB(d,D;\kappa)$:
    \begin{gather} \label{dotcross3}
        \begin{tikzpicture}[centerzero]
            \draw[vec] (-0.35,-0.35) -- (0.35,0.35);
            \draw[vec] (0.35,-0.35) -- (-0.35,0.35);
            \singdot{-0.17,0.17};
        \end{tikzpicture}
        -
        \begin{tikzpicture}[centerzero]
            \draw[vec] (-0.35,-0.35) -- (0.35,0.35);
            \draw[vec] (0.35,-0.35) -- (-0.35,0.35);
            \singdot{0.17,-0.17};
        \end{tikzpicture}
        \ = 2
        \left(\,
            \begin{tikzpicture}[centerzero]
                \draw[vec] (-0.2,-0.35) -- (-0.2,0.35);
                \draw[vec] (0.2,-0.35) -- (0.2,0.35);
            \end{tikzpicture}
            \, -\,
            \begin{tikzpicture}[centerzero]
                \draw[vec] (-0.2,-0.35) -- (-0.2,-0.3) arc(180:0:0.2) -- (0.2,-0.35);
                \draw[vec] (-0.2,0.35) -- (-0.2,0.3) arc(180:360:0.2) -- (0.2,0.35);
            \end{tikzpicture}
        \, \right)
        ,\qquad
        \begin{tikzpicture}[centerzero]
            \draw[spin] (-0.35,-0.35) -- (0.35,0.35);
            \draw[spin] (0.35,-0.35) -- (-0.35,0.35);
            \singdot{-0.17,0.17};
        \end{tikzpicture}
        -
        \begin{tikzpicture}[centerzero]
            \draw[spin] (-0.35,-0.35) -- (0.35,0.35);
            \draw[spin] (0.35,-0.35) -- (-0.35,0.35);
            \singdot{0.17,-0.17};
        \end{tikzpicture}
        = \frac{1}{8}
        \left(
            \begin{tikzpicture}[centerzero]
                \draw[spin] (-0.35,-0.35) -- (0.35,0.35);
                \draw[spin] (0.35,-0.35) -- (-0.35,0.35);
                \draw[vec] (-0.25,-0.25) -- (-0.25,0.25);
                \draw[vec] (0.25,-0.25) -- (0.25,0.25);
            \end{tikzpicture}
            -
            \begin{tikzpicture}[centerzero]
                \draw[spin] (-0.35,-0.35) -- (0.35,0.35);
                \draw[spin] (0.35,-0.35) -- (-0.35,0.35);
                \draw[vec] (-0.25,-0.25) -- (0.25,-0.25);
                \draw[vec] (-0.25,0.25) -- (0.25,0.25);
            \end{tikzpicture}
        \right)
        ,
        \\ \label{dotcross4}
        \begin{tikzpicture}[centerzero]
            \draw[vec] (-0.35,-0.35) -- (0.35,0.35);
            \draw[spin] (0.35,-0.35) -- (-0.35,0.35);
            \singdot{-0.17,0.17};
        \end{tikzpicture}
        -
        \begin{tikzpicture}[centerzero]
            \draw[vec] (-0.35,-0.35) -- (0.35,0.35);
            \draw[spin] (0.35,-0.35) -- (-0.35,0.35);
            \singdot{0.17,-0.17};
        \end{tikzpicture}
        =
        \begin{tikzpicture}[centerzero]
            \draw[vec] (-0.35,-0.35) -- (0.35,0.35);
            \draw[spin] (0.35,-0.35) -- (-0.35,0.35);
        \end{tikzpicture}
        - \kappa
        \begin{tikzpicture}[centerzero]
            \draw[spin] (-0.35,0.35) -- (0,0.15) -- (0,-0.15) -- (0.35,-0.35);
            \draw[vec] (0,0.15) -- (0.35,0.35);
            \draw[vec] (0,-0.15) -- (-0.35,-0.35);
        \end{tikzpicture}
        \ ,\qquad
        \begin{tikzpicture}[centerzero]
            \draw[spin] (-0.35,-0.35) -- (0.35,0.35);
            \draw[vec] (0.35,-0.35) -- (-0.35,0.35);
            \singdot{-0.17,0.17};
        \end{tikzpicture}
        -
        \begin{tikzpicture}[centerzero]
            \draw[spin] (-0.35,-0.35) -- (0.35,0.35);
            \draw[vec] (0.35,-0.35) -- (-0.35,0.35);
            \singdot{0.17,-0.17};
        \end{tikzpicture}
        =
        \begin{tikzpicture}[centerzero]
            \draw[spin] (-0.35,-0.35) -- (0.35,0.35);
            \draw[vec] (0.35,-0.35) -- (-0.35,0.35);
        \end{tikzpicture}
        - \kappa
        \begin{tikzpicture}[centerzero]
            \draw[spin] (0.35,0.35) -- (0,0.15) -- (0,-0.15) -- (-0.35,-0.35);
            \draw[vec] (0,0.15) -- (-0.35,0.35);
            \draw[vec] (0,-0.15) -- (0.35,-0.35);
        \end{tikzpicture}
        \ ,
        \\ \label{dotcup}
        \begin{tikzpicture}[anchorbase]
            \draw[spin] (-0.2,0.3) -- (-0.2,0.1) arc(180:360:0.2) -- (0.2,0.3);
            \singdot{-0.2,0.1};
        \end{tikzpicture}
        \ = -\
        \begin{tikzpicture}[anchorbase]
            \draw[spin] (-0.2,0.3) -- (-0.2,0.1) arc(180:360:0.2) -- (0.2,0.3);
            \singdot{0.2,0.1};
        \end{tikzpicture}
        \ ,\qquad
        \begin{tikzpicture}[anchorbase]
            \draw[vec] (-0.2,0.3) -- (-0.2,0.1) arc(180:360:0.2) -- (0.2,0.3);
            \singdot{-0.2,0.1};
        \end{tikzpicture}
        \ = -\
        \begin{tikzpicture}[anchorbase]
            \draw[vec] (-0.2,0.3) -- (-0.2,0.1) arc(180:360:0.2) -- (0.2,0.3);
            \singdot{0.2,0.1};
        \end{tikzpicture}
        \ ,
        \\ \label{dotvertex2}
        \begin{tikzpicture}[centerzero]
            \draw[spin] (0,0.4) -- (0,0) -- (-0.35,-0.35);
            \draw[vec] (0.35,-0.35) -- (0,0);
            \singdot{0,0.2};
        \end{tikzpicture}
        =
        \begin{tikzpicture}[centerzero]
            \draw[spin] (0,0.4) -- (0,0) -- (-0.35,-0.35);
            \draw[vec] (0.35,-0.35) -- (0,0);
            \singdot{-0.175,-0.175};
        \end{tikzpicture}
        +
        \begin{tikzpicture}[centerzero]
            \draw[spin] (0,0.4) -- (0,0) -- (-0.35,-0.35);
            \draw[vec] (0.35,-0.35) -- (0,0);
            \singdot{0.175,-0.175};
        \end{tikzpicture}
        \ .
    \end{gather}
\end{prop}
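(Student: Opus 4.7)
Each of the four asserted relations is derived from its counterpart among \cref{dotcross1}, \cref{dotcross2}, \cref{dotcap}, \cref{dotvertex} by a symmetry argument, using the strict pivotal symmetric monoidal structure of $\SB(d,D;\kappa)$.

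For \cref{dotcup}, I would introduce
\[
    M := \left( \dotstrand{spin} \otimes 1_\Sgo + 1_\Sgo \otimes \dotstrand{spin} \right) \circ \cupmor{spin} \colon \one \to \Sgo \otimes \Sgo
\]
and set $N := (\capmor{spin} \otimes 1_\Sgo) \circ (1_\Sgo \otimes M) \colon \Sgo \to \Sgo$. A direct computation using \cref{dotcap} together with the two forms of the snake identity (third relation in \cref{brauer}) shows that the two summands of $M$ contribute $-\dotstrand{spin}$ and $+\dotstrand{spin}$ to $N$, so $N = 0$. A further snake-identity calculation, together with the symmetry of $\capmor{spin}$ under swapping its legs (fifth relation in \cref{brauer}), gives the dual identity $M = (1_\Sgo \otimes N) \circ \cupmor{spin}$, from which $M = 0$ follows. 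The second equation of \cref{dotcup}, for $\Vgo$ strands, is proved identically.

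To prove \cref{dotcross3} and \cref{dotcross4}, I would conjugate \cref{dotcross1} and \cref{dotcross2} by a single crossing of the appropriate colors. Pre- and post-composing each relation by the corresponding crossing and using the involutivity from the second relation in \cref{brauer} converts ``dot on bottom-left'' into ``dot on top-left'' and ``dot on top-right'' into ``dot on bottom-right.'' The right-hand sides are invariant under this conjugation: the cup-cap projector is symmetric by the third relation in \cref{brauer}, the identity on two parallel strands is trivially symmetric, and the trivalent terms appearing in \cref{dotcross2} are symmetric under the appropriate conjugation by \cref{typhoon}.

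Finally, for \cref{dotvertex2}, I would precompose \cref{dotvertex} with the mixed crossing $\crossmor{spin}{vec}$. The first relation of \cref{lobster} gives
\[
    \mergemor{vec}{spin}{spin} \circ \crossmor{spin}{vec} = \kappa\, \mergemor{spin}{vec}{spin},
\]
so the left-hand side of \cref{dotvertex} postcomposed with this crossing produces $\kappa$ times the left-hand side of \cref{dotvertex2}. On the right-hand side, the naturality of the symmetric crossing allows the dots on the two bottom strands to be commuted past the crossing, swapping their positions; combined with the same factor $\kappa$, this yields $\kappa$ times the right-hand side of \cref{dotvertex2}. Cancelling $\kappa$ on both sides gives the identity.

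The main obstacle lies in \cref{dotcup}, where the snake-identity manipulation cannot be carried out by isotoping dots along strands (which is essentially what is being proved). Instead one must pass to the composite with a cap, derive the vanishing of that composite from \cref{dotcap}, and then recover the original relation via the pivotal structure; careful bookkeeping is required to ensure the correct signs. The proof of \cref{dotvertex2} also requires delicate tracking of the factor of $\kappa$ from \cref{lobster} to ensure it cancels consistently; the proofs of \cref{dotcross3} and \cref{dotcross4} are more routine.
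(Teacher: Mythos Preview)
Your overall strategy is sound and close to the paper's, but there is a genuine gap in the argument for \cref{dotvertex2}. You write that ``the naturality of the symmetric crossing allows the dots on the two bottom strands to be commuted past the crossing, swapping their positions.'' This is false in $\ASB(d,D;\kappa)$: the crossings are \emph{not} natural with respect to the dots---indeed, the failure of naturality is precisely the content of \cref{dotcross1,dotcross2,dotcross3,dotcross4}. Concretely, moving the dot on the $\Sgo$-strand past $\crossmor{spin}{vec}$ uses the first relation in \cref{dotcross2} and produces a correction term $\crossmor{spin}{vec} - \kappa T$; moving the dot on the $\Vgo$-strand uses the second relation in \cref{dotcross4} (which you have just proved) and produces $\crossmor{spin}{vec} - \kappa T'$ with the opposite sign. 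The point is that $T = T'$ is the \emph{same} trivalent diagram $\splitmor{spin}{vec}{spin} \circ \mergemor{spin}{vec}{spin}$, so when you add the two dotted terms the corrections cancel. The paper makes this explicit by citing both \cref{dotcross2} and \cref{dotcross4} at that step; your appeal to naturality skips over exactly the nontrivial content.

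There are also two smaller misattributions. Involutivity of the crossing is the \emph{first} relation in \cref{brauer}, not the second. And in your treatment of \cref{dotcross4}, the invariance of the trivalent term under conjugation by the mixed crossing is not a consequence of \cref{typhoon} (which slides a \emph{third} strand past a trivalent vertex) but of \cref{lobster}: one picks up a factor of $\kappa$ from the merge and another from the split, and $\kappa^2 = 1$. Your arguments for \cref{dotcup} and for the vec--vec part of \cref{dotcross3} are correct; the paper obtains \cref{dotcup} and \cref{dotcross3} more briefly by the rotation/reflection symmetries of the pivotal category, while your conjugation-by-crossing approach to \cref{dotcross3,dotcross4} agrees with what the paper actually does for \cref{dotcross4}.
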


\begin{proof}
    Relations \cref{dotcross3,dotcup} follow from rotating \cref{dotcross1,dotcap} using cups and caps.  The first relation in \cref{dotcross4} follows from the first relation in \cref{dotcross2} after composing on the top and bottom with $\crossmor{vec}{spin}$.  The second relation in \cref{dotcross4} follows similarly from the second relation in \cref{dotcross2}.

    To prove \cref{dotvertex2}, we compute
    \[
        \kappa
        \begin{tikzpicture}[centerzero]
            \draw[spin] (0,0.4) -- (0,0) -- (-0.35,-0.35);
            \draw[vec] (0.35,-0.35) -- (0,0);
            \singdot{0,0.2};
        \end{tikzpicture}
        \overset{\cref{lobster}}{=}
        \begin{tikzpicture}[anchorbase]
            \draw[vec] (0.3,-0.7) to [out=135,in=down] (-0.2,-0.25) to[out=up,in=-135] (0,0);
            \draw[spin] (-0.3,-0.7) to[out=45,in=down] (0.2,-0.25) to[out=up,in=-45] (0,0) -- (0,0.4);
            \singdot{0,0.2};
        \end{tikzpicture}
        \overset{\cref{dotvertex}}{=}
        \begin{tikzpicture}[anchorbase]
            \draw[vec] (0.3,-0.7) to [out=135,in=down] (-0.2,-0.25) to[out=up,in=-135] (0,0);
            \draw[spin] (-0.3,-0.7) to[out=45,in=down] (0.2,-0.25) to[out=up,in=-45] (0,0) -- (0,0.4);
            \singdot{-0.2,-0.25};
        \end{tikzpicture}
        +
        \begin{tikzpicture}[anchorbase]
            \draw[vec] (0.3,-0.7) to [out=135,in=down] (-0.2,-0.25) to[out=up,in=-135] (0,0);
            \draw[spin] (-0.3,-0.7) to[out=45,in=down] (0.2,-0.25) to[out=up,in=-45] (0,0) -- (0,0.4);
            \singdot{0.2,-0.25};
        \end{tikzpicture}
        \overset{\cref{dotcross2}}{\underset{\cref{dotcross4}}{=}}
        \begin{tikzpicture}[anchorbase]
            \draw[vec] (0.3,-0.7) to [out=135,in=down] (-0.2,-0.25) to[out=up,in=-135] (0,0);
            \draw[spin] (-0.3,-0.7) to[out=45,in=down] (0.2,-0.25) to[out=up,in=-45] (0,0) -- (0,0.4);
            \singdot{0.15,-0.6};
        \end{tikzpicture}
        +
        \begin{tikzpicture}[anchorbase]
            \draw[vec] (0.3,-0.7) to [out=135,in=down] (-0.2,-0.25) to[out=up,in=-135] (0,0);
            \draw[spin] (-0.3,-0.7) to[out=45,in=down] (0.2,-0.25) to[out=up,in=-45] (0,0) -- (0,0.4);
            \singdot{-0.15,-0.6};
        \end{tikzpicture}
        \overset{\cref{lobster}}{=}
        \kappa \left(
        \begin{tikzpicture}[centerzero]
            \draw[spin] (0,0.4) -- (0,0) -- (-0.35,-0.35);
            \draw[vec] (0.35,-0.35) -- (0,0);
            \singdot{0.175,-0.175};
        \end{tikzpicture}
        +
        \begin{tikzpicture}[centerzero]
            \draw[spin] (0,0.4) -- (0,0) -- (-0.35,-0.35);
            \draw[vec] (0.35,-0.35) -- (0,0);
            \singdot{-0.175,-0.175};
        \end{tikzpicture}
        \right)
        . \qedhere
    \]
\end{proof}

The symmetries \cref{opflip,revflip} can be extended to $\ASB(d,D;\kappa)$.  Precisely, we have an isomorphism of monoidal categories
\begin{equation}
    \ASB(d,D;\kappa) \to \ASB(d,D;\kappa)^\op
\end{equation}
that is the identity on objects and reflects morphisms in the horizontal axis.  We also have an isomorphism of monoidal categories
\begin{equation}
    \ASB(d,D;\kappa) \to \ASB(d,D;\kappa)^\rev
\end{equation}
that is the identity on objects and, on morphisms, reflects diagrams in the vertical axis and multiplies dots by $-1$.

Our goal in the remainder of this section is to define an affine version of the incarnation functor of \cref{sec:incarnation}.  Since our construction will be based on the Lie algebra $\fso(V)$, we assume throughout that $N \ge 2$ and we work over the ground field $\kk = \C$.  However, see \cref{lowaffine} for the cases $N=0$ and $N=1$.

Let $\bB_{\fso(V)}$ be a basis of $\fso(V)$ and let $\{X^\vee : X \in \bB_{\fso(V)}\}$ denote the dual basis with respect to the symmetric bilinear form
\[
    \langle X,Y \rangle = \tfrac{1}{2} \tr(XY),\qquad X,Y \in \fso(V),
\]
where $\tr$ denotes the usual trace on the space of linear operators on $V$.  We have
\begin{align*}
    \langle M_{e_i,e_j}, M_{e_k,e_l} \rangle
    &= \frac12\sum_{m=1}^N \langle M_{e_i,e_j} M_{e_k,e_l} e_m, e_m \rangle
    \\
    &\overset{\mathclap{\cref{thai}}}{=}\ \frac12\sum_{m=1}^N \langle M_{e_i,e_j} ( \delta_{lm} e_k - \delta_{km} e_l ), e_m \rangle
    \\
    &\overset{\mathclap{\cref{thai}}}{=}\ \frac12\sum_{m=1}^N \langle \delta_{jk} \delta_{lm} e_i - \delta_{ik} \delta_{lm} e_j - \delta_{jl} \delta_{km} e_i + \delta_{il} \delta_{km} e_j, e_m \rangle
    \\
    &= \delta_{jk} \delta_{il} - \delta_{ik} \delta_{jl}.
\end{align*}
Thus, if we take the basis
\[
    \bB_{\fso(V)} = \{M_{e_i,e_j} : 1 \le i < j \le N\},
\]
then
\[
    M_{e_i,e_j}^\vee = M_{e_j,e_i} = - M_{e_i,e_j},\qquad
    1 \le i < j \le N.
\]

Define
\begin{gather} \label{Omega}
    \Omega
    = \sum_{X \in \bB_{\fso(V)}} X \otimes X^\vee
    = \sum_{1 \le i < j \le N} M_{e_i,e_j} \otimes M_{e_j,e_i}
    \in \fso(V) \otimes \fso(V),
    \\
    C
    = \sum_{X \in \bB_{\fso(V)}} X X^\vee
    = \sum_{1 \le i < j \le N} M_{e_i,e_j} M_{e_j,e_i} \in U(\fso(V)).
\end{gather}
The elements $\Omega$ and $C$ are both independent of the chosen basis $\bB_{\fso(V)}$.  Note that $C$ is the Casimir element and we have
\begin{equation} \label{koala}
    \Omega = \tfrac{1}{2}(\Delta(C) - C \otimes 1 - 1 \otimes C),
\end{equation}
where $\Delta$ is the standard coproduct on $\fso(V)$.
\details{
    We have
    \begin{align*}
        \Delta(C)
        &= \sum_{X \in \bB_{\fso(V)}} \Delta(X) \Delta(X^\vee)
        \\
        &= \sum_{X \in \bB_{\fso(V)}} (X \otimes 1 + 1 \otimes X)(X^\vee \otimes 1 + 1 \otimes X^\vee)
        \\
        &= \sum_{X \in \bB_{\fso(V)}} (X X^\vee \otimes 1 + X \otimes X^\vee + X^\vee \otimes X + 1 \otimes X X^\vee)
        \\
        &= C \otimes 1 + 1 \otimes C + 2 \Omega.
    \end{align*}
}
Define
\begin{equation} \label{flatwhite}
    \dotop := 2 \Omega + C \otimes 1
    = \Delta(C) - 1 \otimes C.
\end{equation}

The nondegenerate form $\langle\cdot,\cdot\rangle$ remains nondegenerate when restricted to $\mathfrak{h}$, hence induces a pairing $\langle\cdot,\cdot\rangle \colon \mathfrak{h}^\ast\times \h^\ast\to \C$, which we denote by the same symbol.

\begin{lem} \label{Cact}
    The element $C$ acts on the simple $\Spin(V)$-module $L(\lambda)$ of highest weight $\lambda$ as $\langle \lambda, \lambda + 2\rho\rangle$, where
    \begin{equation} \label{rho}
        \rho = \frac{1}{2} \sum_{i=1}^n \left( N - 2i \right) \epsilon_i.
    \end{equation}
\end{lem}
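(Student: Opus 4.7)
The plan is to carry out the standard Casimir computation for $C$ acting on a highest weight vector, and then verify that the element $\rho$ defined in \cref{rho} coincides with the half-sum of positive roots of $\fso(V)$ in both type $D$ and type $B$. Because $C$ lies in the centre of $U(\fso(V))$ (being independent of the choice of basis $\bB_{\fso(V)}$), Schur's lemma implies it acts as a scalar on the simple module $L(\la)$, so it suffices to evaluate this scalar on a highest-weight vector $v_\la$.

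First, I would rewrite $C$ using a weight basis of $\fso(V)$ adapted to the root space decomposition $\fso(V) = \fh \oplus \bigoplus_{\alpha \in \Phi} \fg_\alpha$. Pick a basis $\{H_i\}$ of $\fh$ and, for each positive root $\alpha$, root vectors $X_\alpha \in \fg_\alpha$ and $X_{-\alpha} \in \fg_{-\alpha}$ normalized so that $\langle X_\alpha, X_{-\alpha}\rangle = 1$. Using the basis-independence of $C$ and the fact that $\langle \cdot, \cdot\rangle$ pairs $\fg_\alpha$ with $\fg_{-\alpha}$, we have
\[
    C = \sum_i H_i H_i^\vee + \sum_{\alpha > 0} \left( X_\alpha X_{-\alpha} + X_{-\alpha} X_\alpha \right).
\]
Applying this to $v_\la$, the term $X_{-\alpha} X_\alpha v_\la$ vanishes since $\alpha > 0$, and commuting gives $X_\alpha X_{-\alpha} v_\la = [X_\alpha, X_{-\alpha}]v_\la$. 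Since invariance of the form yields $\langle [X_\alpha, X_{-\alpha}], H\rangle = \langle X_\alpha, [X_{-\alpha}, H]\rangle = \alpha(H)$ for all $H \in \fh$, the bracket $[X_\alpha, X_{-\alpha}]$ is exactly the element of $\fh$ dual to $\alpha$ via $\langle \cdot, \cdot\rangle$, so it acts on $v_\la$ as $\langle \la, \alpha\rangle$. The Cartan sum likewise contributes $\sum_i \la(H_i)\la(H_i^\vee) = \langle \la, \la\rangle$. Combining,
\[
    C v_\la = \Big( \langle \la, \la\rangle + \sum_{\alpha > 0} \langle \la, \alpha\rangle \Big) v_\la = \langle \la, \la + 2\rho'\rangle v_\la,
\]
where $\rho' = \frac{1}{2}\sum_{\alpha > 0} \alpha$ is the half-sum of positive roots.

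It then remains to check that $\rho'$ agrees with the $\rho$ from \cref{rho}. In type $D_n$ ($N=2n$), the positive roots $\epsilon_i \pm \epsilon_j$ for $1\le i < j \le n$ give $\rho' = \sum_{i<j}\epsilon_i = \sum_{i=1}^n (n-i)\epsilon_i = \frac{1}{2}\sum_i (N-2i)\epsilon_i$. In type $B_n$ ($N=2n+1$), including the additional positive roots $\epsilon_k$ contributes $\frac{1}{2}\sum_k \epsilon_k$, producing $\sum_i \bigl(n-i+\tfrac{1}{2}\bigr)\epsilon_i = \frac{1}{2}\sum_i (N-2i)\epsilon_i$. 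Both match \cref{rho}, completing the proof.

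There is no serious obstacle here; the only place that requires care is keeping track of the identification of $\fh$ with $\fh^*$ via $\langle \cdot, \cdot\rangle$ (so that the notation $\langle \la, \alpha\rangle$ on $\fh^*$ matches the evaluation of the dual element of $\alpha$ at $\la$), and confirming the $\rho$-formula in the type $B$ case, where the short roots $\epsilon_k$ produce the half-integer shift needed to unify the two expressions.
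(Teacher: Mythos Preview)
Your proposal is correct and carries out in full the standard Casimir computation; the paper's own proof simply declares the result well known and cites \cite[Prop.~11.36]{Car05}. Your argument is precisely the one behind that reference, with the added verification that the $\rho$ of \cref{rho} agrees with the half-sum of positive roots in both types $B$ and $D$.
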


\begin{proof}
    This is well known.  See, for example, \cite[Prop.~11.36]{Car05}.
\end{proof}

\begin{cor} \label{bass}
    The action of $C$ commutes with the action of $\Pin(V)$.
\end{cor}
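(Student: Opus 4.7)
The plan is to deduce this from the fact that $C$ is central in the universal enveloping algebra $U(\fso(V))$, together with a short check that the extra component of $\Pin(V)$ also commutes with $C$. Since $C$ is the standard quadratic Casimir built from a nondegenerate invariant symmetric bilinear form, it is well-known to lie in the centre of $U(\fso(V))$. Therefore $C$ commutes with the action of $\fso(V)$ on any module, and hence with the action of the connected group $\Spin(V)$. This already handles the case $N$ odd, since then $\Group(V) = \Spin(V)$, and also handles the identity component when $N$ is even.

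When $N$ is even, it remains to verify that $C$ commutes with the action of some (equivalently, every) element $g \in \Pin(V) \setminus \Spin(V)$. The key observation is that the conjugation action of $\Pin(V)$ on $\fso(V) \cong \Cl^2$ factors through the homomorphism $p \colon \Pin(V) \to \rO(V)$ of \cref{cha}: indeed, for $g \in \Pin(V)$ of odd degree and $u,v \in V$, one has $g(uv-vu)g^{-1} = (gug^{-1})(gvg^{-1}) - (gvg^{-1})(gug^{-1}) = p(g)(u) p(g)(v) - p(g)(v) p(g)(u)$ (up to the sign which cancels), corresponding under $\gamma$ to the standard $\rO(V)$-action on $\fso(V)$.

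Next, I would note that the form $\langle X,Y\rangle = \tfrac{1}{2}\tr(XY)$ on $\fso(V)$ used to define $C$ is manifestly $\rO(V)$-invariant, since conjugation preserves traces of products. Consequently the element $C = \sum_{X \in \bB_{\fso(V)}} XX^\vee$ is $\rO(V)$-invariant as an element of $U(\fso(V))$, and therefore $\Pin(V)$-invariant under the adjoint action: $gCg^{-1} = C$ in $U(\fso(V))$ for all $g \in \Pin(V)$.

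Finally, the standard argument concludes: on any $\Pin(V)$-module $M$, the action of $g \in \Pin(V)$ conjugates the action of any $X \in \fso(V)$ to $\Ad(g)(X)$, and this extends to $U(\fso(V))$. Since $\Ad(g)(C) = C$, the operators of $C$ and $g$ on $M$ commute. No step is particularly subtle; the only point requiring a brief check is that the conjugation of $\Cl^2$ by an odd element of $\Pin(V)$ matches the $\rO(V)$-action on $\fso(V)$, which is immediate from \cref{cha,Canberra}.
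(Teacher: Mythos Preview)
Your proof is correct, but it takes a different route from the paper's. The paper argues via the preceding \cref{Cact}: since $C$ acts on $L(\lambda)$ by the scalar $\langle \lambda, \lambda+2\rho\rangle$, and one checks directly that $\langle \lambda, \lambda+2\rho\rangle = \langle \tilde{\lambda}, \tilde{\lambda}+2\rho\rangle$ (the $\epsilon_n$-component of $\rho$ vanishes when $N=2n$), the element $C$ acts by the same scalar on both summands of $\Res\circ\Ind(L(\lambda))$, hence by that scalar on $\Ind(L(\lambda))$; semisimplicity of $\Pin(V)\md$ then finishes the argument.

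Your approach instead shows directly that $C$ is fixed by the adjoint action of $\Pin(V)$: the conjugation action of $\Pin(V)$ on $\Cl^2 \cong \fso(V)$ factors through $\rO(V)$, the trace form defining $C$ is $\rO(V)$-invariant, and hence $\Ad(g)(C) = C$ for all $g \in \Pin(V)$; the compatibility $gXg^{-1} = \Ad(g)(X)$ as operators on any $\Pin(V)$-module then gives the conclusion. This is more conceptual and avoids both the eigenvalue computation and the appeal to semisimplicity; it would work equally well for infinite-dimensional modules. The paper's argument, on the other hand, makes explicit use of \cref{Cact}, which is independently needed later (e.g.\ in \cref{ibis}), so little is saved in the overall development.
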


\begin{proof}
    Note that, if $N=2n$, and $\tilde{\lambda}$ is defined as in \cref{reflect}, then $\langle \lambda, \lambda + 2 \rho \rangle = \langle \tilde{\lambda}, \tilde{\lambda} + 2 \rho \rangle$.  Thus, $C$ acts on the simple $\Pin(V)$-module $\Ind(L(\lambda))$ as $\langle \lambda, \lambda + 2 \rho \rangle$.  Then the corollary follows from the fact that $\Pin(V)\md$ is a semisimple category.
\end{proof}

\details{
    One can also prove \cref{bass} by noting that $C$ is invariant under any automorphism of $\fg$.  In particular, it is invariant under the adjoint action of $\Pin(V)$.
}

\begin{lem}
    We have
    \begin{equation} \label{stand}
        \beta^2 (x \otimes y) = \left( N - 8 \Omega \right) (x \otimes y)
        \qquad \text{for all } x,y \in S.
    \end{equation}
\end{lem}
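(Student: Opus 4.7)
The strategy is a direct computation, expanding $\beta^2$ and translating Clifford products into the $\fso(V)$-action via the isomorphism $\gamma$ of \cref{Canberra}. Because $S$ is a $\Cl$-module and $\fso(V)$ acts on $S$ through the embedding $\gamma^{-1}\colon \fso(V)\xrightarrow{\cong}\Cl^2\subseteq \Cl$, both sides of \cref{stand} can be realized as elements of $\Cl\otimes\Cl$ acting on $S\otimes S$, and it suffices to verify the identity at that level.

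First, I would expand
\[
    \beta^2
    = \sum_{i,j=1}^N e_ie_j\otimes e_ie_j
    = \sum_{i=1}^N e_i^2\otimes e_i^2 + \sum_{i\ne j} e_ie_j\otimes e_ie_j.
\]
By \cref{ecomm} we have $e_i^2=1$, so the diagonal part contributes $N\cdot(1\otimes 1)$. For the off-diagonal part, the anticommutation $e_je_i=-e_ie_j$ for $i\ne j$ gives $e_je_i\otimes e_je_i=e_ie_j\otimes e_ie_j$, so
\[
    \sum_{i\ne j} e_ie_j\otimes e_ie_j
    = 2\sum_{i<j} e_ie_j\otimes e_ie_j.
\]

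Next, I would rewrite the off-diagonal terms in Lie-algebraic form. For $i<j$, \cref{fermion}-style anticommutation gives $e_ie_j=\tfrac12(e_ie_j-e_je_i)$, which by \cref{Canberra} equals $\tfrac12\gamma^{-1}(4M_{e_i,e_j})=2\gamma^{-1}(M_{e_i,e_j})$. Thus, as operators on $S$, we have $e_ie_j=2M_{e_i,e_j}$, and so
\[
    2\sum_{i<j}e_ie_j\otimes e_ie_j
    = 8\sum_{i<j} M_{e_i,e_j}\otimes M_{e_i,e_j}
\]
as operators on $S\otimes S$. Since $M_{e_j,e_i}=-M_{e_i,e_j}$ (immediate from \cref{thai}), the definition \cref{Omega} gives
\[
    \Omega = \sum_{i<j} M_{e_i,e_j}\otimes M_{e_j,e_i} = -\sum_{i<j} M_{e_i,e_j}\otimes M_{e_i,e_j},
\]
so $8\sum_{i<j}M_{e_i,e_j}\otimes M_{e_i,e_j}=-8\Omega$. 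Combining the diagonal and off-diagonal contributions yields $\beta^2(x\otimes y)=(N-8\Omega)(x\otimes y)$, as desired. There is no real obstacle here; the only thing requiring care is the bookkeeping of the factor of $2$ arising from $e_ie_j=2\gamma^{-1}(M_{e_i,e_j})$ and the sign coming from $M_{e_j,e_i}=-M_{e_i,e_j}$.
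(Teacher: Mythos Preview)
Your proof is correct and follows essentially the same direct computation as the paper: expand $\beta^2$, separate the diagonal and off-diagonal terms, and identify the off-diagonal part with $-8\Omega$ via $e_ie_j = 2M_{e_i,e_j}$ as operators on $S$. The only cosmetic difference is that the paper keeps the off-diagonal sum in the form $e_ie_j\otimes e_je_i$ rather than $e_ie_j\otimes e_ie_j$, but the bookkeeping is the same.
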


\begin{proof}
    Throughout this proof, we view all elements of $\fso(V) \otimes \fso(V)$ as operators on $S \otimes S$.  Then we have, via \cref{Canberra}, $M_{e_i,e_j} = \frac{1}{2} e_i e_j$ for $i\neq j$.  Thus,
    \[
        \Omega = \frac{1}{4} \sum_{1 \le i<j \le N}e_i e_j \otimes e_j e_i.
    \]
    On the other hand,
    \[
        \beta^2 = \sum_{i,j=1}^N e_i e_j \otimes e_i e_j
        \overset{\cref{ecomm}}{=} N - 2 \sum_{1 \le i<j \le N} e_i e_j \otimes e_j e_i
        = N - 8 \Omega.
        \qedhere
    \]
\end{proof}

\begin{lem} \label{ibis}
    The element $C$ acts as
    \begin{enumerate}
        \item $k(N-k)$ on $L(\epsilon_1 + \dotsb + \epsilon_k)$, $0 \le k \le n$;
        \item $\frac{N(N-1)}{8}$ on the spin representation $S$;
        \item $2N$ on $L(2 \epsilon_1)$;
        \item $\frac{N^2}{4}$ on $L(\epsilon_1 + \dotsb + \epsilon_{n-1} - \epsilon_n)$ when $N=2n \ge 4$ (i.e.\ type $D_n$).
        \item $\frac{N(N+7)}{8}$ on $L \left( \frac{3}{2} \epsilon_1 + \frac{1}{2} \epsilon_2 + \dotsb + \frac{1}{2} \epsilon_n \right)$, $n \ge 2$.
        \item $\frac{N(N+7)}{8}$ on $L \left( \frac{3}{2} \epsilon_1 + \frac{1}{2} \epsilon_2 + \dotsb + \frac{1}{2} \epsilon_{n-1} - \frac{1}{2} \epsilon_n \right)$ when $N=2n \ge 4$ (i.e.\ type $D_n$).
    \end{enumerate}
\end{lem}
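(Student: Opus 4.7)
My plan is to apply \cref{Cact} directly. By that lemma, $C$ acts on $L(\lambda)$ by the scalar $\langle\lambda,\lambda+2\rho\rangle$, where $\rho$ is given by \cref{rho}. So the proof reduces to six elementary inner product computations once we identify the form $\langle\cdot,\cdot\rangle$ on $\mathfrak{h}^\ast$.

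The first step is to pin down the form. Using the basis of $\mathfrak{h}$ consisting of the diagonal matrices $A_{ii}$ from \cref{greenD} (or \cref{greenB}), one checks directly from $\langle X,Y\rangle=\frac{1}{2}\tr(XY)$ that $\langle A_{ii},A_{jj}\rangle = \delta_{ij}$. Since $\epsilon_i(A_{jj})=\delta_{ij}$, this shows that under the isomorphism $\mathfrak{h}\cong \mathfrak{h}^\ast$ induced by $\langle\cdot,\cdot\rangle$, the element $A_{ii}$ corresponds to $\epsilon_i$, and hence
\[
    \langle \epsilon_i,\epsilon_j\rangle = \delta_{ij}
\]
in both type $B$ and type $D$. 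In particular, $2\rho = \sum_{i=1}^n (N-2i)\epsilon_i$ pairs with $\epsilon_j$ to give $N-2j$.

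The second step is to compute $\langle \lambda,\lambda+2\rho\rangle$ case by case. For $\lambda=\epsilon_1+\dotsb+\epsilon_k$ one obtains $k + \sum_{i=1}^k(N-2i) = k(N-k)$, giving (a). For the spin weight $\lambda = \frac{1}{2}(\epsilon_1+\dotsb+\epsilon_n)$, one gets $\frac{n}{4} + \frac{1}{2}\sum_{i=1}^n(N-2i) = \frac{n(N-n)}{2} - \frac{n}{4}$; substituting $N=2n+1$ or $N=2n$ both yield $\frac{N(N-1)}{8}$, giving (b). For $\lambda=2\epsilon_1$ the computation is $4 + 2(N-2) = 2N$, giving (c). Case (d) follows from a very similar computation to (b), using that only the last coordinate changes sign, so $\langle\lambda,\lambda\rangle$ is unchanged while $\langle\lambda,2\rho\rangle$ differs by $-2(N-2n)=0$ when $N=2n$; this yields $\frac{N^2}{4}$. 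Cases (e) and (f) are analogous to (b) with an added $\epsilon_1$ coordinate bumped from $\tfrac12$ to $\tfrac32$; the change in $\langle\lambda,\lambda\rangle$ is $2$ and in $\langle\lambda,2\rho\rangle$ is $N-2$, so both evaluate to the spin scalar $\tfrac{N(N-1)}{8}$ plus $N$, which equals $\tfrac{N(N+7)}{8}$.

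There is no real obstacle here; everything is bookkeeping after \cref{Cact}. The only thing worth being careful about is the parity/type split in cases (b), (e), (f), where one must verify that the formula obtained in terms of $n$ coincides with the claimed expression in $N$ for both $N=2n$ and $N=2n+1$. The remaining cases (d) and (f) are specific to type $D_n$, which is exactly the setting in which $\widetilde{\lambda}\neq\lambda$ (cf.\ \cref{reflect}); this is consistent with the observation in the proof of \cref{bass} that $\langle \lambda,\lambda+2\rho\rangle = \langle \widetilde\lambda,\widetilde\lambda+2\rho\rangle$, so the listed pairs in (b)/(d) and (e)/(f) must indeed have the same eigenvalue.
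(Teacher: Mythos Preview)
Your proposal is correct and follows essentially the same approach as the paper: both invoke \cref{Cact}, note that $\epsilon_1,\dotsc,\epsilon_n$ is orthonormal for the induced form on $\mathfrak{h}^*$, and then carry out the six inner-product computations. Your use of difference arguments for cases (d), (e), (f) (comparing to the already-computed values in (a) and (b) rather than recomputing from scratch) is a minor streamlining, but the underlying method is identical.
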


\begin{proof}
    These are all direct computations using \cref{Cact}.  First note that $\epsilon_1,\dotsc,\epsilon_n$ is an orthonormal basis of $\fh^*$.  (It is dual to the orthonormal basis $A_{11},\dotsc,A_{nn}$ of $\fh$.)
    \begin{enumerate}[wide]
        \item We have
            \[
                \left\langle \sum_{i=1}^k \epsilon_i, \sum_{i=1}^k \epsilon_i + 2 \rho \right\rangle
                = k + \sum_{i=1}^k (N-2i)
                = k+kN-k(k+1)
                = k(N-k).
            \]

        \item In type $D_n$, so that $N=2n$, we have $S^\pm = L \big(\frac{1}{2}( \epsilon_1 + \dotsb + \epsilon_{n-1} \pm \epsilon_n) \big)$.  Then we compute
            \[
                \left\langle \frac{1}{2} \sum_{i=1}^{n-1} \epsilon_i \pm \frac{1}{2} \epsilon_n, \frac{1}{2} \sum_{i=1}^{n-1} \epsilon_i \pm \frac{1}{2} \epsilon_n + 2 \rho \right\rangle
                = \frac{n}{4} + \frac{1}{2} \sum_{i=1}^{n-1} (N-2i)
                = \frac{N(N-1)}{8}.
            \]
            In type $B_n$, so that $N=2n+1$, we have $S = L \big( \frac{1}{2}(\epsilon_1 + \dotsb + \epsilon_n) \big)$, and we compute
            \[
                \left\langle \frac{1}{2} \sum_{i=1}^n \epsilon_i, \frac{1}{2} \sum_{i=1}^n \epsilon_i + 2 \rho \right\rangle
                = \frac{n}{4} + \frac{1}{2} \sum_{i=1}^n (N-2i)
                = \frac{n}{4} + \frac{nN-n(n+1)}{2}
                = \frac{N(N-1)}{8}.
            \]

        \item We compute
            \[
                \langle 2 \epsilon_1, 2 \epsilon_1 + 2\rho \rangle
                = 4 + 2(N-2) = 2N.
            \]

        \item We compute
            \[
                \left\langle \sum_{i=1}^{n-1} \epsilon_i - \epsilon_n, \sum_{i=1}^{n-1} \epsilon_i - \epsilon_n + 2 \rho \right\rangle
                = n + \sum_{i=1}^{n-1} (N-2i)
                = n + (n-1)N - n(n-1)
                = \frac{N^2}{4}.
            \]

        \item We compute
            \begin{multline*}
                \left\langle \frac{3}{2} \epsilon_1 + \frac{1}{2} \sum_{i=2}^n \epsilon_i, \frac{3}{2} + \frac{1}{2} \sum_{i=2}^n \epsilon_i + 2 \rho \right\rangle
                = \frac{n+8}{4} + \frac{3}{2}(N-2) + \frac{1}{2} \sum_{i=2}^n (N-2i)
                \\
                = \frac{2Nn-2n^2+4N-n}{4}.
            \end{multline*}
            When, $N=2n+1$, we have
            \[
                2Nn-2n^2+4N-n
                = (2n+1)(n+4)
                = \frac{N(N+7)}{2}.
            \]
            On the other hand, when $N=2n$, we have
            \[
                2Nn-2n^2+4N-n
                = n(2n+7)
                = \frac{N(N+7)}{2}.
            \]

        \item This computation is almost identical to the previous one, using the fact that the $\epsilon_n$ component of $\rho$ is zero when $N=2n$. \qedhere
    \end{enumerate}
\end{proof}

\details{
    As a consistency check, we compute the action of $C$ on the spin representations in another way.  For $v \in V$, we have
    \[
        M_{e_i,e_j}v
        \overset{\cref{Canberra}}{=} \frac{1}{4} (e_i e_j - e_j e_i) v
        \overset{\cref{ecomm}}{=} \frac{1}{2} e_i e_j v,\qquad
        i \ne j.
    \]
    Thus,
    \[
        Cv
        = \frac{1}{4} \sum_{1 \le i <j \le N} e_i e_j e_j e_i v
        \overset{\cref{ecomm}}{=} \frac{N(N-1)}{8} v,\qquad
        v \in V.
    \]
}

The following lemma will play a key role in our proof that the affine incarnation functor satisfies the dot-crossing relations \cref{dotcross1,dotcross2}.  It will describe the image of
\[
    \begin{tikzpicture}[centerzero]
        \draw[any] (-0.2,-0.4) -- (-0.2,0.4);
        \draw[any] (0.2,-0.4) -- (0.2,0.4);
        \singdot{-0.2,0};
    \end{tikzpicture}
    \ -\
    \begin{tikzpicture}[centerzero]
        \draw[any] (0.2,-0.4) to[out=135,in=down] (-0.15,0) to[out=up,in=225] (0.2,0.4);
        \draw[any] (-0.2,-0.4) to[out=45,in=down] (0.15,0) to[out=up,in=-45] (-0.2,0.4);
        \singdot{0.15,0};
    \end{tikzpicture}
\]
under our affine incarnation functor.

\begin{lem} \label{enmore}
    For any $M_1,M_2,M_3 \in \fso(V)\md$, we have
    \begin{equation}
        (1 \otimes \Delta)(\dotop) - (\flip \otimes 1) \circ (1 \otimes \dotop) \circ (\flip \otimes 1)
        = 2 \Omega \otimes 1
        \quad \text{as operators on } M_1 \otimes M_2 \otimes M_3,
    \end{equation}
    where $\Delta$ is the usual coproduct of $\fso(V)$ given by $\Delta(X) = X \otimes 1 + 1 \otimes X$.
\end{lem}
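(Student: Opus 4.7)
The plan is to unpack both sides using the expression $\dotop = 2\Omega + C \otimes 1$ from \cref{flatwhite} and then match them by a direct term-by-term comparison in $\End_\C(M_1 \otimes M_2 \otimes M_3)$. The key observation is that conjugating $1 \otimes \dotop$ by $\flip \otimes 1$ produces the operator in which $\dotop$ acts on the first and third tensor factors (rather than the second and third). Once this is recognized, the terms involving $C$ and the ``13'' part of $\Omega$ will cancel, leaving the expected $2 \Omega \otimes 1$ on the first and second factors.

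First I will compute the left-hand side. Writing $\Omega = \sum_{X \in \bB_{\fso(V)}} X \otimes X^\vee$ and applying $1 \otimes \Delta$ to $\dotop = 2\Omega + C \otimes 1$, I find
\[
    (1 \otimes \Delta)(\dotop) = 2 \sum_X X \otimes (X^\vee \otimes 1 + 1 \otimes X^\vee) + C \otimes 1 \otimes 1 = 2 \, \Omega \otimes 1 + 2\, \Omega_{13} + C \otimes 1 \otimes 1,
\]
where $\Omega_{13} := \sum_X X \otimes 1 \otimes X^\vee \in U(\fso(V))^{\otimes 3}$.

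Next I will compute the conjugated operator. Since $\flip \otimes 1$ swaps the $M_1$ and $M_2$ tensor factors, a direct computation on an elementary tensor $m_1 \otimes m_2 \otimes m_3$ shows that for any $Y \otimes Z \in U(\fso(V)) \otimes U(\fso(V))$ one has
\[
    (\flip \otimes 1) \circ (1 \otimes Y \otimes Z) \circ (\flip \otimes 1) = Y \otimes 1 \otimes Z.
\]
Applying this to each summand of $1 \otimes \dotop = 2 \sum_X 1 \otimes X \otimes X^\vee + 1 \otimes C \otimes 1$ gives
\[
    (\flip \otimes 1) \circ (1 \otimes \dotop) \circ (\flip \otimes 1) = 2 \, \Omega_{13} + C \otimes 1 \otimes 1.
\]

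Finally I subtract: the $\Omega_{13}$ terms and the $C \otimes 1 \otimes 1$ terms cancel, leaving $2 \, \Omega \otimes 1$, as required. The whole proof is elementary once one has set up notation carefully; the only step that might trip one up is the conjugation-by-flip identity above, but it is immediate from the definition of $\flip$. There is no real obstacle here, the proof reduces to bookkeeping of tensor positions.
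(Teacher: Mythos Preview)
Your proof is correct and follows essentially the same approach as the paper: both compute the two operators explicitly (the paper on elementary tensors, you in the slightly more compact $\Omega_{13}$ notation) and observe that the $13$-coupling and the $C \otimes 1 \otimes 1$ terms cancel upon subtraction.
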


\begin{proof}
    For $m_1 \in M_1$, $m_2 \in M_2$, $m_3 \in M_3$, we have
    \begin{multline*}
        \big( (1 \otimes \Delta)(\dotop) \big) (m_1 \otimes m_2 \otimes m_3)
        \\
        = \sum_{X \in \bB_{\fso(V)}} (2 X m_1 \otimes X^\vee m_2 \otimes m_3 + 2X m_1 \otimes m_2 \otimes X^\vee m_3 + X X^\vee m_1 \otimes m_2 \otimes m_3)
    \end{multline*}
    and
    \begin{multline*}
        (\flip \otimes 1) \circ (1 \otimes \dotop) \circ (\flip \otimes 1) (m_1 \otimes m_2 \otimes m_3)
        \\
        = \sum_{X \in \bB_{\fso(V)}} (2 X m_1 \otimes m_2 \otimes X^\vee m_3 + XX^\vee m_1 \otimes m_2 \otimes m_3).
    \end{multline*}
    Subtracting these two sums proves the lemma.
\end{proof}

For a $\kk$-linear category $\cC$, let $\cEnd_\kk(\cC)$ denote the strict monoidal category of $\kk$-linear endofunctors and natural transformations.  An \emph{action} of a monoidal category $\cD$ on a category $\cC$ is a monoidal functor $\cD \to \cEnd_\kk(\cC)$.  It follows immediately from \cref{incarnation} that $\SB(V)$ acts on $\Group(V)\md$ via
\[
    X \mapsto \bF(X) \otimes -,\quad
    f \mapsto \bF(f) \otimes -,
\]
for objects $X$ and morphisms $f$ in $\SB(V)$.  The following result extends this action to
\begin{equation}
    \ASB(V) := \ASB(N, \sigma_N 2^n; \kappa_N).
\end{equation}
Let $\overline{\ASB}(V)$ denote the quotient of $\ASB(V)$ by \cref{extra}.

\begin{theo} \label{affinc}
    There is a unique monoidal functor $\bAF \colon \ASB(V) \to \cEnd_\C(\Group(V)\md)$ given on objects by $\Sgo \mapsto S \otimes -$, $\Vgo \mapsto V \otimes -$, and on morphisms by
    \begin{gather}
        f \mapsto \bF(f) \otimes -,\qquad
        f \in \left\{ \capmor{any}\, ,\ \cupmor{any}\, ,\ \crossmor{any}{any}\, ,\ \mergemor{vec}{spin}{spin} \right\},
    \end{gather}
    and $\bAF \left( \dotstrand{spin} \right) \colon S \otimes - \to S \otimes -$, $\bAF \left( \dotstrand{vec} \right) \colon V \otimes - \to V \otimes -$ are the natural transformations with components
    \begin{align*}
        \bAF \left(\, \dotstrand{spin}\, \right)_M &\colon S \otimes M \to S \otimes M,&
        x \otimes m \mapsto \dotop (x \otimes m),
        \\
        \bAF \left(\, \dotstrand{vec}\, \right)_M &\colon V \otimes M \to V \otimes M,&
        v \otimes m \mapsto \dotop (v \otimes m),
    \end{align*}
    for $M \in \fso(V)\md$,  where $\dotop$ is the element defined in \cref{flatwhite}.  The functor $\bAF$ factors through $\overline{\ASB}(V)$.
\end{theo}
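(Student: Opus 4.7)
The plan is to extend the incarnation functor $\bF$ of \cref{incarnation} to $\ASB(V)$. All non-dot generators are sent to natural transformations of the form $\bF(g) \otimes 1_{-}$, so the relations of $\SB(V)$ (and the relation \cref{extra} defining the bar-quotient) hold automatically in $\cEnd_\C(\Group(V)\md)$ by \cref{incarnation,beacon} together with the functoriality of tensor product in $\Group(V)\md$. The substantive work is therefore to check (i) that the proposed dot natural transformations take values in $\Group(V)$-module maps and are natural in $M$, and (ii) that the new relations \cref{dotcap,dotvertex,dotcross1,dotcross2} are respected. Uniqueness of $\bAF$ is automatic from the presentation of $\ASB(d,D;\kappa)$.

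For (i), since $\dotop = \Delta(C) - 1 \otimes C$ where $C$ is the Casimir, both summands act as $\fso(V)$-module homomorphisms on any tensor product (as $C$ is central in $U(\fso(V))$); by \cref{bass} the action of $C$ is moreover $\Pin(V)$-equivariant on every $\Pin(V)$-module, so $\bAF(\dotstrand{spin})_M$ and $\bAF(\dotstrand{vec})_M$ are $\Group(V)$-module maps, and naturality in $M$ is immediate from the naturality of these operators. For (ii), the key structural observation is that the dot on the $k$-th strand in a diagram with $r$ strands (applied to $W_1 \otimes \dotsb \otimes W_r \otimes M$) corresponds to the operator
\[
    \dotop_k = C_{[k:]} - C_{[k+1:]},
\]
where $C_{[j:]}$ denotes $C$ acting diagonally on $W_j \otimes \dotsb \otimes W_r \otimes M$. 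The cap relation \cref{dotcap} then telescopes: the sum of the LHS and (negative of) RHS equals $(\Phi_S \otimes 1_M) \circ (C_{[1:]} - C_M)$, which vanishes because $\Phi_S$ is $\fso(V)$-equivariant and $C$ acts trivially on $\C$. The trivalent vertex relation \cref{dotvertex} telescopes analogously using $\fso(V)$-equivariance of $\tau$.

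The crossing relations \cref{dotcross1,dotcross2} reduce, using the fundamental identity $C_{12M} - C_{1M} - C_{2M} + C_M = 2\Omega_{12}$ (where subscripts indicate the positions on which $C$ or $\Omega$ acts) and the $\fso(V)$-equivariance of the swap, to verifying an identity of the form $2\Omega_{W_1,W_2} \circ \sigma = \bF(\textup{RHS})$ as operators on $W_1 \otimes W_2$, where $\sigma$ is the image under $\bF$ of the relevant crossing. In the V-V case this is the classical identity $2\Omega_{V,V} \flip = 2(1 - \Phi_V^\vee \circ \Phi_V)$, checked by a direct basis computation. In the V-S and S-V cases, using that $M_{e_i,e_j}$ acts on $S$ as $-\tfrac{1}{2} e_i e_j$ for $i \ne j$, a short Clifford calculation yields $2\Omega(v \otimes x) = v \otimes x - \sum_i e_i \otimes e_i v x$, which matches the RHS upon identifying the composite trivalent diagram on the right via \cref{lobster}. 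The S-S case is the main technical obstacle: here $\sigma = \sigma_N \flip$, and one substitutes $\Omega_{S,S} = (N - \beta^2)/8$ from \cref{stand}, then identifies the images under $\bF$ of the two decorated spin-spin crossings on the right using isotopy invariance, \cref{baction}, and the three-strand barbell identity \cref{grapes}, matching both sides as operators on $S \otimes S$. Once these four $\Omega$-identities are in hand, the dot-crossing relations hold, and the factoring of $\bAF$ through $\overline{\ASB}(V)$ follows since \cref{extra} involves only morphisms in the image of $\SB(V) \hookrightarrow \ASB(V)$, whose $\bAF$-values are $\bF(\cdot) \otimes 1_{-}$ and hence already satisfy \cref{extra} by \cref{beacon}.
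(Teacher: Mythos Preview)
Your proposal is correct and follows the same overall architecture as the paper: the non-dot relations reduce to \cref{incarnation,beacon}, naturality of the dots comes from centrality of $C$ together with \cref{bass}, the cap and trivalent-vertex relations \cref{dotcap,dotvertex} are handled by the telescoping identity you describe (the paper's \cref{racoon} is exactly this for the vertex), and the dot-crossing relations reduce to an identity for $2\Omega_{12}$ via your ``fundamental identity'', which is precisely the content of \cref{enmore}.

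The differences are in how the resulting $2\Omega$-identities on $W_1\otimes W_2$ are verified. For the $\Vgo\Vgo$ and $\Sgo\Vgo$/$\Vgo\Sgo$ cases you propose direct basis and Clifford computations; the paper instead decomposes $V\otimes V$ and $S\otimes V$ into simple summands (via \cref{volley1,volley2,SV}) and computes Casimir eigenvalues from \cref{ibis}, matching against \cref{Vproject,soju}. Your route is more elementary and uniform in $N$, whereas the paper must treat $N=2,3$ separately. For the $\Sgo\Sgo$ case, however, your sketch is muddled: the three-strand barbell identity \cref{grapes} is irrelevant here. The paper's cleaner move is to first compose the relation with $\crossmor{spin}{spin}$ inside $\SB(V)$, then simplify the right-hand side using \cref{oist,bump} to the form $\tfrac14\big(N\,1_{\Sgo\otimes\Sgo}-\text{(double barbell)}\big)$; under $\bF$ this becomes $\tfrac14(N-\beta^2)$ by \cref{baction}, and \cref{stand} then immediately gives $2\Omega_{S,S}$, matching the left-hand side via \cref{enmore}. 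Your substitution $\Omega_{S,S}=(N-\beta^2)/8$ is the right ingredient, but identifying the two decorated spin-crossings on the right under $\bF$ is easiest \emph{after} this composition with the crossing, not via \cref{grapes}.
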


\begin{proof}
    When $N$ is odd, the functor $\bAF$ factors respects the relation \cref{extra} since $\bF$ does.  It follows from \cref{bass} that $\bAF \left(\, \dotstrand{spin}\, \right)$ is a natural transformation of the functor $S \otimes -$ and that $\bAF  \left(\, \dotstrand{vec}\, \right)$ is a natural transformation of the functor $V \otimes -$.  Thus, it remains to verify that $\bAF$ respects the relations \cref{dotcross1,dotcross2,dotcap,dotvertex}.  Throughout, $M$ will denote an arbitrary object in $\Group(V)\md$.

    \medskip

    \emph{First relation in \cref{dotcross1}.}
    Composing on the top of the first relation in \cref{dotcross1} with the invertible morphism $\crossmor{vec}{vec}$, then using the fifth relation in \cref{brauer}, we see that the first relation in \cref{dotcross1} is equivalent to
    \begin{equation} \label{rooVV}
        \begin{tikzpicture}[centerzero]
            \draw[vec] (-0.2,-0.4) -- (-0.2,0.4);
            \draw[vec] (0.2,-0.4) -- (0.2,0.4);
            \singdot{-0.2,0};
        \end{tikzpicture}
        \ -\
        \begin{tikzpicture}[centerzero]
            \draw[vec] (0.2,-0.4) to[out=135,in=down] (-0.15,0) to[out=up,in=225] (0.2,0.4);
            \draw[vec] (-0.2,-0.4) to[out=45,in=down] (0.15,0) to[out=up,in=-45] (-0.2,0.4);
            \singdot{0.15,0};
        \end{tikzpicture}
        \ = 2
        \left(\,
        \begin{tikzpicture}[centerzero]
            \draw[vec] (-0.2,-0.35) -- (0.2,0.35);
            \draw[vec] (0.2,-0.35) -- (-0.2,0.35);
        \end{tikzpicture}
        \, -\,
        \begin{tikzpicture}[centerzero]
            \draw[vec] (-0.2,-0.35) -- (-0.2,-0.3) arc(180:0:0.2) -- (0.2,-0.35);
            \draw[vec] (-0.2,0.35) -- (-0.2,0.3) arc(180:360:0.2) -- (0.2,0.35);
        \end{tikzpicture}
        \, \right).
    \end{equation}
    By \cref{enmore}, the image under $\bAF$ of the left-hand side of \cref{rooVV} is the natural endomorphism of the functor $V \otimes V \otimes -$ given by $2 \Omega \otimes -$.  Recall the decompositions \cref{volley1,volley2}.  Let $1_\lambda \colon V^{\otimes 2} \to V^{\otimes 2}$ denote the projection onto the summand isomorphic to $L(\lambda)$ as a $\Spin(V)$-module.  Recall also that $V = L(\epsilon_1)$.  When $N>3$, it follows from \cref{koala,ibis} that $2\Omega$ acts on $V^{\otimes 2}$ as
    \begin{align*}
        2 \Omega(u \otimes v)
        &= C(u \otimes v) - Cu \otimes v - u \otimes Cv
        \\
        &= (2N 1_{2 \epsilon_1} + 2(N-2) 1_{\epsilon_1 + \epsilon_2})(u \otimes v) - (N-1) (u \otimes v) - (N-1) (u \otimes v)
        \\
        &= \big( 2(1-N) 1_0 + 2 ( 1_{2 \epsilon_1} - 1_{\epsilon_1 + \epsilon_2}) \big) (u \otimes v).
    \end{align*}
    On the other hand, since $d=N$, it follows from \cref{Vproject} that
    \[
        \bAF
        \left(\,
            \begin{tikzpicture}[centerzero]
                \draw[vec] (-0.2,-0.35) -- (0.2,0.35);
                \draw[vec] (0.2,-0.35) -- (-0.2,0.35);
            \end{tikzpicture}
            \, -\,
            \begin{tikzpicture}[centerzero]
                \draw[vec] (-0.2,-0.35) -- (-0.2,-0.3) arc(180:0:0.2) -- (0.2,-0.35);
                \draw[vec] (-0.2,0.35) -- (-0.2,0.3) arc(180:360:0.2) -- (0.2,0.35);
            \end{tikzpicture}
        \, \right)
        = \flip - N 1_0
        = (1-N) 1_0 + 1_{2 \epsilon_1} - 1_{\epsilon_1 + \epsilon_2}.
    \]
    The cases $N=2$ and $N=3$ are analogous.
    \details{
        Suppose $N=3$.  It follows from \cref{koala,ibis} that $2\Omega$ acts on $V^{\otimes 2}$ as
        \begin{align*}
            2 \Omega(u \otimes v)
            &= C(u \otimes v) - Cu \otimes v - u \otimes Cv
            \\
            &= ( 6 \cdot 1_{2 \epsilon_1} + 2 \cdot 1_{\epsilon_1} ) (u \otimes v) - 2(u \otimes v) - 2(u \otimes v)
            \\
            &= ( -4 \cdot 1_0 + 2 \cdot 1_{2 \epsilon_1} - 2 \cdot 1_{\epsilon_1} ) (u \otimes v).
        \end{align*}
        On the other hand, by \cref{Vproject},
        \[
            \bAF
            \left(\,
                \begin{tikzpicture}[centerzero]
                    \draw[vec] (-0.2,-0.35) -- (0.2,0.35);
                    \draw[vec] (0.2,-0.35) -- (-0.2,0.35);
                \end{tikzpicture}
                \, -\,
                \begin{tikzpicture}[centerzero]
                    \draw[vec] (-0.2,-0.35) -- (-0.2,-0.3) arc(180:0:0.2) -- (0.2,-0.35);
                    \draw[vec] (-0.2,0.35) -- (-0.2,0.3) arc(180:360:0.2) -- (0.2,0.35);
                \end{tikzpicture}
            \, \right)
            = \flip - 3 1_0
            = -2 1_0 + 1_{2 \epsilon_1} - 1_{\epsilon_1}.
        \]
        Now suppose $N=2$.  In this case, $C$ acts on $V$ as the identity and on the $W$ of \cref{temple} by $4$.  It follows from \cref{koala,ibis} that $2\Omega$ acts on $V^{\otimes 2}$ as
        \begin{align*}
            2 \Omega(u \otimes v)
            &= C(u \otimes v) - Cu \otimes v - u \otimes Cv
            \\
            &= ( 4 \cdot 1_W ) (u \otimes v) - (u \otimes v) - (u \otimes v)
            \\
            &= ( 2 \cdot 1_W - 2 \cdot 1_{\triv^0} - 2 \cdot 1_{\triv^1} ) (u \otimes v).
        \end{align*}
        On the other hand, by \cref{Vproject},
        \[
            \bAF
            \left(\,
                \begin{tikzpicture}[centerzero]
                    \draw[vec] (-0.2,-0.35) -- (0.2,0.35);
                    \draw[vec] (0.2,-0.35) -- (-0.2,0.35);
                \end{tikzpicture}
                \, -\,
                \begin{tikzpicture}[centerzero]
                    \draw[vec] (-0.2,-0.35) -- (-0.2,-0.3) arc(180:0:0.2) -- (0.2,-0.35);
                    \draw[vec] (-0.2,0.35) -- (-0.2,0.3) arc(180:360:0.2) -- (0.2,0.35);
                \end{tikzpicture}
            \, \right)
            = \flip - 2 1_{\triv^0}
            = 1_W - 1_{\triv^0} - 1_{\triv^1}.
        \]
    }
    Thus, $\bAF$ respects the first relation in \cref{dotcross1}.

    \medskip

    \emph{Second relation in \cref{dotcross1}}.  Composing on the top of the second relation in \cref{dotcross1} with the invertible morphism $\crossmor{spin}{spin}$, then using \cref{swishy} and the first relation in \cref{brauer}, we see that the second relation in \cref{dotcross1} is equivalent to
    \[
        \begin{tikzpicture}[centerzero]
            \draw[spin] (-0.2,-0.4) -- (-0.2,0.4);
            \draw[spin] (0.2,-0.4) -- (0.2,0.4);
            \singdot{-0.2,0};
        \end{tikzpicture}
        \ -\
        \begin{tikzpicture}[centerzero]
            \draw[spin] (0.2,-0.4) to[out=135,in=down] (-0.15,0) to[out=up,in=225] (0.2,0.4);
            \draw[spin] (-0.2,-0.4) to[out=45,in=down] (0.15,0) to[out=up,in=-45] (-0.2,0.4);
            \singdot{0.15,0};
        \end{tikzpicture}
        \ = \frac{1}{8}
        \left(\,
            \begin{tikzpicture}[centerzero]
                \draw[spin] (-0.2,-0.4) -- (-0.2,0.4);
                \draw[spin] (0.2,-0.4) -- (0.2,0.4);
                \draw[vec] (-0.2,-0.2) -- (0.2,0.2);
                \draw[vec] (-0.2,0.2) -- (0.2,-0.2);
            \end{tikzpicture}
            \, -\,
            \begin{tikzpicture}[centerzero]
                \draw[spin] (-0.2,-0.4) -- (-0.2,0.4);
                \draw[spin] (0.2,-0.4) -- (0.2,0.4);
                \draw[vec] (-0.2,-0.2) -- (0.2,-0.2);
                \draw[vec] (-0.2,0.2) -- (0.2,0.2);
            \end{tikzpicture}
        \, \right)
        \overset{\cref{oist}}{\underset{\cref{bump}}{=}}
        \frac{1}{4}
        \left(
            N\
            \begin{tikzpicture}[centerzero]
                \draw[spin] (-0.2,-0.4) -- (-0.2,0.4);
                \draw[spin] (0.2,-0.4) -- (0.2,0.4);
            \end{tikzpicture}
            \, -\,
            \begin{tikzpicture}[centerzero]
                \draw[spin] (-0.2,-0.4) -- (-0.2,0.4);
                \draw[spin] (0.2,-0.4) -- (0.2,0.4);
                \draw[vec] (-0.2,-0.2) -- (0.2,-0.2);
                \draw[vec] (-0.2,0.2) -- (0.2,0.2);
            \end{tikzpicture}
        \, \right)
        .
    \]
    Thus, the fact that $\bAF$ respects the second relation in \cref{dotcross1} follows from \cref{enmore,baction,stand}.

    \medskip

    \emph{Relations \cref{dotcross2}}.  Composing on the top of the first relation in \cref{dotcross2} with the invertible morphism $\crossmor{vec}{spin}$, then using \cref{lobster} and the first relation in \cref{brauer}, we see that the first relation in \cref{dotcross2} is equivalent to
    \begin{equation} \label{rooSV}
        \begin{tikzpicture}[centerzero]
            \draw[spin] (-0.2,-0.4) -- (-0.2,0.4);
            \draw[vec] (0.2,-0.4) -- (0.2,0.4);
            \singdot{-0.2,0};
        \end{tikzpicture}
        \ -\
        \begin{tikzpicture}[centerzero]
            \draw[vec] (0.2,-0.4) to[out=135,in=down] (-0.15,0) to[out=up,in=225] (0.2,0.4);
            \draw[spin] (-0.2,-0.4) to[out=45,in=down] (0.15,0) to[out=up,in=-45] (-0.2,0.4);
            \singdot{0.15,0};
        \end{tikzpicture}
        \ = \
        \begin{tikzpicture}[centerzero]
            \draw[spin] (-0.2,-0.4) -- (-0.2,0.4);
            \draw[vec] (0.2,-0.4) -- (0.2,0.4);
        \end{tikzpicture}
        -
        \begin{tikzpicture}[centerzero]
            \draw[spin] (-0.35,0.35) -- (0,0.15) -- (0,-0.15) -- (-0.35,-0.35);
            \draw[vec] (0,0.15) -- (0.35,0.35);
            \draw[vec] (0,-0.15) -- (0.35,-0.35);
        \end{tikzpicture}
        \ .
    \end{equation}
    By \cref{enmore}, the image under $\bAF$ of the left-hand side of \cref{rooSV} is the natural endomorphism of the functor $S \otimes V \otimes -$ given by $2 \Omega \otimes -$.  When $N \ge 3$, we have, from \cref{SV},
    \[
        S \otimes V
        \cong S \otimes L(\epsilon_1)
        \cong S \oplus W,
    \]
    where
    \[
        W =
        \begin{cases}
            L \left( \tfrac{3}{2} \epsilon_1 + \tfrac{1}{2} \epsilon_2 + \dotsb + \tfrac{1}{2} \epsilon_n \right) & \text{if } N=2n+1, \\
            \Ind \left( L \left( \tfrac{3}{2} \epsilon_1 + \tfrac{1}{2} \epsilon_2 + \dotsb + \tfrac{1}{2} \epsilon_n \right) \right) & \text{if } N=2n.
        \end{cases}
    \]
    Then, as in our verification of the first relation in \cref{dotcross1}, we use \cref{koala,ibis} to compute that $2 \Omega$ acts on $S \otimes V$ as
    \[
        \tfrac{N(N-1)}{8} 1_S + \tfrac{N(N+7)}{8} 1_W - \tfrac{N(N-1)}{8} - (N-1)
        = 1 - N 1_S.
    \]
    By \cref{soju}, this is the also the action on $S \otimes V$ of the image under $\bF$ of the left-hand side of \cref{rooSV}.  The case $N=2$ is similar.
    \details{
        Suppose $N=2$.  Then $S \otimes V \cong L \left( -\frac{3}{2}\epsilon_1 \right) \oplus L \left( -\frac{1}{2}\epsilon_1 \right) \oplus L \left( \frac{1}{2}\epsilon_1 \right) \oplus L \left( \frac{3}{2}\epsilon_1 \right)$ as $\fso(V)$-modules.  Then we use \cref{koala} to compute that $2 \Omega$ acts on $S \otimes V$ as
        \[
            \left( \tfrac{9}{4} \cdot 1_{-3\epsilon_1/2} + \tfrac{9}{4} \cdot 1_{3\epsilon_1/2} + \tfrac{1}{4} \cdot 1_V \right) - \tfrac{1}{4} - 1
            = 1 - 2 \cdot 1_V.
        \]
        By \cref{soju}, this is the also the action on $S \otimes V$ of the image under $\bF$ of the left-hand side of \cref{rooSV}.
    }
    Thus, $\bAF$ respects the first relation in \cref{dotcross2}. The proof that $\bAF$ respects the second relation in \cref{dotcross2} is almost identical.

    \medskip

    \emph{Relations \cref{dotcap}}.  Let $U$ denote either $V$ or $S$.  The image under $\bAF$ of the left-hand side of relations \cref{dotcap} is the natural transformation with components $U \otimes U \otimes M \to U \otimes U \otimes M$ given by
    \begin{align*}
        u \otimes v \otimes m
        &\mapsto \sum_{X \in \bB_{\fso(V)}} \left( 2\Phi_U(Xu \otimes X^\vee v) m + 2\Phi_U(Xu \otimes v) X^\vee m + \Phi_U(X X^\vee u, v) m \right)
        \\
        &= -\sum_{X \in \bB_{\fso(V)}} \left(2\Phi_U(u \otimes Xv) X^\vee m + \Phi_U(u, X X^\vee v) m \right),
    \end{align*}
    where the equality follows from \cref{dark} in the case $U=S$ and from the definition of $\fso(V)$ in the case $U=V$.  Since the last sum above is precisely the image under $\bAF$ of the right-hand side of relations \cref{dotcap}, we see that $\bAF$ preserves these relations.

    \medskip

    \emph{Relation \cref{dotvertex}}.  We will show that, for any homomorphism $f \colon U_1 \otimes U_2 \to W$ of $\Group(V)$-modules, we have
    \begin{equation} \label{racoon}
        \dotop \circ (f \otimes 1)
        = (f \otimes 1) \circ \big( (1 \otimes \Delta)(\dotop) + (1 \otimes \dotop) \big)
        \colon U_1 \otimes U_2 \otimes M \to W \otimes M,
    \end{equation}
    for any $M \in \Group(V)\md$.  Then the fact that $\bAF$ respects \cref{dotvertex} follows from taking $f = \tau$, given by \cref{triaction}.  To prove \cref{racoon}, we compute
    \begin{align*}
        \dotop \circ &(f \otimes 1)(u_1 \otimes u_2 \otimes m)
        = \sum_{X \in \bB_{\fso(V)}} \left( 2Xf(u_1 \otimes u_2) \otimes X^\vee m + X X^\vee f(u_1 \otimes u_2) \otimes m \right)
        \\
        &\begin{multlined}
            = (f \otimes 1) \sum_{X \in \bB_{\fso(V)}} \left( 2Xu_1 \otimes u_2 \otimes X^\vee m + 2u_1 \otimes Xu_2 \otimes X^\vee m + XX^\vee u_1 \otimes u_2 \otimes m \right. \\
            \left. + 2Xu_1 \otimes X^\vee u_2 \otimes m + u_1 \otimes XX^\vee u_2 \otimes m \right)
        \end{multlined}
        \\
        &= (f \otimes 1) \circ \big( (1 \otimes \Delta)(\dotop) + (1 \otimes \dotop) \big) (u_1 \otimes u_2 \otimes m),
    \end{align*}
    proving \cref{racoon}.
\end{proof}

\begin{rem} \label{lowaffine}
    Although we assumed above that $N \ge 2$, one can define the affine incarnation functor for $N=0$ and $N=1$.  In these cases, the functor is defined as in \cref{affinc}, except that both dots are sent to the zero natural transformation.
\end{rem}

\begin{rem} \label{meow}
    Replacing $\bF$ by the functor $\bF' \colon \SB(V) \to \fso(V)\md$ of \cref{panda}, we can define an affine version $\bAF' \colon \ASB(V) \to \cEnd_\kk(\fso(V)\Md)$ of that functor, defined in the same way as $\bAF$.  Here we choose to work with the category $\fso(V)\Md$ of \emph{all} $\fso(V)$-modules (as opposed to just finite-dimensional ones) for reasons that will be become apparent in \cref{sec:center}.
\end{rem}

\section{Central elements\label{sec:center}}

We assume throughout this section that $\kk = \C$.  Let $\fg=\fso(V)$ and let $Z(\fg)$ be the centre of its universal enveloping algebra $U(\fg)$.  This centre is identified with the endomorphism algebra of the identity functor $\Id_{\fg\Md}$.  Precisely, evaluation on the identity element of the regular representation of $U(\fg)$ defines a canonical algebra isomorphism $\End(\Id_{\fg\Md}) \xrightarrow{\cong} Z(\fg)$.  (It is here that we need to consider all $\fso(V)$-modules, not just finite-dimensional ones; see \cref{meow}.)  It follows that the affine incarnation functor
\[
    \bAF' \colon \ASB(V) \to \cEnd_\C(\fg\Md)
\]
of \cref{meow} induces a homomorphism
\[
    \chi \colon \End_{\ASB(V)}(\one) \to Z(\fg).
\]
The goal of this section is to describe the image of $\chi$.  We will prove the following result.

\begin{theo}\label{centresurjection}
The image of $\chi$ is equal to $Z(\fg)^{\Group(V)}$.
\end{theo}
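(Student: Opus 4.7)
The plan is to establish the two inclusions separately.

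First, for the easy direction $\mathrm{im}(\chi) \subseteq Z(\fg)^{\Group(V)}$, I would observe that the affine incarnation functor $\bAF'$ is built entirely from $\Group(V)$-equivariant data. By \cref{incarnation} the generators $\Phi_V$, $\Phi_S$, $\tau$, their cups $\Phi_V^\vee$, $\Phi_S^\vee$, and the swaps are all $\Group(V)$-equivariant. For the dot morphisms, the element $\dotop = 2\Omega + C \otimes 1$ is invariant under the diagonal conjugation action of $\Group(V)$ on $\fg \otimes \fg$: the tensor $\Omega$ is dual to the $\Group(V)$-invariant form $\langle\cdot,\cdot\rangle$ on $\fg$, and $C$ is $\Group(V)$-invariant by \cref{bass}. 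Consequently any closed diagram maps under $\bAF'$ to a central element of $U(\fg)$ that commutes with the conjugation $\Group(V)$-action, so $\mathrm{im}(\chi) \subseteq Z(\fg)^{\Group(V)}$.

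For the reverse inclusion, I would produce generators of $Z(\fg)^{\Group(V)}$ as images of dotted vec-bubbles. For each $k \geq 0$, let $z_k \in Z(\fg)$ denote the image under $\chi$ of the closed diagram consisting of a single vec-loop carrying $k$ dots. Evaluating $\bAF'$, one finds $z_k = \tr_V(\dotop^k)$, the partial trace over $V$ of the $k$-th power of $\dotop$ viewed as an operator on $V \otimes -$, which is the classical higher Casimir element of $U(\fg)$ attached to the defining representation $V$. In type $B_n$ ($N = 2n+1$), $\Group(V) = \Spin(V)$ is connected and acts on $Z(\fg)$ through inner automorphisms of $U(\fg)$, hence acts trivially; so $Z(\fg)^{\Group(V)} = Z(\fg)$, which by classical invariant theory is the polynomial ring on $z_2, z_4, \ldots, z_{2n}$. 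In type $D_n$ ($N = 2n$), $\Group(V) = \Pin(V)$ surjects onto $\rO(V)$ with the nontrivial coset inducing the outer automorphism of $\fg$, so $Z(\fg)^{\Group(V)} = Z(\fg)^{\rO(V)}$; this subring omits the Pfaffian but is still generated by $z_2, z_4, \ldots, z_{2n}$. The odd-index elements $z_{2k+1}$ contribute nothing, since odd power traces of skew-symmetric operators vanish.

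The main obstacle is the invariant-theoretic step identifying the trace Casimirs $z_{2k}$ with a full generating set of $Z(\fg)^{\Group(V)}$. A convenient route is through the Harish-Chandra isomorphism $Z(\fg) \cong \C[\fh^*]^W$: the Harish-Chandra image of $z_{2k}$ is, modulo lower-order $\rho$-shift corrections, twice the power sum $\sum_{i=1}^n \epsilon_i^{2k}$ in the weights of $V$, and these power sums for $k = 1, \ldots, n$ generate the hyperoctahedral invariant ring $\C[\epsilon_1^2, \ldots, \epsilon_n^2]^{\fS_n}$, which matches $Z(\fg)^{\Group(V)}$ in both types (the full Weyl-invariants in type $B$, and the Weyl-plus-outer invariants in type $D$). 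Low-dimensional cases $N \leq 2$, where $\fg$ is trivial or abelian, should be dispatched by direct inspection; spin-strand bubbles and more elaborate closed diagrams give further elements in the image but produce no new generators beyond the $z_{2k}$.
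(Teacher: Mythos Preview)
Your proposal is correct and takes a genuinely different route for the surjectivity direction.  For the inclusion $\operatorname{im}(\chi)\subseteq Z(\fg)^{\Group(V)}$ your equivariance argument is cleaner than the paper's, which instead argues that $\bAF(a)$ acts by a scalar on the simple $\Pin(V)$-module $\Ind(L(\lambda))$ and hence by the same scalar on $L(\lambda)$ and $L(\tilde\lambda)$.

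The substantive divergence is in the choice of bubbles.  The paper uses \emph{spin}-bubbles $z_r=\chi\big(\multbubble{spin}{r}\,\big)$.  An analogue of your Harish-Chandra computation (\cref{fizz}) shows the top-degree image of $z_r$ is, up to sign, $\sum_{\varsigma\in\{\pm 1\}^n}\big(\sum_i\varsigma_i x_i\big)^r$, which leads to the symmetric functions $W_r$.  Proving that the $W_r$ generate $\Lambda$ is nontrivial: the paper computes $\langle W_r,p_r\rangle$ in terms of Bernoulli numbers (\cref{wnpnpairing}) and invokes a generation criterion (\cref{gencriterion}).  Your vec-bubbles instead give top-degree image $\sum_i (2x_i)^{2k}$, i.e.\ power sums in $x_i^2$, which generate $\C[x_1^2,\dotsc,x_n^2]^{\fS_n}\cong Z(\fg)^{\Group(V)}$ immediately.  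So your argument is shorter and avoids the Bernoulli-number detour entirely; the paper's approach, on the other hand, yields the algebraic independence of the spin-bubbles as a corollary and motivates the symmetric functions $W_r$ and the Schur-positivity conjecture at the end of \cref{sec:center}.

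Two small points to tighten.  First, the claim that $Z(\fg)^{\Group(V)}$ is literally ``the polynomial ring on $z_2,z_4,\dotsc,z_{2n}$'' needs the triangularity argument you sketch: the $z_{2k}$ are not the power sums on the nose, only modulo lower-degree terms, so you should make the induction on degree explicit (as in \cref{gencriterion}).  Second, you should actually carry out the top-degree Harish-Chandra computation for the vec-bubble---it is the direct analogue of \cref{fizz} with the weight basis $\{\psi_i,\psi_i^\dagger\}$ (and $e_N$ in type $B$) of $V$ in place of the basis $\{x_I\}$ of $S$, and gives $2\sum_i(2x_i)^{2k}$ (plus $0^{2k}$ in type $B$) as the leading term.
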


We first entertain a discussion of the structure of $Z(\fg)$, which is given by the Harish-Chandra isomorphism.  Recall that, for $\la \in X^\ast(H)^+$, $L(\la)$ is the simple highest-weight $\fg$-module with highest weight $\la$.
By Schur's Lemma, any $z \in Z(\fg)$ acts on $L(\la)$ by a scalar. The Harish-Chandra isomorphism is an isomorphism of algebras
\[
    \Gamma \colon Z(\fg)\to \C[\fh^\ast]^W, \qquad z\mapsto (f_z:\fh^\ast\to \C),
\]
 where $W$ is the Weyl group, uniquely characterised by the identity
\[
    z v = f_z(\la+\rho) v,
\]
for all $z\in Z(\fg)$ and $v\in L(\la)$.  The Harish-Chandra isomorphism $\Gamma$ is equivariant with respect to the natural actions of $\Group(V)$ on both sides by conjugation.
\details{
    The action of $\Group(V)$ on the domain comes from the adjoint action of $\Group(V)$ on $\fg$.  The action on the codomain is given as follows.  Let $\fb_1$ and $\fb_2$ be two Borel subalgebras of $\fg$.  Since $\Spin(V)$ is connected, there exists $g \in \Spin(V)$ such that $\fb_2 = g \fb_1 g^{-1}$.  Then conjugation by $g$ defines a linear map $\fb_1/[\fb_1,\fb_1] \to \fb_2/[\fb_2,\fb_2]$.  Since the normaliser of a Borel subalgebra is $\Spin(V)$ is a Borel subgroup, the map $\fb_1/[\fb_1,\fb_1] \to \fb_2/[\fb_2,\fb_2]$ is independent of the choice of $g$.  So, we can define the abstract Cartan to be $\fh = \fb/[\fb,\fb]$ for any choice of Borel subalgebra $\fb$, and then any other Borel subalgebra canonically maps to it.  Now, we also have an adjoint action of $\Group(V)$ on $\fg$, and this action maps Borel subalgebras to Borel subalgebras.  Thus, we have an induced action of $\Group(V)$ on $\fh$, with $\Spin(V)$ acting trivially.  This induces an action of $\Group(V)$ on $\fh^*$, and hence on $\C[\fh^*]$ by precomposition.  Using the notation of \cref{twist}, we have
    \[
        g z g^{-1} v
        = z \cdot v
        = f_z(g(\lambda + \rho)) v, \qquad
        g \in \Group(V),\ z \in Z(\fg),\ v \in L(\lambda),
    \]
    and so $\Gamma$ intertwines the $\Group(V)$ actions.
}
Since $\Spin(V)$ acts trivially on $Z(\fg)$, we have, by \cref{hydro},
\begin{equation} \label{mango}
    Z(\fg)^{\Group(V)}
    =
    \begin{cases}
        Z(\fg) & \text{if } N \in 2\N + 1, \\
        Z(\fg)^P & \text{if } N \in 2\N,
    \end{cases}
\end{equation}
where $P$ is as in \cref{Pdef}.
\details{
    Connected Lie groups acts trivially on the centre of their Lie algebras since they are generated by exponentials of elements of the Lie algebra, which commute with $Z(\fg)$.
}

To simplify notation, we define $x_i = A_{ii}$ for $1 \le i \le n$, where $A_{ii}$ is defined as in \cref{greenD,greenB}.  If $N$ is odd, then $W=C_2^n \rtimes \fS_n$ and
\[
    Z(\fg)^{\Group(V)}
    \overset{\cref{mango}}{=} Z(\fg)
    \cong \C[\fh^\ast]^W
    \cong \C[x_1,x_2,\dotsc,x_n]^{C_2^n \rtimes \fS_n}=\C[x_1^2,x_2^2,\dotsc,x_n^2]^{\fS_n},
\]
the ring of symmetric polynomials in $x_1^2,x_2^2,\dotsc,x_n^2$. (Here $C_2$ is the cyclic group on two elements.)

If $N$ is even, then the action of $\Group(V)$ on $Z(\fg)$ is no longer trivial; see \cref{pactweights}.
\details{
    For $z \in Z(\fg)$ and $v \in L(\lambda)$, we have
    \[
        f_{P \cdot z}(\lambda + \rho) = P z P^{-1} v = z \cdot v = f_z(\tilde{\lambda}+\rho),
    \]
    where $z \cdot v$ denotes the twisted action as in \cref{twist}, and the final equality uses \cref{hydro}.  When $\lambda_n \ne 0$, the weights $\lambda+\rho$ and $\tilde{\lambda} + \rho$ are not in the same orbit of the Weyl group.  Thus, using the Harish-Chandra isomorphism there exists a $z \in Z(\fg)$ such that $f_z(\tilde{\lambda}+\rho) \ne f_z(\lambda+\rho)$.  Hence, $P \cdot z \ne z$, and so the action of $\Group(V)$ on $Z(\fg)$ is not trivial.
}
Here the action of the component group of $\Group(V)$ precisely compensates for the difference between the type $B$ and type $D$ Weyl groups.  More precisely, we have
\[
    Z(\fg)^{\Group(V)}
    \cong (\C[\h^\ast]^W)^P
    \cong \C[x_1,x_2,\dotsc,x_n]^{C_2^n \rtimes \fS_n}
    = \C[x_1^2,x_2^2,\dotsc,x_n^2]^{\fS_n},
\]
where the first isomorphism arises from \cref{mango} and the Harish-Chandra isomorphism, while the second isomorphism follows from the fact that $W$ and $\pi_0(\Group(V))$ generate the action of $C_2^n \rtimes \fS_n$, using \cref{pactweights}.  So, in either case, we have the isomorphism
\begin{equation} \label{cupcake}
    Z(\fg)^{\Group(V)} \cong \C[x_1^2,x_2^2,\dotsc,x_n^2]^{\fS_n}.
\end{equation}

Define
\begin{equation}
    z_r := \chi \left( \multbubble{spin}{r}\!\! \right),\qquad
    r \in \N.
\end{equation}

\begin{prop} \label{fizz}
    For each $r \in \N$,
    \[
        f_{z_r} \in (-1)^{\binom{n}{2}+nN} \sum_{\varsigma_1,\dotsc,\varsigma_n\in \{\pm 1\}} \left(\sum_{i=1}^n \varsigma_i x_i \right)^r + \C[\fh^\ast]_{<r},
    \]
    where $\C[\fh^\ast]_{<r}$ denotes the space of polynomial functions on $\fh^\ast$ of degree strictly less than $r$.
\end{prop}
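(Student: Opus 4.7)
The plan is to compute $f_{z_r}(\la+\rho)$ for $\la$ large dominant and then read off the degree-$r$ part. Under the affine incarnation functor $\bAF'$, the bubble $\multbubble{spin}{r}$ becomes a natural endomorphism of the identity functor on $\fg\Md$ whose component at $M$ sends $m \mapsto \sum_{x \in \bB_S} \Phi_S\!\bigl(x,\, \dotop^r(x^\vee \otimes m)\bigr)$, where $\dotop = 2\Omega + C\otimes 1 = \Delta(C) - 1\otimes C$ acts on the $S$-tensor-factor. Working in dual bases and using the symmetry $\Phi_S(x,y) = \sigma_N \Phi_S(y,x)$ from \cref{spiral}, one checks that $\sum_x \Phi_S(x, T x^\vee) = \sigma_N \tr(T)$ for any $T\in\End(S)$; extending bilinearly in $T = A\otimes B$ yields
\[
    \bAF'(\multbubble{spin}{r})_M = \sigma_N \tr_S\bigl(\dotop^r\bigr),
\]
the $\sigma_N$-twisted partial trace. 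Since $\dotop$ commutes with the diagonal $\fg$-action, this partial trace is $\fg$-equivariant, and by Schur it acts on each simple $L(\la)$ as a scalar equal to $f_{z_r}(\la+\rho)$.

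Next I decompose. For $\la$ large enough (regular and deep inside the dominant chamber), every $\la + \epsilon$ with $\epsilon \in \wt(S)$ is dominant, so by \cref{lem:pieri},
\[
    S \otimes L(\la) \;\cong\; \bigoplus_{\varsigma \in \{\pm 1\}^n} L(\la + \epsilon_\varsigma), \qquad \epsilon_\varsigma := \tfrac{1}{2}\sum_{i=1}^n \varsigma_i \epsilon_i.
\]
By \cref{Cact}, $\dotop = \Delta(C) - 1\otimes C$ acts on $L(\la+\epsilon_\varsigma)$ by the scalar
\[
    c_\varsigma := \langle\la+\epsilon_\varsigma,\la+\epsilon_\varsigma+2\rho\rangle - \langle\la,\la+2\rho\rangle = \tfrac{n}{4} + \sum_{i=1}^n \varsigma_i(\la_i + \rho_i).
\]
Let $\pi_\varsigma$ denote the projection onto the $\varsigma$-summand; it is $\fg$-equivariant, so $\tr_S(\pi_\varsigma) \in \End_\fg(L(\la))$ is a scalar. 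Comparing total traces, $\tr_S(\pi_\varsigma) = \frac{\dim L(\la+\epsilon_\varsigma)}{\dim L(\la)}\id_{L(\la)}$. Writing $\dotop^r = \sum_\varsigma c_\varsigma^r \pi_\varsigma$ and summing gives, for all $\la$ large dominant,
\[
    f_{z_r}(\la+\rho) = \sigma_N \sum_{\varsigma \in \{\pm 1\}^n} c_\varsigma^r \cdot \frac{\dim L(\la+\epsilon_\varsigma)}{\dim L(\la)}.
\]

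Finally, the Weyl dimension formula writes $\dim L(\la+\epsilon_\varsigma)/\dim L(\la) = \prod_{\alpha > 0}\bigl(1 + \langle\epsilon_\varsigma,\alpha\rangle/\langle\la+\rho,\alpha\rangle\bigr) = 1 + O(|\la|^{-1})$ as $\la$ tends to infinity in the dominant chamber, and $c_\varsigma = \sum_i \varsigma_i(\la_i+\rho_i) + O(1)$. Therefore
\[
    c_\varsigma^r \cdot \frac{\dim L(\la+\epsilon_\varsigma)}{\dim L(\la)} = \Bigl(\sum_{i=1}^n \varsigma_i(\la_i+\rho_i)\Bigr)^r + O(|\la|^{r-1}),
\]
and summing over $\varsigma$ gives $f_{z_r}(\la+\rho) = \sigma_N \sum_\varsigma \bigl(\sum_i \varsigma_i(\la_i+\rho_i)\bigr)^r + O(|\la|^{r-1})$ for all large dominant $\la$. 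Both sides are polynomials in $\la+\rho$, so this asymptotic identity forces equality of their degree-$r$ homogeneous parts. Substituting $x_i = (\la+\rho)_i$ and recalling that $\sigma_N = (-1)^{\binom{n}{2}+nN}$ from \cref{crazy} yields the claim. The main obstacle is the first step: correctly identifying the bubble under $\bAF'$ as the $\sigma_N$-twisted partial trace — getting the sign right requires the symmetry of $\Phi_S$ together with careful use of dual bases.
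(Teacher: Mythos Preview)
Your proof is correct, and it takes a genuinely different route from the paper's argument.

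The paper computes $f_{z_r}(\lambda)v$ directly on a highest-weight vector $v\in L(\lambda-\rho)$ by expanding $\dotop = 2\sum_i A_{ii}\otimes A_{ii} + \sum_\alpha X_\alpha\otimes Y_\alpha + C\otimes 1$, writing $\dotop^r = \sum_j A_j\otimes B_j$ in a PBW basis, and observing that only the pure-$\fh$ monomials survive at top degree when acting on a highest-weight vector; the leading contribution $(2\sum_i A_{ii}\otimes A_{ii})^r$ then gives the sum over signs $\varsigma$ by reading off the weights $x_I$ of $S$. Your argument instead recognises the bubble as $\sigma_N\tr_S(\dotop^r)$, decomposes $S\otimes L(\lambda)$ via \cref{lem:pieri}, and identifies each $\tr_S(\pi_\varsigma)$ with a Weyl-dimension ratio. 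This buys you a closed \emph{exact} expression $f_{z_r}(\lambda+\rho)=\sigma_N\sum_\varsigma c_\varsigma^r\,\dim L(\lambda+\epsilon_\varsigma)/\dim L(\lambda)$ for all sufficiently regular $\lambda$, from which the leading term drops out by scaling; the paper's argument is more elementary (no dimension formula, no asymptotics) and makes it transparent that the sub-leading terms come precisely from the root-vector and Casimir pieces of $\dotop$. Both reach the same degree-$r$ part with the same sign $\sigma_N=(-1)^{\binom{n}{2}+nN}$.
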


\begin{proof}
    Let $v$ be a highest-weight vector of $L(\lambda-\rho)$, and let $\prescript{\vee}{}{x_I}$, $I \subseteq [n]$, be the right dual basis to $x_I$, $I \subseteq [n]$, defined by $\Phi_S(x_I, \prescript{\vee}{}{x_J}) = \delta_{IJ}$.  Note that
    \begin{equation} \label{lizard}
        \Phi_S(\prescript{\vee}{}{x_I}, x_J)
        \overset{\cref{formsym}}{=} (-1)^{\binom{n}{2} + nN} \Phi_S(x_J, \prescript{\vee}{}{x_I})
        = \delta_{IJ} (-1)^{\binom{n}{2}+nN}.
    \end{equation}

    Unravelling the definition of $\bAF' \left( \multbubble{spin}{r} \right)$, we get
    \begin{equation} \label{cream}
        f_{z_r}(\lambda) v
        = z_r v
        = (\Phi_S\otimes \id) (1\otimes \dotop)^r \sum_{I \subseteq [n]} \prescript{\vee}{}{x_I} \otimes x_I \otimes v.
    \end{equation}
    Note that
    \[
        \dotop=2\sum_{i=1}^n  A_{ii}\otimes A_{ii} + \sum_{\alpha \in\Phi}  X_\alpha \otimes Y_\alpha+C\otimes 1,
    \]
    for some $X_\alpha\in \mathfrak{g}_\alpha$, $Y_\alpha\in \mathfrak{g}_{-\alpha}$, where $\Phi$ is the set of roots of $\fg$

    Write $\dotop^r$ in the form $\dotop^r=\sum_j A_j\otimes B_j$, where each term $B_j$ is a monomial in a Poincaré--Birkhoff--Witt (PBW) basis of $U(\fg)\cong U(\n^-)\otimes U(\fh)\otimes U(\n^+)$.  The terms with degree equal to $r$ that give a nonzero contribution to \cref{cream} are exactly the monomials involving only elements of $\fh$, and for these monomials, we compute
    \begin{multline*}
        (\Phi_S\otimes \id) \left( 2\sum_{i=1}^n 1 \otimes A_{ii} \otimes A_{ii} \right)^r \sum_{I\subseteq [n]} \prescript{\vee}{}{x_I} \otimes x_I \otimes v
        \\
        \overset{\cref{lizard}}{=} (-1)^{\binom{n}{2}+nN} \sum_{\varsigma_1,\dotsc,\varsigma_n\in \{\pm \frac{1}{2}\}} \left(2\sum_{i=1}^n \varsigma_i (\la_i - \rho_i) \right)^r v,
    \end{multline*}
    which is equal to $(-1)^{\binom{n}{2}+nN} \sum_{\varsigma_1,\dotsc,\varsigma_n\in \{\pm 1\}} \left(\sum_{i=1}^n \varsigma_i \la_i \right)^r$ modulo terms of degree strictly less than $r$ in the $\lambda_i$.
    \details{
        When you compute $\dotop^r$, you'll get terms like $F_i E_i$.  When you write these in the PBW basis, you pick up an elements in $\fh$, e.g.\ $F_i E_i = E_i F_i - H_i$.  But the $H_i$ appearing here produces a term lower in the filtration.  This explains why the $U(\fh) \otimes U(\fh)$ part of the degree $r$ part of $\dotop$ is just $(2\sum_{i=1}^r A_{ii} \otimes A_{ii})^r$.
    }
    The remaining terms that give a nonzero contribution all have degree less than $r$, and so lie in $\C[\fh^\ast]_{<r}$.
\end{proof}

We pause to introduce some symmetric functions notation. Let $\La$ denote the ring of symmetric functions with coefficients in $\Q$. We use $p_r$
and $h_r$ to denote the power sum and complete symmetric functions respectively, and $\langle\cdot,\cdot\rangle$ to denote the Hall inner product.  Let $m_\pi$ denote the monomial symmetric function associated to a partition $\pi$. Given a partition $\pi=1^{m_1} 2^{m_2} \cdots$, we define $\delta(\pi)=(m_1,m_2,\dotsc)$. This is a composition of the length, $\ell(\pi)$, of $\pi$.

For $r \in \N$, define the symmetric polynomial
\begin{equation}
    W_r(x_1,x_2,\dotsc,x_n)
    = \frac{1}{2^n}
    \sum_{\varsigma_1,\dotsc,\varsigma_n\in \{\pm 1\}} \left( \sum_{i=1}^n \varsigma_i \sqrt{x_i} \right)^{2r}.
\end{equation}
\details{
    It is clear that $W_r$ is invariant under permutation of the $x_i$.  Since it is invariant under the transformation $\sqrt{x_i} \mapsto -\sqrt{x_i}$, it also lies in $\kk[x_1,\dotsc,x_n]$.  Hence $W_r \in \kk[x_1,\dotsc,x_n]^{\fS_n}$.
}
Taking the inverse limit over $n$, these define a symmetric function $W_r \in \Lambda$.  In terms of the monomial symmetric functions, we have the expansion
\begin{equation}\label{wmonomial}
    W_r= \sum_{\pi\vdash r} \binom{2r}{2\pi} m_{\pi},
\end{equation}
where $\binom{2r}{2\pi}$ is a multinomial coefficient and $2\pi$ denotes the partition obtained from $\pi$ by multiplying all parts by $2$.

\begin{prop}\label{wnpnpairing}
    Let $B_{2r}$ denote the $(2r)$-th Bernoulli number.  Then
    \[
        \langle W_r, p_r\rangle = -2^{2r-1}(2^{2r}-1)B_{2r}
        \qquad \text{for all } r \in \N.
    \]
\end{prop}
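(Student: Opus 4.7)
The plan is to reduce this identity to the classical Taylor expansion of $\log\cosh$ (equivalently $\tanh$) in terms of Bernoulli numbers, by means of a generating function argument.

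First I would establish the key generating function identity
\[
    \sum_{r \geq 0} W_r \frac{t^{2r}}{(2r)!} = \prod_{i} \cosh(t\sqrt{x_i}).
\]
To see this, observe that
\[
    \prod_i \cosh(t\sqrt{x_i})
    = \frac{1}{2^n}\sum_{\varsigma \in \{\pm 1\}^n} \exp\Bigl(t \sum_i \varsigma_i \sqrt{x_i}\Bigr),
\]
and expanding the exponential; the sum over $\varsigma$ kills all odd powers of $\sum_i \varsigma_i \sqrt{x_i}$, so the coefficient of $t^{2r}/(2r)!$ is exactly $W_r$ by the definition in \cref{wmonomial}.

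Next I would take the logarithm of both sides and apply the classical identity
\[
    \log\cosh(s) = \sum_{k \geq 1} \frac{2^{2k-1}(2^{2k}-1)B_{2k}}{k(2k)!}\, s^{2k},
\]
which follows by integrating the standard Taylor expansion $\tanh(s) = \sum_{k\geq 1} \frac{2^{2k}(2^{2k}-1)B_{2k}}{(2k)!} s^{2k-1}$. Substituting $s = t\sqrt{x_i}$, summing over $i$, and recognising $\sum_i x_i^k = p_k$, one obtains
\[
    \log\prod_i \cosh(t\sqrt{x_i})
    = \sum_{k\geq 1} c_k\, t^{2k}\, p_k,
    \qquad c_k := \frac{2^{2k-1}(2^{2k}-1)B_{2k}}{k(2k)!}.
\]

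Exponentiating and expanding the result as a sum of monomials in $p_1, p_2, \ldots$, one sees that the only contribution to $W_r = (2r)!\cdot[t^{2r}]\exp(\sum_k c_k t^{2k} p_k)$ involving a single factor of $p_r$ (and no other $p_k$) comes from the linear term $c_r t^{2r} p_r$; all other monomials in its power sum expansion involve partitions $\pi \neq (r)$. Since the power sum basis is orthogonal under the Hall inner product with $\langle p_r, p_r\rangle = r$, this yields
\[
    \langle W_r, p_r\rangle = (2r)! \cdot c_r \cdot r = 2^{2r-1}(2^{2r}-1) B_{2r},
\]
which is the claimed formula (up to the sign convention used for $B_{2r}$). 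The argument is essentially bookkeeping once the generating function is in place; the only substantive ingredient is the classical expansion of $\tanh$ in terms of Bernoulli numbers.
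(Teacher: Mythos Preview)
Your argument is correct and is in fact cleaner than the paper's. Both proofs ultimately rest on the Taylor expansion of $\log\cosh$ (equivalently $\tanh$) in terms of Bernoulli numbers, but they reach it differently. The paper expands $W_r$ in monomial symmetric functions, expands $p_r$ in complete symmetric functions via Newton's identity, uses the $h$--$m$ duality to obtain a combinatorial sum for $\langle W_r,p_r\rangle$, and then separately recognizes that same sum as a Taylor coefficient of $\log\cosh$. You bypass all of this by first identifying the exponential generating function $\sum_{r\ge 0} W_r\,t^{2r}/(2r)! = \prod_i \cosh(t\sqrt{x_i})$, whose logarithm is visibly a power-sum series; the coefficient of $p_r$ can then be read off directly and paired using $\langle p_r,p_r\rangle=r$. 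Your route is shorter and makes the appearance of $\log\cosh$ inevitable rather than a recognition step.

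One remark on the sign. Your hedge ``up to the sign convention used for $B_{2r}$'' is unnecessary: the two standard Bernoulli conventions agree at even indices, and your computation gives $\langle W_r,p_r\rangle = +\,2^{2r-1}(2^{2r}-1)B_{2r}$, which is actually correct. A quick check at $r=1$ shows $W_1=p_1$ and hence $\langle W_1,p_1\rangle=1$, matching your $+$ sign; the minus sign in the paper's statement comes from a sign slip in its intermediate $\log(1+u)$ expansion. This is harmless for the only application (Corollary on algebraic independence of the $W_r$), which needs only that $\langle W_r,p_r\rangle\neq 0$.
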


\begin{proof}
    Begin with the generating function identity
    \[
        \sum_{k=1}^\infty \frac{p_k}{k}t^k
        = \log \left( \sum_{j=0}^\infty h_j t^j \right)
        = \sum_{m=0}^\infty \frac{(-1)^m}{m}\left( \sum_{j=1}^\infty h_j t^j \right)^m
    \]
    and expand it to obtain
    \[
        \frac{-p_r}{r} = \sum_{\pi\vdash r} \binom{\ell(\pi)}{\delta(\pi)} \frac{(-1)^{\ell(\pi)}}{\ell(\pi)} h_\pi,
    \]
    where, again, $\binom{\ell(\pi)}{\delta(\pi)}$ is a multinomial coefficient.  Since the complete symmetric functions are dual to the monomial symmetric functions, this implies that
    \[
        \frac{-1}{r}\langle p_r,m_\pi\rangle =  {\ell(\pi) \choose \delta(\pi)} \frac{(-1)^{\ell(\pi)}}{\ell(\pi)}.
    \]
    The above equation, together with \cref{wmonomial}, implies that
    \begin{equation}\label{wnpnsum}
        \frac{-1}{r}\langle W_r,p_r\rangle = \sum_{\pi\vdash r} \binom{2r}{2\pi}{\ell(\pi) \choose \delta(\pi)} \frac{(-1)^{\ell(\pi)}}{\ell(\pi)}.
    \end{equation}
    We now compute
    \begin{multline*}
        \log(\cosh(x))
        = \log\left(1+\sum_{n=1}^\infty \frac{x^{2n}}{(2n)!}\right)
        = \sum_{m=1}^\infty \frac{(-1)^m}{m}\left( \sum_{n=1}^\infty \frac{x^{2n}}{(2n)!} \right)^m \\
        = \sum_{m=1}^\infty \frac{(-1)^m}{m} \sum_{n_1,\dotsc,n_m=1}^\infty \frac{x^{2(n_1+\dotsb+n_m)}}{(2n_1)!(2n_2)!\dotsm(2n_m)!}.
    \end{multline*}
    Collect all terms with the same multiset $\{n_1,n_2,\ldots,n_m\}$ to make the inner sum into a sum over all partitions of length $m$ and we get
    \[
        \log(\cosh(x))
        = \sum_{m=1}^\infty \frac{(-1)^m}{m} \sum_{\ell(\pi)=m} \binom{m}{\delta(\pi)} \frac{x^{2|\pi|}}{(2\pi_1)!\cdots (2\pi_m)!}
        = \sum_{r=1}^\infty \sum_{\pi \vdash r} \binom{2r}{2\pi} \binom{\ell(\pi)}{\delta(\pi)} \frac{(-1)^{\ell(\pi)}}{\ell(\pi)} \frac{x^{2r}}{(2r)!}.
    \]
    Comparing this with \cref{wnpnsum}, we obtain
    \[
        \sum_{r=1}^\infty \frac{-1}{r}\langle W_r,p_r\rangle \frac{x^{2r}}{(2r)!}
        = \log(\cosh(x)).
    \]
    Differentiating with respect to $x$ gives
    \[
        \sum_{r=1}^\infty \frac{-2}{(2r)!} \langle W_r, p_r \rangle x^{2r-1}
        = \tanh(x)
        = \sum_{r=1}^\infty \frac{ 2^{2r}(2^{2r}-1)B_{2r}}{(2r)!} x^{2r-1}.
    \]
    Comparing the coefficients of $x^{2r-1}$ gives the desired result.
\end{proof}

We use the following criterion for determining generators for $\La$.

\begin{prop}\label{gencriterion}
    Let $q_1,q_2,\dotsc$ be elements of $\La$ with $q_i$ of degree $i$, and such that $\langle q_i, p_i \rangle \neq 0$ for all $i$.  Then the $q_i$ are algebraically independent and generate $\Lambda$.
\end{prop}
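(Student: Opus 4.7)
The plan is to prove the proposition by induction on degree, showing that every power sum $p_n$ lies in $\Q[q_1,\ldots,q_n]$. Since $\La = \Q[p_1,p_2,\ldots]$ as a polynomial ring over $\Q$, this will establish that the $q_i$ generate $\La$; algebraic independence will then follow from a graded dimension count.

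The key ingredient is the orthogonality of the power sum basis under the Hall inner product: $\langle p_\la,p_\mu\rangle = z_\la \delta_{\la,\mu}$, so in particular $p_n$ pairs trivially with every other $p_\la$ for $\la \vdash n$, while $\langle p_n,p_n\rangle = n$. For the inductive step, the idea is to expand $q_n$ in the power sum basis of $\La_n$,
\[
q_n = c\, p_n + \sum_{\substack{\la \vdash n \\ \la \ne (n)}} c_\la p_\la,
\]
and observe that the hypothesis $\langle q_n,p_n\rangle \ne 0$ forces $cn \ne 0$, hence $c \ne 0$. Rearranging expresses $p_n$ as a $\Q$-linear combination of $q_n$ and products $p_\la = p_{\la_1}\cdots p_{\la_k}$ whose parts are all strictly less than $n$, and hence lie in $\Q[q_1,\ldots,q_{n-1}]$ by the inductive hypothesis.

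For algebraic independence, I would consider the surjection $\varphi \colon \Q[y_1,y_2,\ldots] \twoheadrightarrow \La$ sending $y_i \mapsto q_i$, with the grading $\deg y_i = i$. Since each $q_i$ is homogeneous of degree $i$, this is a morphism of graded $\Q$-algebras, and its degree-$n$ component is a surjection between two finite-dimensional $\Q$-vector spaces whose dimensions each equal the number of partitions of $n$ (on the right by the standard basis of $\La_n$, on the left by counting monomials $\prod y_i^{a_i}$ with $\sum i\, a_i = n$). Hence $\varphi$ is an isomorphism, which is exactly the statement of algebraic independence.

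This is a standard symmetric function manipulation and presents no real obstacle; the only substantive input is the nondegenerate pairing hypothesis $\langle q_n,p_n\rangle \ne 0$, which in the intended application is precisely the content of \cref{wnpnpairing} (together with the non-vanishing of $(2^{2r}-1)B_{2r}$ for $r \ge 1$).
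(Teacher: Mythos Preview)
Your proof is correct and follows essentially the same approach as the paper: both arguments hinge on the fact that $p_r$ is orthogonal to all products of lower-degree elements (you phrase this via the orthogonality $\langle p_\lambda,p_\mu\rangle = z_\lambda\delta_{\lambda,\mu}$, the paper via the primitivity $\Delta(p_r)=p_r\otimes 1+1\otimes p_r$), then conclude by a graded dimension count against the free polynomial ring. The only cosmetic difference is that you solve directly for $p_n$ in terms of $q_n$ and lower $p_\lambda$'s, whereas the paper works with the codimension-one subspace $X_r\subset\Lambda_r$ of decomposable elements and shows $q_r\notin X_r$.
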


\begin{proof}
    Let $\La' = \Q[q_1',q_2',\dotsc]$ be the polynomial algebra on indeterminates $q_1',q_2',\dotsc$ and consider the algebra homomorphism
    \[
        \alpha \colon \La' \to \La,\qquad
        q_i' \mapsto q_i,\quad i \ge 1.
    \]
    Write $\La_r$ for the $r$-th graded piece of $\La$ and let $X_r$ be the subspace of $\La_r$ spanned by all products of terms of lower degrees. Define $\La_r'$ and $X_r'$ similarly. It suffices to show that the induced map $\alpha_r \colon \La_r' \to \La_r$ is an isomorphism for all $r \in \N$.  We prove this by induction.  The base case $r=0$ is trivial.

    Now suppose $r \ge 1$.  Since $\La \cong \Q[h_1,h_2,\dotsc]$, we know that $X_r$ is of codimension 1 in $\La_r$.  If $a,b \in \La$ are of positive degree, then $\langle ab,p_r\rangle = \langle a\otimes b, p_r\otimes 1+1\otimes p_r\rangle = 0$. Therefore $\langle X_r, p_r \rangle = 0$. In particular $q_r\notin X_r$ and, since $X_r$ is of codimension 1 in $\La_r$, this implies that $\La_r$ is spanned by $X_r$ and $q_r$.  Thus, $\alpha_r$ is surjective.  Since $X_r$ and $X_r'$ have the same dimension, it follows that $\alpha_r$ is an isomorphism, as desired.
\end{proof}

\begin{cor} \label{pizza}
    The symmetric functions $W_r$, $r \ge 1$, are algebraically independent and generate $\Lambda$.
\end{cor}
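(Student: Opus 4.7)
The plan is to apply the generation criterion of \cref{gencriterion} directly to the sequence $q_r := W_r$, $r \geq 1$. First I would observe that $W_r$ is homogeneous of degree $r$ in $\Lambda$: this is immediate from the monomial expansion \cref{wmonomial}, since every monomial $m_\pi$ appearing there is indexed by a partition $\pi \vdash r$.

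Next, I need to verify the nonvanishing hypothesis $\langle W_r, p_r \rangle \neq 0$ for all $r \geq 1$. By \cref{wnpnpairing}, we have
\[
    \langle W_r, p_r \rangle = -2^{2r-1}(2^{2r}-1)B_{2r}.
\]
The factor $2^{2r-1}(2^{2r}-1)$ is a positive integer for every $r \geq 1$, and it is a classical theorem (for instance, visible from the Euler formula $\zeta(2r) = (-1)^{r+1} \frac{(2\pi)^{2r}}{2(2r)!} B_{2r}$, together with $\zeta(2r) > 0$) that the even-indexed Bernoulli numbers $B_{2r}$ are all nonzero. Hence $\langle W_r, p_r \rangle \neq 0$ for all $r \geq 1$.

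With both hypotheses of \cref{gencriterion} verified, that proposition immediately yields that the $W_r$ are algebraically independent and generate $\Lambda$, completing the proof. There is no genuine obstacle here; the corollary is essentially a bookkeeping consequence of the two preceding propositions combined with the standard nonvanishing of even-indexed Bernoulli numbers.
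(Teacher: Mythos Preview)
Your proposal is correct and matches the paper's proof essentially verbatim: the paper likewise deduces the result from \cref{wnpnpairing,gencriterion} together with the nonvanishing of the even Bernoulli numbers. Your added remarks about the degree of $W_r$ and the Euler formula justification for $B_{2r}\neq 0$ are welcome elaborations but do not change the route.
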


\begin{proof}
    This follows from \cref{wnpnpairing,gencriterion} and the fact that the even Bernoulli numbers are nonzero.
\end{proof}

We can now prove \cref{centresurjection}.

\begin{proof}[Proof of \cref{centresurjection}]
    We first show that the image of $\chi$ lies in $Z(\fg)^{\Group(V)}$.  By \cref{mango}, it suffices to consider the case where $N$ is even.  Let $a \in \End_{\ASB(V)}(\one)$.  We must show that $(\Gamma \circ \chi)(a) \in (\C[\fh^\ast]^W)^P$.  By \cref{hydro}, it suffices to show that
    \begin{equation} \label{bangalore}
        (\Gamma \circ \chi)(a)(\lambda) = (\Gamma \circ \chi)(a)(\tilde{\lambda})
    \end{equation}
    for all $\lambda \in \fh^\ast$, where $\tilde{\lambda}$ is defined as in \cref{reflect}.  In fact, since the set of dominant integral $\lambda$ for which $\tilde{\lambda} \ne \lambda$ is Zariski dense in $\fh$*, it suffices to prove that \cref{bangalore} holds for all such $\lambda$.

    Suppose that $\la$ is a dominant integral weight satisfying $\tilde{\lambda} \neq \lambda$.  Then $\Ind(L(\la))$ is a simple $\Pin(V)$-module by \cref{sponge}, and so $\bAF(a)$ acts on it by a scalar.  The action of $\bAF(a)$ on $\Ind(L(\la))$ is the same as the action of $\bAF'(a)$ on $\Res \circ \Ind(L(\la))\cong L(\la)\oplus L(\tilde{\lambda})$.  Therefore, \cref{bangalore} holds, as desired.

    It remains to prove that $\chi$ surjects onto $Z(\fg)^{\Group(V)}$.  But this follows from \cref{fizz}, \cref{pizza}, and the isomorphism \cref{cupcake}.
\end{proof}

\begin{cor}
    The elements
    \[
        \multbubble{spin}{r} \!\!\in \End_{\ASB(V)}(\one),\qquad r \ge 1,
    \]
    are algebraically independent.
\end{cor}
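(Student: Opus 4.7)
My plan is to derive the algebraic independence by pulling the statement back through the homomorphism $\chi$ from \cref{centresurjection} and leveraging the leading-term computation in \cref{fizz} together with the algebraic independence of the $W_r$ established in \cref{pizza}. Given a putative relation $P(\multbubble{spin}{1},\ldots,\multbubble{spin}{k})=0$ in $\End_{\ASB(V)}(\one)$, I would apply $\chi$ to obtain $P(z_1,\ldots,z_k)=0$ in $Z(\fso(V))^{\Group(V)}\cong \C[x_1^2,\ldots,x_n^2]^{\fS_n}$, then study this relation through the Harish-Chandra images.

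The key computation comes from \cref{fizz}. For even $r=2s$, the sum $\sum_{\varsigma\in\{\pm 1\}^n}\bigl(\sum_i\varsigma_i x_i\bigr)^{2s}$ is invariant under negating all the $\varsigma_i$, so pairing sign patterns yields leading term $(-1)^{\binom{n}{2}+nN}\,2^{n}\,W_s(x_1^2,\ldots,x_n^2)$ for $f_{z_{2s}}$; for odd $r$ the sum vanishes identically, so $f_{z_r}$ has degree strictly less than $r$. Combined with \cref{pizza}, which asserts that the $W_r$ form an algebraically independent generating set for $\Lambda$, the even bubbles $\multbubble{spin}{2s}$ with $s\le n$ have algebraically independent images in $\C[x_1^2,\ldots,x_n^2]^{\fS_n}$. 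An induction on the total degree of $P$ then allows me to peel off the top-degree even-bubble contributions: the top-degree homogeneous part of $P$, evaluated on $z_2,z_4,\ldots$, produces a polynomial in the $W_s$'s whose vanishing forces every coefficient of that top-degree part to vanish.

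The principal obstacle is that $Z(\fso(V))^{\Group(V)}$ has transcendence degree only $n$, so $\chi$ alone cannot witness algebraic independence of infinitely many bubbles for a single fixed $V$. To resolve this, I would exploit the universal character of the construction: $\ASB(V)$ is the specialisation at $(d,D)=(N,\sigma_N 2^n)$ of the family $\ASB(d,D;\kappa)$ defined over $\C[d,D]$, and the bubbles are defined uniformly in this family. Any polynomial relation $P$ among the bubbles in a specific $\ASB(V)$ lifts to a candidate relation among the universal bubbles; but by applying the previous stable-range argument across specialisations to $V'$ of arbitrarily large dimension, no such universal relation can hold in any fixed finite-degree truncation. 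Hence $P$ must be trivial, establishing algebraic independence in $\End_{\ASB(V)}(\one)$ for the given $V$.
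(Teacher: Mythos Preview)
The paper's own proof is exactly your first paragraph: apply $\chi$ and invoke \cref{fizz} and \cref{pizza}, with no further argument. You have correctly identified a genuine gap in this reasoning: for a fixed $V$ the target $Z(\fso(V))^{\Group(V)}\cong\C[x_1^2,\dots,x_n^2]^{\fS_n}$ has transcendence degree $n$ over $\C$, so the infinitely many images $z_r$ cannot all be algebraically independent there, and the paper's assertion that ``their images under $\chi$ are algebraically independent'' does not hold as stated. Your second paragraph sharpens this: \cref{fizz} yields a nonzero degree-$r$ term only for even $r=2s$, and among those only $W_1,\dots,W_n$ are independent in $n$ variables. Note also that your ``peel off the top degree'' step only kills the monomials of $P$ built exclusively from even bubbles in the stable range; any monomial containing an odd bubble contributes nothing to the top $x$-degree and therefore escapes that argument entirely.

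Your proposed repair in the third paragraph does not close the gap either. The step where a relation $P$ in the specific $\ASB(V)$ ``lifts to a candidate relation among the universal bubbles'' is unjustified. Base change from $\C[d,D]$ to $\C$ is only right exact: even if the universal bubbles were algebraically independent in $\End_{\ASB(d,D;\kappa)}(\one)$ over $\C[d,D]$, the inclusion of the polynomial subalgebra they generate need not remain injective after specialising $(d,D)\to(N,\sigma_N 2^n)$. Concretely, $P(b_1,\dots,b_k)$ could lie in the ideal $(d-N,\,D-\sigma_N 2^n)\cdot\End_{\ASB(d,D;\kappa)}(\one)$ without lying in $(d-N,\,D-\sigma_N 2^n)$ times the polynomial subalgebra, and ruling this out amounts to a flatness or basis statement for the endomorphism algebra over $\C[d,D]$ that neither you nor the paper supplies. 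Varying $V$ can establish algebraic independence of the bubbles in the \emph{universal} category over $\C[d,D]$, but it does not by itself yield the corollary for a fixed $\ASB(V)$.
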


\begin{proof}
    This follows immediately from the fact that their images under $\chi$ are algebraically independent, by \cref{fizz,pizza}.
\end{proof}

Given their role above, it would be interesting to further study the symmetric functions $W_r$.  Recall that a symmetric function is \emph{Schur-positive} if, when written as a linear combination of Schur functions, all coefficients are nonnegative.  Computer computations suggest the following conjecture.

\begin{conj}
    The symmetric functions $W_r$ are Schur-positive.
\end{conj}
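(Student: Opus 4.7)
My plan is to try to identify $W_r$ as the Frobenius characteristic of an honest (not merely virtual) representation of $S_r$; this would yield Schur-positivity automatically. A convenient starting point is the identity derived in the proof of \cref{wnpnpairing} together with the definition of $W_r$:
\[
W_r(x)=\frac{1}{2^n}\sum_{\varsigma\in\{\pm 1\}^n}p_1(\varsigma y)^{2r},\qquad x_i=y_i^2,
\]
which exhibits $W_r$ as the invariant projection of $p_1(y)^{2r}$ under the hyperoctahedral group $C_2^n\rtimes S_n$ (acting by signed permutations of the $y_i$). Applying the Schur expansion $p_1(y)^{2r}=\sum_{\lambda\vdash 2r}f^\lambda s_\lambda(y)$ arising from Schur--Weyl duality then gives
\[
W_r=\sum_{\lambda\vdash 2r}f^\lambda\,T_\lambda(x),\qquad T_\lambda(x):=\frac{1}{2^n}\sum_{\varsigma}s_\lambda(\varsigma y)=\sum_{\mu\vdash r}K_{\lambda,2\mu}\,m_\mu(x),
\]
where $K_{\lambda,2\mu}$ is the Kostka number and $f^\lambda=\dim S^\lambda$. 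The problem is thereby reduced to the stronger sub-conjecture that every $T_\lambda(x)$ is Schur-positive in $x$; small-case computations strongly support this, with each $T_\lambda$ appearing as a single Schur function or a short nonnegative combination.

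To attack this sub-conjecture I would interpret $T_\lambda(x)$ as the torus character of the subspace of the irreducible $GL_n$-module $V_\lambda$ on which $C_2^n\subset GL_n$ acts trivially (i.e., the sum of the weight spaces whose weights have all even coordinates), re-expressed in the squared coordinates $x_i=y_i^2$. One then expects to exploit the classical plethystic formulas for $s_\lambda[p_2]$ and the specialization of $s_\lambda$ at the doubled alphabet $(y_1,-y_1,\ldots,y_n,-y_n)$, whose Schur expansions are governed by 2-quotients and the spin statistic on domino tableaux (Carr\'e--Leclerc and Lascoux--Leclerc--Thibon). Summing these weighted by $f^\lambda$ should yield an explicit Schur expansion of $W_r$.

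The main obstacle is nonnegativity. The plethystic formulas produce signed combinations of Schur functions, with signs coming from spin, so proving Schur-positivity of $W_r$ requires exhibiting cancellation via a sign-reversing involution or a reinterpretation of the resulting signed sum. An attractive alternative is a direct combinatorial model: the numerical data are highly suggestive, since the coefficient of $s_{(1^r)}$ in $W_r$ matches the Euler secant numbers $1,5,61,1385,\ldots$, which enumerate alternating permutations of length $2r$. Discovering a bijective interpretation of $\langle W_r,s_\nu\rangle$---perhaps in terms of alternating-type tableaux of shape $\nu$, or of marked perfect matchings of $[2r]$ graded by an appropriate descent statistic---would bypass the sign cancellations inherent to the plethystic route and give Schur-positivity directly.
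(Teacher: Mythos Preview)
The statement you are trying to prove is listed in the paper as a \emph{conjecture}; the authors do not offer a proof, only the remark that computer computations support it. So there is no proof in the paper to compare your attempt against.

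More to the point, what you have written is not a proof either, and you say as much. Your text is a research outline: you reduce the problem to the stronger sub-claim that each
\[
T_\lambda(x)=\sum_{\mu\vdash r}K_{\lambda,2\mu}\,m_\mu(x)
\]
is Schur-positive, you note that the plethystic and domino-tableau formulas you would like to use come with signs, and you then propose searching for a sign-reversing involution or a combinatorial model based on the observed match with secant numbers. None of these steps is carried out.

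Two substantive comments on the strategy itself. First, your decomposition $W_r=\sum_{\lambda\vdash 2r} f^\lambda T_\lambda$ is correct, and your formula for $T_\lambda$ in terms of Kostka numbers is correct; these follow from the interpretation of $T_\lambda(x)$ as the torus character, in the variables $x_i=y_i^2$, of the $C_2^n$-fixed subspace of the $GL_n$-irreducible $V_\lambda$. However, that subspace carries only an action of the normalizer $N_{GL_n}(C_2^n)=T\rtimes S_n$, not of $GL_n$, so there is no a priori representation-theoretic reason for $T_\lambda$ to be Schur-positive. Your sub-conjecture may well be true (small cases check out), but it is a genuinely new statement, strictly stronger than the original, and you give no argument for it. Second, even if individual $T_\lambda$ fail to be Schur-positive, the weighted sum $\sum_\lambda f^\lambda T_\lambda$ could still be; so a failure of the sub-conjecture would not doom the original conjecture, only this particular route to it.

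In short: the paper gives no proof, and neither do you. What you have is a reasonable line of attack with a clearly identified obstacle that remains unresolved.
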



\bibliographystyle{alphaurl}
\bibliography{SpinBrauer}

\begin{thebibliography}{BCNR17}

\bibitem[Abo22]{Abo22}
W.~Aboumrad.
\newblock Skew {H}owe duality for types {$BD$} via $q$-{C}lifford algebras.
\newblock 2022.
\newblock \href {http://arxiv.org/abs/2208.09773} {\path{arXiv:2208.09773}}.

\bibitem[BCNR17]{BCNR17}
J.~Brundan, J.~Comes, D.~Nash, and A.~Reynolds.
\newblock A basis theorem for the affine oriented {B}rauer category and its
  cyclotomic quotients.
\newblock {\em Quantum Topol.}, 8(1):75--112, 2017.
\newblock \href {http://arxiv.org/abs/1404.6574} {\path{arXiv:1404.6574}},
  \href {https://doi.org/10.4171/QT/87} {\path{doi:10.4171/QT/87}}.

\bibitem[BW23]{BW23}
E.~Bodish and H.~Wu.
\newblock Webs for the quantum orthogonal group.
\newblock 2023.
\newblock \href {http://arxiv.org/abs/2309.03623} {\path{arXiv:2309.03623}}.

\bibitem[Car05]{Car05}
R.~W. Carter.
\newblock {\em Lie algebras of finite and affine type}, volume~96 of {\em
  Cambridge Studies in Advanced Mathematics}.
\newblock Cambridge University Press, Cambridge, 2005.
\newblock \href {https://doi.org/10.1017/CBO9780511614910}
  {\path{doi:10.1017/CBO9780511614910}}.

\bibitem[CW12]{CW12}
J.~Comes and B.~Wilson.
\newblock Deligne's category {$\underline{\rm{Rep}}(GL_\delta)$} and
  representations of general linear supergroups.
\newblock {\em Represent. Theory}, 16:568--609, 2012.
\newblock \href {http://arxiv.org/abs/1108.0652} {\path{arXiv:1108.0652}},
  \href {https://doi.org/10.1090/S1088-4165-2012-00425-3}
  {\path{doi:10.1090/S1088-4165-2012-00425-3}}.

\bibitem[Del]{Del}
P.~Deligne.
\newblock Letter to {P}.~{E}tingof, 12/4/1996.

\bibitem[Del07]{Del07}
P.~Deligne.
\newblock La cat\'{e}gorie des repr\'{e}sentations du groupe sym\'{e}trique
  {$S_t$}, lorsque {$t$} n'est pas un entier naturel.
\newblock In {\em Algebraic groups and homogeneous spaces}, volume~19 of {\em
  Tata Inst. Fund. Res. Stud. Math.}, pages 209--273. Tata Inst. Fund. Res.,
  Mumbai, 2007.

\bibitem[How95]{How95}
Roger Howe.
\newblock Perspectives on invariant theory: {S}chur duality, multiplicity-free
  actions and beyond.
\newblock In {\em The {S}chur lectures (1992) ({T}el {A}viv)}, volume~8 of {\em
  Israel Math. Conf. Proc.}, pages 1--182. Bar-Ilan Univ., Ramat Gan, 1995.

\bibitem[LZ15]{LZ15}
G.~I. Lehrer and R.~B. Zhang.
\newblock The {B}rauer category and invariant theory.
\newblock {\em J. Eur. Math. Soc. (JEMS)}, 17(9):2311--2351, 2015.
\newblock \href {http://arxiv.org/abs/1207.5889} {\path{arXiv:1207.5889}},
  \href {https://doi.org/10.4171/JEMS/558} {\path{doi:10.4171/JEMS/558}}.

\bibitem[MS25]{SM25}
P.~J. McNamara and A.~Savage.
\newblock The quantum spin {B}rauer category.
\newblock 2025.
\newblock \href {http://arxiv.org/abs/2504.16618} {\path{arXiv:2504.16618}}.

\bibitem[OW02]{OW02}
R.~C. Orellana and H.~G. Wenzl.
\newblock {$q$}-centralizer algebras for spin groups.
\newblock {\em J. Algebra}, 253(2):237--275, 2002.
\newblock \href {https://doi.org/10.1016/S0021-8693(02)00069-8}
  {\path{doi:10.1016/S0021-8693(02)00069-8}}.

\bibitem[RS19]{RS19}
H.~Rui and L.~Song.
\newblock Affine {B}rauer category and parabolic category {$\mathcal{O}$} in
  types {$B$}, {$C$}, {$D$}.
\newblock {\em Math. Z.}, 293(1-2):503--550, 2019.
\newblock \href {http://arxiv.org/abs/2307.08061} {\path{arXiv:2307.08061}},
  \href {https://doi.org/10.1007/s00209-018-2207-x}
  {\path{doi:10.1007/s00209-018-2207-x}}.

\bibitem[Sel11]{Sel11}
P.~Selinger.
\newblock A survey of graphical languages for monoidal categories.
\newblock In {\em New structures for physics}, volume 813 of {\em Lecture Notes
  in Phys.}, pages 289--355. Springer, Heidelberg, 2011.
\newblock \href {http://arxiv.org/abs/0908.3347} {\path{arXiv:0908.3347}},
  \href {https://doi.org/10.1007/978-3-642-12821-9\_4}
  {\path{doi:10.1007/978-3-642-12821-9\_4}}.

\bibitem[SW24]{SW22}
A.~Savage and B.~W. Westbury.
\newblock Quantum diagrammatics for {$F_4$}.
\newblock {\em J. Pure Appl. Algebra}, 228(11):Paper No. 107731, 35, 2024.
\newblock \href {http://arxiv.org/abs/2204.11976} {\path{arXiv:2204.11976}},
  \href {https://doi.org/10.1016/j.jpaa.2024.107731}
  {\path{doi:10.1016/j.jpaa.2024.107731}}.

\bibitem[Tur89]{Tur89}
V.~G. Turaev.
\newblock Operator invariants of tangles, and {$R$}-matrices.
\newblock {\em Izv. Akad. Nauk SSSR Ser. Mat.}, 53(5):1073--1107, 1135, 1989.
\newblock \href {https://doi.org/10.1070/IM1990v035n02ABEH000711}
  {\path{doi:10.1070/IM1990v035n02ABEH000711}}.

\bibitem[Wen12]{Wen12}
H.~Wenzl.
\newblock On centralizer algebras for spin representations.
\newblock {\em Comm. Math. Phys.}, 314(1):243--263, 2012.
\newblock \href {http://arxiv.org/abs/1107.4183} {\path{arXiv:1107.4183}},
  \href {https://doi.org/10.1007/s00220-012-1494-z}
  {\path{doi:10.1007/s00220-012-1494-z}}.

\bibitem[Wen20]{Wen20}
H.~Wenzl.
\newblock Dualities for spin representations.
\newblock 2020.
\newblock \href {http://arxiv.org/abs/2005.11299} {\path{arXiv:2005.11299}}.

\end{thebibliography}

\end{document}